\newcommand{\coveringDiagram}{
\begin{scope}[yscale=1.2, xscale=1.1, every node/.style={scale=.95}]
\node (X) at (0,4) {$\X = (S,\omega)$};
\node (Xh) at (-4,3) {$\Xh=\rquo{\X}{\langle \tau_h\rangle}=(S_h,\omega_h)$};
\node (Xv) at (4,3) {$\Xv=\rquo{\X}{\langle \tau_v\rangle}=(S_v,\omega_v)$};
\node (Xhv) at (0,2) {$\Xhv=\rquo{\X}{\langle \tau_h,\tau_v\rangle}=(\tilde S,\tilde\omega)$};
\node (Wh) at (-4,1) {$\Wh=\rquo{\X}{\langle \tau_h,\iota\circ\tau_v\rangle}=(\mathbb{T}^2,q_h)$};
\node (Wv) at (4,1) {$\Wv=\rquo{\X}{\langle \tau_v,\iota\circ\tau_h\rangle}=(\mathbb{T}^2,q_v)$};
\node (W) at (0,0) {$\W=\rquo{\X}{\langle \tau_h,\tau_v,\iota\rangle}=(\mathbb{CP}^1,q)$};

\begin{scope}[every node/.style={scale=.75}]
\draw[->] (X) -- node[midway, above] {$\Ph$} (Xh);
\draw[->] (Xh) -- node[midway, right] {$\Pph$} (Wh);
\draw[->] (Wh) -- node[midway, below] {$\ph$} (W);
\draw[->] (X) -- node[midway, above] {$\Pv$} (Xv);
\draw[->] (Xv) -- node[midway, right] {$\Ppv$} (Wv);
\draw[->] (Wv) -- node[midway, below] {$\pv$} (W);
\draw[->] (X) -- node[midway, right] {$\Phv$} (Xhv);
\draw[->] (Xhv) -- node[midway, right] {$\phv$} (W);

\draw[->] (X) ..controls (-2.5,2.5) and (-2,1).. node[near end, right] {$\PP$} (W);

\end{scope}
\end{scope}

%
%
%
}
\newcommand{\rightPocket}[2][1]{
\begin{scope}[line width=.5, shift={(#2)}, scale=#1]
\draw (0, .5) arc (90:-90:.2 and .5);
\draw[densely dashed] (0, .5) arc (90:300:.2 and .5);
\draw (0, .5) -- (2, .5);
\draw (2, .5) -- (2,-.5);
\draw (2,-.5) -- (0,-.5);
\draw[line width=.25] (1, .5) arc (90:-90:.2 and .5);
\draw[line width=.25, densely dashed] (1, .5) arc (90:300:.2 and .5);
\end{scope}
}
\newcommand{\leftSaddleConnectionRightPocket}[2][1]{
\begin{scope}[line width=.5, shift={(#2)}, scale=#1]
\draw[line width=1] (0, .5) arc (90:-90:.2 and .5);
\draw[line width=1, densely dashed] (0, .5) arc (90:300:.2 and .5);
\end{scope}
}
\newcommand{\rightSaddleConnectionRightPocket}[2][1]{
\begin{scope}[line width=.5, shift={(#2)}, scale=#1]
\draw[line width=1] (2, .5) -- (2,-.5);
\end{scope}
}
\newcommand{\upperLeftNodeRightPocket}[3][1]{
\begin{scope}[line width=.5, shift={(#2)}, scale=#1, every node/.style={scale=.7}]
\draw (0, .5) node[circle,fill,inner sep=2, label={[label distance=-3]45:#3}]{};
\end{scope}
}
\newcommand{\lowerLeftNodeRightPocket}[3][1]{
\begin{scope}[line width=.5, shift={(#2)}, scale=#1, every node/.style={scale=.7}]
\draw (0,-.5) node[circle,fill,inner sep=2, label={[label distance=-3]-45:#3}]{};
\end{scope}
}
\newcommand{\upperRightNodeRightPocket}[3][1]{
\begin{scope}[line width=.5, shift={(#2)}, scale=#1, every node/.style={scale=.7}]
\draw (2, .5) node[circle,fill,inner sep=2, label={[label distance=-3]above:#3}]{};
\end{scope}
}
\newcommand{\lowerRightNodeRightPocket}[3][1]{
\begin{scope}[line width=.5, shift={(#2)}, scale=#1, every node/.style={scale=.7}]
\draw (2,-.5) node[circle,fill,inner sep=2, label={[label distance=-3]below:#3}]{};
\end{scope}
}
\newcommand{\leftSphere}[2][1]{
\begin{scope}[line width=.5, shift={(#2)}, scale=#1]
\draw ( 0,.5) ..controls (-.25,.5) and (-.75,1.5).. (-1.5,1.5);
\draw (-1.5,1.5) arc (90:180:1.5 and 1.75);
\draw (-3  ,-.25) arc (180:270:1.5);
\draw ( 0,-.5) ..controls (0,-.75) and (-.75,-1.75).. (-1.5,-1.75);
\draw[line width=.25] (-1.5,1.5) arc (90:-90:.6 and 3.25/2);
\draw[line width=.25, densely dashed] (-1.5,1.5) arc (90:300:.6 and 3.25/2);
\end{scope}
}
\newcommand{\pocketConfiguration}[2][1]{
\rightPocket[#1]{#2}
\lowerLeftNodeRightPocket[#1]{#2}{$P_{i}$}
\leftSaddleConnectionRightPocket[#1]{#2}
\upperRightNodeRightPocket[#1]{#2}{$P_{j_1}$}
\lowerRightNodeRightPocket[#1]{#2}{$P_{j_2}$}
\rightSaddleConnectionRightPocket[#1]{#2}
\leftSphere[#1]{#2}
}
\newcommand{\cylinder}[2][1]{
\begin{scope}[line width=.5, shift={(#2)}, scale=#1]
\draw (-1, .5) arc (90:-90:.2 and .5);
\draw[densely dashed] (-1, .5) arc (90:300:.2 and .5);
\draw (-1,-.5) -- ( 1,-.5);

\draw ( 1, .5) arc (90:-90:.2 and .5);
\draw[densely dashed] (1, .5) arc (90:300:.2 and .5);

\draw ( 1, .5) -- (-1, .5);

\draw[line width=.25] (0, .5) arc (90:-90:.2 and .5);
\draw[line width=.25, densely dashed] (0, .5) arc (90:300:.2 and .5);
\end{scope}
}
\newcommand{\leftSaddleConnectionCylinder}[2][1]{
\begin{scope}[shift={(-1*#1,0)}]
\leftSaddleConnectionRightPocket[#1]{#2}
\end{scope}
}
\newcommand{\rightSaddleConnectionCylinder}[2][1]{
\begin{scope}[shift={( 1*#1,0)}]
\leftSaddleConnectionRightPocket[#1]{#2}
\end{scope}
}
\newcommand{\upperLeftNodeCylinder}[3][1]{
\begin{scope}[line width=.5, shift={(#2)}, scale=#1, every node/.style={scale=.7}]
\draw (-1, .5) node[circle,fill,inner sep=2, label={[label distance=-3]45:#3}]{};
\end{scope}
}
\newcommand{\lowerLeftNodeCylinder}[3][1]{
\begin{scope}[line width=.5, shift={(#2)}, scale=#1, every node/.style={scale=.7}]
\draw (-1,-.5) node[circle,fill,inner sep=2, label={[label distance=-3]-45:#3}]{};
\end{scope}
}
\newcommand{\upperRightNodeCylinder}[3][1]{
\begin{scope}[line width=.5, shift={(#2)}, scale=#1, every node/.style={scale=.7}]
\draw (1, .5) node[circle,fill,inner sep=2, label={[label distance=-3]135:#3}]{};
\end{scope}
}
\newcommand{\lowerRightNodeCylinder}[3][1]{
\begin{scope}[line width=.5, shift={(#2)}, scale=#1, every node/.style={scale=.7}]
\draw (1,-.5) node[circle,fill,inner sep=2, label={[label distance=-3]-135:#3}]{};
\end{scope}
}
\newcommand{\rightSphere}[2][1]{
\begin{scope}[line width=.5, shift={(#2)}, scale=#1]
\draw ( 0  ,-.5) ..controls (.25,-.5) and (.5,-1.75).. (1.5,-1.75);
\draw ( 1.5,-1.75) arc (-90:0:1.5 and 1.75);
\draw ( 3  , 0    ) arc (0:90:1.5);
\draw ( 0  , .5) ..controls (0,.75) and (.75,1.5).. (1.5,1.5);
\draw[line width=.25] (1.5,-1.75) arc (-90:90:.6 and 3.25/2);
\draw[line width=.25, densely dashed] (1.5,-1.75) arc (270:70:.6 and 3.25/2);
\end{scope}
}
\newcommand{\dumbbellConfiguration}[2][1]{
\cylinder[#1]{#2}
\lowerLeftNodeCylinder[#1]{#2}{$P_{i_1}$}
\leftSaddleConnectionCylinder[#1]{#2}
\upperRightNodeCylinder[#1]{#2}{$P_{i_2}$}
\rightSaddleConnectionCylinder[#1]{#2}

\begin{scope}[shift={(-1*#1,0)}]
\leftSphere[#1]{#2}
\end{scope}

\begin{scope}[shift={( 1*#1,0)}]
\rightSphere[#1]{#2}
\end{scope}

}
\newcommand{\hole}[2][1]{
\begin{scope}[line width=.5, shift={(#2)}, scale=#1]
\draw[line width=.5, rounded corners=30*#1] (-.88,.08) -- (0,-.48) -- (.88,.08);
\draw[line width=.5, rounded corners=22*#1] (-.72,0) -- (0,.36) -- (.72,0);
\end{scope}
}
\newcommand{\leftPocket}[2][1]{
\begin{scope}[line width=.5, shift={(#2)}, scale=#1]
\draw (0, .5) arc (90:-90:.2 and .5);
\draw[densely dashed] (0, .5) arc (90:300:.2 and .5);

\draw (0, .5) -- (-2,.5) -- (-2,-.5) -- (0,-.5);

\draw[line width=.25] (-1, .5) arc (90:-90:.2 and .5);
\draw[line width=.25, densely dashed] (-1, .5) arc (90:300:.2 and .5);
\end{scope}
}
\newcommand{\rightSaddleConnectionLeftPocket}[2][1]{
\begin{scope}[line width=.5, shift={(#2)}, scale=#1]
\draw[line width=1] (0, .5) arc (90:-90:.2 and .5);
\draw[line width=1, densely dashed] (0, .5) arc (90:300:.2 and .5);
\end{scope}
}
\newcommand{\leftSaddleConnectionLeftPocket}[2][1]{
\begin{scope}[line width=.5, shift={(#2)}, scale=#1]
\draw[line width=1] (-2, .5) -- (-2,-.5);
\end{scope}
}
\newcommand{\upperRightNodeLeftPocket}[3][1]{
\begin{scope}[line width=.5, shift={(#2)}, scale=#1, every node/.style={scale=.7}]
\draw (0, .5) node[circle,fill,inner sep=2, label={[label distance=-3]135:#3}]{};
\end{scope}
}
\newcommand{\upperLeftNodeLeftPocket}[3][1]{
\begin{scope}[line width=.5, shift={(#2)}, scale=#1, every node/.style={scale=.7}]
\draw (-2, .5) node[circle,fill,inner sep=2, label={[label distance=-3]above:#3}]{};
\end{scope}
}
\newcommand{\lowerLeftNodeLeftPocket}[3][1]{
\begin{scope}[line width=.5, shift={(#2)}, scale=#1, every node/.style={scale=.7}]
\draw (-2,-.5) node[circle,fill,inner sep=2, label={[label distance=-3]below:#3}]{};
\end{scope}
}
\newcommand{\centralTorus}[2][1]{
\begin{scope}[line width=.5, shift={(#2)}, scale=#1]
\draw (1.5,.5) ..controls (1.25,.5) and (1,1.25).. (0,1.25);
\draw (1.5,-.5) ..controls (1.5,-.75) and (.75,-1.25).. (0,-1.25);
\draw (-1.5,-.5) ..controls (-1.25,-.5) and (-1,-1.25).. (0,-1.25);
\draw (-1.5,.5) ..controls (-1.5,.75) and (-.75,1.25).. (0,1.25);
\end{scope}
\begin{scope}[shift={(#2)}, xscale=.85, yscale=1.5]
\hole[#1]{(0,0)}
\end{scope}
}
\newcommand{\liftPocketRZero}[2][1]{
\begin{scope}[shift={(1.5*#1,0)}]
\rightPocket[#1]{#2}
\lowerLeftNodeRightPocket[#1]{#2}{$P_{i}^0$}
\leftSaddleConnectionRightPocket[#1]{#2}
\upperRightNodeRightPocket[#1]{#2}{$P_{j_1}^0$}
\lowerRightNodeRightPocket[#1]{#2}{$P_{j_2}^0$}
\rightSaddleConnectionRightPocket[#1]{#2}
\end{scope}
\begin{scope}[shift={(-1.5*#1,0)}]
\leftPocket[#1]{#2}
\upperRightNodeLeftPocket[#1]{#2}{$P_{i}^1$}
\rightSaddleConnectionLeftPocket[#1]{#2}
\lowerLeftNodeLeftPocket[#1]{#2}{$P_{j_1}^1$}
\upperLeftNodeLeftPocket[#1]{#2}{$P_{j_2}^1$}
\leftSaddleConnectionLeftPocket[#1]{#2}
\end{scope}
\centralTorus[#1]{#2}
}
\newcommand{\centralRightNodeRightPocket}[3][1]{
\begin{scope}[line width=.5, shift={(#2)}, scale=#1, every node/.style={scale=.7}]
\draw (2, 0) node[circle,fill,inner sep=1, label={[label distance=-3]right:#3}]{};
\end{scope}
}
\newcommand{\leftTorus}[2][1]{
\begin{scope}[line width=.5, shift={(#2)}, scale=#1]
\draw (0,.5) ..controls (0,.6) and (-.75,.9).. (-1.5,.9);
\draw (-1.5,.9) arc (90:180:1.5 and 1);
\draw (-3  ,-.1) arc (180:270:1.5 and .8);
\draw (0,-.5) ..controls (0,-.6) and (-.75,-.9).. (-1.5,-.9);
\end{scope}
\begin{scope}[shift={(#2)}, scale=.75]
\hole[#1]{(-2*#1,0)}
\end{scope}
}
\newcommand{\liftPocketROne}[2][1]{
\begin{scope}[yscale=2]
\rightPocket[#1]{#2}
\lowerLeftNodeRightPocket[#1]{#2}{$P_{i}^0$}
\upperLeftNodeRightPocket[#1]{#2}{$P_{i}^1$}
\leftSaddleConnectionRightPocket[#1]{#2}
\upperRightNodeRightPocket[#1]{#2}{$P_{j_1}^0$}
\centralRightNodeRightPocket[#1]{#2}{$P_{j_2}^\times$}
\lowerRightNodeRightPocket[#1]{#2}{$P_{j_1}^1$}
\rightSaddleConnectionRightPocket[#1]{#2}
\leftTorus[#1]{#2}
\end{scope}
}
\newcommand{\wrappingTorus}[2][1]{
\begin{scope}[line width=.5, shift={(#2)}, scale=#1]
\draw (-1,.5) ..controls (-1.125,.5) and (-1.25,.75).. (-1.25,1);
\draw (-1,-.5) ..controls (-1,-1) and (-1.75,-1).. (-1.75,.25);
\draw (1   , .5 ) ..controls (1,.75) and (1.125,.75).. (1.25,1);
\draw (1  ,-.5 ) ..controls (1.1,-.5) and (1.15,-.75).. (1.3,-.75);
\draw (1.3,-.75) ..controls (1.4,-.75) and (1.75,-.75).. (1.75,.25);
\draw (1.75,.25) arc (0:180:1.75 and 2);
\draw (0,1.25) arc (90:46:1.5 and 2);
\draw (0,1.25) arc (90:138:1.5 and 2);
\end{scope}
}
\newcommand{\upperCentralNodeCylinder}[3][1]{
\begin{scope}[line width=.5, shift={(#2)}, scale=#1, every node/.style={scale=.7}]
\draw (0, .5) node[circle,fill,inner sep=1, label={[label distance=-3]above:#3}]{};
\end{scope}
}
\newcommand{\lowerCentralNodeCylinder}[3][1]{
\begin{scope}[line width=.5, shift={(#2)}, scale=#1, every node/.style={scale=.7}]
\draw (0,-.5) node[circle,fill,inner sep=1, label={[label distance=-3]below:#3}]{};
\end{scope}
}
\newcommand{\liftPocketRTwo}[2][1]{
\begin{scope}[shift={(-1*#1,0)}]
\cylinder[#1]{#2}
\leftSaddleConnectionCylinder[#1]{#2}
\end{scope}
\begin{scope}[shift={(1*#1,0)}]
\cylinder[#1]{#2}
\rightSaddleConnectionCylinder[#1]{#2}
\end{scope}
\begin{scope}[xscale=2]
\upperCentralNodeCylinder[#1]{#2}{$P_{j_1}^\times$}
\lowerCentralNodeCylinder[#1]{#2}{$P_{j_2}^\times$}
\wrappingTorus[#1]{#2}
\end{scope}
\begin{scope}[line width=.5, shift={(#2)}, scale=#1, every node/.style={scale=.7}]
\draw (-2,-.5) node[circle,fill,inner sep=2, label={[label distance=-6*#1*#1]left:$P_{i}^0$}]{};
\draw (2,.5) node[circle,fill,inner sep=2, label={[label distance=-6*#1*#1]right:$P_{i}^1$}]{};
\end{scope}
}
\newcommand{\smallLeftSphere}[2][1]{
\begin{scope}[line width=.5, shift={(#2)}, scale=#1]
\draw (0,.5) ..controls (-.25,.5) and (-.25,1).. (-.75,1);
\draw (-.75,1) arc (90:270:1);
\draw (0,-.5) ..controls (0,-.75) and (-.5,-1).. (-.75,-1);

\draw[line width=.1*#1] (-.75,1) arc (90:-90:.3 and 1);
\draw[line width=.1*#1, densely dashed] (-.75,1) arc (90:300:.3 and 1);
\end{scope}
}
\newcommand{\smallRightSphere}[2][1]{
\begin{scope}[line width=.5, shift={(#2)}, scale=#1]
\draw (0,-.5) ..controls (.25,-.5) and (.25,-1).. (.75,-1);
\draw (.75,1) arc (90:-90:1);
\draw (0,.5) ..controls (0,.75) and (.5,1).. (.75,1);

\draw[line width=.1*#1] (.75,1) arc (90:-90:.3 and 1);
\draw[line width=.1*#1, densely dashed] (.75,1) arc (90:300:.3 and 1);
\end{scope}
}
\newcommand{\liftDumbbellRZero}[2][1]{
\begin{scope}[shift={(2.5*#1,0)}]
\cylinder[#1]{#2}
\lowerLeftNodeCylinder[#1]{#2}{$P_{i_1}^0$}
\leftSaddleConnectionCylinder[#1]{#2}
\upperRightNodeCylinder[#1]{#2}{$P_{i_2}^0$}
\rightSaddleConnectionCylinder[#1]{#2}
  \begin{scope}[shift={(1*#1,0)}]
  \smallRightSphere[#1]{#2}
  \end{scope}
\end{scope}
\begin{scope}[shift={(-2.5*#1,0)}]
\cylinder[#1]{#2}
\upperRightNodeCylinder[#1]{#2}{$P_{i_1}^1$}
\rightSaddleConnectionCylinder[#1]{#2}
\lowerLeftNodeCylinder[#1]{#2}{$P_{i_2}^1$}
\leftSaddleConnectionCylinder[#1]{#2}
  \begin{scope}[shift={(-1*#1,0)}]
  \smallLeftSphere[#1]{#2}
  \end{scope}
\end{scope}
\centralTorus[#1]{#2}
}
\newcommand{\smallRRightSphere}[2][1]{
\begin{scope}[line width=.5, shift={(#2)}, scale=#1]
\draw (0,-.5) ..controls (0,-.6) and (.75,-.9).. (1.25,-.9);
\draw (1.25,.9) arc (90:-90:1.75 and .9);
\draw (0, .5) ..controls (0, .6) and (.75, .9).. (1.25, .9);

\draw[line width=.1*#1] (1.25,.9) arc (90:-90:.6 and .9);
\draw[line width=.1*#1, densely dashed] (1.25,.9) arc (90:300:.6 and .9);
\end{scope}
}
\newcommand{\liftDumbbellROne}[2][1]{
\begin{scope}[yscale=2]
\cylinder[#1]{#2}
\lowerLeftNodeCylinder[#1]{#2}{$P_{i_1}^0$}
\upperLeftNodeCylinder[#1]{#2}{$P_{i_1}^1$}
\leftSaddleConnectionCylinder[#1]{#2}
\upperRightNodeCylinder[#1]{#2}{$P_{i_2}^0$}
\lowerRightNodeCylinder[#1]{#2}{$P_{i_2}^1$}
\rightSaddleConnectionCylinder[#1]{#2}
\begin{scope}[shift={(-1*#1,0)}]
\leftTorus[#1]{#2}
\end{scope}
\begin{scope}[shift={( 1*#1,0)}]
\smallRRightSphere[#1]{#2}
\end{scope}
\end{scope}
}
\newcommand{\leftSSphere}[2][1]{
\begin{scope}[line width=.5, shift={(#2)}, scale=#1]
\draw (-1,1.5) ..controls (-1,1.75) and (-1.75,2).. (-2.5,2);
\draw (-2.5,2) arc (90:270:1.75 and 2);
\draw (-1,-1.5) ..controls (-1,-1.75) and (-1.75,-2).. (-2.5,-2);

\draw[line width=.25] (-2.5,2) arc (90:-90:.6 and 2);
\draw[line width=.25, densely dashed] (-2.5,2) arc (90:300:.6 and 2);

\draw (-1, .5) ..controls (-1.2, .5) and (-1.4, .25).. (-1.5,0);
\draw (-1,-.5) ..controls (-1.2,-.5) and (-1.4,-.25).. (-1.55,.1);
\end{scope}
}
\newcommand{\rightSSphere}[2][1]{
\begin{scope}[line width=.5, shift={(#2)}, scale=#1]
\draw (1,1.5) ..controls (1.25,1.5) and (1.5,2).. (2.5,2);
\draw (2.5,2) arc (90:-90:1.75 and 2);
\draw (1,-1.5) ..controls (1.25,-1.5) and (1.5,-2).. (2.5,-2);

\draw[line width=.25] (2.5,2) arc (90:-90:.6 and 2);
\draw[line width=.25, densely dashed] (2.5,2) arc (90:300:.6 and 2);

\draw (1, .5) ..controls (1, .3) and (1.4, .2).. (1.47,0);
\draw (1,-.5) ..controls (1,-.3) and (1.4,-.2).. (1.54,.1);
\end{scope}
}
\newcommand{\liftDumbbellRTwo}[2][1]{
\begin{scope}[shift={(0,-1*#1)}]
\cylinder[#1]{#2}
\lowerLeftNodeCylinder[#1]{#2}{$P_{i_1}^0$}
\leftSaddleConnectionCylinder[#1]{#2}
\rightSaddleConnectionCylinder[#1]{#2}
\end{scope}
\begin{scope}[shift={(0, 1*#1)}]
\cylinder[#1]{#2}
\upperLeftNodeCylinder[#1]{#2}{$P_{i_1}^1$}
\leftSaddleConnectionCylinder[#1]{#2}
\rightSaddleConnectionCylinder[#1]{#2}
\end{scope}
\begin{scope}[line width=.5, shift={(#2)}, scale=#1, every node/.style={scale=.7}]
\draw (1, -.5) node[circle,fill,inner sep=2, label={[label distance=-3]right:$P_{i_2}^0$}]{};
\draw (1, .5) node[circle,fill,inner sep=2, label={[label distance=-3]right:$P_{i_2}^1$}]{};
\end{scope}
\leftSSphere[#1]{#2}
\rightSSphere[#1]{#2}
}
\newcommand{\leftHandle}[2][1]{
\begin{scope}[line width=.5, shift={(#2)}, scale=#1]
\draw (0,0) node[circle,fill,inner sep=2]{};
\draw[line width=1] (0,0) arc (90:-90:.2 and .5);
\draw[densely dashed, line width=1] (0, 0) arc (90:300:.2 and .5);
\draw[line width=1] (0,0) arc (-90:90:.2 and .5);
\draw[densely dashed, line width=1] (0, 0) arc (270:60:.2 and .5);

\draw (0,1) ..controls (-.3 ,1  ) and (-.3,1.1).. (-.6,1.1)
            ..controls (-1.2,1.1) and (-1.5,.5  ).. (-1.5,0  );
\draw (0,-1) ..controls (-.3 ,-1  ) and (-.3,-1.1).. (-.6,-1.1)
             ..controls (-1.2,-1.1) and (-1.5,-.5  ).. (-1.5, 0  );

\draw (-.9,.2) arc (90:270:.2);
\draw (-.9,.2) ..controls (-.8, .2).. (0,0);
\draw (-.9,-.2) ..controls (-.8,-.2).. (0,0);

\draw (-1.5,0) node[circle,fill,inner sep=1]{};
\draw (-1.1,0) node[circle,fill,inner sep=1]{};
\draw (-1.5,0) arc (180:360:.2 and .1);
\draw[densely dashed] (-1.5,0) arc (180:-30:.2 and .1);

\draw[line width=.25] (-.75,1.08) arc (90:-90:.18 and .45);
\draw[line width=.25, densely dashed] (-.75,1.08) arc (90:300:.18 and .45);
\draw[line width=.25] (-.75,-1.08) arc (-90:90:.18 and .45);
\draw[line width=.25, densely dashed] (-.75,-1.08) arc (-90:-300:.18 and .45);
\end{scope}
}
\newcommand{\rightHandle}[2][1]{
\begin{scope}[line width=.5, shift={(#2)}, scale=#1]
\draw (0,0) node[circle,fill,inner sep=2]{};
\draw[line width=1] (0,0) arc (90:-90:.2 and .5);
\draw[densely dashed, line width=1] (0, 0) arc (90:300:.2 and .5);
\draw[line width=1] (0,0) arc (-90:90:.2 and .5);
\draw[densely dashed, line width=1] (0, 0) arc (270:60:.2 and .5);

\begin{scope}[xscale=-1]
\draw (0,1) ..controls (-.3 ,1  ) and (-.3,1.1).. (-.6,1.1)
            ..controls (-1.2,1.1) and (-1.5,.5  ).. (-1.5,0  );
\draw (0,-1) ..controls (-.3 ,-1  ) and (-.3,-1.1).. (-.6,-1.1)
             ..controls (-1.2,-1.1) and (-1.5,-.5  ).. (-1.5, 0  );

\draw (-.9,.2) arc (90:270:.2);
\draw (-.9,.2) ..controls (-.8, .2).. (0,0);
\draw (-.9,-.2) ..controls (-.8,-.2).. (0,0);

\draw (-1.5,0) node[circle,fill,inner sep=1]{};
\draw (-1.1,0) node[circle,fill,inner sep=1]{};
\draw (-1.5,0) arc (180:360:.2 and .1);
\draw[densely dashed] (-1.5,0) arc (180:-30:.2 and .1);
\end{scope}

\draw[line width=.25] (.75,1.08) arc (90:-90:.18 and .45);
\draw[line width=.25, densely dashed] (.75,1.08) arc (90:300:.18 and .45);
\draw[line width=.25] (.75,-1.08) arc (-90:90:.18 and .45);
\draw[line width=.25, densely dashed] (.75,-1.08) arc (-90:-300:.18 and .45);
\end{scope}
}
\newcommand{\fourTimes}[1]
{ #1
\begin{scope}[scale=-1] #1 \end{scope}
\begin{scope}[yscale=-1] #1 \end{scope}
\begin{scope}[xscale=-1] #1 \end{scope}
}
\newcommand{\centralSurface}[2][1]{
\begin{scope}[shift={(#2)}, yscale=1.25]
\hole[.7*#1]{0,0}
\hole[.7*#1]{1.25*#1,.6*#1}
\hole[.7*#1]{1.25*#1,-.6*#1}
\hole[.7*#1]{-1.25*#1,.6*#1}
\hole[.7*#1]{-1.25*#1,-.6*#1}
\end{scope}

\begin{scope}[line width=.5, shift={(#2)}, scale=#1]
\fourTimes{
  \draw (0,1.2) ..controls (.5,1.2) and (.75,1.5).. (1.25,1.5)
              ..controls (2,1.5) and (2.2,1).. (2.5,1);
}
\end{scope}
}
\newcommand{\liftliftPocketRZero}[2][1]{
\begin{scope}[shift={(#2)}]
\leftHandle[#1]{-2.5*#1,0}
\rightHandle[#1]{2.5*#1,0}
\end{scope}

\centralSurface[#1]{#2}
}
\newcommand{\rightDoubleHandle}[2][1]{
\begin{scope}[line width=.5, shift={(#2)}, scale=#1]
\draw (0,1) node[circle,fill,inner sep=2]{};
\draw (0,-1) node[circle,fill,inner sep=2]{};

\draw[line width=1] (0,1) arc (90:-90:.2 and 1);
\draw[densely dashed, line width=1] (0,1) arc (90:300:.2 and 1);
\draw[densely dashed, line width=1] (0,1) arc (90:-90:.1 and 1);
\draw[densely dashed, line width=1] (0,1) arc (90:300:.1 and 1);

\draw (0,1) ..controls (.01,1) and (.5,1.2).. (.9,1.2)
            ..controls (1.25,1.2) and (1.25,.5).. (1.25,0) node[circle,fill,inner sep=1]{};
\draw (0,-1) ..controls (.01,-1) and (.5,-1.2).. (.9,-1.2)
            ..controls (1.25,-1.2) and (1.25,-.5).. (1.25,0);
\draw[densely dashed, line width=.5] (1.01,1.15) node[circle,fill,inner sep=1]{} ..controls (.95,1.2) and (.9,1.1).. (.9,1) -- (.9,0) node[circle,fill,inner sep=1]{} -- (.9,-1) ..controls (.9,-1.1) and (.95,-1.2).. (1.01,-1.15) node[circle,fill,inner sep=1]{};

\draw[line width=.5, rounded corners=7*#1] (0,1) -- (.9,.9) -- (1,1.1);
\draw[line width=.5, rounded corners=4*#1] (0,1) -- (.8,1.1) -- (.9,1);

\draw[densely dashed, line width=.5, rounded corners=7*#1] (0,-1) -- (.9,-1.1) -- (1,-.9);
\draw[densely dashed, line width=.5, rounded corners=4*#1] (0,-1) -- (.8,-.9) -- (.9,-1);

\end{scope}
}
\newcommand{\leftSurface}[2][1]{
\begin{scope}[shift={(#2)}, yscale=1.25]
\hole[.7*#1]{-1.25*#1,.6*#1}
\hole[.7*#1]{-1.25*#1,-.6*#1}
\hole[.7*#1]{-2.5*#1,0}
\hole[.7*#1]{-3.75*#1,-.6*#1}
\hole[.7*#1]{-3.75*#1,.6*#1}
\hole[.7*#1]{-5*#1,0}
\end{scope}

\begin{scope}[shift={(-2.5*#1,0)}]
\begin{scope}[line width=.5, shift={(#2)}, scale=#1]
\fourTimes{
  \draw (0,1.2) ..controls (.5,1.2) and (.75,1.5).. (1.25,1.5);
}
\draw (1.25,1.5) ..controls (1.75,1.5) and (2.5,1.25).. (2.5,1);
\draw (1.25,-1.5) ..controls (1.75,-1.5) and (2.5,-1.25).. (2.5,-1);
\draw (-1.25,1.5) ..controls (-2,1.5) and (-2.25,.75).. (-2.75,.75);
\draw (-2.75,.75) arc (90:270:1 and .75);
\draw (-1.25,-1.5) ..controls (-2,-1.5) and (-2.25,-.75).. (-2.75,-.75);
\end{scope}
\end{scope}
}
\newcommand{\liftliftPocketROne}[2][1]{
\rightDoubleHandle[#1]{#2}
\leftSurface[#1]{#2}
}
\newcommand{\twoCylinders}[2][1]{
\begin{scope}[line width=.5, shift={(#2)}, scale=#1]
\draw (1,0) node[circle,fill,inner sep=2]{};
\draw (-1,0) node[circle,fill,inner sep=2]{};

\draw[line width=1] (1,0) arc (90:-90:.2 and .5);
\draw[densely dashed, line width=1] (1, 0) arc (90:300:.2 and .5);
\draw[line width=1] (1,0) arc (-90:90:.2 and .5);
\draw[densely dashed, line width=1] (1, 0) arc (270:60:.2 and .5);

\draw[line width=1] (-1,0) arc (90:-90:.2 and .5);
\draw[densely dashed, line width=1] (-1, 0) arc (90:300:.2 and .5);
\draw[line width=1] (-1,0) arc (-90:90:.2 and .5);
\draw[densely dashed, line width=1] (-1, 0) arc (270:60:.2 and .5);

\fourTimes{
\draw (-1,1) ..controls (-.5,1) and (-.5,1.1).. (0,1.1);
\draw (-1,0) ..controls (-.5,0) and (-.5,.1).. (0,.1);
}

\draw[line width=.25] (0,1.1) arc (90:-90:.2 and .5);
\draw[line width=.25, densely dashed] (0,1.1) arc (90:300:.2 and .5);
\draw[line width=.25] (0,-1.1) arc (-90:90:.2 and .5);
\draw[line width=.25, densely dashed] (0,-1.1) arc (-90:-300:.2 and .5);
\end{scope}
}
\newcommand{\leftSmallSurface}[2][1]{
\begin{scope}[yscale=1.25, shift={(#2)}]
\hole[.7*#1]{-1.25*#1,.45*#1}
\hole[.7*#1]{-1.25*#1,-.45*#1}
\end{scope}

\begin{scope}[line width=.5, shift={(#2)}, scale=#1]
\draw (0,1) ..controls (-.5,1) and (-.75,1.25).. (-1.25,1.25);
\draw (0,-1) ..controls (-.5,-1) and (-.75,-1.25).. (-1.25,-1.25);

\draw (-1.25,1.25) arc (90:180:1.2 and .75);
\draw (-1.25,-1.25) arc (-90:-180:1.2 and .75);

\draw (-2.45,.5) ..controls (-2.45,.2) and (-2.25,.2).. (-2.25,0);
\draw (-2.45,-.5) ..controls (-2.45,-.2) and (-2.25,-.2).. (-2.25,0);
\end{scope}
}
\newcommand{\rightSmallSurface}[2][1]{
\begin{scope}[shift={(#2)}, xscale=-1]
\leftSmallSurface[#1]{0,0}
\end{scope}
}
\newcommand{\liftliftDumbbellRZero}[2][1]{
\begin{scope}[shift={(#2)}]
\leftSmallSurface[#1]{(-4.5*#1,0)}
\twoCylinders[#1]{(-3.5*#1,0)}
\centralSurface[#1]{0,0}
\twoCylinders[#1]{(3.5*#1,0)}
\rightSmallSurface[#1]{(4.5*#1,0)}
\end{scope}
}
\newcommand{\twoDoubleCylinders}[2][1]{
\begin{scope}[line width=.5, shift={(#2)}, scale=#1]
\draw (1,1) node[circle,fill,inner sep=2]{};
\draw (1,-1) node[circle,fill,inner sep=2]{};

\draw[line width=1] (1,1) arc (90:-90:.25 and 1);
\draw[densely dashed, line width=1] (1,1) arc (90:300:.25 and 1);
\draw[densely dashed, line width=1] (1,1) arc (90:-90:.1 and 1);
\draw[densely dashed, line width=1] (1,1) arc (90:300:.1 and 1);

\draw (-1,1) node[circle,fill,inner sep=2]{};
\draw (-1,-1) node[circle,fill,inner sep=2]{};

\draw[line width=1] (-1,1) arc (90:-90:.25 and 1);
\draw[densely dashed, line width=1] (-1,1) arc (90:300:.25 and 1);
\draw[densely dashed, line width=1] (-1,1) arc (90:-90:.1 and 1);
\draw[densely dashed, line width=1] (-1,1) arc (90:300:.1 and 1);

\draw (-1,1) ..controls (-.75,1) and (-.45,.9).. (.1,.9)
             ..controls (.55,.9) and (.75,1).. (1,1);
\draw (-1,1) ..controls (-.75,1) and (-.55,1.1).. (-.1,1.1)
             ..controls (.45,1.1) and (.75,1).. (1,1);
             
\draw (-1,-1) ..controls (-.75,-1) and (-.45,-1.1).. (.1,-1.1)
             ..controls (.55,-1.1) and (.75,-1).. (1,-1);
\draw[densely dashed] (-1,-1) ..controls (-.75,-1) and (-.55,-.9).. (-.1,-.9)
             ..controls (.45,-.9) and (.75,-1).. (1,-1);

\draw[line width=.25] (.1,.9) arc (90:-90:.1 and 1);
\draw[densely dashed, line width=.25] (.1,.9) arc (90:270:.05 and 1);
\draw[densely dashed, line width=.25] (-.1,1.1) arc (90:270:.1 and 1);
\draw[densely dashed, line width=.25] (-.1,1.1) arc (90:-90:.05 and 1);
\end{scope}
}
\newcommand{\anotherRightSmallSurface}[2][1]{
\begin{scope}[shift={(#2)}, yscale=1.25]
\hole[.7*#1]{2.5*#1,0}
\hole[.7*#1]{1.25*#1,.6*#1}
\hole[.7*#1]{1.25*#1,-.6*#1}
\end{scope}

\begin{scope}[line width=.5, shift={(#2)}, scale=#1]
\draw (1.25,1.5) ..controls (.55,1.5) and (0,1.1).. (0,1);
\draw (1.25,-1.5) ..controls (.55,-1.5) and (0,-1.1).. (0,-1);
\draw (1.25,1.5) ..controls (2,1.5) and (2.25,.75).. (2.75,.75);
\draw (2.75,.75) arc (90:-90:1 and .75);
\draw (1.25,-1.5) ..controls (2,-1.5) and (2.25,-.75).. (2.75,-.75);
\end{scope}
}
\newcommand{\liftliftDumbbellROne}[2][1]{
\begin{scope}[shift={(#2)}]
\twoDoubleCylinders[#1]{0,0}
\leftSurface[#1]{-1*#1,0}
\anotherRightSmallSurface[#1]{1*#1,0}
\end{scope}
}
\newcommand{\liftliftliftPocketRZero}[2][1]{
\begin{scope}[shift={(#2)}]
\begin{scope}[yscale=1.25]
\hole[.7*#1]{1*#1,1.5*#1}
\hole[.7*#1]{2*#1,.5*#1}
\hole[.7*#1]{-1*#1,1.5*#1}
\hole[.7*#1]{-2*#1,.5*#1}
\hole[.7*#1]{0,0}
\hole[.7*#1]{1*#1,-1.5*#1}
\hole[.7*#1]{2*#1,-.5*#1}
\hole[.7*#1]{-1*#1,-1.5*#1}
\hole[.7*#1]{-2*#1,-.5*#1}
\end{scope}
\begin{scope}[shift={(3*#1,1.5*#1)}, rotate=30]
  \rightHandle[#1]{0,0}
  \node[inner sep=0] (A) at (0,1*#1) {};
  \node[inner sep=0] (a) at (0,-1*#1) {};
\end{scope}
\begin{scope}[shift={(3*#1,-1.5*#1)}, rotate=-30]
  \rightHandle[#1]{0,0}
  \node[inner sep=0] (B) at (0,-1*#1) {};
  \node[inner sep=0] (b) at (0,1*#1) {};
\end{scope}
\begin{scope}[shift={(-3*#1,1.5*#1)}, rotate=-30]
  \leftHandle[#1]{0,0}
  \node[inner sep=0] (C) at (0,1*#1) {};
  \node[inner sep=0] (c) at (0,-1*#1) {};
\end{scope}
\begin{scope}[shift={(-3*#1,-1.5*#1)}, rotate=30]
  \leftHandle[#1]{0,0}
  \node[inner sep=0] (D) at (0,-1*#1) {};
  \node[inner sep=0] (d) at (0,1*#1) {};
\end{scope}\end{scope}

\begin{scope}[shift={(#2)}, scale=#1]
\fourTimes{
\draw (0,2.25) ..controls (.5,2.25) and (.5,2.5).. (1,2.5);
}
\draw(1,2.5) to[out=0, in=210, looseness=.75] (A);
\draw(1,-2.5) to[out=0, in=-210, looseness=.75] (B);
\draw(-1,2.5) to[out=180, in=-30, looseness=.75] (C);
\draw(-1,-2.5) to[out=180, in=30, looseness=.75] (D);

\draw (3,0) to[out=90, in=210, looseness=.75] (a);
\draw (3,0) to[out=-90, in=-210, looseness=.75] (b);
\draw (-3,0) to[out=90, in=-30, looseness=.75] (c);
\draw (-3,0) to[out=-90, in=30, looseness=.75] (d);
\end{scope}
}
\newcommand{\leftDoubleHandle}[2][1]{
\begin{scope}[line width=.5, shift={(#2)}, scale=#1]
\draw (0,1) node[circle,fill,inner sep=2]{};
\draw (0,-1) node[circle,fill,inner sep=2]{};

\draw[line width=1] (0,1) arc (90:-90:.2 and 1);
\draw[densely dashed, line width=1] (0,1) arc (90:300:.2 and 1);
\draw[densely dashed, line width=1] (0,1) arc (90:-90:.1 and 1);
\draw[densely dashed, line width=1] (0,1) arc (90:300:.1 and 1);

\begin{scope}[xscale=-1]
\draw (0,1) ..controls (.01,1) and (.5,1.2).. (.9,1.2)
            ..controls (1.25,1.2) and (1.25,.5).. (1.25,0) node[circle,fill,inner sep=1]{};
\draw (0,-1) ..controls (.01,-1) and (.5,-1.2).. (.9,-1.2)
            ..controls (1.25,-1.2) and (1.25,-.5).. (1.25,0);
\draw[densely dashed, line width=.5] (1.01,1.15) node[circle,fill,inner sep=1]{} ..controls (.95,1.2) and (.9,1.1).. (.9,1) -- (.9,0) node[circle,fill,inner sep=1]{} -- (.9,-1) ..controls (.9,-1.1) and (.95,-1.2).. (1.01,-1.15) node[circle,fill,inner sep=1]{};

\draw[line width=.5, rounded corners=7*#1] (0,1) -- (.9,.9) -- (1,1.1);
\draw[line width=.5, rounded corners=4*#1] (0,1) -- (.8,1.1) -- (.9,1);

\draw[densely dashed, line width=.5, rounded corners=7*#1] (0,-1) -- (.9,-1.1) -- (1,-.9);
\draw[densely dashed, line width=.5, rounded corners=4*#1] (0,-1) -- (.8,-.9) -- (.9,-1);
\end{scope}

\end{scope}
}
\newcommand{\liftliftliftPocketROne}[2][1]{
\begin{scope}[shift={(#2)}]
\begin{scope}[yscale=1.25]
\hole[.7*#1]{0,0}
\hole[.7*#1]{2*#1,0}
\hole[.7*#1]{-2*#1,0}
\hole[.7*#1]{3*#1,.6*#1}
\hole[.7*#1]{-3*#1,.6*#1}
\hole[.7*#1]{1*#1,1*#1}
\hole[.7*#1]{-1*#1,1*#1}
\hole[.7*#1]{3*#1,-.6*#1}
\hole[.7*#1]{-3*#1,-.6*#1}
\hole[.7*#1]{1*#1,-1*#1}
\hole[.7*#1]{-1*#1,-1*#1}
\end{scope}

\leftDoubleHandle[#1]{-4*#1,0}
\rightDoubleHandle[#1]{4*#1,0}

\begin{scope}[line width=.5, shift={(#2)}, scale=#1]
\fourTimes{
  \draw (0,1.6) ..controls (.45,1.6) and (.55,1.8).. (1,1.8)
                ..controls (2,1.8) and (2,1.25).. (3,1.25)
                ..controls (3.5,1.25) and (4,1.25).. (4,1);
}
\end{scope}

\end{scope}
}
\newcommand{\liftliftliftDumbbellRZero}[2][1]{
\begin{scope}[shift={(#2)}]
\begin{scope}[yscale=1.25]
\hole[.7*#1]{1*#1,1.5*#1}
\hole[.7*#1]{2*#1,.5*#1}
\hole[.7*#1]{-1*#1,1.5*#1}
\hole[.7*#1]{-2*#1,.5*#1}
\hole[.7*#1]{0,0}
\hole[.7*#1]{1*#1,-1.5*#1}
\hole[.7*#1]{2*#1,-.5*#1}
\hole[.7*#1]{-1*#1,-1.5*#1}
\hole[.7*#1]{-2*#1,-.5*#1}
\end{scope}
\begin{scope}[shift={(3*#1,1.5*#1)}, rotate=30]
  \twoCylinders[#1]{(1*#1,0)}
  \rightSmallSurface[#1]{(2*#1,0)}
  \node[inner sep=0] (A) at (0,1*#1) {};
  \node[inner sep=0] (a) at (0,-1*#1) {};
\end{scope}
\begin{scope}[shift={(3*#1,-1.5*#1)}, rotate=-30]
  \twoCylinders[#1]{(1*#1,0)}
  \rightSmallSurface[#1]{(2*#1,0)}
  \node[inner sep=0] (B) at (0,-1*#1) {};
  \node[inner sep=0] (b) at (0,1*#1) {};
\end{scope}
\begin{scope}[shift={(-3*#1,1.5*#1)}, rotate=-30]
  \twoCylinders[#1]{(-1*#1,0)}
  \leftSmallSurface[#1]{(-2*#1,0)}
  \node[inner sep=0] (C) at (0,1*#1) {};
  \node[inner sep=0] (c) at (0,-1*#1) {};
\end{scope}
\begin{scope}[shift={(-3*#1,-1.5*#1)}, rotate=30]
  \twoCylinders[#1]{(-1*#1,0)}
  \leftSmallSurface[#1]{(-2*#1,0)}
  \node[inner sep=0] (D) at (0,-1*#1) {};
  \node[inner sep=0] (d) at (0,1*#1) {};
\end{scope}\end{scope}

\begin{scope}[shift={(#2)}, scale=#1]
\fourTimes{
\draw (0,2.25) ..controls (.5,2.25) and (.5,2.5).. (1,2.5);
}
\draw(1,2.5) to[out=0, in=210, looseness=.75] (A);
\draw(1,-2.5) to[out=0, in=-210, looseness=.75] (B);
\draw(-1,2.5) to[out=180, in=-30, looseness=.75] (C);
\draw(-1,-2.5) to[out=180, in=30, looseness=.75] (D);

\draw (3,0) to[out=90, in=210, looseness=.75] (a);
\draw (3,0) to[out=-90, in=-210, looseness=.75] (b);
\draw (-3,0) to[out=90, in=-30, looseness=.75] (c);
\draw (-3,0) to[out=-90, in=30, looseness=.75] (d);
\end{scope}
}
\newcommand{\anotherLeftSmallSurface}[2][1]{
\begin{scope}[shift={(#2)}, xscale=-1]
\anotherRightSmallSurface[#1]{0,0}
\end{scope}
}
\newcommand{\liftliftliftDumbbellROne}[2][1]{
\begin{scope}[shift={(#2)}]
\begin{scope}[yscale=1.25]
\hole[.7*#1]{0,0}
\hole[.7*#1]{2*#1,0}
\hole[.7*#1]{-2*#1,0}
\hole[.7*#1]{3*#1,.6*#1}
\hole[.7*#1]{-3*#1,.6*#1}
\hole[.7*#1]{1*#1,1*#1}
\hole[.7*#1]{-1*#1,1*#1}
\hole[.7*#1]{3*#1,-.6*#1}
\hole[.7*#1]{-3*#1,-.6*#1}
\hole[.7*#1]{1*#1,-1*#1}
\hole[.7*#1]{-1*#1,-1*#1}
\end{scope}

\twoDoubleCylinders[#1]{-5*#1,0}
\twoDoubleCylinders[#1]{5*#1,0}

\anotherLeftSmallSurface[#1]{-6*#1,0}
\anotherRightSmallSurface[#1]{6*#1,0}

\begin{scope}[line width=.5, shift={(#2)}, scale=#1]
\fourTimes{
  \draw (0,1.6) ..controls (.45,1.6) and (.55,1.8).. (1,1.8)
                ..controls (2,1.8) and (2,1.25).. (3,1.25)
                ..controls (3.5,1.25) and (4,1.25).. (4,1);
}
\end{scope}

\end{scope}
}
\definecolor{lightgray}{gray}{.8}
\newcommand{\boundaryA}{(0,13.5) -- ++(2,0) -- ++(0,1.5) -- ++(1.5,0) -- ++(0,3) -- ++(7.5,0) -- ++(0,-6) -- ++(-4.5,0) -- ++(0,2) -- ++(2,0) -- ++(0,2) -- ++(-3.5,0) -- ++(0,-6) -- ++(9,0) -- ++(0,3) -- ++(5,0) -- ++(0,-9.5) -- ++(-4.5,0) -- ++(0,-2.5) -- ++(-3,0) -- ++(0,5) -- ++(6,0) -- ++(0,5) -- ++(-1.5,0) -- ++(0,-2.5) -- ++(-13,0) -- ++(0,2.5) -- ++(-2,0) -- ++(0,-4.5) -- ++(3.5,0) -- ++(0,-3) -- ++(2,0) -- ++(0,-2) -- ++(3,0) -- ++(0,-1.5)}
\newcommand{\goodCylinderOneZero}[2][1]{
\begin{scope}[line width=.25, shift={(#2)}, scale=#1]
\begin{scope}[scale=.1]
\draw (-21,-21) rectangle (21,21);
\fourTimes{
\fill[green!50!white, opacity=.2] (0,13.5) rectangle ++(2,1.5);
\draw[green!66!black] (0,13.5+.75) -- ++(2,0);
}
\fourTimes{\filldraw[lightgray] \boundaryA -- (0,0) -- cycle;}
\fourTimes{\draw \boundaryA;}
\end{scope}
\end{scope}
}
\newcommand{\goodCylinderZeroOne}[2][1]{
\begin{scope}[line width=.25, shift={(#2)}, scale=#1]
\begin{scope}[scale=.1, rotate=90]
\draw (-21,-21) rectangle (21,21);
\begin{scope}[xscale=18/19, yscale=19/18]
\fourTimes{
\fill[red!50!white, opacity=.2] (0,13.5) rectangle ++(2,1.5);
\draw[red!66!black] (0,13.5+.75) -- ++(2,0);
}
\fourTimes{\filldraw[lightgray] \boundaryA -- (0,0) -- cycle;}
\fourTimes{\draw \boundaryA;}
\end{scope}
\end{scope}
\end{scope}
}
\newcommand{\goodCylinderZeroZeroH}[2][1]{
\begin{scope}[line width=.25, shift={(#2)}, scale=#1]
\begin{scope}[scale=.1]
\draw (-21,-21) rectangle (21,21);
\fourTimes{
\fill[blue!50!white, opacity=.2] (5,14) rectangle ++(3.5,2);
\draw[blue!66!black] (5,14+1) -- ++(3.5,0);

\fill[yellow!50!white, opacity=.2] (5,10) rectangle ++(9,2);
\draw[yellow!66!black] (5,10+1) -- ++(9,0);

\fill[green!50!white, opacity=.2] (17.5,8.5) rectangle ++(-1.5,2.5);
\draw[green!66!black] (17.5,8.5+1.25) -- ++(-1.5,0);

\fill[red!50!white, opacity=.2] (3,8.5) rectangle ++(-2,2.5);
\draw[red!66!black] (3,8.5+1.25) -- ++(-2,0);
}
\fourTimes{\filldraw[lightgray] \boundaryA -- (0,0) -- cycle;}
\fourTimes{\draw \boundaryA;}
\end{scope}
\end{scope}
}
\newcommand{\goodCylinderZeroZeroV}[2][1]{
\begin{scope}[line width=.25, shift={(#2)}, scale=#1]
\begin{scope}[scale=.1]
\draw (-21,-21) rectangle (21,21);
\fourTimes{
\fill[blue!50!white, opacity=.2] (8.5,14) rectangle ++(-2,2);
\draw[blue!66!black] (8.5-1,14) -- ++(0,2);

\fill[yellow!50!white, opacity=.2] (5,10) rectangle ++(1.5,6);
\draw[yellow!66!black] (5+.75,10) -- ++(0,6);

\fill[green!50!white, opacity=.2] (17.5,6) rectangle ++(-1.5,5);
\draw[green!66!black] (17.5-.75,6) -- ++(0,5);

\fill[red!50!white, opacity=.2] (3,6.5) rectangle ++(-2,4.5);
\draw[red!66!black] (3-1,6.5) -- ++(0,4.5);
}
\fourTimes{\filldraw[lightgray] \boundaryA -- (0,0) -- cycle;}
\fourTimes{\draw \boundaryA;}
\end{scope}
\end{scope}
}
\newcommand{\twoTimes}[1]
{ #1
\begin{scope}[scale=-1] #1 \end{scope}
}
\newcommand{\boundaryB}{(0,11) -- ++(6,0) -- ++(0,-4) -- ++(6,0) -- ++(0,-4) -- ++(6,0) -- ++(0,-3)}
\newcommand{\goodCylinderOneOne}[2][1]{
\begin{scope}[line width=.25, shift={(#2)}, scale=#1]
\begin{scope}[scale=.1]
\twoTimes{
\draw[red, thick] (6,7) -- (21,21);
\draw[green, thick] (12,3) -- (21,21);
\draw[blue, thick] (6,-7) -- (21,-21);
\draw[yellow, thick] (12,-3) -- (21,-21);
}
\fourTimes{\filldraw[lightgray] \boundaryB -- (0,0) -- cycle;}
\fourTimes{\draw \boundaryB;}
\draw (-21,-21) rectangle (21,21);
\end{scope}
\end{scope}
}
\newtheorem{theo}{Theorem}[section]
\newtheorem{prop}[theo]{Proposition}
\newtheorem{coro}[theo]{Corollary}
\newtheorem{lemm}[theo]{Lemma}
\theoremstyle{remark}
\newtheorem{rema}[theo]{Remark}
\renewcommand{\d}{\mathrm{d}}
\newcommand{\N}{\mathbb{N}}
\newcommand{\Z}{\mathbb{Z}}
\newcommand{\R}{\mathbb{R}}
\newcommand{\ind}{\mathbf{1}}
\newcommand{\slr}{\mathrm{SL}(2,\R)}
\newcommand{\hol}[1][\omega]{\mathrm{hol}_{#1}}
\newcommand{\pr}{\mathrm{pr}}
\newcommand{\sys}{\mathrm{sys}}
\newcommand{\M}{\mathcal{M}}
\renewcommand{\L}{\mathcal{L}}
\newcommand{\X}{\mathrm{X}}
\newcommand{\Xh}{\X_h}
\newcommand{\Xv}{\X_v}
\newcommand{\W}{\mathrm{W}}
\newcommand{\Wh}{\W_h}
\newcommand{\Wv}{\W_v}
\newcommand{\Xhv}{\mathrm{Y}}
\newcommand{\PP}{\mathfrak{P}}
\renewcommand{\P}{\mathbf{P}}
\newcommand{\p}{\mathbf{p}}
\newcommand{\Pp}{\tilde{\p}}
\newcommand{\Phv}{\P}
\newcommand{\phv}{\p}
\newcommand{\Ph}{\P_h}
\newcommand{\Pv}{\P_v}
\newcommand{\Pph}{\Pp_h}
\newcommand{\Ppv}{\Pp_v}
\newcommand{\ph}{\p_h}
\newcommand{\pv}{\p_v}
\newcommand{\si}{{{}^{{}_{\mathbf{o}\!}}}}
\newcommand{\B}{\mathcal{B}}
\renewcommand{\H}{\mathcal{H}}
\newcommand{\Q}{\mathcal{Q}}
\newcommand{\C}{\mathcal{C}}
\newcommand{\WT}{\mathcal{WT}}
\newcommand{\rquo}[2]{
 \mathchoice
 {\text{\raisebox{3pt}{$#1$}}\!{\Bigm/}\!\text{\raisebox{-3pt}{$#2$}}} 
 {\text{\raisebox{1pt}{$#1$}}\!{\bigm/}\!\text{\raisebox{-1pt}{$#2$}}} 
 {#1/#2} 
 {#1/#2} 
}
\newcommand{\vcfigure}[1]{\centerline{\vtop to .6\textwidth{\vfill\clap{\hbox{#1}}\vfill}}}
\newcommand{\vcTikZ}[1]{\vcfigure{\begin{tikzpicture}{#1}\end{tikzpicture}}}
\begin{document}
\title{Counting problem on wind-tree models}
\author{Angel Pardo}
\begin{abstract}
We study periodic wind-tree models, billiards in the plane endowed with $\Z^2$-periodically located identical connected symmetric right-angled obstacles.
We show asymptotic formulas for the number of (isotopy classes of) closed billiard trajectories (up to $\Z^2$-translations) on the wind-tree billiard. We also compute explicitly the associated Siegel-Veech constant for generic wind-tree billiards depending on the number of corners on the obstacle.
\end{abstract}
\maketitle
\section{Introduction}

The classical wind-tree model corresponds to a billiard in the plane endowed with $\Z^2$-periodic obstacles of rectangular shape; the sides of the rectangles are aligned along the lattice, see Figure~\ref{figu:WTM1}.

\begin{figure}[ht]
\includegraphics[width=.5\textwidth]{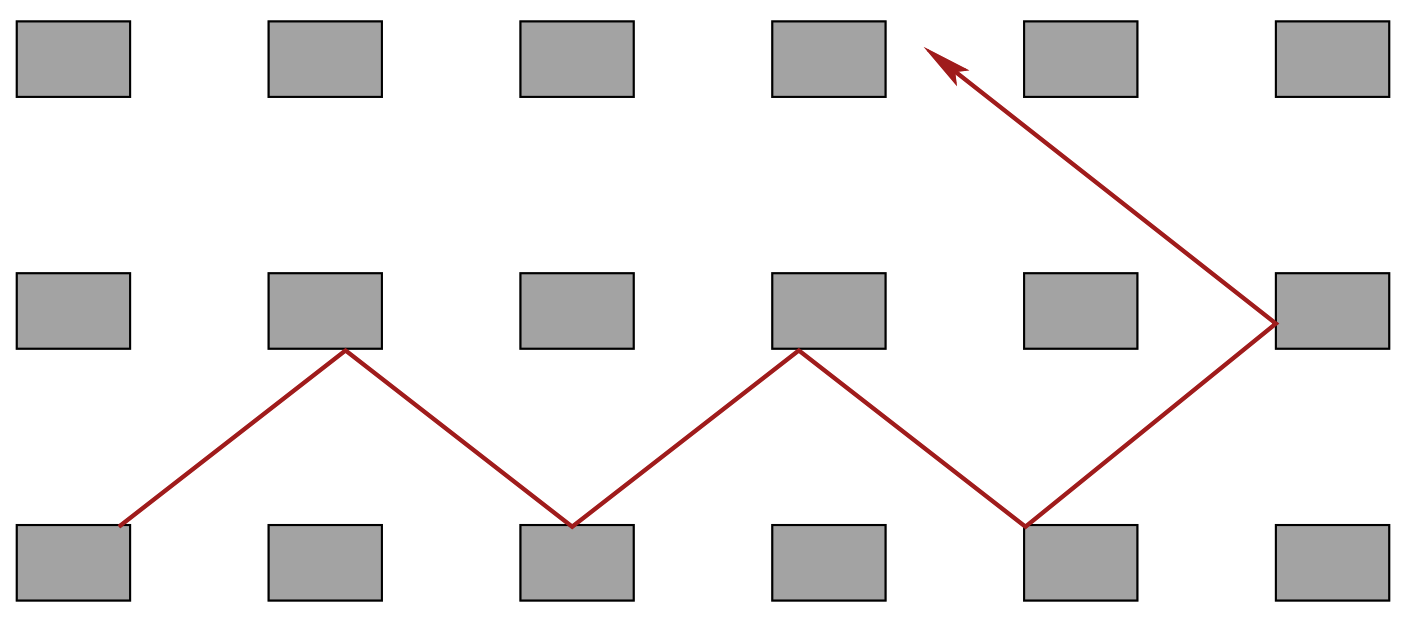}
\caption{Original wind-tree model.}
\label{figu:WTM1}
\end{figure}

The wind-tree model (in a slightly different version) was introduced by P.~Ehrenfest and T.~Ehrenfest~\cite{EE} in 1912. J.~Hardy and J.~Weber \cite{HW} studied the periodic version. All these studies had physical motivations.

Several advances on the dynamical properties of the billiard flow in the wind-tree model were obtained recently using geometric and dynamical properties on moduli space of (compact) flat surfaces; billiard trajectories can be described by the linear flow on a flat surface.

A.~Avila and P.~Hubert \cite{AH} showed that for all parameters of the obstacle and for almost all directions, the trajectories are recurrent. There are examples of divergent trajectories constructed by V.~Delecroix \cite{D}. The non-ergodicity was proved by K.~Fr\c{a}cek and C.~Ulcigrai \cite{FU}. It was proved by V.~Delecroix, P.~Hubert and S.~Leli\`evre \cite{DHL} that the diffusion rate is independent either on the concrete values of parameters of the obstacle or on almost any direction and almost any starting point and is equals to $2/3$. A generalization of this last result was shown by V.~Delecroix and A.~Zorich \cite{DZ} for more complicated obstacles. In this work we study this last variant, corresponding to a billiard in the plane endowed with $\Z^2$-periodic obstacles of right-angled polygonal shape; the obstacles being horizontally and vertically symmetric and the sides of the rectangles are aligned along the lattice, see Figure~\ref{figu:WTMm} for an example.

\begin{figure}[ht]
\includegraphics[width=.75\textwidth]{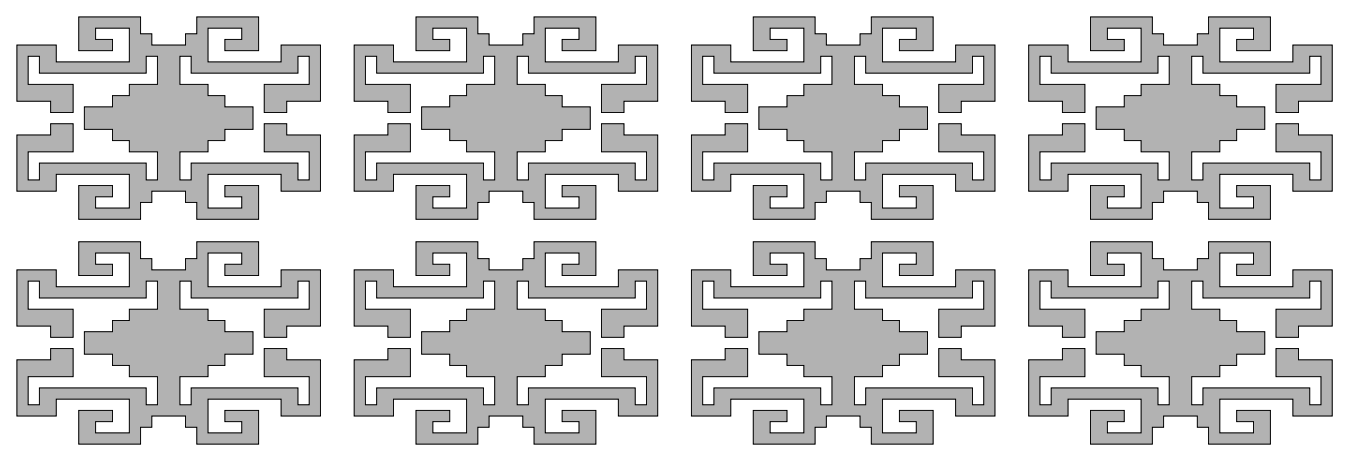}
\caption{Delecroix--Zorich variant.}
\label{figu:WTMm}
\end{figure}

This work concerns asymptotic formulas for the number of (isotopy classes of) closed billiard trajectories on the wind-tree model. Note that we do not count trajectories which go around a single closed trajectory several times, and we are counting unoriented trajectories. This question has been widely studied in the context of (finite) rational billiards and compact flat surfaces, and it is related to many other questions such as the calculation of the volume of normalized strata \cite{EMZ} or the sum of Lyapunov exponents of the geodesic Teichm\"uller flow \cite{EKZ} on strata of flat surfaces (Abelian or quadratic differentials).

H.~Masur~\cite{Ma1,Ma2} proved that for every flat surface $X$, there exist positive constants $c(X)$ and $C(X)$ such that the number $N(L,X)$ of (maximal) cylinders of closed geodesics of length at most $L$ satisfy \[c(X) L^2 \leq N(L,X) \leq C(X) L^2\]
for large enough $L$.
W.~Veech~\cite{Ve1} proved that for Veech surfaces there are in fact exact quadratic asymptotics; E.~Gutkin and C.~Judge~\cite{GJ} gave a different proof. 
Another proof for the upper quadratic bounds was given by Y.~Vorobets~\cite{Vo1}. A.~Eskin and H.~Masur~\cite{EMa} gave yet another one and proved that for each ergodic probability measure $\mu$ on strata of normalized (area~$1$) flat surfaces, there is a constant $c(\mu)$ such that for almost every surface, $N(L,X) \sim c(\mu)\cdot \pi L^2$, that is, \[\lim_{L\to\infty} \frac{N(L,X)}{\pi L^2} = c(\mu).\] The constant $c(\mu)$ is called the Siegel--Veech constant (\cite{EMa}) of the counting problem; it is the constant in the Siegel--Veech formula (\cite{EMa}), a Siegel-type formula introduced by W.~Veech~\cite{Ve2}.

It is still an open problem whether \emph{all} flat surfaces have exact quadratic asymptotics.
The particular constants for several Veech surfaces have been computed explicitly by W.~Veech~\cite{Ve1}, Y.~Vorobets~\cite{Vo1}, E.~Gutkin and C.~Judge~\cite{GJ} and M.~Schmoll~\cite{S}. Constants for some families of non-Veech surfaces were also given by A.~Eskin, H.~Masur and M.~Schmoll~\cite{EMS} and A.~Eskin, J.~Marklof and D.~Witte Morris~\cite{EMarWM}. A.~Eskin, H.~Masur and A.~Zorich~\cite{EMZ} computed the Siegel--Veech constants for connected components of all strata of Abelian differentials, and also described all possible configurations of cylinders of closed geodesics which might be found on a generic flat surface.
In general, the particular constants for Veech surfaces do not coincide with the Siegel--Veech constants of the strata where they live.

The case of quadratic differentials presents extra difficulties. However, J.~Athreya, A.~Eskin and A.~Zorich~\cite{AEZ} gave explicit values for the Siegel--Veech constants on strata of quadratic differentials of genus zero surfaces. E.~Goujard~\cite{Gou} generalized this approach to higher genera and obtained some exact values of Siegel--Veech constants for strata of quadratic differentials away from genus zero.

We prove asymptotic formulas for generic wind-tree models with respect to a natural Lebesgue-type measure (see \cite{AEZ,DZ}) on the parameters of the wind-tree billiards, that is, the side lengths of the obstacles.
Denote by $\WT(m)$ the family of wind-tree billiards such that the obstacle has $4m$ corners with the angle $\pi/2$. Say, all billiards from the original wind-tree family as in Figure~\ref{figu:WTM1} live in $\WT(1)$; the billiard in Figure~\ref{figu:WTMm} belongs to $\WT(17)$. We denote by $\operatorname{Area}\left(\Pi/\Z^2\right)$ the area of a fundamental domain of the $\Z^2$-periodic billiard table $\Pi\in\WT(m)$.

\begin{theo} \label{theo:main}
For almost every wind-tree billiard \,$\Pi\in\WT(m)$ the number $N(L,\Pi)$ of (isotopy classes of) closed billiard trajectories of length at most $L$ in $\Pi$ has quadratic asymptotic growth rate \[ N(L,\Pi) \sim c(m) \cdot \frac{\pi L^2}{\operatorname{Area}\left(\Pi/\Z^2\right)},\]
where \[c(m) = \left(20m^2 - 95m - 78 + 78\cdot 4^m\frac{(m!)^2}{(2m)!}\right)\frac{1}{6\pi^2}.\]
\end{theo}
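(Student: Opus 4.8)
The plan is to reduce the wind-tree counting problem to a Siegel--Veech computation on a suitable stratum of quadratic differentials, following the strategy of Delecroix--Zorich \cite{DZ} combined with the genus-zero technology of Athreya--Eskin--Zorich \cite{AEZ}. The first step is to realize the $\Z^2$-periodic billiard $\Pi\in\WT(m)$ via the standard unfolding: the billiard flow on $\Pi$ in a fixed generic direction is the linear flow on a (non-compact, $\Z^2$-periodic) flat surface $\X=(S,\omega)$ built from four copies of the fundamental domain, with the horizontal and vertical reflections $\tau_h,\tau_v$ and the central symmetry $\iota$ acting on it. Quotienting by the group generated by these involutions produces the tower of covers depicted in the diagram defining \verb|\coveringDiagram|; in particular $\W=\X/\langle\tau_h,\tau_v,\iota\rangle$ is a \emph{compact} flat surface of genus zero carrying a quadratic differential $q$, i.e.\ a point in some stratum $\Q(m)$ of $\mathcal{Q}(\mathbb{CP}^1)$ whose singularity data is dictated by $m$ (the $4m$ right angles become cone points, plus the marked points coming from the torus structure). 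Closed billiard trajectories on $\Pi$, counted up to $\Z^2$-translation and without multiplicity, correspond to cylinders on $\X$ that are invariant under the deck group, hence to certain cylinders (or families of cylinders) on $\W$; one must track carefully which configurations on $\W$ lift to honest closed trajectories upstairs, and with what multiplicity, which is exactly the bookkeeping encoded in the $\Ph,\Pv,\Phv,\PP$ projections.

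The second step is the core counting input: by Eskin--Masur \cite{EMa} and its extension to quadratic differentials (and the explicit genus-zero evaluations of \cite{AEZ}, generalized in \cite{Gou}), for almost every $q$ in the relevant stratum the number of cylinders of area-normalized length $\le L$ in each prescribed configuration grows like $c_{\mathrm{config}}\cdot\pi L^2/\operatorname{Area}$, with $c_{\mathrm{config}}$ an explicit rational-times-$1/\pi^2$ combinatorial constant computed from volumes of boundary strata. Delecroix--Zorich \cite{DZ} classify exactly which configurations of cylinders on $\W$ arise and which ones pull back to closed trajectories on the wind-tree surface; these are the ``pocket'' and ``dumbbell'' configurations drawn in the figures (\verb|\pocketConfiguration|, \verb|\dumbbellConfiguration| and their iterated lifts). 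The third step is therefore to sum the Siegel--Veech constants over the admissible configurations, weighted by the appropriate lift-multiplicity factors (the $\frac14$-type factors coming from the four-fold cover, and whether a cylinder downstairs lifts to one or two cylinders upstairs). This produces a finite sum of terms, each of the form (rational function of $m$)$\times 4^m\frac{(m!)^2}{(2m)!}\times\frac{1}{\pi^2}$ or (rational in $m$)$\times\frac{1}{\pi^2}$, coming respectively from boundary strata that still see the ``$m$-dependent'' cone point of total angle and from those where that point has been absorbed.

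The main obstacle — and the bulk of the work — is the second and third steps fused together: correctly enumerating the cylinder configurations on the quotient surface $\W$ that descend from closed wind-tree trajectories, and computing the Siegel--Veech contribution of each. This requires (i) identifying the stratum $\Q(m)$ and all of its relevant boundary strata obtained by shrinking the saddle connections bounding a cylinder, (ii) applying the Athreya--Eskin--Zorich volume formulas for genus-zero quadratic strata to each, and (iii) handling the combinatorics of how cone points of angle $\pi$ (coming from the corners) and the regular marked points distribute among the pieces — this is where the binomial $\binom{2m}{m}^{-1}4^m$ enters, as it is precisely the ratio of genus-zero stratum volumes that appears in \cite{AEZ}. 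A secondary but delicate point is the passage from ``cylinders on $\W$'' back to ``isotopy classes of closed trajectories on $\Pi$'': one must check that distinct trajectories are not overcounted and that the $\Z^2$-translation quotient interacts correctly with the compactification, i.e.\ that the Siegel--Veech formula on $\W$ literally computes the wind-tree count with the stated normalization $\pi L^2/\operatorname{Area}(\Pi/\Z^2)$. Assembling all contributions and simplifying the resulting sum to the closed form $c(m)=\bigl(20m^2-95m-78+78\cdot 4^m(m!)^2/(2m)!\bigr)/(6\pi^2)$ is then a (lengthy but routine) algebraic verification.
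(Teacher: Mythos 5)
Your outline captures the high-level scaffolding of the paper (unfolding, the tower of quotients down to the genus-zero surface $\W$, configurations \`a la Athreya--Eskin--Zorich, final combinatorial simplification), but it misses the central technical obstruction that the paper is built around, and as written your second step would not go through.

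The issue is that a cylinder on $\X=\X(\Pi)$ lifts to a closed cylinder in the $\Z^2$-cover $\X_\infty$ if and only if its core curve $\gamma$ satisfies $\langle\gamma,h\rangle=\langle\gamma,v\rangle=0$, where $h,v\in H^1(\X,\Z)$ are the two specific cocycles defining the cover. This condition is \emph{not} $\slr$-equivariant, because it refers to the fixed covectors $h,v$ rather than to an invariant subbundle, so you cannot simply invoke Eskin--Masur on ``the cylinders that pull back to closed trajectories.'' The paper resolves this by splitting these cylinders into $F^{+-}\oplus F^{-+}$-\emph{good} cylinders (those whose core curve projects trivially to the whole invariant subbundle $F^{+-}\oplus F^{-+}$, an $\slr$-equivariant condition to which Eskin--Masur and the configuration count do apply) and \emph{bad} cylinders (those annihilating $h$ or $v$ but with nontrivial projection to $F^{+-}$ or $F^{-+}$). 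The heart of the argument is Theorem~\ref{theo:bad-cylinders-WTM}: bad cylinders have \emph{subquadratic} growth, so only good cylinders contribute to the leading term. Proving this requires the positivity of the Lyapunov exponents on $F^{+-},F^{-+}$ (Delecroix--Zorich for a.e.\ $\Pi$; the Forni criterion for an integer equivariant subbundle for all $\Pi$), plus the Eskin--Masur systole/integrability estimates to control the cusp. Your proposal never separates these two families and never addresses why the non-equivariant condition can be replaced by the equivariant one; without that, the reduction to a Siegel--Veech count on $\W$ is unjustified.

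Two smaller inaccuracies: the condition for a cylinder on $\X$ to descend from a closed trajectory is the cohomological pairing condition above, not ``invariance under the deck group''; and the classification of which configurations on $\W$ lift to good cylinders (pocket and dumbbell configurations, with the profile conditions $r_h,r_v\in\{0,1\}$) is carried out in the paper via Boissy's genus-zero classification together with the monodromy analysis of the covers $\p_\si,\Pp_\si,\P_\si$, not taken from Delecroix--Zorich.
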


The constant $c(m)$ is not the Siegel--Veech constant of one particular surface, but corresponds to Siegel--Veech constants of some particular configurations of cylinders on compact flat surfaces associated to generic wind-tree billiards.

On the other hand, A.~Eskin, M.~Mirzakhani and A.~Mohammadi~\cite{EMM} showed that for \emph{all} (area~$1$) flat surfaces we have \emph{weak} quadratic asymptotic formulas, 
\[\lim_{L\to\infty}\frac{1}{L}\int_0^L \frac{N(e^t,X)}{\pi e^{2t}}\d t=c(X),\]
which we write $N(L,X) \mathbin{\text{\rm{``}$\sim$\rm{''}}} c(X) \cdot \pi L^2$.
The constant $c(X)$ being the Siegel--Veech constant associated to the affine invariant measure supported on the $\slr$-orbit closure of the surface $X$ given by general invariant measure classification theorem of A.~Eskin and M.~Mirzakhani~\cite{EMi}.

Using this technology, one can prove weak asymptotic formulas for individual wind-tree billiards. In particular, the following holds.

\begin{theo} \label{theo:weak}
Let $\Pi\in\WT(m)$ be a wind tree billiard.
\begin{enumerate}[leftmargin=*]
\item Suppose that one of the following conditions holds
  \begin{enumerate}[leftmargin=*]
  \item All the parameters of $\Pi$ are rational, or
  \item $m=1$ and there exists a square-free integer $D > 0$ such that the two parameters of $\Pi$, say $a,b\in(0,1)$,
  can be written as $1/(1-a) = x+z\sqrt{D}$ and $1/(1-b) = y + z\sqrt{D}$ with $x, y, z \in\mathbb{Q}$ and $x+y=1$.
  \end{enumerate}
Then, \[N(L,\Pi) \sim c(\Pi) \cdot \frac{\pi L^2}{\operatorname{Area}\left(\Pi/\Z^2\right)}.\]
\item In any other case, we have the weak asymptotic formula
\[ N(L,\Pi) \mathbin{\text{\rm{``}$\sim$\rm{''}}} c(\Pi) \cdot \frac{\pi L^2}{\operatorname{Area}\left(\Pi/\Z^2\right)}.\]
\end{enumerate}
\end{theo}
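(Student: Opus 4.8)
The plan is to feed two external inputs into the reduction that is developed in the proof of Theorem~\ref{theo:main}, which expresses $N(L,\Pi)$ --- up to an explicit combinatorial normalisation accounting for the $(\Z/2)^2$ of unfolding directions, for unoriented trajectories, and for the factor $\operatorname{Area}(\Pi/\Z^2)$ --- as a count of cylinders of circumference at most $L$ on the compact translation surface $\X=\X(\Pi)$ whose core curves lift to closed curves in the associated $\Z^2$-cover $\widetilde\X\to\X$ (equivalently, whose holonomy in $\Z^2$ vanishes). The two inputs are the Eskin--Mirzakhani--Mohammadi theorem~\cite{EMM}, which gives weak (Ces\`aro-averaged) quadratic asymptotics for \emph{every} flat surface, and, in the special cases of part~(1), the exact quadratic asymptotics available for Veech surfaces (Veech~\cite{Ve1}; see also \cite{GJ,Vo1,EMa}).

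For part~(2), the first step is to observe that the $\Z^2$-cover $\widetilde\X\to\X$ is determined by the flat structure of $\X(\Pi)$ together with its horizontal and vertical symmetries, that these data persist under the $\slr$-action and under taking orbit closures, and hence that the orbit closure $\M:=\overline{\slr\cdot\X}$ lies in the wind-tree locus and carries a coherent family of $\Z^2$-covers. Consequently the condition ``the core curve lifts to $\widetilde\X$'' is a locally constant, $\slr$-equivariant condition on the space of cylinders over $\M$, the constrained count on $\X$ is a genuine Siegel--Veech counting function on $\M$ --- the Siegel--Veech transform of the indicator of that condition --- and its Siegel--Veech constant $c(\Pi)$ is the integral of this transform against the canonical affine invariant measure on $\M$ \cite{EMi,EMM}. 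Feeding this transform into the $g_t$-equidistribution statement of~\cite{EMM}, exactly as in the unconstrained case recalled in the introduction, yields the stated weak asymptotic formula with this $c(\Pi)$.

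For part~(1), under either hypothesis the orbit closure $\M$ is a single closed $\slr$-orbit, i.e.\ $\X(\Pi)$ is a Veech surface, so~\cite{Ve1} upgrades the conclusion to a genuine $L^2$-asymptotic. In case~(1a), when all parameters of $\Pi$ are rational, the unfolding of $\Pi/\Z^2$ produces a square-tiled surface $\X(\Pi)$: its cylinders fall into finitely many orbits under the Veech group, a lattice commensurable with $\slr(\Z)$; counting cylinders of bounded length in each orbit is an exact lattice-point count for primitive integer vectors; and the $\Z^2$-constraint merely selects certain orbits and reweights each by the rational density of an appropriate coset in $\Z^2$, still leaving an exact asymptotic, with $c(\Pi)$ an explicitly computable rational multiple of $\pi^{-2}$. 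In case~(1b), the displayed condition on $1/(1-a)$ and $1/(1-b)$ is, by prior work on the original wind-tree model together with the real-multiplication (eigenform) description of the relevant genus-two Prym locus, exactly the condition for $\X(\Pi)$ to lie on a Teichm\"uller curve; granting this, the same Veech-group orbit analysis of cylinders, together with the homological constraint handled by the Veech-surface counting techniques of \cite{EMS,EMarWM}, again produces an exact asymptotic.

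The crux is a bookkeeping point rather than a deep obstacle: one must verify that ``the core curve lifts to the $\Z^2$-cover'' defines a legitimate $\slr$-equivariant, locally constant function on the space of cylinder data over $\M$, so that the constrained count is visible to the affine measure on $\M$ and the machinery of~\cite{EMM} applies verbatim; and, for part~(1b), one must match the billiard parameters $a,b$ to eigenform coordinates so that the stated arithmetic condition is recognised as equivalent to $\X(a,b)$ being a Veech surface. Neither step introduces dynamics beyond importing~\cite{EMM} and the Veech/eigenform theory into the framework already used for Theorem~\ref{theo:main}.
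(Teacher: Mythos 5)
Your plan for part~(2) rests on the claim that the condition ``the core curve of $C$ lifts to the $\Z^2$-cover'' is a locally constant, $\slr$-equivariant condition on cylinders over the orbit closure $\M$, so that the constrained count is a genuine Siegel--Veech counting function to which \cite{EMM} applies verbatim. This claim is false, and it is exactly the obstruction the paper is built to overcome. The lifting condition is $\langle\gamma_C,h\rangle=\langle\gamma_C,v\rangle=0$ for specific integer classes $h\in F^{+-}_\X$, $v\in F^{-+}_\X$. For this to give an $\slr$-equivariant function one would need $h$ (and $v$) to extend to a continuous, Kontsevich--Zorich-equivariant section of integer vectors over $\M$; such a section would automatically be flat, hence the line $\R h$ would be a flat rank-one equivariant subbundle and its Lyapunov exponent would be~$0$. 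But $\Lambda(\M,F^{+-})=\{\pm\delta\}$ with $\delta>0$ (Theorem~\ref{theo:DZ} generically, and Corollary~\ref{coro:hypothesis} for every $\Pi$ via Forni's criterion), so no such section exists. In other words, what is equivariant is only the pair of subbundles $F^{+-},F^{-+}$, not the distinguished cocycles $h,v$ inside them, so the constrained count is \emph{not} the Siegel--Veech transform of anything on $\M$.

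The paper resolves this by decomposing the count into $(F^{+-}\oplus F^{-+})$-good cylinders (condition depends only on the subbundles, hence genuinely $\slr$-equivariant) and $(F^{+-},h)$- and $(F^{-+},v)$-bad cylinders (condition depends on the non-equivariant $h,v$), and proving in Theorem~\ref{theo:bad-cylinders-WTM} that the bad part is $o(L^2)$ for \emph{every} $\Pi$; only after this reduction does the argument of \cite[Theorem~1.7]{AEZ} --- exact asymptotics in the Veech cases of part~(1), weak asymptotics via \cite{EMM} otherwise --- apply to the remaining good-cylinder count. Your write-up never makes this split and never addresses bad cylinders, so the entire content of \S\S~3--5 of the paper is missing: both the subquadratic estimate itself (which needs the machinery of Eskin--Masur's Theorems~5.1--5.2 together with Proposition~\ref{prop:zero}) and the fact that it holds for every, not just almost every, $\Pi$ (which needs the integer-subbundle version of Forni's criterion, Theorem~\ref{theo:Forni}). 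A related issue affects your part~(1a): even on a square-tiled surface the Veech-group monodromy does not preserve the line $\R h$, so the constraint does not simply ``select certain orbits''; one again needs the good/bad dichotomy, with bad cylinders discarded by Theorem~\ref{theo:bad-cylinders-WTM}. Your identification of the Veech cases in (1a)/(1b) and the use of \cite{EMM} in the generic case are the right ingredients, but without the bad-cylinder reduction the argument does not close.
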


The case (1) corresponds to (particular cases of) Veech surfaces and formulas for the Siegel--Veech constants can be obtained following an approach similar to the one of E.~Gutkin and C.~Judge~\cite[\S~6]{GJ}. In the case (a), when the parameters are rational, it corresponds to square-tiled surfaces and it is possible to obtain formulas similar to the obtained by A.~Eskin, M.~Kontsevich and A.~Zorich~\cite[Theorem~4]{EKZ}.
In the other cases
we do not know the Siegel--Veech constants for every wind-tree billiard. However, it depends only on $\slr$-orbit closures (of a compact flat surface associated to the wind-tree billiard) and, in particular, it coincides with $c(m)$ for generic billiards.

\subsection{Strategy of the proof}
We reformulate the counting problem on wind-tree billiards in terms of a counting problem on a $\Z^2$-periodic flat surface. This is quite elementary and straightforward. For details on the reduction of the study of the billiard flow into the study of a $\Z^2$-cocycle over the linear flow of a finite flat surface, see \cite[\S~3]{DHL}.

In general, we can consider an infinite flat surface $X_\infty$ which is a ramified $\Z^d$-cover over a compact flat surface $X$, $d\geq 1$ ($d=2$ in our case). Let $\Sigma$ be the finite set of singularity points of $X$. Since the intersection form $\langle\cdot,\cdot\rangle$ is non-degenerate between $H^1(X\setminus\Sigma,\Z)$ and $H^1(X,\Sigma,\Z)$, every such $\Z^d$-cover is defined by a $d$-tuple of independent elements $\mathbf{f}=(f_1,\dots,f_d)$ in the group of relative cohomology $H^1(S,\Sigma,\Z)$, but we restrict ourselves to the case when $\mathbf{f}\in H^1(X,\Z^d)$ ---this is the case of the infinite $\Z^2$-periodic flat surface associated to a wind-tree model.

We are interested in counting (maximal) cylinders of closed geodesics in $X_\infty$ (up to $\Z^d$-translations, of course). Cylinders of closed geodesics in the cover $X_\infty$ clearly descends to cylinders in $X$, but not the other way around. In fact, by definition of the covering, cylinders in the cover $X_\infty$ are exactly the lift of those cylinders $C$ in $X$ such that $\gamma_C$, (the Poincar\'e dual of the homology class of) its core curve, verifies $\langle\gamma_C,f_i\rangle = 0$, for each $i=1,\dots,d$. 

One of the main tools used in this kind of problems (and many others) is the $\slr$-action on strata of flat surfaces (see, e.g., \cite{EMa,EMZ}) and the associated cocycle over the Hodge bundle, the Kontsevich--Zorich cocycle.
Let $\M$ be the $\slr$-orbit closure of $X$, $F$ be a subbundle of the Hodge bundle over $\M$, invariant with respect to the Kontsevish--Zorich cocycle, and let $f\in F_X$.

Note that cylinders $C$ in $X$ such that $\langle\gamma_C,f\rangle = 0$ split naturally into two families:
(a) the family of cylinders such that $\langle\gamma_C,h\rangle = 0$ for all $h\in F_X$, which we call \emph{$F$-good cylinders}, and
(b) the family of cylinders that are not $F$-good, but $\langle\gamma_C,f\rangle = 0$. These later are called \emph{$(F,f)$-bad cylinders}.
This notion of $F$-good cylinders was first introduced by A.~Avila and P.~Hubert~\cite{AH} in order to give a geometric criterion for recurrence of $\Z^d$-periodic flat surfaces.

Thus, counting cylinders in a $\Z^d$-periodic flat surface can be reduced to count separately cylinders which are $(\oplus_j F^{(j)})$-good cylinders and $(F^{(j_i)},f_i)$-bad cylinders in the compact surface, for some appropriate subbundles $(F^{(j)})_j$.

In the case of the classical wind-tree model, that is, for $m=1$, V.~Delecroix, P.~Hubert and S.~Leli\`evre~\cite{DHL} gave a complete description of the cocycles defining the surfaces and the corresponding decomposition of the Hodge bundle, which allows us to successfully apply this approach. This is extended naturally to the Delecroix--Zorich variant ($m>1$). In fact, for every $\Pi\in\WT(m)$, there are two cocycles $h$ and $v$ in a compact flat surface $\X=\X(\Pi)$ defining the $\Z^2$-periodic flat surface $\X_\infty=\X_\infty(\Pi)$ associated to $\Pi$ and two $2$-dimensional equivariant subbundles, which we denote by $F^{+-}$ and $F^{-+}$, such that $h\in F^{+-}$ and $v\in F^{-+}$.

Using the main result of A.~Eskin and H.~Masur in~\cite{EMa}, it is a straightforward remark that we have asymptotic formulas for the number of $F$-good cylinders with an associated Siegel--Veech constant, for generic surfaces, for any $\slr$-ergodic finite measure on any normalized strata. In the case of $(F,f)$-bad cylinders, this is no longer true. However, in the case of the wind-tree model, we prove the following.

\begin{theo} \label{theo:bad-cylinders-WTM}
Let $\Pi\in\WT(m)$ be a wind-tree billiard, $\X=\X(\Pi)$ the associated compact flat surface and let $F$ be one of the associated subbundles $F^{+-}$ or $F^{-+}$. Then, for any $f\in F_\X$ the number $N_F(L,f)$, of $(F,f)$-bad cylinders in $\X$ of length at most $L$, has \emph{subquadratic} asymptotic growth rate, that is, $N_F(L,f)=o(L^2)$ or, which is the same, \[\lim_{L\to\infty} \frac{N_F(L,f)}{\pi L^2} = 0.\]
\end{theo}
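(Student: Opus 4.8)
We shall show that a $(F,f)$-bad cylinder can occur only when the Kontsevich--Zorich cocycle has not yet expanded $f$ along the geodesic renormalising that cylinder; since the top Lyapunov exponent of $F$ is positive, such renormalising trajectories are rare, and the Eskin--Masur counting then yields a subquadratic bound. We may assume $f\neq 0$ (the vanishing condition being vacuous for $f=0$). Since $F=F^{+-}$ is an eigenspace for the action of the symmetry group on cohomology, it is defined over $\mathbb{Q}$, so $\Lambda:=F_\X\cap H^1(\X,\Z)$ is a rank-two lattice; and since that action preserves the intersection form and has mutually orthogonal eigenspaces, the form restricts to a symplectic form $\omega_F$ on the two-dimensional fibres of $F$, preserved by the cocycle. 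For a cylinder $C$ let $u_C\in F_\X$ be the unique vector with $\omega_F(u_C,\cdot)=\langle\gamma_C,\cdot\rangle|_{F_\X}$; then $u_C$ is the component of $\gamma_C$ in $F_\X$ along the intersection-orthogonal splitting $H^1(\X)=F_\X\oplus F_\X^{\perp}$. Thus $C$ is $F$-good iff $u_C=0$, and $(F,f)$-bad iff $u_C$ is a nonzero multiple of $f$. In the latter case $\langle\gamma_C,\cdot\rangle|_{F_\X}\neq0$, so, as $\Lambda$ spans $F_\X$, we may choose $w\in\Lambda$ with $k:=\langle\gamma_C,w\rangle\neq0$, whence $|k|\geq1$; writing $u_C=\mu f$ we get $\mu\,\omega_F(f,w)=\omega_F(u_C,w)=k$, so $\omega_F(f,w)\neq0$ and $|\mu|\geq1/|\omega_F(f,w)|>0$.

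The core of the argument is the following estimate. Let $C$ be $(F,f)$-bad, of circumference $\ell$ in direction $\theta$, and let $T=\log\ell+O(1)$ be chosen so that $C$ becomes a horizontal cylinder of circumference $\asymp1$ on $Y:=g_Tr_{-\theta}\X$; write $A:=A_{T,-\theta}$ for the Kontsevich--Zorich transport $\X\to Y$. Since $A$ preserves the intersection form (hence the splitting $F_\X\oplus F_\X^\perp$) and $\omega_F$, the class $\gamma_C^{Y}:=A\gamma_C$ of the core curve of $C$ on $Y$ has $F_Y$-component exactly $Au_C$. As the core curve has flat length $\asymp1$, $\|\gamma_C^{Y}\|_{\mathrm{Hodge},Y}=O(1)$, and since the projection onto $F_Y$ is bounded in the Hodge norm (with the usual care when $Y$ is deep in the thin part), $\|Au_C\|_{\mathrm{Hodge},Y}=O(1)$. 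Because $u_C=\mu f$ with $|\mu|$ bounded below, we conclude
\[\|A_{T,-\theta}f\|_{\mathrm{Hodge},Y}\leq M\]
for a constant $M=M(\X,f)$. In words: at the moment a bad cylinder has unit circumference, the renormalising trajectory has kept $f$ bounded.

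To finish, recall that by \cite{DHL,DZ} the top Lyapunov exponent $\lambda$ of the cocycle on $F^{+-}$ is strictly positive, so on the two-dimensional symplectic $F$ it has exponents $\pm\lambda$ and is proximal; hence the conditional distribution of its stable lines is non-atomic. Therefore, for Lebesgue-a.e.\ $\theta$ the trajectory $\{g_tr_{-\theta}\X\}_{t\geq0}$ equidistributes and $f$ is not in its stable line, so $\tfrac1t\log\|A_{t,-\theta}f\|_{\mathrm{Hodge}}\to\lambda>0$, and consequently
\[\mathrm{Leb}\bigl\{\theta\in[0,2\pi):\|A_{t,-\theta}f\|_{\mathrm{Hodge}}\leq M\bigr\}\ \xrightarrow{t\to\infty}\ 0.\]
Feeding this into the Eskin--Masur counting: the number of cylinders of $\X$ with circumference in a window $[e^{s},e^{s+1}]$ is $O(e^{2s})$ and their directions become equidistributed on the circle as $s\to\infty$, so the number of \emph{bad} ones among them --- which, by the previous paragraph, must have direction in the set above evaluated at a time $t\asymp s$, a set of vanishing measure --- is $o(e^{2s})$. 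Summing over $s\leq\log L$ gives $N_F(L,f)=o(L^2)$.

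The step I expect to be the main obstacle is this last one. Being $(F,f)$-bad is not a property of the renormalised surface alone but of the cocycle transport of $f$ along the (trajectory-dependent) renormalising path, so it does not slot directly into the Siegel--Veech transform formalism of Eskin--Masur. Converting the displayed measure estimate into an honest bound on the \emph{number} of bad cylinders at scale $e^{s}$ requires an equidistribution statement for cylinder directions uniform enough to be tested against the moving target sets $\{\theta:\|A_{s,-\theta}f\|_{\mathrm{Hodge}}\leq M\}$ --- or, equivalently, lifting the entire count to a finite cover of $\M$ over which $F$ is trivialised, so that $f$ becomes a genuine function of the base point; this is where the real work lies. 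Finally, any exponential estimate $\mathrm{Leb}\{\theta:\|A_{t,-\theta}f\|_{\mathrm{Hodge}}\leq M\}\lesssim e^{-\kappa t}$, which spectral-gap results for the Kontsevich--Zorich cocycle would supply, would upgrade the conclusion to a power saving $N_F(L,f)=O(L^{2-\kappa})$.
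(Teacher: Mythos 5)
Your core mechanism---renormalise a candidate bad cylinder to unit circumference, observe that the cocycle transport of $f$ must then be bounded, and invoke the positive Lyapunov exponent on $F$ to conclude that for a.e.\ $\theta$ that transport grows exponentially, so bad cylinders must be rare---is exactly what drives the paper's proof. There are nonetheless two genuine gaps, one of which you do not flag.

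The unflagged gap: the theorem is stated for \emph{every} $\Pi\in\WT(m)$, but the positivity of Lyapunov exponents on $F^{+-}$ and $F^{-+}$ you cite from \cite{DHL,DZ} holds for every billiard only when $m=1$; for $m>1$, Theorem~\ref{theo:DZ} gives it only for \emph{almost every} billiard. The paper bridges this with Theorem~\ref{theo:Forni} and Corollary~\ref{coro:hypothesis}, an adaptation of Forni's geometric criterion to integer equivariant subbundles; without that ingredient your argument only proves the statement off a null set of billiards. The gap you do flag---upgrading the measure estimate $|\{\theta:\|A(g_t,r_\theta\X)f\|\leq M\}|\to0$ to a count of bad cylinders---is resolved in the paper not by an equidistribution of cylinder directions tested against moving targets (which would essentially re-prove the needed estimate), but by Eskin--Masur's Proposition~3.5 and Lemma~8.1, restated as Proposition~\ref{prop:bound}: the number of holonomy vectors in the annulus of radii $\rho e^t$ and $2\rho e^t$ is bounded by $c(\rho)e^{2t}\int_0^{2\pi}N_F(4\rho,A(g_t,r_\theta\X)f)\,\d\theta$. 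This deterministic averaging bound slots the $\theta$-measure statement straight into a counting bound and is precisely the missing link. Finally, your estimate $\|\gamma_C^Y\|_{\mathrm{Hodge},Y}=O(1)$ for a unit-circumference cylinder genuinely fails deep in the thin part; the paper deals with this by splitting the integral according to whether $g_tr_\theta\X\in K_\epsilon$ and treating the thin part via Theorems~\ref{EM:5.1} and~\ref{EM:5.2} (Lemma~\ref{lemm:circles-cusp}), not via a Hodge-norm bound.
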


We use technology for asymptotic formulas developed by A.~Eskin and H.~Masur~\cite{EMa} in order to prove (a slightly more general version of) Theorem~\ref{theo:bad-cylinders-WTM}. For this, we need in addition the condition of non-zero Lyapunov exponents for to the relevant subbundles $F^{+-}$ and $F^{-+}$. This is true for almost every wind-tree billiards thanks to one of the main results of V.~Delecroix and A.~Zorich in \cite{DZ}. For the statement to be true for every wind-tree billiard, we use (a slightly more general version of) the so called Forni's criterion due to G.~Forni~\cite{Fo}, a geometric criterion for the positivity of Lyapunov exponents, applied to integer equivariant subbundles.

As a consequence of Theorem~\ref{theo:bad-cylinders-WTM}, the proof of Theorem~\ref{theo:main} is reduced to compute the Siegel--Veech constant associated to configurations of $F^{+-}\oplus F^{-+}$-good cylinders.
Furthermore, Theorem~\ref{theo:weak} becomes a compilation of several different results and we omit its proof here; it is almost identical to the proof of Theorem~1.7 in~\cite{AEZ}, after the reduction given by Theorem~\ref{theo:bad-cylinders-WTM}, to the problem of counting only $F^{+-}\oplus F^{-+}$-good cylinders.

For the computation of the Siegel--Veech constant associated to configurations of $F^{+-}\oplus F^{-+}$-good cylinders, we make use of extra symmetries in the surface $\X(\Pi)$ to describe it as a cover of lower genus surfaces. In particular, configurations of $F^{+-}\oplus F^{-+}$-good cylinders are related to configurations of cylinders on some strata of genus zero surfaces, 
such that they lift to homologically trivial cylinders on some strata of genus one surfaces.

C.~Boissy~\cite{Bo} described all possible configurations on generic surfaces in genus zero. Using this, we describe all possible configurations of cylinders satisfying the homological conditions ensuring they correspond to $F^{+-}\oplus F^{-+}$-good cylinders. Then, we relate Siegel--Veech constants of configurations in the genus zero surface with the constant for the higher genus surface and do the combinatorics. Finally, plugging in the resulting expression the explicit values of the Siegel--Veech constants for configurations on generic surfaces of genus zero obtained by J.~Athreya, A.~Eskin and A.~Zorich~\cite{AEZ} and, proving certain combinatorial identities for resulting hypergeometric sums, we obtain the desired explicit value of $c(m)$.

\subsection{Side results}

As a by-product of our methods, we obtain several results as detailed below.

\subsection*{Area Siegel-Veech constant}
Following the same strategy, we are able to compute the area Siegel--Veech constant, associated to the counting of the area of maximal families of isotopy classes of compact trajectories. More precisely, we have the analogous of Theorem~\ref{theo:main}:

\begin{theo} \label{theo:c-area}
For almost every \,$\Pi\in\WT(m)$ the weighted number $N_{area}(L,\Pi)$ of maximal families of isotopic closed billiard trajectories of length at most $L$ in $\Pi$, where the weight is the area covered by the family, has quadratic asymptotic growth rate \[ N_{area}(L,\Pi) \sim c_{area}(m) \cdot \frac{\pi L^2}{\operatorname{Area}\left(\Pi/\Z^2\right)},\]
where \[c_{area}(m) = \left(8m - 33 + 39\cdot 4^m\frac{(m!)^2}{(2m+1)!}\right)\frac{1}{3\pi^2}.\]
\end{theo}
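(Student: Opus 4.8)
The plan is to run the argument behind Theorem~\ref{theo:main} once more, carrying the area weight through every step. First I would note that, via the reduction recalled in \cite[\S~3]{DHL}, the weighted billiard count $N_{area}(L,\Pi)$ becomes the count of (maximal) cylinders of closed geodesics on $\X_\infty(\Pi)$---equivalently, of cylinders $C$ on the compact surface $\X=\X(\Pi)$ with $\langle\gamma_C,h\rangle=\langle\gamma_C,v\rangle=0$---each weighted by the area it sweeps out, a quantity bounded by $\operatorname{Area}(\X)$ and invariant under $\Z^2$-translations. Splitting again into $F^{+-}\oplus F^{-+}$-good cylinders and $(F,f)$-bad cylinders for $F\in\{F^{+-},F^{-+}\}$, the weighted contribution of the bad ones is at most $\operatorname{Area}(\X)\cdot N_F(L,f)=o(L^2)$ by Theorem~\ref{theo:bad-cylinders-WTM}. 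Hence $c_{area}(m)$ is exactly the area Siegel--Veech constant of configurations of $F^{+-}\oplus F^{-+}$-good cylinders on $\X(\Pi)$; by the Eskin--Masur theory \cite{EMa}, together with the non-vanishing of the Lyapunov exponents of $F^{+-}$ and $F^{-+}$ from \cite{DZ}, this constant is well defined for almost every $\Pi\in\WT(m)$, and it is the quantity to evaluate.

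Next I would transport this constant down the tower of covers relating $\X(\Pi)$ to $\W$, exactly as for Theorem~\ref{theo:main}, now also tracking the area weight: for a (possibly branched) degree-$d$ cover $\pi\colon X'\to X$ one has $\operatorname{Area}(X')=d\operatorname{Area}(X)$, and the total area of the cylinders of $X'$ over a single cylinder $C\subset X$ that lifts equals $d\operatorname{Area}(C)$, so the area-normalized Siegel--Veech constant is carried across each step by an explicit rational factor, to be adjusted at the branch points. In this way the area Siegel--Veech constant of $F^{+-}\oplus F^{-+}$-good configurations on $\X(\Pi)$ is re-expressed through area Siegel--Veech constants of configurations of cylinders on the strata of quadratic differentials on $\mathbb{CP}^1$ carried by $\W$, subject to the same homological constraint---lifting to homologically trivial cylinders on the intermediate genus one tori $\Wh$ and $\Wv$---that governs the proof of Theorem~\ref{theo:main}. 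Boissy's classification \cite{Bo} then lists the admissible configuration types (the ``pocket'' and ``dumbbell'' families pictured above, together with their successive lifts), and for each type I would read off the area-weighted Siegel--Veech constant from the explicit genus zero formulas of Athreya--Eskin--Zorich \cite{AEZ}, which provide precisely these area-weighted constants.

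Finally, summing the weighted contributions of all admissible types over the strata arising from $\WT(m)$ yields a finite sum in $m$ of polynomial terms and of terminating hypergeometric terms, and the remaining task is to collapse it into the closed form $\bigl(8m-33+39\cdot 4^m\,\tfrac{(m!)^2}{(2m+1)!}\bigr)\tfrac{1}{3\pi^2}$; this is the analogue of the combinatorial identity closing the proof of Theorem~\ref{theo:main}, and the shift from $(2m)!$ to $(2m+1)!$ together with the drop in polynomial degree reflects the extra integration of the area weight over moduli. I expect the main obstacle to be twofold. The first difficulty is the bookkeeping of the area weight across the branched steps of the tower and across the degenerate configurations---cylinders adjacent to several singularities of $\W$---where the plain degree-multiplication rule has to be refined case by case; this is where an error would most easily slip in. The second is proving the resulting hypergeometric identity in closed form, which I would attack by creative telescoping (Zeilberger's algorithm) or by recognizing the sum as a terminating ${}_2F_1$ evaluable by the Gauss or Chu--Vandermonde formula, exactly as for the companion identity giving $c(m)$.
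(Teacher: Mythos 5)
Your plan follows the paper's argument in all essentials: reduce $N_{area}$ to the count of good cylinders on $\X(\Pi)$ (the bad ones have bounded area, hence $o(L^2)$ weighted contribution), descend the tower $\X\to\Xh,\Xv\to\W$ tracking multiplicities and areas, feed in the genus-zero constants of Athreya--Eskin--Zorich, and sum. What you miss, though, is the shortcut that makes the paper's proof a one-paragraph corollary of Theorem~\ref{theo:main} rather than a fresh computation. The Vorobets-type identity established in \cite[Proposition~4.9]{AEZ} gives a uniform linear relation $c_{a,\C}(\L) = \frac{1}{2m+1}\,c_\C(\L)$ for every cylinder configuration $\C$ on $\Q(1^m,-1^{m+4})$, so the area-weighted genus-zero constants are obtained from the unweighted ones by a single scalar factor---there is no new hypergeometric summation to perform. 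Furthermore, the ``bookkeeping of the area weight across the branched steps'' that you flag as the main obstacle actually dissolves: the area of the lift of a cylinder across a degree-$d$ cover is always $d$ times the area downstairs, regardless of how preimages of the core curve merge inside a single cylinder. This is exactly Remark~\ref{rema:c-area} in the paper: the area Siegel--Veech constant depends only on the monodromy, so the ``pocket-like'' distinction in Lemma~\ref{relation-SV-constants} disappears, and one simply replaces the factors $(32,4)$ by $(64,8)$ for pocket configurations (equivalently, doubles the pocket contribution) and otherwise reuses the same multipliers. Combining these two observations, $c_{a,good}(\M)$ is obtained from the already-computed $c_{good}^{\text{pocket}}(\M)$ and $c_{good}^{\text{dumbbell}}(\M)$ by multiplying the former by $\tfrac{2}{2m+1}$, the latter by $\tfrac{1}{2m+1}$, and adding---no Zeilberger, no new ${}_2F_1$ evaluation. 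Your route would still reach the answer, but at the cost of rederiving a combinatorial identity the paper has already proved and tediously re-auditing the tower of covers when a one-line monodromy observation suffices.
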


\subsection*{Polynomial diffusion}
Let $d(\cdot,\cdot)$ be the Euclidean distance on $\R^2$ and consider the wind-tree billiard table $\Pi\in\WT(m)$ as a subset of $\R^2$. Let $(\phi^\theta_t)_{t\in\R}$ be the billiard flow in direction $\theta\in[0,2\pi)$ on $\Pi$, that is, $\phi^\theta_t(x)$ is the position of a particle after time $t$ starting from position $x\in\Pi$ in direction $\theta$.

The application of the Forni's criterion to the relevant subbundles $F^{+-}$ and $F^{-+}$ allows us to show that they have non-zero Lyapunov exponents. Applying the result \cite[Corollary~1]{DZ} of V.~Delecroix and A.~Zorich, which is a generalization of the analogous result for the classical model due to V.~Delecroix, P.~Hubert and S.~Leli\`evre~\cite{DHL}, we obtain the following.

\begin{theo} \label{theo:diffusion}
For every wind-tree billiard \,$\Pi\in\WT(m)$ there exists $\delta(\Pi)>0$ such that for almost every direction $\theta\in[0,2\pi)$ and every starting point (with infinite forward orbit) \[\limsup_{t\to\infty} \frac{\log d(x,\phi^\theta_t(x))}{\log t} = \delta(\Pi).\]
\end{theo}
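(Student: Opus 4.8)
The plan is to follow the reduction sketched in the introduction and bring the theorem down to the positivity, for \emph{every} $\Pi\in\WT(m)$, of the top Lyapunov exponents of the Kontsevich--Zorich cocycle restricted to the subbundles $F^{+-}$ and $F^{-+}$. First I would recall that, unfolding the billiard, the forward $\phi^\theta_t$-orbit of a point $x\in\Pi\subset\R^2$ projects to the linear flow in direction $\theta$ on the compact flat surface $\X=\X(\Pi)$, and that the displacement $x\mapsto\phi^\theta_t(x)$ in $\R^2$ is recorded by the Birkhoff sums of the $\Z^2$-valued cocycle determined by the pair $(h,v)$, with $h\in F^{+-}$ and $v\in F^{-+}$. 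The growth of these Birkhoff sums along a generic direction is governed by the Oseledets filtration of the Kontsevich--Zorich cocycle over the $\slr$-orbit closure $\M$ of $\X$; this is the content of \cite[\S~3]{DHL} in the classical case and is extended to $\WT(m)$ by Delecroix and Zorich. In particular, \cite[Corollary~1]{DZ} asserts that as soon as the restrictions of the Kontsevich--Zorich cocycle to $F^{+-}$ and to $F^{-+}$ have strictly positive top Lyapunov exponents $\lambda^{+-},\lambda^{-+}>0$, then for almost every $\theta$ and every $x$ with infinite forward orbit the limit superior in the statement exists and equals a single positive number $\delta(\Pi)$ built from $\lambda^{+-}$ and $\lambda^{-+}$ (their common value, by the horizontal/vertical symmetry of the construction). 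So it remains only to prove $\lambda^{+-}>0$ and $\lambda^{-+}>0$ for every $\Pi\in\WT(m)$.

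For \emph{almost every} $\Pi\in\WT(m)$ this positivity is one of the main results of \cite{DZ}. To upgrade it to \emph{every} $\Pi$, I would invoke Forni's criterion \cite{Fo} in the form already used in this paper to prove Theorem~\ref{theo:bad-cylinders-WTM}: it is a geometric sufficient condition for the positivity of the top Lyapunov exponent of an $\slr$-invariant, Kontsevich--Zorich-equivariant subbundle, here applied to the integer equivariant rank-two subbundles $F^{+-}$ and $F^{-+}$. Concretely, one checks that the restriction of the Hodge-theoretic second fundamental form (equivalently, the derivative of the period map) to $F^{+-}$ does not vanish identically along the $\slr$-orbit of $\X$ --- i.e.\ $F^{+-}$ is not spanned by holomorphic one-forms that are everywhere pointwise proportional --- and symmetrically for $F^{-+}$. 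This non-degeneracy is read off from the explicit flat geometry of $\X(\Pi)$ and its extra symmetries, the very structure exploited elsewhere in the paper to describe $\X(\Pi)$ as a cover of lower-genus surfaces. Forni's criterion then yields $\lambda^{+-}>0$ and $\lambda^{-+}>0$, since a $2$-dimensional integer equivariant bundle carries only the single pair of exponents $\pm\lambda^{+-}$ (resp.\ $\pm\lambda^{-+}$).

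Finally, I would assemble the pieces: feeding $\lambda^{+-},\lambda^{-+}>0$ into \cite[Corollary~1]{DZ} produces, for every $\Pi\in\WT(m)$, a positive $\delta(\Pi)$ with the stated almost-everywhere-in-$\theta$ polynomial diffusion and no exceptional starting points among those with infinite forward orbit. I expect the main obstacle to be the verification of Forni's criterion --- establishing the required non-degeneracy of the second fundamental form on $F^{+-}$ and $F^{-+}$ uniformly in the parameters of $\Pi$ --- but this is exactly the ingredient developed in the paper for Theorem~\ref{theo:bad-cylinders-WTM}, so for the present statement it may simply be quoted; the remaining steps are bookkeeping around the cited results of \cite{DHL,DZ,Fo}.
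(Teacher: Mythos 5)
Your overall route is the paper's: reduce, via unfolding and \cite[Corollary~1]{DZ}, to the positivity of the Lyapunov exponents of the Kontsevich--Zorich cocycle on $F^{+-}$ and $F^{-+}$, then establish that positivity for \emph{every} $\Pi\in\WT(m)$ by Forni's criterion adapted to integer equivariant subbundles, and conclude. However, the step where you verify Forni's criterion is misdescribed and, if carried out as you sketch it, would not go through with the form of the criterion the paper actually uses. Theorem~\ref{theo:Forni} is stated in terms of a family of parallel closed geodesics whose Poincar\'e-dual homology classes span a $d$-dimensional subspace \emph{inside} $F_X$; it is not the ``second fundamental form does not vanish identically'' formulation you invoke. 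Applied directly to the rank-two bundle $F^{+-}$ this criterion cannot fire: no family of parallel closed geodesics on $\X$ has its homology span inside the $2$-dimensional $F^{+-}$, since the generator $h$ is the alternating combination $h_{00}-h_{01}+h_{10}-h_{11}$, while the individual core classes $h_{ij}$ span a $4$-dimensional space that leaves $F^{+-}$.

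The paper gets around this (in the proof of Corollary~\ref{coro:hypothesis}) by applying Theorem~\ref{theo:Forni} not to $F^{+-}$ or $F^{-+}$ but to the larger rank-$8$, flat, integer-defined, symplectic bundle $F$ generated by the cycles $h_{00},h_{10},h_{01},h_{11},v_{00},v_{10},v_{01},v_{11}$. The four horizontal core curves $h_{ij}$ are a family of parallel closed geodesics whose span is a $4$-dimensional subspace of $F_\X$, so Forni's criterion gives four strictly positive exponents for $F$; since $F$ is symplectic its spectrum is symmetric, hence all eight exponents are non-zero; and since $F^{+-},F^{-+}\subset F$ are equivariant subbundles, their spectra are contained in that of $F$, hence non-zero. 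This intermediate bundle and the restriction step are the missing ingredient in your verification. You do note you would ``simply quote'' Corollary~\ref{coro:hypothesis}, and with that quotation your reduction via \cite{DHL,DZ} does yield the statement; but the account you give of \emph{how} that corollary is established (direct non-degeneracy check on $F^{+-}$, $F^{-+}$) is not what the paper does and is not substantiated.
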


Here, $\delta(\Pi)$ is the polynomial diffusion rate and coincides with the Lyapunov exponent mentioned above.
Note that this result is already known for $m=1$ and the diffusion rate $\delta$ is $2/3$ independently of the billiard table (see \cite[Theorem~1]{DHL}), and for almost all $\Pi\in\WT(m)$, for $m>1$, with $\delta(m)=4^m (m!)^2/(2m+1)!$, also independent of the billiard (see \cite[Theorem~1]{DZ}). 
Moreover, the value of $\delta(\Pi)$ depends only on $\slr$-orbit closures (of the compact flat surface associated to the wind-tree billiard).
Anyway, the interest of this result relies in the fact that the diffusion rate $\delta(\Pi)$ is \emph{positive} for \emph{every} $\Pi\in\WT(m)$.

\subsection*{Recurrence}
A.~Avila and P.~Hubert~\cite{AH} gave a geometric criterion for the recurrence of a $\Z^d$-periodic flat surfaces in terms of good cylinders and proved the recurrence for the original wind-tree model. Using this criterion, our approach allows us to prove the recurrence for the Delecroix--Zorich variant. More precisely, we have the following.

\begin{theo} \label{theo:recurrence}
For every wind-tree billiard \,$\Pi\in\WT(m)$ the billiard flow in $\Pi$ is recurrent for almost every direction $\theta\in[0,2\pi)$.
\end{theo}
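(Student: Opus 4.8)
The plan is to deduce this from the geometric recurrence criterion of Avila--Hubert \cite{AH}, whose hypothesis —the presence of sufficiently many good cylinders on the base surface— is verified through the same description of the cocycles used throughout the paper.

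Recall the setup from the strategy above: the $\Z^2$-periodic flat surface $\X_\infty=\X_\infty(\Pi)$ associated with $\Pi$ is the ramified cover of the compact flat surface $\X=\X(\Pi)$ determined by the two cocycles $h\in F^{+-}$ and $v\in F^{-+}$, and the billiard flow in $\Pi$ in direction $\theta$ is measurably conjugate to the linear flow in direction $\theta$ on $\X_\infty$ (see \cite[\S~3]{DHL}). A cylinder $C$ in $\X$ lifts to (finite) cylinders in $\X_\infty$ exactly when $\langle\gamma_C,h\rangle=\langle\gamma_C,v\rangle=0$; in particular every $F^{+-}\oplus F^{-+}$-good cylinder lifts, since its core curve annihilates all of $F^{+-}\oplus F^{-+}$, which contains both $h$ and $v$. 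The criterion of \cite{AH} reduces recurrence of the linear flow on $\X_\infty$ in almost every direction to an abundance statement for such good cylinders on $\X$; as it is established surface by surface, it suffices to verify this abundance for each fixed $\Pi\in\WT(m)$, with no genericity assumption.

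It therefore remains to produce, for every $\Pi\in\WT(m)$, the required good cylinders. The key observation is that the topological type of the cover $\X_\infty\to\X$ —equivalently, the combinatorial model of $\X$ together with the classes dual to $h$ and $v$— does not depend on the parameters of $\Pi$; only the flat metric varies. Hence whether a given combinatorial cylinder $C$ satisfies $\langle\gamma_C,h\rangle=\langle\gamma_C,v\rangle=0$ is a parameter-free computation, and a good cylinder present on one representative is present on all of them. These cylinders are precisely the $F^{+-}\oplus F^{-+}$-good ones whose configurations are enumerated in the proof of Theorem~\ref{theo:main}, at least one of each relevant type occurring on every such surface; so $\X(\Pi)$ always carries a good cylinder. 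If the variant of the criterion in \cite{AH} one uses is quantitative —requiring that good cylinders carry a positive Siegel--Veech constant relative to the affine measure on the $\slr$-orbit closure of $\X$— then one combines this with Theorem~\ref{theo:bad-cylinders-WTM}: the count of cylinders of $\X$ lifting to $\X_\infty$ is governed by the Siegel--Veech constant of $F^{+-}\oplus F^{-+}$-good configurations —positive, since it specialises to $c(m)>0$ and is attached to configurations realised on every surface of the orbit closure— corrected by the contribution of $(F^{+-},h)$- and $(F^{-+},v)$-bad cylinders, which is $o(L^2)$ and hence does not affect positivity.

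Feeding this into the criterion yields that for every $\Pi\in\WT(m)$ the linear flow on $\X_\infty(\Pi)$, hence the billiard flow in $\Pi$, is recurrent for almost every direction $\theta\in[0,2\pi)$, which is the assertion. I expect the main obstacle to be matching the precise hypothesis of the criterion of \cite{AH}: guaranteeing that the good cylinders one exhibits are robust —available with the modulus control and in the density of directions the criterion demands— rather than merely present in a single direction. This is handled as in \cite{AH} for the original model, by transporting a good cylinder under the $\slr$-action and using recurrence of the Teichm\"uller geodesic flow to a compact part of the orbit closure of $\X$; the parameter-independence of the combinatorics ensures the argument carries over verbatim to the Delecroix--Zorich variant.
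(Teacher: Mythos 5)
Your high-level strategy matches the paper: invoke a recurrence criterion in the spirit of Avila--Hubert and reduce to exhibiting a good cylinder on $\X(\Pi)$. The paper indeed upgrades the Avila--Hubert criterion via Chaika--Eskin's Birkhoff genericity result (to get accumulation on a surface with a \emph{vertical} good cylinder for almost every direction, given the mere existence of an $F$-good cylinder somewhere in the orbit closure), which is close to the ``transport under the $\slr$-action'' you gesture at. The digression through Siegel--Veech constants and Theorem~\ref{theo:bad-cylinders-WTM} is unnecessary; the criterion only needs one good cylinder to exist, not a positive count.

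However, there is a genuine gap in the step where you produce the good cylinder. You assert that ``whether a given combinatorial cylinder $C$ satisfies $\langle\gamma_C,h\rangle=\langle\gamma_C,v\rangle=0$ is a parameter-free computation, and a good cylinder present on one representative is present on all of them.'' The homological condition on a fixed homotopy class is parameter-free, but a cylinder is a metric object: a homotopy class that is the core curve of a cylinder on one $\X(\Pi)$ need not be the core curve of any cylinder on another. Your subsequent appeal to the configurations ``enumerated in the proof of Theorem~\ref{theo:main}'' does not close this: those are configurations of closed geodesics that exist on \emph{generic} surfaces of genus zero in $\Q(1^m,-1^{m+4})$, and nothing you say guarantees that any of them is realized on a given $\X(\Pi)$, which projects to a non-generic $\W(\Pi)$.

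The paper handles this by exhibiting \emph{explicit} horizontal and vertical cylinders, which always exist on $\X(\Pi)$ because the obstacles are right-angled polygons with sides aligned along the lattice. It then carries out a case analysis on the shape of the obstacle: if there are two consecutive corners of angle $3\pi/2$, one can write down horizontal or vertical good cylinders of profile $(1,0)$, $(0,1)$ or $(0,0)$; if there are no two consecutive $3\pi/2$-corners, there are good cylinders of profile $(1,1)$. In either case $\X(\Pi)$ carries a good cylinder. You should replace the ``parameter-free'' heuristic with a concrete exhibition of this kind; otherwise the existence of a good cylinder on every $\X(\Pi)$ is not established.
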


This result is already known for $m=1$ (see \cite[Theorem~1]{AH}). Moreover, as explained to us by V.~Delecroix, a criterion of recurrence due to N.~Chevallier and J.-P.~Conze~\cite[Corollary~1.2]{CC} allows us to conclude that the billiard flow $\phi^\theta_t$ is recurrent in $\Pi$ for almost every direction $\theta\in[0,2\pi)$ if the polynomial diffusion rate (see above) $\delta(\Pi)<1/2$. However, we only know that the polynomial diffusion rate is less than $1/2$ for \emph{almost every} $\Pi\in\WT(m)$ and only for $m>2$.

\subsection{Structure of the paper}

In \S~\ref{sect:background} we briefly recall all the background necessary to formulate and prove the results. 
In \S~\ref{sect:counting-periodic-surfaces} we do the reduction of the counting problem on general $\Z^d$-periodic flat surfaces to the counting of $(\oplus_j F^{(j)})$-good cylinders and $(F^{(j_i)},f_i)$-bad cylinders in the compact surface, for some appropriate subbundles $(F^{(j)})_j$ of the Hodge bundle.
In \S~\ref{sect:bad-cylinders} we prove Theorem~\ref{theo:bad-cylinders}, a slightly more general version of Theorem~\ref{theo:bad-cylinders-WTM}, but with the extra condition that some particular Lyapunov exponent is positive.
In \S~\ref{sect:application-wtm} we show that the relevant Lyapunov exponent is positive applying the Forni's criterion to integer equivariant subbundles, which ends the proof of Theorem~\ref{theo:bad-cylinders-WTM} and allows us to reduce the problem to the counting of $F^{+-}\oplus F^{-+}$-good cylinders.
In \S~\ref{sect:configurations} we study configurations of cylinders on generic genus zero surfaces in order to describe $F^{+-}\oplus F^{-+}$-good cylinders.
In \S~\ref{sect:which-good} we show which configurations of cylinders on generic genus zero surfaces lift to $F^{+-}\oplus F^{-+}$-good cylinders in the higher genus surface by means of topological considerations. Then, in \S~\ref{sect:how-good}, we describe how these cylinders lift to the higher genus surface, that is, the number of cylinders we obtain and their length. With this, we are able to relate in \S~\ref{sect:relation-SV} the Siegel--Veech constants of the genus zero and the higher genus surfaces.

Finally, in \S~\ref{sect:SV-good} we compute the Siegel--Veech constant of $F^{+-}\oplus F^{-+}$-good cylinders: we count the possible configurations taking part in the computations and plug in the explicit values of the Siegel--Veech constants obtained by J.~Athreya, A.~Eskin and A.~Zorich~\cite{AEZ}. This allows us to conclude the computations by means of a combinatorial identity for certain hypergeometric sums proved separately in an appendix.

Side results mentioned above are proved in \S~\ref{sect:side-results}.

\subsection*{Acknowledgments}
The author is greatly indebted to Pascal Hubert for his guide and invaluable help at every stage of this work, to Anton Zorich for introducing me in the theory of flat surfaces, to both of them for their constant encouragement, kind explanations and useful discussions.
The author is grateful to Vincent Delecroix for his kind explanation of his work with A.~Zorich about the polynomial diffusion rate of generalized wind-tree models and for pointing out the recurrence in these models when there is a low polynomial diffusion rate.
The author is grateful to Carlos Matheus for corroborate and give some details about the validity of Forni's criterion for an integer equivariant subbundle.

\section{Background} \label{sect:background}
\subsection{Flat surfaces}
For an introduction and general references to this subject, we refer the reader to the surveys of Zorich~\cite{Zo}, Forni--Matheus~\cite{FM}, Wright~\cite{Wr2}.

\subsection*{Flat surfaces and strata}
Let $S$ be a compact Riemann surface of genus $g$. Let $\alpha=\{n_1,\dots,n_k\}\subset\N$ be a partition of $2g - 2$ and $\H(\alpha)$ be a stratum of Abelian differentials on $S$, that is, the space of pairs $X=(S,\omega)$ where $\omega$ is a holomorphic $1$-form on $S$ with zeros of degrees $n_1,\dots,n_k\in\N$. Let $\Sigma = \Sigma(\omega)$ be the set of singularities of $X$, the zeros of $\omega$.
The form $\omega$ defines a canonical flat metric 
on $S$ with conical singularities of angle $2\pi(n+1)$ at zeros of degree~$n$ of $\omega$.

We also consider strata $\Q(d_1,\dots,d_k)$ of meromorphic quadratic differentials with at most simple poles on $S$, the spaces of pairs $(S, q)$ where $q$ is a meromorphic quadratic differential on $M$ with zeros of order $d_1,\dots,d_k$, $d_i\in\{-1\}\cup\N$ for $i=1,\dots,k$ (in a slight abuse of vocabulary, we are considering poles as zeros of order~$-1$) and $\sum_{i=1}^k d_i=4g-4$.
The quadratic differential $q$ also defines a canonical flat metric with conical singularities of angle $\pi(d+2)$ at zeros of order~$d$ of $q$.

In this paper, a quadratic differential is not the square of an Abelian differential and a flat surface is the Riemann surface with the flat metric corresponding to an Abelian or quadratic differential.

The area of a flat surface is the one obtained from the flat metric. 
Let $\H_1(\alpha)$ denote the codimension~$1$ subspace of area~$1$ on $\H(\alpha)$ denote the codimension~$1$ subspace of (flat) area~$1$. 

\subsection*{$\slr$-action and the Teichm\"uller geodesic flow}
There is a natural action of $\slr$ on strata of Abelian differentials,
which generalizes the action of $\slr$ on the space $\rquo{\mathrm{GL}(2,\R)}{\mathrm{SL}(2,\Z)}$ of flat tori.
Let \[g_t = \left(\begin{matrix} e^t & 0 \\ 0 & e^{-t} \end{matrix}\right) \quad \text{ and } \quad r_\theta = \left(\begin{matrix} \cos\theta & \sin\theta \\ -\sin\theta & \cos\theta \end{matrix}\right).\]
The element $r_\theta\in\slr$ acts by $(S,\omega)\mapsto (S,e^{i\theta}\omega)$. This has the effect of rotating the flat surface by the angle $\theta\in[0,2\pi)$. The action of $(g_t)_{t\in\R}$ is called the Teichm\"uller geodesic flow.


\subsection*{Affine invariant measures and manifolds}
Each stratum carries a natural Lebesgue measure, invariant under the action of $\slr$, which is given by the pullback of the Lebesgue measure on $H^1(S,\Sigma,\mathbb{C})\cong\mathbb{C}^{2g+k-1}$.

An affine invariant manifold is an $\slr$-invariant closed subset of $\H_1(\alpha)$, which looks like an affine subspace in period coordinates (see, e.g., \cite[\S~3]{Zo}). Each affine invariant manifold $\M$ is the support of an ergodic $\slr$-invariant probability measure $\nu_\M$. Locally, in period coordinates, this measure is (up to normalization) the restriction of Lebesgue measure to the subspace $\M$ (see~\cite{EMi} for the precise definitions). Eskin--Mirzakhani--Mohammadi~\cite{EMM} proved that any $\slr$-orbit closure is an affine invariant manifold.
The most important case of an affine invariant manifold is a connected component of a stratum $\H_1(\alpha)$. Masur~\cite{Ma0} and Veech~\cite{Ve0} independently proved that in this case, the total mass of this measure is finite and ergodic with respect to the Teichm\"uller geodesic flow. The associated affine measure is known as the Masur--Veech measure.

\subsection*{Hodge bundle and the Kontsevich--Zorich cocycle}
The (real) Hodge bundle $H^1$ is the real vector bundle of dimension $2g$ over an affine invariant manifold $\M$, where the fiber over $X=(S,\omega)$ is the real cohomology $H^1_X=H^1(S,\R)$. Each fiber $H^1_X$ has a natural lattice $H^1_X(\Z)=H^1(S,\Z)$ which allows identification of nearby fibers and definition of the Gauss--Manin (flat) connection. 
The monodromy of the Gauss--Manin connection restricted to $\slr$-orbits provides a cocycle called the Kontsevich--Zorich cocycle, which we denote by $A(g,X)$, for $g\in\slr$ and $X\in\M$. 
The Kontsevich--Zorich cocycle is a symplectic cocycle because it preserves the intersection form $\langle f_1,f_2\rangle=\int_S f_1\wedge f_2$ on $H^1(S,\R)$, which is a symplectic form on the $2g$-dimensional real vector space $H^1(S,\R)$.
Let $\|\cdot\|_\omega$ be the Hodge norm 
(for precise definition see, e.g., \cite[\S~3.4]{FM}). 
The Hodge norm depend continuously on $(S,\omega)$, but is not preserved by the Kontsevich--Zorich cocycle in general.


\subsection*{Lyapunov exponents}
Given any affine invariant manifold $\M$, we know from Oseledets theorem that there are real numbers $\lambda_1(\M) \geq \dots \geq \lambda_{2g}(\M)$, the Lyapunov exponents, 
and a measurable $g_t$-equivariant filtration of the Hodge bundle $H^1(S,\R) = V_1(X)\supset\dots\supset V_{2g}(X)=\{0\}$ at $\nu_\M$-almost every $X=(S,\omega)\in\M$ and \[\lim_{t\to\infty} \frac{1}{t} \log \| A(g_t,X) f\|_{g_t\omega}=\lambda_i\] for every $f\in V_i\setminus V_{i+1}$.
\begin{theo}
    [Chaika-Eskin \cite{CE}] \label{theo:Chaika-Eskin}
    Let $X$ be a flat surface and $\M$ be the $\slr$-orbit closure of $X$. Then, for almost every $\theta\in[0,2\pi)$ we have the $g_t$-equivariant filtration $H^1(S,\R) = V_1(r_\theta X)\supset\dots\supset V_{2g}(r_\theta X)=\{0\}$ and, for every $f\in V_i\setminus V_{i+1}$, \[\lim_{t\to\infty} \frac{1}{t} \log \| A(g_t,r_\theta X) f\|_{g_tr_\theta\omega}=\lambda_i(\M).\]
\end{theo}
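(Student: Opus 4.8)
\emph{Proof plan.} The statement is precisely the Chaika--Eskin genericity theorem, and the plan is to recover it by combining the Eskin--Mirzakhani--Mohammadi equidistribution of $\slr$-orbits with the Oseledets multiplicative ergodic theorem applied along a \emph{single} orbit. The argument naturally splits into two stages: first a \textbf{Birkhoff genericity} statement for the Teichm\"uller geodesic flow, and then a \textbf{Oseledets genericity} statement for the Kontsevich--Zorich cocycle, the second being deduced from the first.

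\textbf{Stage 1 (Birkhoff genericity).} Let $\M$ be the $\slr$-orbit closure of $X$, an affine invariant manifold carrying the ergodic $\slr$-invariant probability $\nu_\M$. By the circle-averaging equidistribution theorem of Eskin--Mirzakhani--Mohammadi~\cite{EMM}, for every bounded continuous $f$ on $\M$,
\[ \frac1{2\pi}\int_0^{2\pi} f(g_t\, r_\theta X)\,d\theta \;\xrightarrow[\;t\to\infty\;]{}\; \int_\M f\,d\nu_\M. \]
The set $\{g_t\, r_\theta X:\theta\in[0,2\pi)\}$ is the ``sphere'' $S_t(X)$ of radius $t$ about $X$, so this is an equidistribution statement for expanding spheres. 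To pass to genericity of almost every individual ray one thickens: the piece of $\slr$-orbit $\{g_s\, r_\theta X: s\in[t,t+1],\ \theta\in[0,2\pi)\}$ also equidistributes (Ces\`aro consequence of the above), and a Borel--Cantelli plus monotonicity argument then extracts, for Lebesgue-a.e.\ $\theta$, the convergence $\frac1T\int_0^T f(g_t\,r_\theta X)\,dt\to\int_\M f\,d\nu_\M$. The necessary ingredient to make this work is non-escape of mass: one uses the Eskin--Masur-type exponential recurrence estimate for the Teichm\"uller flow~\cite{EMa} to guarantee that for a.e.\ $\theta$ the ray $(g_t\,r_\theta X)_{t\ge0}$ spends a definite proportion of time in a fixed compact subset of the stratum, preventing the Birkhoff average from leaking into the cusp.

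\textbf{Stage 2 (Oseledets genericity).} Fix such a $\theta$, for which $r_\theta X$ is $\nu_\M$-generic under $(g_t)$. For each $k$, the top Lyapunov exponent of the exterior-power cocycle $\Lambda^k A(g_t,\cdot)$ with respect to $\nu_\M$ is $\Lambda_k:=\lambda_1(\M)+\dots+\lambda_k(\M)$, and by the subadditive ergodic theorem $\Lambda_k=\inf_{s>0}\frac1s\int_\M\log\|\Lambda^k A(g_s,Y)\|\,d\nu_\M(Y)$. The functions $Y\mapsto\log\|\Lambda^k A(g_s,Y)\|$ are unbounded, so one truncates and controls the error using the logarithmic growth estimates for the Hodge norm of the Kontsevich--Zorich cocycle near the boundary of the stratum~\cite{Fo,EKZ}; Birkhoff genericity of $r_\theta X$ then yields $\limsup_{t\to\infty}\frac1t\log\|\Lambda^k A(g_t,r_\theta X)\|\le\Lambda_k$ for every $k$. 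Since the Kontsevich--Zorich cocycle is symplectic it is conjugate, via the intersection form $\langle\cdot,\cdot\rangle$, to its inverse transpose; applying the same upper bounds to the dual cocycle gives the matching lower bounds, so the Lyapunov spectrum along the ray is pinched to $\{\lambda_i(\M)\}$. The same $\nu_\M$-integrability of $\log\|A(g_1,\cdot)^{\pm1}\|$ makes the cocycle tempered along the generic orbit, so the Oseledets theorem applies along this single orbit and produces the equivariant filtration $H^1(S,\R)=V_1(r_\theta X)\supset\dots\supset V_{2g}(r_\theta X)=\{0\}$ with the stated exponents.

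\textbf{Main obstacle.} The delicate step is Stage~1: upgrading equidistribution of expanding spheres to genericity of almost every individual geodesic ray, carried out simultaneously with the control of escape of mass into the non-compact part of the stratum. Once Birkhoff genericity is in hand, Stage~2 is comparatively soft, the only technical input being the standard Hodge-norm growth estimates that render all the relevant log-norm functions $\nu_\M$-integrable.
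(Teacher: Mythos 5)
The paper does not prove this statement: Theorem~\ref{theo:Chaika-Eskin} is quoted verbatim from Chaika--Eskin \cite{CE} as an external black-box input, so there is no internal proof in the paper to compare your sketch against. That said, your two-stage architecture (Birkhoff genericity from EMM equidistribution, then Oseledets genericity via exterior powers and symplectic duality) does reflect the overall shape of the argument in \cite{CE}.

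The genuine gap is in Stage~1. The EMM circle-averaging theorem gives convergence of $\frac{1}{2\pi}\int_0^{2\pi} f(g_t r_\theta X)\,d\theta$ to the space average as $t\to\infty$; this is an integral-in-$\theta$ statement at each fixed $t$. Upgrading it to Birkhoff genericity of Lebesgue-a.e.\ \emph{individual} $\theta$ is a pointwise-in-$\theta$ statement, and a bare ``Borel--Cantelli plus monotonicity'' step does not bridge the two: with only first-moment (in $\theta$) convergence and no rate, you have no control on the oscillation of the Birkhoff integral $\frac{1}{T}\int_0^T f(g_t r_\theta X)\,dt$ as $\theta$ varies, so Borel--Cantelli has nothing summable to apply to. One needs a second-moment or maximal-function estimate (in practice, a quantitative equidistribution rate coming from a spectral gap, or a comparable variance bound), and this is exactly the substantive content of Chaika--Eskin's Birkhoff genericity theorem. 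The Eskin--Masur non-divergence estimate you invoke controls escape of mass into the cusp but says nothing about fluctuations of time averages across nearby rays, so it does not repair this gap. Stage~2, by contrast, is essentially sound: once Birkhoff genericity of $r_\theta X$ is secured, the deduction via exterior powers, the subadditive-infimum characterization of $\lambda_1+\dots+\lambda_k$, integrability of the Hodge log-norm, temperedness along the generic ray, and the symplectic symmetry of the spectrum is the correct and comparatively soft step.
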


The set $\Lambda(\M)$ of Lyapunov exponents is called Lyapunov spectrum (of the Kontsevich--Zorich cocycle over the Teichm\"uller flow on $\M$).
The fact that the Kontsevich--Zorich cocycle is symplectic means that the Lyapunov spectrum is always symmetric, $\Lambda(\M)=-\Lambda(\M)$.

\subsection*{Equivariant subbundles of the Hodge bundle}
Let $\M$ be an affine invariant submanifold and $F$ a subbundle of the Hodge bundle over $\M$. We say that $F$ is equivariant if it is invariant under the Kontsevich--Zorich cocycle, and we say that $F$ is irreducible if it has no proper equivariant subbundles.
Since $\M$ is $\slr$-invariant, by the definition of the Kontsevich--Zorich cocycle, a flat (locally constant) subbundle is always equivariant.

Previous discussion about Lyapunov exponents applies in this context as well and we have that, as before, for every $X=(S,\omega)\in\M$ such that $\M$ is the $\slr$-orbit closure of $X$ and almost every $\theta\in[0,2\pi)$, there is a $g_t$-equivariant filtration $F_{r_\theta X} = U_1(r_\theta X)\supset\dots\supset U_r(r_\theta X)=\{0\}$, where $r=\operatorname{rank}F=\dim F_X$ and, for every $f\in U_i\setminus U_{i+1}$, \[\lim_{t\to\infty} \frac{1}{t} \log \| A(g_t,r_\theta X) f\|_{g_tr_\theta\omega}=\lambda_i(\M,F).\]
The Lyapunov spectrum restricted to $F$ is $\Lambda(\M,F)=\{\lambda_i(\M,F)\}_{i=1}^r\subset\Lambda(\M)$.

\begin{rema}\label{rema:Lyapunov-symplectic}
If $F$ is irreducible and admits a non-zero Lyapunov exponent in its Lyapunov spectrum, then $F$ is symplectic with respect to the intersection form, that is, the symplectic intersection form is non-degenerate on $F$ (this is a nontrivial fact that can be deduced from \cite[Theorem~A.9]{EMi}, which in turn is deduced from \cite{FoMatZ}). In particular, $F$ is an even-dimensional subbundle and, as before, the associated Lyapunov spectrum is symmetric, $\Lambda(\M,F)=-\Lambda(\M,F)$.\end{rema}

We denote by $F^\dagger$ the symplectic complement of $F$ and, when $F$ is symplectic, define $F_X^\pr(\Z)=\pr_{F_X} H^1_X(\Z)$, where $\pr_{F_X}:H^1_X\to F_X$ is the symplectic projection, that is, the first component of the decomposition $H^1_X=F_X\oplus F_X^\dagger$.

We denote by $F_X(\Z)= F_X\cap H^1_X(\Z)$ the set of integer cocycles in $F_X$.
We say that $F$ is defined over $\Z$ if it is generated by integer cocycles, that is, if $F_X=\left<F_X(\Z)\right>_\R$. When $F$ is defined over $\Z$, $F_X(\Z)$ is a lattice in $F_X$. If, in addition, $F$ is symplectic, we have that $F_X^\pr(\Z)$ is also a lattice and $F_X(\Z)\subset F_X^\pr(\Z)$.

\subsection{Counting problem}

We are interested in the counting of closed geodesics of bounded length on flat surfaces.

\subsection*{Cylinders of closed geodesics and saddle connections}
Together with every closed regular geodesic in a flat surface $X=(S,\omega)$ (resp. $(S,q)$) we have a bunch of parallel closed regular geodesics. 
A cylinder on a flat surface is a maximal open annulus filled by isotopic simple closed regular geodesics. A cylinder $C$ is isometric to the product of an open interval and a circle, and its core curve $\gamma_C$ is the geodesic projecting to the middle of the interval. A saddle connection is a geodesic joining two different singularities  or a singularity to itself, with no singularities in its interior. Cylinders are always bounded by parallel saddle connections.

\subsection*{Holonomy}
Integrating $\omega$ (resp. a locally defined square-root of $q$) along the core curve of a cylinder, a saddle connection or, more generally, any homology class $\gamma\in H_1(S,\Sigma,\Z)$, we get a complex number. Considered as a planar vector, this complex number represents the affine holonomy along $\gamma$ and we denote this holonomy vector by $\hol(\gamma)$. In particular, in the case of a cylinder or saddle connection, its euclidean length corresponds to the modulus of its holonomy vector.

\subsection*{Systole}
Let $\sys(X)$ be the systole of the flat surface $X$, that is, the length of its shortest saddle connection, and let $K_\epsilon=\{X:\sys(X)\geq\epsilon\}$. $K_\epsilon$ form a compact exhaustion on any affine invariant manifold (which are never compact).

\subsection*{Counting problem and Siegel--Veech constants}
Consider the set of all cylinders on a flat surface $X$ and consider its image $V(X)\in\R^2\cong \mathbb{C}$ under the holonomy map, $V(X)=\left\{ \hol[] \gamma_C : C \text{ is a cylinder in } X \right\}$. This is a discret set of $\R^2$. We are concerned with the asymptotic behavior of the number $N(L,X)=\# V(X)\cap B(L)$ of cylinders in $X$ of length at most $L$, when $L\to\infty$.

\begin{theo}[Eskin--Masur {\cite{EMa}}] \label{theo:EM-SV-constant} Let $\M$ be an affine invariant manifold. Then, there is a constant $c(\M)$ such that for $\nu_\M$-almost all $X\in\M$
\begin{equation} \label{equa:EM}
\lim_{L\to\infty} \frac{N(L,X)}{\pi L^2}=c(\M),
\end{equation}
where $c(\M)$ is the Siegel--Veech constant given by the Siegel--Veech formula
\begin{equation} \label{equa:SV-formula}
c(\M)=\frac{1}{\pi \rho^2}\int_\M N(\rho,Y)\d\nu_\M(Y).
\end{equation}
\end{theo}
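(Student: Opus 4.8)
The plan is to follow the original strategy of Eskin--Masur: first establish the Siegel--Veech \emph{formula} \eqref{equa:SV-formula} as an identity between $\slr$-equivariant functionals, and then upgrade it to the pointwise counting asymptotics \eqref{equa:EM} by an ergodic averaging argument over the Teichm\"uller geodesic flow.

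\emph{Step 1: the Siegel--Veech transform and the formula.} For a nice compactly supported function $f$ on $\R^2$ (say the indicator of a ball or of a thin trapezoidal sector), set $\hat f(X)=\sum_{v\in V(X)}f(v)$. Since $V(gX)=g\cdot V(X)$ for $g\in\slr$, the linear functional $f\mapsto\int_\M\hat f\,\d\nu_\M$ is invariant under the $\slr$-action on functions on $\R^2$. As $\slr$ acts transitively on $\R^2\setminus\{0\}$ with unimodular (unipotent) stabilizers, any positive $\slr$-invariant Radon measure on $\R^2\setminus\{0\}$ is a multiple of Lebesgue measure; because $0\notin V(X)$ this forces $\int_\M\hat f\,\d\nu_\M=c(\M)\int_{\R^2}f(x)\,\d x$ for some $c(\M)\in[0,\infty]$. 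Taking $f=\ind_{B(\rho)}$ unwinds the normalization and gives exactly \eqref{equa:SV-formula}, \emph{provided} the left-hand side is finite.

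\emph{Step 2 (the main obstacle): finiteness of $c(\M)$, i.e.\ $\hat f\in L^1(\M,\nu_\M)$.} This is the hard analytic heart of \cite{EMa}, and I expect it to be the principal difficulty. The ingredients are: (i) the a priori quadratic upper bound of Masur~\cite{Ma1,Ma2}, $N(\rho,X)\le C(X)\rho^2$; (ii) a quantitative non-divergence estimate for the Teichm\"uller geodesic flow on the stratum --- the analogue for strata of the Dani--Margulis estimates --- bounding the proportion of time a $g_t$-trajectory spends in the thin part $\M\setminus K_\epsilon$ in terms of the number of short saddle connections, which in turn requires a careful description of the cusp structure of the stratum and of the behaviour of $\sys$ near the boundary; and (iii) combining (i)--(ii) to produce a proper function on $\M$ dominating $\hat f$ whose $g_t$-averages, hence whose integral, are finite. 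In practice one proves the stronger statement $\int_\M N(\rho,Y)^{1+\delta}\,\d\nu_\M(Y)<\infty$ for some small $\delta>0$, which is what the averaging in Step 3 needs to control error terms.

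\emph{Step 3: from the formula to the pointwise limit.} Choosing $f$ the indicator of a thin sector, one unfolds the counting function: $\frac1{2\pi}\int_0^{2\pi}\hat f(g_t r_\theta X)\,\d\theta$ equals, up to a controlled error and a fixed normalizing factor, $N(e^t,X)/(\pi e^{2t})$, because as $\theta$ sweeps the circle the map $g_t r_\theta$ carries precisely the vectors of $V(X)$ of length $\approx e^t$ into $\operatorname{supp}f$. Applying the Birkhoff ergodic theorem to $g_t$ together with the equidistribution of the circles $\{r_\theta X\}_{\theta}$ under $g_t$ (the recurrence estimates of Step 2 supply enough mixing; alternatively one may invoke Theorem~\ref{theo:Chaika-Eskin}), the Ces\`aro averages $\frac1T\int_0^T\frac1{2\pi}\int_0^{2\pi}\hat f(g_t r_\theta X)\,\d\theta\,\d t$ converge, for $\nu_\M$-a.e.\ $X$, to $\int_\M\hat f\,\d\nu_\M=c(\M)\int f$. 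A Tauberian/monotonicity argument, using the a priori bound of Step 2(i) to absorb the fluctuations between the Ces\`aro averages and the genuine limit of $N(L,X)/(\pi L^2)$, then yields $\lim_{L\to\infty}N(L,X)/(\pi L^2)=c(\M)$ for $\nu_\M$-almost every $X$, which is \eqref{equa:EM}.
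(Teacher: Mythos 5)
This is a background result that the paper quotes directly from Eskin--Masur \cite{EMa} without proof, so there is no in-paper argument to compare yours against; I can only assess your sketch against the known proof. Your Steps 1 and 2 correctly identify the skeleton of the Eskin--Masur argument: the Siegel--Veech transform $\hat f(X)=\sum_{v\in V(X)}f(v)$, the observation that $f\mapsto\int_\M\hat f\,\d\nu_\M$ is an $\slr$-invariant functional and hence (by transitivity of $\slr$ on $\R^2\setminus\{0\}$) proportional to $\int_{\R^2}f$, and the recognition that the genuinely hard point is the integrability $\hat f\in L^{1+\delta}(\nu_\M)$ via quantitative non-divergence and Masur's quadratic bound. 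You are also right that the stronger $L^{1+\delta}$ bound, not just $L^1$, is what the averaging step consumes.

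The weak point is Step 3, where the mechanism is misdescribed. What is needed is convergence of the large-circle averages $\frac{1}{2\pi}\int_0^{2\pi}\hat f(g_t r_\theta X)\,\d\theta\to\int_\M\hat f\,\d\nu_\M$ for $\nu_\M$-almost every $X$. This is \emph{not} a consequence of the Birkhoff ergodic theorem applied to $g_t$ (Birkhoff controls time averages along one orbit, not circle averages), and it is also not supplied by Theorem~\ref{theo:Chaika-Eskin}: Chaika--Eskin is an Oseledets/Birkhoff genericity statement for almost every direction on a \emph{fixed} surface, which is a different (and logically independent) result proved much later. The correct input in Eskin--Masur is Nevo's ergodic theorem for spherical (here, circular) averages for actions of semisimple Lie groups, which applies to $L^{p}$ functions with $p>1$ --- precisely the reason Step 2 must deliver $L^{1+\delta}$ rather than $L^1$. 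With Nevo's theorem in hand, the circle averages converge directly (not merely in Ces\`aro mean), and the passage from circle averages of thin-sector indicators to $N(L,X)/(\pi L^2)$ is then a sandwich argument handling the discontinuities of $\hat f$, as you indicate. So the overall architecture of your sketch is sound, but you should replace the appeal to Birkhoff/Chaika--Eskin by Nevo's spherical ergodic theorem, and make explicit that the $L^{1+\delta}$ estimate is there precisely to meet Nevo's hypotheses.
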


We use some of the tools developed by Eskin--Masur when proving this theorem. In particular, the following are of special utility to us.

\begin{theo}[{\cite[Theorem~5.1(b)]{EMa}}] \label{EM:5.1} For any $X\in\H(\alpha)$ and all $\delta,\rho > 0$, \[N(\rho,X) \leq \frac{c(\rho,\delta)}{\sys(X)^{1+\delta}}.\]
\end{theo}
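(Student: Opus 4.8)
The plan is to reduce the count of cylinders to a count of saddle connections, and then to control the latter through a geodesic triangulation. Every cylinder of circumference at most $\rho$ is bounded on each of its two sides by saddle connections that are parallel to, hence no longer than, its core curve, and a given saddle connection lies on the boundary of at most two cylinders; so $N(\rho,X)\le 2\,N_{\mathrm{sc}}(\rho,X)$, where $N_{\mathrm{sc}}(\rho,X)$ denotes the number of saddle connections of length at most $\rho$, and it is enough to bound $N_{\mathrm{sc}}$ (we may assume $X$ has unit area). I would then fix a geodesic triangulation $\mathcal{T}$ of $X$ with vertex set $\Sigma$ --- for instance a Delaunay triangulation, which among its edges contains every sufficiently short saddle connection of $X$. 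By Euler's formula $\mathcal{T}$ has $F(\alpha)=4g-4+2k$ triangles and $E(\alpha)=6g-6+3k$ edges, both depending only on the stratum, and $2g+k-1$ of the edges form a basis of $H_1(S,\Sigma;\Z)$.

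In a fixed direction the surface carries at most $O(E(\alpha))$ saddle connections of a given length (parallel saddle connections of equal length being homologous and occurring in a single cylinder/chain decomposition), so up to a factor depending only on $\alpha$ the quantity $N_{\mathrm{sc}}(\rho,X)$ equals the number of classes in $H_1(S,\Sigma;\Z)$ represented by a saddle connection of length at most $\rho$. Writing such a class in the edge basis as $\sum_e n_e[e]$, one has $\hol(\gamma)=\sum_e n_e\,\hol(e)$, but the decisive point is geometric rather than holonomic: $|n_e|$ is dominated by the geometric intersection number $i(\gamma,e)$, and since a flat cone metric with all cone angles $\ge 2\pi$ is locally $\mathrm{CAT}(0)$ and therefore admits no geodesic bigons, every "return" sub-arc of $\gamma$ based on an edge is essential and hence has length at least $\sys(X)$, which forces $i(\gamma,e)\le C(\alpha)\,\rho/\sys(X)$ for every $e$. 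Consequently the relevant homology classes lie in a box of side $O(\rho/\sys(X))$ inside $\Z^{E(\alpha)}$, giving the preliminary polynomial bound $N_{\mathrm{sc}}(\rho,X)\le c_1(\rho)\,\sys(X)^{-E(\alpha)}$. Making this bigon-exclusion estimate precise is the first delicate point.

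The exponent $E(\alpha)$ is far from optimal, and sharpening it to $1+\delta$ is the main obstacle; I would argue by induction on the topological complexity $3g-3+k$. When $\sys(X)$ is bounded below, $N_{\mathrm{sc}}(\rho,X)$ is bounded by compactness (the locus $K_\epsilon$ of the stratum is compact after normalizing the area and $N_{\mathrm{sc}}(\rho,\cdot)$ is locally bounded on it). When $\sys(X)$ is small, one collapses a shortest saddle connection $\sigma$ to degenerate $X$ into one or two flat surfaces of strictly smaller complexity whose systoles stay comparable to $\sys(X)$; a saddle connection of $X$ of length $\le\rho$ is then reconstructed from saddle connections of the pieces together with at most $\lfloor\rho/|\hol(\sigma)|\rfloor\le\rho/\sys(X)$ windings around $\sigma$, so only one "new" factor $\rho/\sys(X)$ appears at each stage, while the pieces contribute, by the inductive hypothesis, a factor $\sys(X)^{-\delta}$ which can be split evenly across the at most $3g-3+k$ stages. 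Summing over the finitely many degeneration patterns yields $N_{\mathrm{sc}}(\rho,X)\le c(\rho,\delta)\,\sys(X)^{-(1+\delta)}$, and hence the statement via $N(\rho,X)\le 2N_{\mathrm{sc}}(\rho,X)$. An arbitrarily small $\delta>0$ is unavoidable because $X$ may be thin in several essentially independent ways at once --- a plumbing of several thin tori --- so that no single short saddle connection dominates and the naive exponent genuinely exceeds $1$; the bookkeeping that I expect to be most delicate is ensuring that the systoles of the limiting pieces do not collapse faster than $\sys(X)$ and that the number of degeneration patterns is bounded in terms of $\alpha$ alone.
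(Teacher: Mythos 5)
This statement is not proved in the paper: it is quoted as Theorem~5.1(b) of Eskin--Masur~\cite{EMa}, whose own proof (via what they call a \emph{complex} of saddle connections, a direct count rather than an induction) does not proceed through a triangulation-plus-degeneration scheme. So your proposal is a from-scratch reproof and has to stand on its own, and as written it contains two genuine gaps --- both of which you yourself flag as ``delicate'' but which I believe are fatal rather than merely technical.

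First, the return-arc estimate is not correct. Essentiality of the loop formed by a return sub-arc of $\gamma$ together with a segment of the Delaunay edge $e$ gives only that this piecewise-geodesic \emph{loop} is at least as long as the shortest closed geodesic, hence $\ge \sys(X)$; this bounds the return arc from below by $\sys(X)-|e|$, and a Delaunay edge can have length comparable to the diameter of the surface, far exceeding $\sys(X)$ precisely in the thin regime that matters. So the box bound $i(\gamma,e)\le C(\alpha)\rho/\sys(X)$ is not established, and even the preliminary polynomial bound with exponent $2g+k-1$ is in doubt. Second, and more seriously, the inductive degeneration step does not close. Collapsing a shortest saddle connection $\sigma$ does not yield the asserted correspondence ``saddle connections of the pieces times $\lfloor\rho/|\sigma|\rfloor$ windings around $\sigma$'': $\sigma$ is an arc with endpoints at cone points, not a core curve of a cylinder, and the relation between saddle connections on $X$ and on a stable limit is far less uniform than a winding picture (which makes sense only for cylinder pinching). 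Even granting such a decomposition, the inductive hypothesis applied to a piece contributes a factor $\sys^{-(1+\delta)}$, not $\sys^{-\delta}$; multiplied by the new $\rho/\sys$ factor this gives $\sys^{-(2+\delta)}$, strictly worse than the target, and the proposed fix of ``splitting $\delta$ evenly across the stages'' applies the statement with a different exponent than the one being proved, so the induction is not coherent. Pushing the exponent all the way down to $1+\delta$ --- rather than something growing with the complexity $3g-3+k$ --- is exactly the hard content of the Eskin--Masur theorem, and that is the part the proposal has not supplied.
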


\begin{theo}[{\cite[Theorem~5.2]{EMa}}] \label{EM:5.2} For any $X\in\H(\alpha)$, any $\beta < 2$ and all $t>0$, \[\int_0^{2\pi}\frac{\d\theta}{\sys(g_tr_\theta X)^\beta} \leq c(X,\beta).\]
\end{theo}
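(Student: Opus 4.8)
The plan is to reduce the estimate to the behaviour of a single holonomy vector under the family $(g_tr_\theta)_\theta$, and then to sum over all saddle connections, controlling the sum by the classical quadratic upper bound of Masur on the number of saddle connections of bounded length. First one disposes of the trivial range: for $\beta\le 0$ the integrand is bounded, since $\sys(g_tr_\theta X)\le c\sqrt{\operatorname{Area}(X)}$ uniformly in $t$ and $\theta$ by a systolic inequality; so assume $0<\beta<2$. The computational input is the elementary identity that, for a saddle connection $\gamma$ of $X$ with $r=\|\hol(\gamma)\|$ and writing $\psi=\psi(\theta)$ for the angle of the rotated holonomy vector,
\[\|g_tr_\theta\hol(\gamma)\|^{2}=r^{2}\bigl(e^{2t}\cos^{2}\psi+e^{-2t}\sin^{2}\psi\bigr)\ \ge\ r^{2}e^{-2t}\qquad(t\ge 0).\]

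From this I would extract a pointwise bound on the set of bad directions for a single $\gamma$: for $\epsilon>0$, the set $I_\gamma(\epsilon)=\{\theta\in[0,2\pi):\|g_tr_\theta\hol(\gamma)\|<\epsilon\}$ is empty once $r\ge\epsilon e^{t}$, and when $r<\epsilon e^{t}$ the inequality forces $\cos^{2}\psi\le\epsilon^{2}/(r^{2}e^{2t})$, so that $I_\gamma(\epsilon)$ lies in two short arcs around $\pm\pi/2$ and $|I_\gamma(\epsilon)|\le\min\bigl(2\pi,\,C_0\,\epsilon/(re^{t})\bigr)$ for an absolute constant $C_0$. Then, summing over the finitely many saddle connections with $\|\hol(\gamma)\|<\epsilon e^{t}$ and using a dyadic decomposition of their lengths together with Masur's bound $\#\{\gamma:\|\hol(\gamma)\|\le L\}\le C_1(X)L^{2}$ — which gives $\sum_{\|\hol(\gamma)\|<R}\|\hol(\gamma)\|^{-1}\le 4C_1(X)R$, here with $R=\epsilon e^{t}$ — one arrives at
\[\bigl|\{\theta\in[0,2\pi):\sys(g_tr_\theta X)<\epsilon\}\bigr|\ \le\ \min\bigl(2\pi,\ c_1(X)\,\epsilon^{2}\bigr),\qquad c_1(X)=4C_0C_1(X),\]
a bound \emph{uniform in} $t\ge 0$.

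To finish I would integrate by layers: with $a^{-\beta}=\beta\int_a^{\infty}s^{-\beta-1}\,\d s$ and Tonelli,
\[\int_0^{2\pi}\frac{\d\theta}{\sys(g_tr_\theta X)^{\beta}}=\beta\int_0^{\infty}s^{-\beta-1}\bigl|\{\theta:\sys(g_tr_\theta X)<s\}\bigr|\,\d s\ \le\ \beta\int_0^{\infty}s^{-\beta-1}\min\bigl(2\pi,\,c_1(X)s^{2}\bigr)\,\d s;\]
splitting the last integral at $s_0=\sqrt{2\pi/c_1(X)}$, the part over $(0,s_0)$ converges precisely because $\beta<2$ and the part over $(s_0,\infty)$ converges because $\beta>0$, so their sum is a finite constant $c(X,\beta)$ independent of $t$, as required.

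The step I expect to be the main obstacle is guaranteeing that the intermediate estimate for $|\{\theta:\sys(g_tr_\theta X)<\epsilon\}|$ is genuinely uniform in $t$: it is the \emph{sharp} exponent $2$ in Masur's counting bound that makes the length cutoff $\epsilon e^{t}$ and the resulting count $\lesssim(\epsilon e^{t})^{2}$ combine so as to cancel exactly the factor $e^{-t}$ coming from the narrowness of the arcs $I_\gamma(\epsilon)$ — a weaker bound $\#\{\gamma:\|\hol(\gamma)\|\le L\}\lesssim L^{2+\eta}$ would leave an uncontrolled factor $e^{\eta t}$. Apart from this, the argument is just the elementary calculus of the quadratic form $e^{2t}\cos^{2}\psi+e^{-2t}\sin^{2}\psi$ and the layer-cake integration, so the real care goes into organizing the dyadic sum around the $\epsilon e^{t}$ threshold.
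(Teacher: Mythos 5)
Your proof is correct. The paper does not supply a proof of this statement at all — it is quoted verbatim as Theorem~5.2 of Eskin--Masur~\cite{EMa} — so there is no internal argument to compare against; but the argument you give is the standard one behind that theorem. Each ingredient checks out: the identity $\|g_tr_\theta\hol(\gamma)\|^2=r^2(e^{2t}\cos^2\psi+e^{-2t}\sin^2\psi)$ shows $I_\gamma(\epsilon)=\emptyset$ unless $r<\epsilon e^t$ and otherwise forces $|\cos\psi|<\epsilon/(re^t)$, whose preimage under the measure-preserving map $\theta\mapsto\psi$ consists of two arcs of total length at most $2\pi\,\epsilon/(re^t)$; the dyadic sum $\sum_{\|\hol(\gamma)\|<R}\|\hol(\gamma)\|^{-1}\le 4C_1(X)R$ does follow from Masur's quadratic counting bound; taking $R=\epsilon e^t$ makes the $e^{-t}$ from the arc width and the $e^{t}$ from the cutoff cancel exactly, giving the $t$-uniform estimate $|\{\theta:\sys(g_tr_\theta X)<\epsilon\}|\le\min(2\pi,c_1(X)\epsilon^2)$; the layer-cake rewriting is a valid application of Tonelli since $\sys>0$ everywhere; and the two convergence conditions $\beta<2$ and $\beta>0$ are exactly what the split at $s_0=\sqrt{2\pi/c_1(X)}$ requires, with $\beta\le 0$ dispatched by the systolic upper bound on $\sys$. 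You also correctly flag the one place where the argument could fail: a counting bound $L^{2+\eta}$ would leave a stray $e^{\eta t}$, so the sharp exponent $2$ is essential for uniformity in $t$. No gaps.
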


We remark that these two results are true for \emph{every} flat surface, in contrast to Theorem~\ref{theo:EM-SV-constant}, which holds for \emph{almost every} flat surface.

\subsection*{Configurations of cylinders} \label{sect:configuration}
A collection $\mathbf{C}=\{C_1,\dots,C_n\}$ of cylinders 
determines the data on combinatorial geometry of the decomposition of $S\setminus\mathbf{C}$. It determines the number of components, their boundary structure, the singularity data for each component and how the components are glued to each other. These data are referred to as configuration of cylinders (see \cite{EMZ}). The multiplicity of a configuration is the number of cylinders it defines. Remark that we reserve the notion of configuration for geometric types of possible collections of cylinders, and not for the collections themselves.

In this work, we are only concerned with multiplicity one configurations, that is, those defining a single cylinder. We are also concerned with some homological conditions ---and not only the geometric combinatorics--- when considering configurations (see \S~\ref{sect:counting-periodic-surfaces}). However, this information is also carried by configurations because of topological considerations. 

\begin{rema} \label{rema:EM-SV-constant} Let $\C$ be a configuration of cylinders and consider now $N_\C(L,X)$, the number of cylinders in $X$ of length at most $L$ forming a configuration of type $\C$. Then, the analogous of Theorem~\ref{theo:EM-SV-constant} is also true in this context (see \cite{EMa,EMZ}), with the Siegel-Veech constant associated to this counting problem depending also on the configuration, $c_\C(\M)=c(\C,\M)$.
\end{rema}

\subsection{Configuration of cylinders in genus zero and associated Siegel--Veech constants} \label{sect:configurations-sphere}
In the following, we recall briefly results from \cite{Bo} describing configurations of periodic geodesics for flat surfaces in genus zero, and the results from \cite{AEZ} providing the values of the corresponding Siegel--Veech constants.

According to \cite{Bo} and \cite{MZ}, almost any flat surface in any stratum $\Q(d_1,\dots,d_k)$ of meromorphic quadratic differentials with at most simple poles ($d_i \in\{-1\}\cup\N$) on the sphere ($\sum_{i=1}^{k}d_i = -4$), different from the pillowcase stratum $\Q(-1^4)$, does not have a single regular closed geodesic not contained in one of the two families described below.

\subsection*{Pocket configurations} In a pocket configuration, we have a single cylinder filled with closed regular geodesics, such that the cylinder is bounded by a saddle connection joining a fixed pair of poles $P_{j_1},P_{j_2}$ on one side and by a saddle connection joining a fixed zero $P_i$ of order $d_i \geq 1$ to itself, on the other side (see Figure~\ref{pocket}).
By convention, the affine holonomy associated to this configuration corresponds to the closed geodesic and not to the saddle connection joining the two poles. Such a saddle connection is twice as short as the closed geodesic.

\begin{figure}[h]
\centering
\begin{tikzpicture}
\pocketConfiguration[.67]{0,0}
\end{tikzpicture}
\caption{A pocket configuration with a cylinder bounded by a saddle connection joining two poles on one side and, by a saddle connection joining a zero to itself, on the other side.}
\label{pocket}
\end{figure}
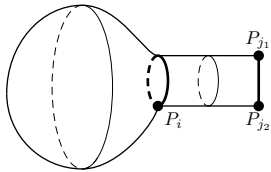

By Theorem~4.5 and formula~(4.28) in \cite{AEZ}, the Siegel--Veech constant $c^{\text{pocket}}_{j_1, j_2; i}$ corresponding to this configuration has the form
\[c^{\text{pocket}}_{j_1, j_2; i} = \frac{d_i+1}{k-4}\frac{1}{2\pi^2}.\]

One can consider the union of several configurations as above fixing the pair of poles $P_{j_1}, P_{j_2}$, but considering any zero $P_i$ on the boundary of the cylinder. 
By Corollary~4.7 and formula~(4.36) in \cite{AEZ}, the resulting Siegel--Veech constant $c^{\text{pocket}}_{j_1, j_2}$ corresponding to this configuration has the form
\begin{equation} \label{SV-pocket}
c^{\text{pocket}}_{j_1, j_2} = \frac{1}{2\pi^2}.
\end{equation}

\subsection*{Dumbbell configurations} For the second configuration, we still have a single cylinder filled with closed regular geodesics. But this time the cylinder is bounded by saddle connections joining a zero to itself on each side. We assume that the saddle connection bounding the cylinder on one side joins a fixed zero $P_{i_1}$ of order $d_{i_1} \geq 1$ to itself and that the other saddle connection bounding the cylinder on the other side joins a fixed zero $P_{i_2}$ of order $d_{i_2} \geq 1$ to itself (see Figure~\ref{dumbbell}).
Such a cylinder separates the original surface $\W$ in two parts. Let $P_{i_{11}},\dots, P_{i_{1k_1}}$ be the list of singularities (zeros and poles) which get to the first part and $P_{i_{21}},\dots, P_{i_{2k_2}}$ be the list of singularities (zeros and poles) which get to the second part. In particular, we have $i_1 \in \{i_{11},\dots,i_{1k_1}\}$ and $i_2 \in \{i_{21},\dots,i_{2k_2}\}$. We assume that there is not any marked point. Denoting, as usual, by $d_i$ the order of the singularity $P_i$ we can represent the sets (with multiplicities) $\alpha\coloneqq\{d_1,\dots, d_k\}$ as a disjoint union of the two subsets
\[\alpha=\{d_{i_{11}},\dots, d_{i_{1k_1}}\} \sqcup \{d_{i_{21}},\dots, d_{i_{2k_2}}\} \eqqcolon \alpha_1\sqcup\alpha_2.\]
(Recall that $\{d_1,\dots, d_k\}$ denotes all zeros and poles.) This information is considered to be part of the configuration.

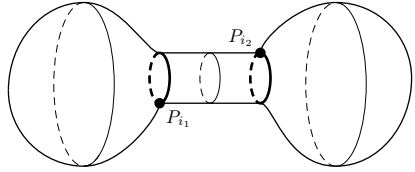
\begin{figure}[h]
\centering
\begin{tikzpicture}
\dumbbellConfiguration[.67]{0,0}
\end{tikzpicture}
\caption{A dumbbell configuration, composed of two flat spheres joined by a cylinder. Each boundary component of the cylinder is a saddle connection joining a zero to itself.}
\label{dumbbell}
\end{figure}

By Theorem~4.8 and equation~(4.38) in \cite{AEZ}, the corresponding Siegel--Veech constant $c^{\text{dumbbell}}_{i_1,i_2;\alpha_1,\alpha_2}$ is given by
\begin{equation} \label{SV-dumbbell}
c^{\text{dumbbell}}_{i_1,i_2;\alpha_1,\alpha_2} = (d_{i_1}+1)(d_{i_2}+1)\frac{(k_1-3)!(k_2-3)!}{(k-4)!}\frac{1}{2\pi^2}
\end{equation}

\subsection{From billiards to flat surfaces}
Recall that in the classical case of a billiard in a rectangle we can glue a flat torus out of four copies of the billiard table and unfold billiard trajectories to flat geodesics of the same length on the resulting flat torus.

\subsection*{Wind-tree model}
The wind-tree model corresponds to a billiard $\Pi$ in the plane endowed with $\Z^2$-periodic horizontally and vertically symmetric right-angled obstacles, where the sides of the obstacles are aligned along the lattice as in Figure~\ref{figu:WTM1} and Figure~\ref{figu:WTMm}.

In the case of the wind-tree model we also start from gluing a flat surface out of four copies of the infinite billiard table $\Pi$. The resulting surface $\X_\infty=\X_\infty(\Pi)$ is $\Z^2$-periodic with respect to translations by vectors of the original lattice. Passing to the $\Z^2$-quotient we get a compact flat surface $\X=\X(\Pi)$. For the case of the original wind-tree billiard, with rectangular obstacles, the resulting flat surface is represented at Figure~\ref{figu:compact-surface}. It has genus~$5$ and belongs to the stratum $\H(2^4)$ (see \cite[\S~3]{DHL} for details).

\begin{figure}[h]
  \includegraphics[width=.65\textwidth]{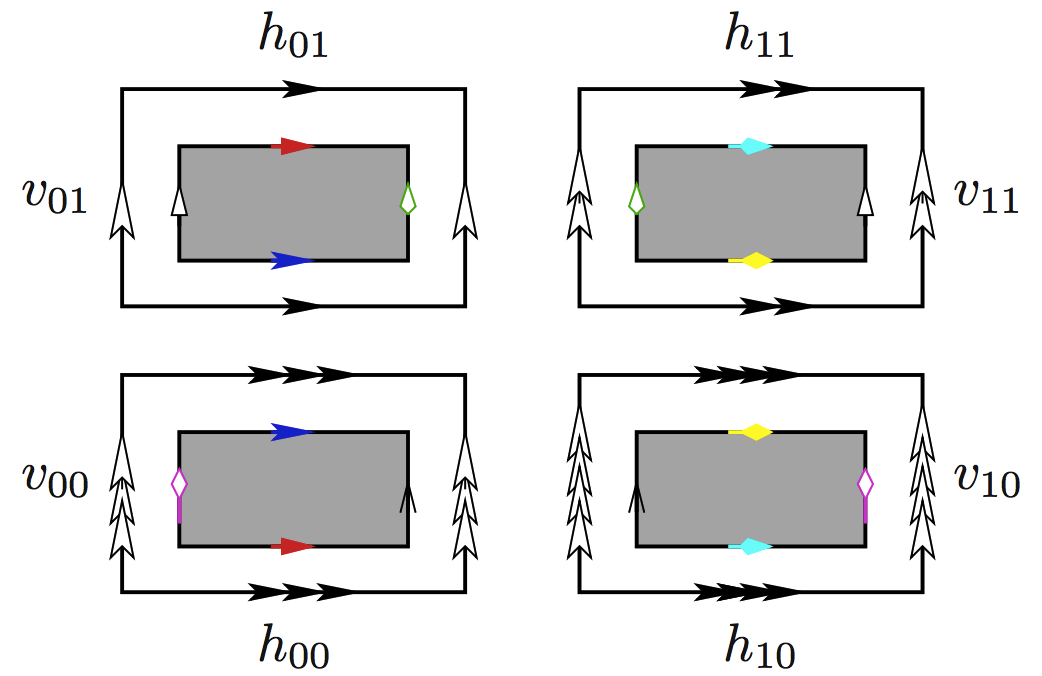}
  \caption{The flat surface $\X$ obtained as quotient over $\Z^2$ of an unfolded wind-tree billiard table.}
  \label{figu:compact-surface}
\end{figure}

Similarly, when the obstacle has $4m$ corners with the angle $\pi/2$ ---and $4(m-1)$ with angle $3\pi/2$---, the same construction gives a flat surface of genus~$4m+1$ in $\H(2^{4m})$, consisting in four flat tori with holes (four copies of a $\Z^2$ fundamental domain of $\Pi$, the holes corresponding to the obstacles) with corresponding identifications, as in the classical setting ($m=1$, see Figure~\ref{figu:compact-surface}). Let $\WT(m)$ denote the set of wind-tree billiards $\Pi$ whose obstacles have $4m$ corners with angle $\pi/2$. The space $\WT(m)$ has a natural Lebesgue measure coming from the consideration of lengths and position of the sides of the obstacle. 
The construction $\Pi\mapsto\X(\Pi)$ defines a map $\WT(m)\to\H(2^{4m})$ and we define $\B(m)$ to be the image of this map, that is, the set of all compact surfaces $\X(\Pi)$ such that $\Pi\in\WT(m)$, and we consider in $\B(m)$ the pushforward of the measure on $\WT(m)$.

Note that any resulting flat surface $\X\in\B(m)$ has (at least) the group $(\Z_2)^3$ as a group of isometries. We have the isometry $\tau_h$, interchanging the pairs of flat tori with holes in the same rows by parallel translations, the isometry $\tau_v$, interchanging columns, and $\iota$, the isometry acting on each of the four tori with holes as the central symmetry with the center in the center of the hole (rotation by $\pi$).

Consider the quotient $\Wh$ of the flat surface $\X$ over the subgroup $(\Z_2)^2$ of isometries spanned by $\tau_h$ and $\iota\circ\tau_v$. The resulting surface $\Wh$ (see Figure~\ref{figu:torus}) belongs to the stratum $\Q(1^{2m},-1^{2m})$. In particular, it has genus~$1$, $\Wh=(\mathbb{T}^2,q_h)$. Similarly, $\Wv=\rquo{\X}{\langle\tau_v,\iota\circ\tau_h\rangle}=(\mathbb{T}^2,q_v)\in\Q(1^{2m},-1^{2m})$.
The surface $\W$ obtained as the quotient of the original flat surface $\X$ over the entire group $(\Z_2)^3$ (see Figure~\ref{figu:sphere}) belongs to the stratum $\Q(1^m,-1^{m+4})$. In particular, it has genus zero, $\W=(\mathbb{CP}^1,q)$. 
Clearly, $\Wh$ and $\Wv$ are ramified double covers over $\W$ with ramification points at four (out of $m+4$) simple poles of the flat surface $\W$ (see \cite[\S~3.1, 3.2]{DZ} for details). Moreover, $\Wh$ and $\Wv$ share three out of their four ramified simple poles.

\begin{figure}[h]
\begin{subfigure}[t]{.45\textwidth}
  \begin{centering}
  \includegraphics[height=4cm]{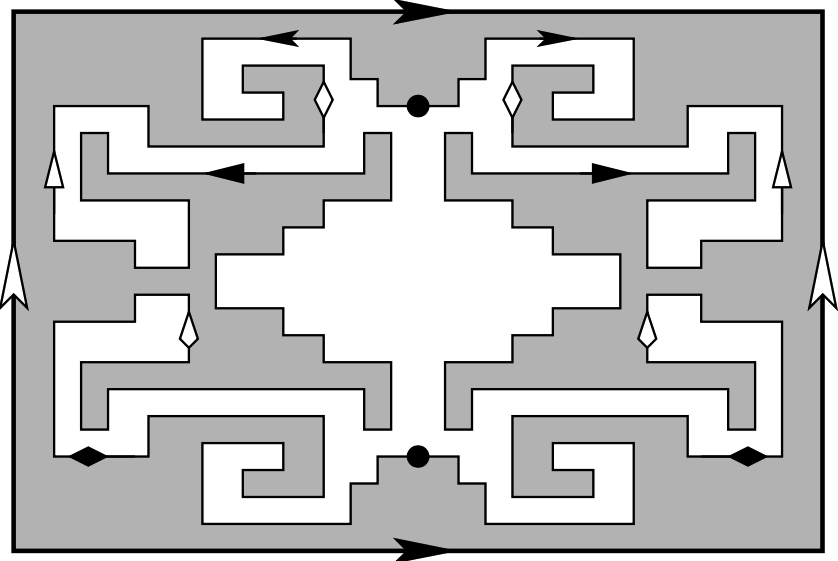}
  \caption{$\Wh=(\mathbb{T}^2,q_h)\in\Q(1^{2m},-1^{2m})$.}
  \label{figu:torus}
  \end{centering}
\end{subfigure}
\begin{subfigure}[t]{.45\textwidth}
  \begin{centering}
  \includegraphics[height=4cm]{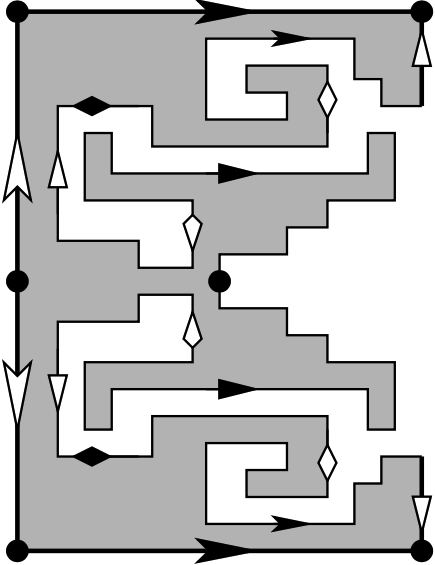}
  \caption{$\W=(\mathbb{CP}^1,q)\in\Q(1^m,-1^{m+4})$.}
  \label{figu:sphere}
  \end{centering}
\end{subfigure}
\caption{The flat surface $\Wh$ is a double cover over the underlying surface $\W$ branched at the four simple poles represented by bold dots.}
\label{figu:t2-cp1}
\end{figure}


Furthermore, the isometries $\tau_h$ and $\tau_v$ decompose the Hodge bundle over $\M$. In fact, we have that \[H^1_\X=E^{++}\oplus E^{+-}\oplus E^{-+}\oplus E^{--},\] where $E^{++}$ is the vector space invariant by $\tau_h$ and $\tau_v$, $E^{+-}$ the vector space invariant by $\tau_h$ and anti-invariant by $\tau_v$, etc. This decomposition is flat, defined over $\Z$ and symplectic; each subbundle is symplectic and the sum is orthogonal with respect to the intersection form.

Consider now the cohomology classes $h,v\in H^1(\X,\Z)$ Poincar\'e-dual to the cycles $h_{00}-h_{01}+h_{10}-h_{11}$ and $v_{00}-v_{10}+v_{01}-v_{11}$ respectively (see Figure~\ref{figu:compact-surface}) as elements of the fiber over the point $\X$ of the (real) Hodge bundle $H^1$ over the $\slr$-orbit closure of $\X\in\B(m)$. The pair $(h,v)\in H^1(\X,\Z^2)$ defines the $\Z^2$-covering $\X_\infty$ of $\X$ and the coordinates of this $\Z^2$-cocycle defining $\X_\infty$ belong to $E^{+-}\oplus E^{-+}$, more precisely, we have that $h\in E^{+-}$ and $v\in E^{-+}$.

We further consider $F^{+-}\subset E^{+-}$, the vector space invariant by $\tau_h$ and $\iota\circ\tau_v$, which is naturally isomorphic to the Hodge bundle over $\Wh=(\mathbb{T}^2,q_h)$. Then, $F^{+-}$ is a two dimensional, defined over $\Z$, flat ---it is locally defined by two cocycles in $H^1(\X,\Z)$ and the Gauss--Manin connection---  and symplectic subbundle of the Hodge bundle. In particular, it is continuous and equivariant (invariant with respect to the Kontsevich--Zorich cocycle). 
Analogously, we consider $F^{-+}\subset E^{-+}$, the vector space invariant by $\tau_v$ and $\iota\circ\tau_h$, with the analogous properties. We have that $h\in F^{+-}$ and $v\in F^{-+}$ (see~\cite[Lemma~3.1]{DZ}).


\begin{theo}[Delecroix--Zorich \cite{DZ}] \label{theo:DZ}
    For almost every billiard $\Pi\in\WT(m)$, the $\mathrm{GL}(2,\R)$-orbit closure of $\W(\Pi)$ coincides with the whole stratum $\Q(1^m,-1^{m+4})$ and the Lyapunov exponents on the $\slr$-orbit closure of $\X(\Pi)$ over the subbundles $F^{+-}$ and $F^{-+}$ are $\pm\delta(m)$, 
    where \[\delta(m)=\frac{(2m)!!}{(2m+1)!!}=4^m\frac{(m!)^2}{(2m+1)!}>0.\]
\end{theo}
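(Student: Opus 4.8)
The plan is to separate the statement into its two halves — the computation of the $\mathrm{GL}(2,\R)$-orbit closure of $\W(\Pi)$, and the value $\pm\delta(m)$ of the Lyapunov exponents over $F^{+-}$ and $F^{-+}$ — and to deduce the second from the first together with the Eskin--Kontsevich--Zorich formula for sums of Lyapunov exponents of covers and the explicit Athreya--Eskin--Zorich values in genus zero.

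\emph{Step 1: the orbit closure.} The assignment $\Pi\mapsto\W(\Pi)$ is a real-analytic map of the parameter space $\WT(m)$ into $\Q(1^m,-1^{m+4})$. By Eskin--Mirzakhani--Mohammadi \cite{EMM,EMi} every $\mathrm{GL}(2,\R)$-orbit closure is one of countably many affine invariant submanifolds, each cut out locally by real-linear equations in period coordinates; hence, for each \emph{proper} such submanifold $\M'$, the locus $\{\Pi:\W(\Pi)\in\M'\}$ is either Lebesgue-null or all of $\WT(m)$, and the theorem reduces to showing that no proper affine invariant submanifold of $\Q(1^m,-1^{m+4})$ contains $\W(\Pi)$ for \emph{every} $\Pi$. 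I would establish this last assertion by cylinder deformations: letting $\M'$ be the smallest affine invariant submanifold that contains the whole family, it suffices to produce one parameter $\Pi_0$ — for instance a rational one, so that $\W(\Pi_0)$ is square-tiled and hence completely periodic in the horizontal and vertical directions with cylinder decompositions read off directly from the billiard — at which the tangent space $T_{\W(\Pi_0)}\M'$ is already all of $H^1(S,\Sigma,\mathbb{C})$. That tangent space automatically contains the tangent to the $\mathrm{GL}(2,\R)$-orbit, the tangent to the family $\{\W(\Pi)\}$, and, by Wright's cylinder-deformation theorem, the stretches and shears of each $\M'$-parallel class of horizontal and of vertical cylinders. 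The substance of Step 1 is the combinatorial verification that, for a well-chosen $\Pi_0$, these vectors span everything — i.e. that the parallel classes are as fine as possible and that the core curves generate enough relative homology — which forces $\M'=\Q(1^m,-1^{m+4})$.

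\emph{Step 2: the Lyapunov exponents.} As recalled in the excerpt, $F^{+-}$ is canonically isomorphic, as a flat symplectic bundle with its Gauss--Manin connection, to the Hodge bundle of $\Wh=(\mathbb{T}^2,q_h)$, and $\Wh\to\W$ is the double cover branched over four of the $m+4$ simple poles (symmetrically for $F^{-+}$ and $\Wv$). Since $\Wh$ has genus one, the Lyapunov spectrum over $F^{+-}$ is a single symmetric pair $\{+\lambda_1,-\lambda_1\}$; by Step 1 the push-forward to $\Q(1^m,-1^{m+4})$ of the affine measure on the $\slr$-orbit closure of $\X(\Pi)$ has full support, hence is the Masur--Veech measure, so $\lambda_1$ is the top exponent of the Hodge bundle over the locus of all such double covers over the generic genus-zero stratum. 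As the base contributes no exponents, $\lambda_1$ is given entirely by the Eskin--Kontsevich--Zorich formula in its form for covers \cite{EKZ,AEZ}: an elementary sum of local contributions at the ramification points plus a term proportional to the area Siegel--Veech constant of $\Q(1^m,-1^{m+4})$. Substituting the explicit value of that constant from \cite{AEZ} and simplifying the resulting hypergeometric sum — it collapses to a Wallis-type quantity — yields $\lambda_1=\delta(m)=(2m)!!/(2m+1)!!=4^m(m!)^2/(2m+1)!$. Positivity is then immediate from the closed form; alternatively it follows from Forni's criterion \cite{Fo} applied to the integer equivariant subbundle $F^{+-}$, which is precisely the device used elsewhere in the paper to upgrade the conclusion from almost every to every billiard.

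\emph{Expected main obstacle.} The difficulty is concentrated in Step 1, in ruling out \emph{all} proper affine invariant submanifolds: one must control the cylinder decompositions of a suitable $\W(\Pi_0)$ and carry out the spanning argument uniformly in $m$. Step 2, by contrast, is bookkeeping with formulas that are already available.
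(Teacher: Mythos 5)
The statement you are proving is cited by the paper as Theorem~2.11 (Delecroix--Zorich) and is \emph{not} proved here: the paper only records it as background from~\cite{DZ}. So there is no internal proof to compare against; I will instead assess the soundness of your reconstruction as a stand-alone argument, with an eye to what \cite{DZ} actually does.

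Your high-level plan is the right one and matches the structure of~\cite{DZ}: EMM/countability plus cylinder deformations for the orbit-closure half, and the Eskin--Kontsevich--Zorich formula combined with the Athreya--Eskin--Zorich genus-zero values for the Lyapunov exponent. Two points deserve more care than the proposal gives them.

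First, in Step~2 you assert that $\lambda_1$ is ``given entirely by the Eskin--Kontsevich--Zorich formula in its form for covers: \dots a term proportional to the area Siegel--Veech constant of $\Q(1^m,-1^{m+4})$''. This is not quite what the EKZ formula supplies. Applied to $\Wh\in\Q(1^{2m},-1^{2m})$ it expresses $\lambda_1^+$ through the area Siegel--Veech constant of the affine invariant \emph{sublocus} of $\Q(1^{2m},-1^{2m})$ traced out by the double covers $\Wh$ of $\W$ branched over the prescribed four poles --- not through the constant of the base stratum. Passing from the base constant (which is the quantity that \cite{AEZ} compute) to the cover-locus constant requires the cylinder-lifting analysis: one must know, configuration by configuration, how many cylinders of $\W$ lift, with which monodromy and at which relative length. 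That is exactly the content of Lemma~4.1 of \cite{DZ}, the analogue of which is Lemma~\ref{relation-SV-constants} in the present paper, and it is where the hypergeometric sums are generated. Calling this ``bookkeeping with formulas that are already available'' understates it; it is a genuine intermediate lemma that your sketch silently presupposes.

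Second, in Step~1 you correctly isolate the substance (the spanning argument at a square-tiled $\Pi_0$), but note that you cannot deduce anything from the $\mathrm{GL}(2,\R)$-orbit closure of $\W(\Pi_0)$ itself (which is merely a Teichm\"uller curve). The argument must run, as you say, through the smallest affine invariant submanifold $\M'$ containing the \emph{entire} family, and one must verify uniformly in $m$ that the $\M'$-parallel classes of horizontal and vertical cylinders at $\Pi_0$ are fine enough for the Wright deformations, together with the tangent to the wind-tree family and to the $\mathrm{GL}(2,\R)$-orbit, to span $H^1(S,\Sigma,\mathbb{C})$. You flag this honestly as the main obstacle; it is, and it is not carried out here. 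One small additional remark: ``full support implies Masur--Veech'' is not by itself a valid inference; what one uses is that once the orbit closure is the \emph{entire} stratum, the ergodic affine invariant probability supported on it is unique, hence Masur--Veech.
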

Here, the double factorial means the product of all even (correspondingly odd) natural numbers from $2$ to $2m$ (correspondingly from $1$ to $2m+1$). For the original wind-tree model, that is, when $m=1$, this was first shown by Delecroix--Hubert--Leli\`evre~\cite{DHL}. In this case we have, in particular, that $F^{+-}=E^{+-}$, $F^{-+}=E^{-+}$ and $\delta(1)=2/3$.

Since the subbundles $F^{+-}$ and $F^{-+}$ have non-zero Lyapunov exponents and are $2$-dimensional, they are irreducible and then, symplectic (see Remark~\ref{rema:Lyapunov-symplectic}).

In this work, we are concerned with counting closed trajectories in the wind-tree billiard. Obviously, any closed trajectory can be translated by an element in $\Z^2$ to obtain a new closed trajectory. Then, we shall count (isotopy classes of) closed trajectories of bounded length in the wind-tree billiard up to $\Z^2$-translations.
There is a one to one correspondence between billiard trajectories in $\Pi$ and geodesics in $\X_\infty$. But $\X_\infty$ is the $\Z^2$-covering of $\X$ given by $h,v\in H^1(\X,\Z)$, which means that closed curves $\gamma$ in $\X$ lift to closed curves in $\X_\infty$ if and only if $\langle\gamma,h\rangle=\langle\gamma,v\rangle=0$. This is a general fact about $\Z^d$-periodic flat surfaces.

\section{Counting problem in $\Z^d$-periodic flat surfaces} \label{sect:counting-periodic-surfaces}

We consider an infinite $\Z^d$-periodic flat surface $X_\infty$ which is a ramified cover over a compact flat surface $X=(S,\omega)$, the covering group being $\Z^d$, $d\geq 1$. Let $\Sigma$ be the finite set of singularity points of $X$. Since the intersection form $\langle\cdot,\cdot\rangle$ is non-degenerate between $H^1(S\setminus\Sigma,\Z)$ and $H^1(S,\Sigma,\Z)$, every such $\Z^d$-cover is defined by a $d$-tuple of independent elements $\mathbf{f}=(f_1,\dots,f_d)$ in the group of relative cohomology $H^1(S,\Sigma,\Z)$. 

We are interested in counting cylinders in $X_\infty$ modulo $\Z^d$-translations. Cylinders in the cover $X_\infty$ clearly descends to cylinders in $X$, but not the other way around. In fact, by definition of the covering, the monodromy of a closed curve $\gamma$ is translation by $(\langle\gamma,f_i\rangle)_{i=1}^{d}\in\Z^d$. It follows that cylinders in the cover $X_\infty$ are exactly the lift of those cylinders $C$ in $X$ such that its core curve $\gamma_C$ verifies $\langle\gamma_C,f_i\rangle = 0$, for each $i=1,\dots,d$. Note that, in this case, the monodromy is always trivial and cylinders in $X_\infty$ are always isometric to their projection on $X$. When a cylinder $C$ does not satisfy this condition, it lifts to $X_\infty$ as a strip, isometric to the product of an open interval and a straight line.

We restrict ourselves to the case when $\mathbf{f}$ is an absolute covector, that is, it is a $d$-tuple of independent elements in the group of \emph{absolute} cohomology $H^1(S,\Z)$.
Let $\M$ be the $\slr$-orbit closure of $X$, $F$ be an equivariant subbundle of the Hodge bundle over $\M$ and $f\in F_X$.

Note that cylinders $C$ in $X$ such that $\langle\gamma_C,f\rangle = 0$, split naturally into two families:
(a) the family of cylinders such that $\langle\gamma_C,h\rangle = 0$ for all $h\in F_X$, which we call \emph{$F$-good cylinders}, and
(b) the family of cylinders that are not $F$-good, but $\langle\gamma_C,f\rangle = 0$. These later are called \emph{$(F,f)$-bad cylinders}.
The notion of $F$-good cylinders was first introduced by Avila--Hubert~\cite{AH} in order to give a geometric criterion for recurrence of $\Z^d$-periodic flat surfaces.

Thus, counting cylinders in the $\Z^d$-periodic flat surface can be reduced to counting separately cylinders which are $(\oplus_j F^{(j)})$-good cylinders and $(F^{(j_i)},f_i)$-bad cylinders in the compact surface, for some appropriate subbundles $(F^{(j)})_j$.

\begin{rema} \label{rema:hyp-subbundle} When $F$ is symplectic, in particular, if $\Lambda(F)\neq\{0\}$ (see Remark~\ref{rema:Lyapunov-symplectic}), $F$-good cylinders are exactly those that $\pr_{F_X} \gamma_C = 0$.
If, in addition, $F$ is $2$-dimensional (in particular, irreducible if $\Lambda(F)\neq\{0\}$), $C$ is an $(F,f)$-bad cylinder if and only if $\pr_{F_X} \gamma_C \neq 0$ is colinear to $f$.
\end{rema}

Since the Kontsevich--Zorich cocycle preserves the intersection form and $F$ is equivariant, it is clear that the set of $F$-good cylinders is $\slr$-equivariant. Then, classical results can be applied. In particular, applying the main result of \cite{EMa}, if there is at least one $F$-good cylinder in $X$, then we can deduce that $F$-good cylinders have quadratic asymptotic growth rate (with positive Siegel--Veech constant) for $\nu_\M$-almost every flat surface in $\M$, the $\slr$-orbit closure of $X$.
However, this is no longer true in the case of $(F,f)$-bad cylinders.

For $f\in F_X$ define the set $V_F(f)$ of holonomy vectors of $(F,f)$-bad cylinders in $X$.
We have that $V_F(A(g,X)f)=gV_F(f)$, since $F$ is equivariant and the Kontsevich-Zorich cocycle respects the intersection form. Finally, let \[N_F(L,f) = \# V_F(f)\cap B(L)\] be the number of $(F,f)$-bad cylinders in $X$ of length bounded by $L$.

\section{Bad cylinders have subquadratic asymptotic growth rate} \label{sect:bad-cylinders}

In this section, we prove the following general result about bad cylinders which applies to \emph{some} $\Z^d$-periodic flat surfaces and, in particular, to the family of wind-tree models we are interested in.

\begin{theo} \label{theo:bad-cylinders} Let $X$ be a flat surface and $F$ a $2$-dimensional equivariant continuous subbundle of the Hodge bundle on $\M$, the $\slr$-orbit closure of $X$. Suppose that $F$ is defined over $\Z$ and has non-zero Lyapunov exponents. Then, for all $f\in F_X$ the number $N_F(L,f)$, of $(F,f)$-bad cylinders in $X$ of length at most $L$, has subquadratic asymptotic growth rate, that is, $N_F(L,f)=o(L^2)$ or, which is the same, \[\lim_{L\to\infty} \frac{N_F(L,f)}{\pi L^2} = 0.\]
\end{theo}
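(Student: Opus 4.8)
The plan is to exploit the rigid structure that the hypotheses on $F$ impose on bad cylinders, and then feed it into the Eskin--Masur circle-averaging machinery. First I would record that, since $F$ is $2$-dimensional and has a non-zero Lyapunov exponent, it is irreducible and hence symplectic (Remarks~\ref{rema:Lyapunov-symplectic} and \ref{rema:hyp-subbundle}); being defined over $\Z$, the set $F_X^\pr(\Z)=\pr_{F_X}H^1_X(\Z)$ is a lattice in $F_X$. By Remark~\ref{rema:hyp-subbundle} every $(F,f)$-bad cylinder $C$ has $\pr_{F_X}\gamma_C\neq 0$ and colinear with $f$, and since $\gamma_C$ is an integral class we get $\pr_{F_X}\gamma_C\in\R f\cap F_X^\pr(\Z)$. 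If this intersection is trivial there are no bad cylinders and $N_F(L,f)\equiv 0$; otherwise it is an infinite cyclic group with a generator $w_0\neq 0$, so $\pr_{F_X}\gamma_C=n_C\,w_0$ with $n_C\in\Z\setminus\{0\}$. Thus a single vector $w_0$ controls all bad cylinders simultaneously: for every bad cylinder $C$ and every $g\in\slr$, using that $F$ is equivariant so that $\pr_F$ commutes with the Kontsevich--Zorich cocycle, at the surface $gX$ one has $\|\pr_F\gamma_C\|_{g\omega}=|n_C|\,\|A(g,X)w_0\|_{g\omega}\geq\|A(g,X)w_0\|_{g\omega}$.

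Next I would normalize $\operatorname{Area}(X)=1$, fix a small $\rho>0$, and set $\Phi(Y)=\#\{C : C \text{ is a bad cylinder in } Y \text{ with } \ell_Y(C)\leq\rho\}$, where for $Y=g_Tr_\theta X$ the relevant covector is the transported one, so that $V_F(A(g,X)f)=gV_F(f)$ keeps track of badness along the flow. Following the Eskin--Masur scheme, a bad cylinder of length at most $L$ in $X$ becomes a bad cylinder of length at most $\rho$ in $g_Tr_\theta X$ with $e^{T}\asymp L/\rho$, for $\theta$ ranging over an arc of length comparable to $\rho^2/(L\cdot\ell_X(C))$; a covering argument then yields a bound of the shape \[N_F(L,f)\leq C\,\frac{L^2}{\rho^2}\int_0^{2\pi}\Phi\bigl(g_{\log(L/\rho)}r_\theta X\bigr)\,d\theta,\] so that Theorem~\ref{theo:bad-cylinders} reduces to showing $\int_0^{2\pi}\Phi(g_Tr_\theta X)\,d\theta\to 0$ as $T\to\infty$.

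The engine of the argument would be the classical identity $\|\gamma_C\|_{\omega}^{2}=\ell(C)^2/\operatorname{Area}(C)$ for a cylinder core, together with the $\slr$-invariance of area. If $Y=g_Tr_\theta X$ carries a bad cylinder $C$ with $\ell_Y(C)\leq\rho$ and $\operatorname{Area}(C)=a$, then the estimate of the first paragraph gives $\|A(g_T,r_\theta X)w_0\|_{g_Tr_\theta\omega}\leq\|\pr_F\gamma_C\|_{g_Tr_\theta\omega}\leq\|\gamma_C\|_{g_Tr_\theta\omega}=\ell_Y(C)/\sqrt a\leq\rho/\sqrt a$. I would then split $\Phi=\sum_{k\geq 0}\Phi^{(k)}$ according to the dyadic scale $a\asymp 2^{-k}$ of $\operatorname{Area}(C)$. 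For fixed $k$, the cylinders counted by $\Phi^{(k)}$ are pairwise disjoint of area $\asymp 2^{-k}$, so there are $O(2^{k})$ of them, and by the above $\Phi^{(k)}(g_Tr_\theta X)\geq 1$ only when $\|A(g_T,r_\theta X)w_0\|_{g_Tr_\theta\omega}\lesssim 2^{k/2}\rho$. By Theorem~\ref{theo:Chaika-Eskin} applied to $F$, for a.e.\ $\theta$ the vector $w_0$ avoids the contracting Oseledets line $U_2(r_\theta X)\subset F$, hence $\tfrac1T\log\|A(g_T,r_\theta X)w_0\|_{g_Tr_\theta\omega}\to\lambda_1(\M,F)>0$; so by dominated convergence the measure of the exceptional set of $\theta$ tends to $0$, and $\int_0^{2\pi}\Phi^{(k)}(g_Tr_\theta X)\,d\theta\to 0$ for each $k$. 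To absorb the tail $\sum_{k\geq K}\Phi^{(k)}$ of thin bad cylinders I would use that any cylinder has circumference at least $\sys$, so $\Phi$ is supported on $\{\sys<\rho\}$, and combine this with the Eskin--Masur estimates (Theorems~\ref{EM:5.1} and \ref{EM:5.2}) to bound $\int_0^{2\pi}(\sum_{k\geq K}\Phi^{(k)})(g_Tr_\theta X)\,d\theta$ by a quantity that is small uniformly in $T$ once $K$ is large; letting $K\to\infty$ after $T\to\infty$ concludes.

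The step I expect to be the main obstacle is exactly this uniform-in-$T$ control of the tail of thin (equivalently, small-area, hence long-and-narrow-after-flowing) bad cylinders: the bounds coming from Theorems~\ref{EM:5.1}--\ref{EM:5.2} are uniform in $T$ but do not decay, so the estimate must be arranged so that the decay always originates from the cocycle rather than from the flat-geometric bounds. This is where both hypotheses are genuinely needed: integrality of $F$ quantizes $\pr_{F_X}\gamma_C$ into $\Z w_0$, so that the single cocycle orbit $\{A(g_T,r_\theta X)w_0\}$ governs every bad cylinder at once, while positivity of the Lyapunov exponent on $F$ forces that orbit to escape to infinity in Hodge norm for almost every direction, which is incompatible — through the length/area identity — with the persistence of short bad cylinders.
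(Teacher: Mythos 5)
Your overall strategy matches the paper's at the top level: pass to a circle average via the Eskin--Masur machinery, use that $F$ is defined over $\Z$ to quantize $\pr_F\gamma_C$ into integer multiples of a single primitive $w_0\in F_X^\pr(\Z)$, and invoke the positive Lyapunov exponent (via Chaika--Eskin) to force $\|A(g_T,r_\theta X)w_0\|_{g_Tr_\theta\omega}$ to infinity for almost every $\theta$. However, the central chain of inequalities has a gap. You write $\|\pr_F\gamma_C\|_{g_Tr_\theta\omega}\leq\|\gamma_C\|_{g_Tr_\theta\omega}$, but $\pr_F$ here is the \emph{symplectic} projection (along $F^\dagger$), which is not a contraction for the Hodge norm in general: it would be only if $F$ and $F^\dagger$ were Hodge-orthogonal, equivalently if $F$ were Hodge-star invariant, and that is not among the hypotheses (and, even where it holds, needs an argument). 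The subsequent bound $\|\gamma_C\|^2\leq\ell(C)^2/\operatorname{Area}(C)$ is correct as an inequality (the bump form concentrated on the cylinder dominates the harmonic representative in $L^2$), but it bounds the wrong projection. The paper sidesteps the whole issue by Lemma~\ref{lemm:bounded-holonomy-homology}: on a fixed compact set $K_\epsilon\subset\M$, continuity of $F$ and of the Hodge norm, together with finiteness of $N(\rho,\cdot)$, yield a uniform bound $\|\pr_{F_{X'}}[\gamma_C]\|_{\omega'}\leq c(\rho,K_\epsilon,F)$ for all cylinders of length at most $\rho$ --- a soft compactness argument requiring no global norm comparison.

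Your second, self-flagged obstacle --- uniform-in-$T$ control of the tail $\sum_{k\geq K}\Phi^{(k)}$ --- is also a genuine gap, and the dyadic decomposition by cylinder area does not couple well with the available estimates: Theorems~\ref{EM:5.1} and~\ref{EM:5.2} control $N(\rho,\cdot)$ in terms of $\sys$, not cylinder area, so the tail contribution is merely bounded, not small. The paper's decomposition is instead $K_\epsilon$ versus $\{\sys<\epsilon\}$: on $K_\epsilon$, the uniform bound above feeds into Proposition~\ref{prop:zero} and makes the circle integral vanish for large $T$; off $K_\epsilon$, Theorems~\ref{EM:5.1} and~\ref{EM:5.2} give a contribution $\leq c(\delta,\rho,X,\beta)\,\epsilon^{\beta-(1+\delta)}$, uniform in $T$ and tending to $0$ as $\epsilon\to 0$. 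One then takes $T\to\infty$ first and $\epsilon\to 0$ second. A minor additional point: the Eskin--Masur covering lemma controls the increment $N_F(2\rho e^t,f)-N_F(\rho e^t,f)$, not $N_F$ itself, so after the circle integral vanishes one still needs the short self-improvement step $\bar c_F\leq\frac{1}{4\pi}\bar c_F$ to conclude $\bar c_F=0$.
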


\begin{rema}
When $F$ is $2$-dimensional, symplectic (in particular, when it has non-zero Lyapunov exponents) and defined over $\Z$, if $f\in F_X$ is not colinear to an integer cocycle, then, there are no $(F,f)$-bad cylinders, since $\pr_{F_X} \gamma_C$ is always a rational multiple of an integer cocycle. Since the notion of bad cylinder is clearly projective, the proof of Theorem~\ref{theo:bad-cylinders} is then reduced to prove the conclusion only for $f\in F_X(\Z)$, instead that for all $f\in F_X$. 
\end{rema}

To prove Theorem~\ref{theo:bad-cylinders} we use technology for asymptotic formulas for counting closed geodesics developed by Eskin--Masur~\cite{EMa}. In particular, the following proposition, which is a restatement of Proposition~{3.5} and Lemma~{8.1} in \cite{EMa}, is a key step in the proof.

\begin{prop}[Eskin--Masur] \label{prop:bound} Let $\mathcal{V}\subset \R^2\setminus \{0\}$, define $\mathcal{N}(T,\mathcal{V})\coloneqq\# \mathcal{V}\cap B(T)$ and suppose that $\mathcal{N}(T,\mathcal{V}) < \infty$ for all $T>0$. Then, for all $\rho,t>0$ \[\mathcal{N}(2\rho e^t,\mathcal{V})-\mathcal{N}(\rho e^t,\mathcal{V}) \leq c(\rho) e^{2t}\int_0^{2\pi} \mathcal{N}(4\rho,g_tr_\theta \mathcal{V})\d\theta.\]
\end{prop}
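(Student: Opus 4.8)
\emph{Proof proposal.} The plan is to run the classical Eskin--Masur ``flower'' averaging argument. Fix $\rho,t>0$. The geometric heart of the matter is that every vector $w\in\mathcal{V}$ lying in the annulus $\rho e^t\le|w|\le 2\rho e^t$ gets dragged into the small ball $B(4\rho)$ by $g_tr_\theta$ for a set of rotations $\theta$ whose Lebesgue measure is at least of order $e^{-2t}$; indeed $g_t$ contracts the vertical direction by $e^{-t}$, so once $r_\theta$ has rotated $w$ to be nearly vertical its length $\sim\rho e^t$ shrinks to $\sim\rho$, while the ``nearly vertical'' condition is a window of angular width $\sim e^{-2t}$ because outside it the horizontal part, stretched by $e^{t}$, blows up. Summing this lower bound over all such $w$ and swapping sum and integral produces the stated inequality, in fact with an absolute constant in place of $c(\rho)$.

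First I would establish the window estimate. Fix $w$ with $\rho e^t\le|w|\le 2\rho e^t$. Since $\theta\mapsto r_\theta w$ is a measure-preserving bijection onto the circle of radius $|w|$, write $r_\theta w=(|w|\cos\phi,|w|\sin\phi)$ where $\phi=\phi(\theta)$ runs over $[0,2\pi)$ measure-preservingly. Then
\[
|g_tr_\theta w|^2=|w|^2e^{2t}\cos^2\phi+|w|^2e^{-2t}\sin^2\phi .
\]
The second summand is at most $|w|^2e^{-2t}\le(2\rho e^t)^2e^{-2t}=4\rho^2$ for every $\phi$, so $|g_tr_\theta w|\le 4\rho$ as soon as $|w|^2e^{2t}\cos^2\phi\le 12\rho^2$, i.e.\ $|\cos\phi|\le\sqrt{12}\,\rho/(|w|e^t)$; using $|w|\le 2\rho e^t$ once more, it suffices that $|\cos\phi|\le\sqrt{3}\,e^{-2t}$. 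When $\sqrt3\,e^{-2t}\le 1$ the set of such $\phi$ has measure $4\arcsin(\sqrt3\,e^{-2t})\ge 4\sqrt3\,e^{-2t}$, and when $\sqrt3\,e^{-2t}>1$ it is all of $[0,2\pi)$, of measure $2\pi>e^{-2t}$. Transporting back through the measure-preserving map $\theta\mapsto\phi$ gives, uniformly in $t>0$ and in $w$ in the annulus,
\[
\big|\{\theta\in[0,2\pi):g_tr_\theta w\in B(4\rho)\}\big|\ \ge\ e^{-2t}.
\]

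Next I would assemble the global bound. Set $\mathcal{A}=\{w\in\mathcal{V}:\rho e^t\le|w|\le 2\rho e^t\}$; by the finiteness hypothesis $\#\mathcal{A}\le\mathcal{N}(2\rho e^t,\mathcal{V})<\infty$, and since $B(\cdot)$ is increasing we have $\mathcal{N}(2\rho e^t,\mathcal{V})-\mathcal{N}(\rho e^t,\mathcal{V})\le\#\mathcal{A}$. For each $\theta$, $\mathcal{N}(4\rho,g_tr_\theta\mathcal{V})=\sum_{w\in\mathcal{V}}\ind_{B(4\rho)}(g_tr_\theta w)$ is a countable sum of nonnegative Borel functions of $\theta$ (each set $\{\theta:g_tr_\theta w\in B(4\rho)\}$ is the preimage of a ball under a continuous map), so Tonelli applies and
\[
\int_0^{2\pi}\mathcal{N}(4\rho,g_tr_\theta\mathcal{V})\,\d\theta=\sum_{w\in\mathcal{V}}\big|\{\theta:g_tr_\theta w\in B(4\rho)\}\big|\ \ge\ \sum_{w\in\mathcal{A}}\big|\{\theta:g_tr_\theta w\in B(4\rho)\}\big|\ \ge\ e^{-2t}\,\#\mathcal{A}.
\]
Multiplying by $e^{2t}$ and using $\#\mathcal{A}\ge\mathcal{N}(2\rho e^t,\mathcal{V})-\mathcal{N}(\rho e^t,\mathcal{V})$ yields the claimed inequality (with $c(\rho)$ any constant $\ge 1$).

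There is no genuine obstacle here beyond bookkeeping: the two points needing a little care are the uniformity of the window estimate across all $t>0$, handled above by separating the regimes $\sqrt3\,e^{-2t}\lessgtr1$, and the measurability required for Tonelli, which is immediate. All the geometric content is the contraction rate $e^{-2t}$ of the vertical direction under $g_t$, which is exactly what forces the favourable-angle window to have width of order $e^{-2t}$ and thus produces the factor $e^{2t}$ in the final bound.
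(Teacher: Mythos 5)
Your proof is correct, and it is exactly the standard Eskin--Masur annulus-plus-angular-window argument: the paper itself does not prove this proposition but quotes it from Eskin--Masur (Proposition~3.5 and Lemma~8.1 of \cite{EMa}), whose proof proceeds by the same polar-coordinate estimate and Tonelli step you give. Nothing further is needed; your version even yields an absolute constant in place of $c(\rho)$.
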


Hence, the proof of Theorem~\ref{theo:bad-cylinders} is reduced to show the following.

\begin{theo} \label{theo:lim} Under the hypothesis of Theorem~\ref{theo:bad-cylinders}, for every $f\in F_X(\Z)$ and all $\rho>0$, \[\lim_{t\to\infty} \int_0^{2\pi} N_F(\rho,A(g_t, r_\theta X)f)\d\theta = 0.\]
\end{theo}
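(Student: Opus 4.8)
The plan is to show that the integrand $N_F(\rho,A(g_t,r_\theta X)f)$ vanishes identically for all sufficiently large $t$, for $\nu$-almost every $\theta$, and then to dominate it so that the integral converges to $0$. The starting point is Remark~\ref{rema:hyp-subbundle}: since $F$ is $2$-dimensional, symplectic (it has non-zero Lyapunov exponents) and defined over $\Z$, an $(F,f)$-bad cylinder $C$ in a surface $Y$ is characterised by $0\neq\pr_{F_Y}\gamma_C$ being colinear to $f$. In the surface $Y_{t,\theta}:=A(g_t,r_\theta X)\cdot$ data, the projection $\pr_{F}\gamma_C$ lies in the lattice $F^{\pr}_{Y}(\Z)$, whose shortest non-zero vector has length bounded below by the Hodge-norm comparison with a genuine integer cocycle; the key point is that the holonomy length of the \emph{cylinder} is at least a definite multiple of the Hodge norm of its core curve's $F$-projection (this is where one uses that $F$ is symplectic, so that $\pr_F$ is a bounded operator and the core curve pairs non-trivially). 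Hence a bad cylinder of holonomy length $\leq\rho$ forces the $F$-part of its core curve to be an $F$-integer cocycle $w$ (colinear to $f$) whose Hodge norm in $Y_{t,\theta}$ is $\lesssim\rho$.

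Now I would invoke the hypothesis of non-zero Lyapunov exponents on $F$ together with the Chaika--Eskin theorem (Theorem~\ref{theo:Chaika-Eskin}) applied to the equivariant subbundle $F$: for almost every $\theta$, every non-zero $f'\in F_{r_\theta X}$ satisfies $\tfrac1t\log\|A(g_t,r_\theta X)f'\|_{g_t r_\theta\omega}\to\lambda$ for some $\lambda\in\Lambda(\M,F)$, and since $\Lambda(\M,F)=\{\pm\delta\}$ with $\delta>0$, the smallest possible exponent is $-\delta<0$, but the point is that along an orbit where the limit is $+\delta$ the norm grows, while along the contracting direction it shrinks. The subtlety is that $f$ is a \emph{fixed} cocycle: for almost every $\theta$, $f$ is \emph{not} in the Oseledets-contracting subspace $U_2(r_\theta X)$ of $F_{r_\theta X}$ (that subspace is $1$-dimensional and depends measurably on $\theta$, so the set of $\theta$ for which the fixed line $\R f$ equals it has measure zero — one must check this, e.g.\ via absolute continuity / the fact that the contracting direction is not locally constant because $\delta\neq 0$). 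Therefore for a.e.\ $\theta$, $\|A(g_t,r_\theta X)f\|\to\infty$; more importantly, I want the same for \emph{every} integer cocycle $w\in F_X(\Z)$ colinear to $f$, but these are all rational multiples of a single primitive one, so it suffices to handle the primitive generator. Consequently, for a.e.\ $\theta$ there is $T(\theta)$ such that for $t\geq T(\theta)$ no integer multiple of $f$ has $Y_{t,\theta}$-Hodge norm below the threshold needed to support a cylinder of length $\leq\rho$; hence $N_F(\rho,A(g_t,r_\theta X)f)=0$.

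To convert this pointwise-in-$\theta$ vanishing into the vanishing of the integral I would apply dominated convergence. The domination comes from Eskin--Masur's Theorem~\ref{EM:5.1} and Theorem~\ref{EM:5.2}: for any surface $Y$, $N_F(\rho,\cdot)\leq N(\rho,Y)\leq c(\rho,\delta)\,\sys(Y)^{-1-\delta}$, and by Theorem~\ref{EM:5.2} with $\beta=1+\delta<2$ the function $\theta\mapsto\sys(g_t r_\theta X)^{-1-\delta}$ is integrable with a bound $c(X,1+\delta)$ \emph{uniform in $t$}. Thus $\int_0^{2\pi}N_F(\rho,A(g_t,r_\theta X)f)\,d\theta\leq c(X,1+\delta)c(\rho,\delta)<\infty$ uniformly, the integrands converge to $0$ pointwise a.e.\ as $t\to\infty$, and dominated convergence gives the claim. (One should be slightly careful: the pointwise bound must dominate uniformly in $t$, which it does since the $g_t r_\theta$-systole bound is the whole point of Theorem~\ref{EM:5.2}.)

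The main obstacle I expect is the measure-zero argument in the second paragraph: showing that the fixed line $\R f$ is, for almost every $\theta$, transverse to the Oseledets stable line of $F$ at $r_\theta X$. Since $F$ has exactly the two exponents $\pm\delta$ with $\delta>0$, the stable line is genuinely non-constant along the rotation orbit (if it were locally constant it would be a flat, hence equivariant, rank-$1$ subbundle, contradicting irreducibility of $F$ — $F$ is irreducible because it is $2$-dimensional with a non-zero exponent, Remark~\ref{rema:Lyapunov-symplectic}). Turning "non-constant" into "avoids a fixed line for a.e.\ $\theta$" is the technical heart; I would argue it using the measurability of the Oseledets splitting plus the fact that an equivariant measurable line field coinciding with $\R f$ on a positive-measure set of directions would have to be genuinely invariant, again contradicting irreducibility. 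Once that transversality is in hand, the growth $\|A(g_t,r_\theta X)f\|\to\infty$ and hence the eventual absence of short bad cylinders follows from the Oseledets/Chaika--Eskin statement, and the rest is the soft dominated-convergence wrap-up.
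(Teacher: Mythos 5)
Two significant gaps, each at a step you explicitly flagged as "slightly careful." First, the pointwise vanishing claim --- that for a.e.\ $\theta$, $N_F(\rho,A(g_t,r_\theta X)f)=0$ for all large $t$ --- is not established and is not obviously true without a compactness restriction. The passage from ``a cylinder of length $\leq\rho$ forces $\|\pr_{F}[\gamma_C]\|$ to be bounded'' to ``hence no bad cylinder exists once $\|A(g_t,r_\theta X)f\|$ is large'' only works when $g_t r_\theta X$ stays in a fixed compact $K_\epsilon=\{\sys\geq\epsilon\}$: the comparison between cylinder length and the Hodge norm of the core curve's $F$-projection (the paper's Lemma~\ref{lemm:bounded-holonomy-homology}) is a compactness argument whose constant blows up as the systole degenerates. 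This is why Proposition~\ref{prop:zero} carries the factor $\ind_{K_\epsilon}(g_t r_\theta X)$ that you dropped. For a.e.\ $\theta$ the Teichm\"uller orbit visits the thin part along an unbounded set of times $t$, and along those times your argument does not exclude short bad cylinders.

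Second, the concluding ``dominated convergence'' step is a genuine error. You have (at best) pointwise a.e.\ convergence of the integrands and a bound on $\int_0^{2\pi}N_F(\rho,A(g_t,r_\theta X)f)\,\d\theta$ that is uniform in $t$; dominated convergence needs a single $t$-independent integrable envelope, and the family $\theta\mapsto\sys(g_t r_\theta X)^{-(1+\delta)}$ has none (its pointwise supremum over $t$ is typically infinite). Uniform $L^1$-boundedness plus pointwise a.e.\ convergence does not yield $L^1$-convergence, by the usual moving-bump example. What is actually needed, and what the paper does, is a uniform-integrability-type estimate: split $\int_0^{2\pi}$ according to whether $g_t r_\theta X\in K_\epsilon$, use Proposition~\ref{prop:zero} plus the $L^\infty$ bound $N(\rho,\cdot)\leq c(\rho,1)/\epsilon^2$ on $K_\epsilon$ to kill that piece as $t\to\infty$, and show via Lemma~\ref{lemm:circles-cusp} (built from Theorems~\ref{EM:5.1} and~\ref{EM:5.2}) that the thin-part contribution is $O(\epsilon^{\beta-(1+\delta)})$ uniformly in $t$, then let $\epsilon\to0$. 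You could legitimately replace the $\epsilon$-cutoff by Vitali's convergence theorem with $L^p$-boundedness for some $p>1$ (from Theorem~\ref{EM:5.2} with $\beta<2$ close to $2$), but that still presupposes the pointwise a.e.\ vanishing, which brings you back to the first gap: the pointwise statement you actually have access to is the conditional one of Proposition~\ref{prop:zero}, so the split is unavoidable.
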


\begin{proof}[Proof of Theorem~\ref{theo:bad-cylinders}]

It is clear that $V_F(\cdot)\subset \R^2\setminus \{0\}$ is $\slr$-equivariant and $N_F(L,f)$ is finite, since it is bounded by $N(L,X)$, the number of all cylinders of length bounded by $L$ and $N(L,X)\leq c(X)L^2$ (\cite{Ma2}). Then, by Proposition~\ref{prop:bound}, we have that, for all $f\in F_X(\Z)$, all $\rho>0$ and all $t>0$, \[N_F(2\rho e^t,f)-N_F(\rho e^t,f) \leq c(\rho) e^{2t}\int_0^{2\pi} N_F(4\rho,A(g_t, r_\theta X)f)\d\theta.\]

But then, by Theorem~\ref{theo:lim}, \[\limsup_{t\to\infty} \frac{N_F(2\rho e^t,f)-N_F(\rho e^t,f)}{\rho^2 e^{2t}} \leq \frac{c(\rho)}{\rho^2}\lim_{t\to\infty} \int_0^{2\pi} N_F(4\rho,A(g_t, r_\theta X)f)\d\theta = 0.\] That is \[\limsup_{T\to\infty} \frac{N_F(2T,f)-N_F(T,f)}{T^2} = 0.\]
It follows that
\begin{align*}
\bar{c}_F(x) & \coloneqq \limsup_{L\to\infty} \frac{N_F(L,f)}{\pi L^2} = \limsup_{T\to\infty} \frac{1}{4\pi}\frac{N_F(2T,f)}{T^2} \\
 & = \frac{1}{4\pi} \limsup_{T\to\infty} \left(\frac{N_F(2T,f)-N_F(T,f)}{T^2} + \frac{N_F(T,f)}{T^2} \right) \\
 & \leq \frac{1}{4\pi}\left(\limsup_{T\to\infty} \frac{N_F(2T;X,f)-N_F(T;X,f)}{T^2} + \limsup_{T\to\infty} \frac{N_F(T;X,f)}{T^2} \right) \\
 & = \frac{1}{4\pi}\left( 0 + \bar{c}_F(x) \right) = \frac{1}{4\pi}\bar{c}_F(x)
\end{align*}
and then, $\bar{c}_F(x) = 0$. We conclude that \[\lim_{L\to\infty} \frac{N_F(L,f)}{\pi L^2} = 0.\]
\end{proof}

\subsection{Proof of Theorem~\ref{theo:lim}}

In order to show that \[\lim_{t\to\infty} \int_0^{2\pi} N_F(\rho, A(g_t, r_\theta X)f)\d\theta = 0,\] we split the integral in whether $g_tr_\theta X\in K_\epsilon=\{\sys \geq \epsilon\}$ or not, and show that both parts tend to zero as $t$ tends to infinity and $\epsilon$, to zero.

When $g_tr_\theta X\in K_\epsilon$, the corresponding part of the integral tends to zero as a consequence of the following proposition, whose proof is postponed to \S~\ref{sec:prop:zero}.

\begin{prop} \label{prop:zero} Under the hypothesis of Theorem~\ref{theo:lim}, for all $f\in F_X(\Z)$, all $\rho,\epsilon > 0$ and almost every $\theta$ \[ N_F(\rho, A(g_t, r_\theta X)f)\cdot\ind_{K_\epsilon}(g_tr_\theta X)=0\] for sufficiently large $t$, $t \geq t_0(x,\rho,\epsilon,\theta)$.
\end{prop}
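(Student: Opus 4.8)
The plan is to argue by contraposition and reduce everything to an exponential-growth statement for the cocycle restricted to $F$. By the remark preceding Theorem~\ref{theo:bad-cylinders} it suffices to treat $f\in F_X(\Z)$, and since the notion of $(F,f)$-bad cylinder is projective we may further assume $f$ is primitive in the lattice $F_X(\Z)$. Write $Y=Y_{t,\theta}\coloneqq g_tr_\theta X\in\M$ and $f'=A(g_t,r_\theta X)f$; because the Kontsevich--Zorich cocycle preserves the integer structure and $F$ is equivariant, $A(g_t,r_\theta X)$ restricts to an isomorphism of lattices $F_X(\Z)\to F_{Y}(\Z)$, so $f'$ is again primitive in $F_{Y}(\Z)$. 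The proof rests on two assertions. \emph{(A)} If $Y\in K_\epsilon$ and $C$ is an $(F,f')$-bad cylinder in $Y$ with $|\hol[](\gamma_C)|\le\rho$, then $\|A(g_t,r_\theta X)f\|_{g_tr_\theta\omega}\le M(\epsilon,\rho)$ for some constant depending only on $\epsilon$ and $\rho$. \emph{(B)} For Lebesgue-almost every $\theta$ one has $\|A(g_t,r_\theta X)f\|_{g_tr_\theta\omega}\to\infty$ as $t\to\infty$. Granting (A) and (B), for a.e.\ $\theta$ there is $t_0(X,\rho,\epsilon,\theta)$ with $\|A(g_t,r_\theta X)f\|_{g_tr_\theta\omega}>M(\epsilon,\rho)$ for all $t\ge t_0$; by the contrapositive of (A) there is then no $(F,f')$-bad cylinder of length $\le\rho$ in $Y_{t,\theta}$ whenever $Y_{t,\theta}\in K_\epsilon$, i.e.\ $N_F(\rho,A(g_t,r_\theta X)f)\cdot\ind_{K_\epsilon}(g_tr_\theta X)=0$ for $t\ge t_0$, which is the statement.

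For (A): since $F$ is $2$-dimensional and symplectic, Remark~\ref{rema:hyp-subbundle} gives that $v\coloneqq\pr_{F_{Y}}\gamma_C$ is nonzero and colinear to $f'$. As $\gamma_C$ is an integer class, $v\in F_{Y}^{\pr}(\Z)$; being colinear to the primitive vector $f'\in F_{Y}(\Z)\subset F_{Y}^{\pr}(\Z)$, a short divisibility argument using that $f'$ is primitive in $F_Y(\Z)$ and $n_Y\coloneqq[F_{Y}^{\pr}(\Z):F_{Y}(\Z)]$ yields $\|f'\|_{\omega_{Y}}\le n_Y\,\|v\|_{\omega_{Y}}$. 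On the other hand $\|v\|_{\omega_{Y}}\le\|\pr_{F_{Y}}\|_{\mathrm{op}}\,\|\gamma_C\|_{\omega_{Y}}$. On the compact set $K_\epsilon\cap\M$ every quantity on the right is controlled: $n_Y$ is integer-valued and continuous (a ratio of Hodge covolumes), hence locally constant and bounded by some $N_1(\epsilon)$; $\|\pr_{F_{Y}}\|_{\mathrm{op}}$ is bounded by some $N_2(\epsilon)$ since $F$, hence $F^\dagger$ and the symplectic projection, is a continuous subbundle; and the Hodge norm of the core curve of a cylinder of length $\le\rho$ is bounded by some $R(\epsilon,\rho)$, because loops of length $\le\rho$ on a surface with $\sys\ge\epsilon$ realize only finitely many homotopy classes and the Hodge norm varies continuously over the compact $K_\epsilon\cap\M$. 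Multiplying, $\|A(g_t,r_\theta X)f\|_{g_tr_\theta\omega}=\|f'\|_{\omega_{Y}}\le N_1(\epsilon)N_2(\epsilon)R(\epsilon,\rho)\eqqcolon M(\epsilon,\rho)$.

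For (B): the Chaika--Eskin theorem (Theorem~\ref{theo:Chaika-Eskin}), applied to the equivariant subbundle $F$, gives for a.e.\ $\theta$ a Lyapunov filtration $F_{r_\theta X}=U_1(r_\theta X)\supsetneq U_2(r_\theta X)\supsetneq\{0\}$ along which $\tfrac1t\log\|A(g_t,r_\theta X)v\|_{g_tr_\theta\omega}$ converges to $\lambda\coloneqq\lambda_1(\M,F)>0$ on $U_1\setminus U_2$ and to $-\lambda$ on $U_2\setminus\{0\}$ (positivity of $\lambda$ is the standing hypothesis on $F$, and $F$ symplectic forces $\Lambda(\M,F)=\{\lambda,-\lambda\}$). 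Thus (B) is equivalent to the assertion that $f\notin U_2(r_\theta X)$ for a.e.\ $\theta$.

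This last transversality is the step I expect to be the main obstacle: positivity of the exponent must be used in an essential way, since the conclusion is false when $\Lambda(\M,F)=\{0\}$. My proposed route is to prove a uniform Siegel-transform bound $\int_0^{2\pi}\|A(g_t,r_\theta X)f\|_{g_tr_\theta\omega}^{-\beta}\,\d\theta\le c(X,f,\beta)$ for some small $\beta>0$, uniformly in $t\ge0$ --- an analogue for the two-dimensional cocycle on $F$ of Theorem~\ref{EM:5.2}; in the wind-tree setting $F^{+-}$ (resp.\ $F^{-+}$) is canonically the Hodge bundle of the genus-one surface $\Wh$ (resp.\ $\Wv$), where this reduces to the classical Siegel--Veech bound on a flat torus. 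Given such a bound, Fatou's lemma forbids $\|A(g_t,r_\theta X)f\|_{g_tr_\theta\omega}\to0$ on any positive-measure set of $\theta$, so $\liminf_{t\to\infty}\|A(g_t,r_\theta X)f\|_{g_tr_\theta\omega}>0$ for a.e.\ $\theta$; combined with the Chaika--Eskin dichotomy (the limit of $\tfrac1t\log\|\cdot\|$ is $+\lambda$ or $-\lambda$) this forces $f\notin U_2(r_\theta X)$, hence the full growth $\|A(g_t,r_\theta X)f\|_{g_tr_\theta\omega}\to\infty$, for a.e.\ $\theta$. The compactness and continuity facts on $K_\epsilon\cap\M$ used in (A) are routine; the genuinely delicate ingredient is this integrability estimate (equivalently the a.e.\ transversality $f\notin U_2(r_\theta X)$) for an abstract two-dimensional symplectic cocycle with a nonzero exponent, and it is the place where I would concentrate the effort.
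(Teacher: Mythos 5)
Your structure and step (A) match the paper's own proof quite closely: the paper proves a compactness lemma (Lemma~\ref{lemm:bounded-holonomy-homology}) bounding $\|\pr_{F_{X'}}[\gamma_C]\|_{\omega'}$ for short cylinders on a compact set, and then runs exactly the same lattice--divisibility argument you give in (A), with $m(f)$ chosen so that $\tfrac1m f$ is primitive in $F_X^{\pr}(\Z)$ playing the role of your index $n_Y$. Your reduction to primitive $f$, the use of Remark~\ref{rema:hyp-subbundle}, and the application of Chaika--Eskin to get $\|A(g_t,r_\theta X)f\|_{g_tr_\theta\omega}\geq e^{\lambda t/2}$ for large $t$ are all present in the paper in the same roles.

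Where you diverge is in the justification of (B), i.e.\ why $f\in F_X(\Z)$ lies in $U_1\setminus U_2$ for a.e.\ $\theta$. The paper simply states ``since $f\in F_X(\Z)$ is an integer covector, its associated Lyapunov exponent has to be positive'' and moves on, whereas you correctly flag this as the point requiring an argument, but then propose an integrability estimate $\int_0^{2\pi}\|A(g_t,r_\theta X)f\|^{-\beta}\,\d\theta\leq c$ plus Fatou, which you yourself admit you have not established. That route is heavier than necessary. The standard, and much more direct, argument goes through recurrence: by Chaika--Eskin (\cite[Theorem~1.1]{CE}), for a.e.\ $\theta$ the orbit $g_tr_\theta X$ is Birkhoff generic for $\nu_\M$, hence returns to $K_\epsilon$ infinitely often for small $\epsilon$; on the compact set $K_\epsilon\cap\M$ the Hodge norm of the shortest nonzero vector of the lattice $F^{\pr}_{(\cdot)}(\Z)$ is bounded below by some $c_\epsilon>0$ (continuity of $F$, of the lattice, and of the Hodge norm), so $\|A(g_{t_n},r_\theta X)f\|_{g_{t_n}r_\theta\omega}\geq c_\epsilon$ along a sequence $t_n\to\infty$; this rules out the $-\lambda$ branch of the Oseledets dichotomy, forcing $f\notin U_2(r_\theta X)$. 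Replacing your (B) with this recurrence argument would close the gap without needing the Siegel-transform bound, and would bring your proof into exact alignment with what the paper is (implicitly) doing.
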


\begin{rema} 
The intuition behind this apparently technical proposition is the following. By hypothesis, the Lyapunov exponent of $f\in F_X(\Z)$ is positive and then, for almost every $\theta$, $A(g_t,r_\theta X)f$ becomes very long for large $t$. Without loss of generality, we can suppose that $f$ is primitive. Therefore, no short cycle (of length bounded by $\rho$) can have projection on $F_X$ colinear to $A(g_t,r_\theta X)f$, because this latter is primitive and longer. We formalize this idea in \S~\ref{sec:prop:zero}.
\end{rema}

Recall that $N_F(L,f)\leq N(L,X)$. Furthermore, $N(\rho,\cdot)$ is bounded in $K_\epsilon$. Indeed, by Theorem~\ref{EM:5.1}, for $\delta = 1$,
\[\ind_{K_\epsilon} N(\rho,\cdot) \leq \ind_{K_\epsilon}\frac{c(\rho,1)}{\sys^{2}} \leq \frac{c(\rho,1)}{\epsilon^{2}} = c(\rho,\epsilon).\]
Then, for fixed $\rho,\epsilon > 0$,
\begin{multline*}
\int_0^{2\pi} N_F(\rho, A(g_t, r_\theta X)f)\cdot\ind_{K_\epsilon}(g_tr_\theta X)\d \theta \\
  \leq c(\rho,\epsilon)\cdot|\{\theta\in [0,2\pi):N_F(\rho, A(g_t, r_\theta X)f)\cdot\ind_{K_\epsilon}(g_tr_\theta X) \neq 0 \}|,
\end{multline*}
where $|\cdot|$ is the Lebesgue measure on $[0,2\pi)$. Finally, by Proposition~\ref{prop:zero}, the right side of the inequality tends to zero as $t$ tends to infinity. That is,
\begin{equation} \label{equa:int-ke}
\lim_{t\to\infty} \int_0^{2\pi} N_F(\rho, A(g_t, r_\theta X)f)\cdot\ind_{K_\epsilon}(g_tr_\theta X)\d \theta = 0.
\end{equation}

For the rest of the integral we use the following.

\begin{lemm} \label{lemm:circles-cusp} For any flat surface $X$, any $\beta < 2$ and all $\epsilon > 0$, \[|\{\theta\in [0,2\pi):\sys(g_tr_\theta X) < \epsilon \}| < c(X,\beta)\epsilon^\beta \] for all $t > 0$.
\end{lemm}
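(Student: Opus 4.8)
The plan is to deduce the lemma immediately from Theorem~\ref{EM:5.2} by a Chebyshev/Markov-type argument; no genuinely new work is required. Fix a flat surface $X$, an exponent $\beta$ with $0<\beta<2$ (only this range is relevant: for $\beta\le 0$ the asserted estimate is vacuous for small $\epsilon$ and false for large $\epsilon$), some $\epsilon>0$, and $t>0$, and set
\[E_{t,\epsilon}=\{\theta\in[0,2\pi):\sys(g_tr_\theta X)<\epsilon\}.\]
For every $\theta\in E_{t,\epsilon}$ one has $\sys(g_tr_\theta X)^{-\beta}>\epsilon^{-\beta}$ (here $\sys>0$ always, so the power is well defined). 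Integrating this pointwise bound over $E_{t,\epsilon}$ and then enlarging the domain of integration to all of $[0,2\pi)$ gives
\[\epsilon^{-\beta}\,|E_{t,\epsilon}|\ \le\ \int_{E_{t,\epsilon}}\frac{\d\theta}{\sys(g_tr_\theta X)^{\beta}}\ \le\ \int_0^{2\pi}\frac{\d\theta}{\sys(g_tr_\theta X)^{\beta}}.\]
By Theorem~\ref{EM:5.2} the right-hand side is at most $c(X,\beta)$, and multiplying through by $\epsilon^\beta$ yields $|E_{t,\epsilon}|\le c(X,\beta)\,\epsilon^\beta$.

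To obtain the strict inequality as written in the lemma, note that when $|E_{t,\epsilon}|>0$ the function $\theta\mapsto\sys(g_tr_\theta X)^{-\beta}-\epsilon^{-\beta}$ is strictly positive on a set of positive measure, hence has strictly positive integral, which upgrades the first displayed inequality to $\epsilon^{-\beta}|E_{t,\epsilon}|<c(X,\beta)$; and when $|E_{t,\epsilon}|=0$ the bound $|E_{t,\epsilon}|<c(X,\beta)\epsilon^\beta$ holds trivially since $c(X,\beta)>0$. (Alternatively, one simply replaces $c(X,\beta)$ by $2c(X,\beta)$, which is harmless for the subsequent use of the lemma.)

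There is no real obstacle here: the only substantive input is the uniform integral estimate of Theorem~\ref{EM:5.2}, quoted from \cite{EMa}, and everything else is Markov's inequality. The single point worth keeping in mind is the restriction $\beta>0$, which is exactly the regime in which the lemma is invoked (with $\beta$ taken close to $2$) in the proof of Theorem~\ref{theo:lim}.
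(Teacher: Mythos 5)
Your proof is correct and is essentially the same Chebyshev/Markov argument the paper uses: bound the indicator of $\{\sys<\epsilon\}$ by $\epsilon^\beta/\sys^\beta$, integrate, and invoke Theorem~\ref{EM:5.2}. The additional remarks about strictness and the sign of $\beta$ are fine but not needed for how the lemma is applied.
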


\begin{proof}
\begin{align*}
|\{\theta\in [0,2\pi):\sys(g_tr_\theta X) < \epsilon \}|
 & = \int_0^{2\pi} \ind_{\sys < \epsilon}(g_tr_\theta X)\d \theta \\
 & \leq \int_0^{2\pi} \ind_{\sys < \epsilon}(g_tr_\theta X)\cdot \frac{\epsilon^\beta}{\sys(g_tr_\theta X)^\beta}\d \theta \\
 & \leq \epsilon^\beta \int_0^{2\pi} \frac{\d \theta}{\sys(g_tr_\theta X)^\beta} 
\end{align*}
Then, by Theorem~\ref{EM:5.2}, we conclude that \[ \mid\{\theta\in [0,2\pi):\sys(g_tr_\theta X) < \epsilon \}| \leq c(X,\beta) \epsilon^\beta.\]
\end{proof}

Moreover, since $N_F(\rho,f)\leq N(\rho,X)$ and, by Theorem~\ref{EM:5.1}, for any $\delta > 0$ \[ N(\rho,X) \leq \frac{c(\delta,\rho)}{\sys(X)^{1+\delta}},\]  it follows that

{\small
\begin{align*}
\mathrlap{\int_0^{2\pi} N_F(\rho, A(g_t, r_\theta X)f)\cdot\ind_{\sys < \epsilon}(g_tr_\theta X)\d \theta} \hspace{6em} & \\
 & \leq \sum_{n=0}^\infty \int_0^{2\pi} N(\rho, g_tr_\theta X)\cdot \ind_{\sys \in \left[\frac{\epsilon}{2^{n+1}},\frac{\epsilon}{2^{n}}\right)} (g_tr_\theta X)\d \theta \\
 & \leq c(\delta,\rho)\sum_{n=0}^\infty \int_0^{2\pi} \frac{1}{\sys(g_tr_\theta X)^{1+\delta}}\cdot \ind_{\sys \in \left[\frac{\epsilon}{2^{n+1}},\frac{\epsilon}{2^{n}}\right)} (g_tr_\theta X)\d \theta \\
 & \leq c(\delta,\rho)\sum_{n=0}^\infty \int_0^{2\pi} \frac{1}{\left(\frac{\epsilon}{2^{n+1}}\right)^{1+\delta}}\cdot \ind_{\sys \in \left[\frac{\epsilon}{2^{n+1}},\frac{\epsilon}{2^{n}}\right)} (g_tr_\theta X)\d \theta \\
 & \leq c(\delta,\rho)\sum_{n=0}^\infty \frac{2^{(n+1)(1+\delta)}}{\epsilon^{1+\delta}} \int_0^{2\pi} \ind_{\sys < \frac{\epsilon}{2^{n}}}(g_tr_\theta X)\d \theta \\
 & \leq c(\delta,\rho )\sum_{n=0}^\infty \frac{2^{(n+1)(1+\delta)}}{\epsilon^{1+\delta}} |\{ \theta\in [0,2\pi): \sys(g_tr_\theta X) < \frac{\epsilon}{2^{n}} \}|.
\end{align*}
}

Then, by Lemma~\ref{lemm:circles-cusp}, for $1+\delta < \beta < 2$,
\begin{align}
\lim_{t\to\infty} \int_0^{2\pi} N_F(\rho, A(g_t, r_\theta X)f)\cdot\ind_{\sys < \epsilon}(g_tr_\theta X)\d \theta
 & \leq c(\delta,\rho)\sum_{n=0}^\infty \frac{2^{(n+1)(1+\delta)}}{\epsilon^{1+\delta}} c(X,\beta) \frac{\epsilon^\beta}{2^{n\beta}} \nonumber \\
 & \leq c(\delta,\rho,X,\beta) \epsilon^{\beta - (1+\delta)}. \label{equa:int-coke} 
\end{align}

Joining both parts of the integral, (\ref{equa:int-ke}) and (\ref{equa:int-coke}), we obtain that, for every $\epsilon,\delta,\rho>0$, $f\in F_X(\Z)$ and $1+\delta<\beta<2$, \[\lim_{t\to\infty} \int_0^{2\pi} N_F(\rho, A(g_t, r_\theta X)f)\d\theta \leq 0 + c(\delta,\rho,X,\beta) \epsilon^{\beta - (1+\delta)}.\] Then, fixing $\rho>0$, $0<\delta < 1$ and $1+\delta<\beta<2$, and letting $\epsilon\to 0$, we conclude that \[\lim_{t\to\infty} \int_0^{2\pi} N_F(\rho, A(g_t, r_\theta X)f)\d\theta = 0.\]
\qed

\subsection{Proof of Proposition~\ref{prop:zero}} \label{sec:prop:zero}


The first step is to show that, for a cylinder, being bounded in length implies having bounded projection in $F_X$. 

\begin{lemm} \label{lemm:bounded-holonomy-homology} Let $\rho>0$ and $K\subset\M$ be a compact subset. Then, for all $X'\in K$ and all cylinder $C$ on $X'$ such that $|\hol[\omega']\gamma_C|\leq\rho$ we have that
\[\| \pr_{F_{X'}}[\gamma_C]\|_{\omega'} \leq c(\rho,K,F) .\]
\end{lemm}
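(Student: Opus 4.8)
The plan is to control $\pr_{F_{X'}}[\gamma_C]$ \emph{not} through the Hodge norm of $[\gamma_C]$ itself, but through the two intersection numbers of $[\gamma_C]$ with a fixed integral frame of $F$, which are trivially bounded by the length of $\gamma_C$. Recall first that $F$, being continuous, equivariant and defined over $\Z$, is flat (locally constant): the integral sublattice $F_{X'}(\Z)\subset H^1_{X'}(\Z)$ of the locally constant lattice $H^1(\Z)$ varies continuously, hence is locally constant, and $F=\langle F(\Z)\rangle_\R$; also $F$ is symplectic by Remark~\ref{rema:Lyapunov-symplectic}. So I would cover $K$ by finitely many small open sets $U$ (with $\bar U$ compact) such that over each $U$ the bundle $F$ admits a flat frame $c_1,c_2\in H^1(S,\Z)$ of fixed integral classes, \emph{and} all zeros of $\omega'$, $X'\in\bar U$, lie in one fixed neighbourhood $N$ of the singular set which is a disjoint union of topological discs (the zeros move continuously in period coordinates, so this holds after shrinking $U$). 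It then suffices to prove the estimate for $X'$ ranging over one such $U$.

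The key step is to bound $|\langle[\gamma_C],c_i\rangle|$. I would fix smooth closed $1$-forms $\tilde c_1,\tilde c_2$ on $S$ representing $c_1,c_2$ and \emph{vanishing identically on $N$}: on $N$ every closed form is exact, so one subtracts $d(\phi f_i)$, where $f_i$ is a local primitive on $N$ and $\phi$ a bump function equal to $1$ near $\Sigma$ and supported in $N$; this changes the representative only by an exact form. Since $\tilde c_i$ vanishes on $N\supseteq\Sigma(\omega')$,
\[\langle[\gamma_C],c_i\rangle=\int_{\gamma_C}\tilde c_i=\int_{\gamma_C\cap(S\setminus N)}\tilde c_i .\]
On the compact set $S\setminus N$ there are no cone points of $\omega'$, the flat metrics $|\omega'|$ ($X'\in\bar U$) vary continuously and are therefore uniformly bi-Lipschitz to one fixed Riemannian metric $g_0$; hence the $g_0$-length of $\gamma_C\cap(S\setminus N)$ is at most $C_U$ times its flat length, which is at most the total flat length $|\hol[\omega']\gamma_C|\le\rho$ of $\gamma_C$. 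Consequently $|\langle[\gamma_C],c_i\rangle|\le C_U\,\rho\,\sup_{S\setminus N}|\tilde c_i|_{g_0}=:c_1(\rho,U,F)$ for $i=1,2$.

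Finally I would recover the Hodge norm of the projection. Writing $\pr_{F_{X'}}[\gamma_C]=\beta_1c_1+\beta_2c_2$ in the frame, the defining property of the symplectic projection gives $\beta_1\langle c_1,c_j\rangle+\beta_2\langle c_2,c_j\rangle=\langle[\gamma_C],c_j\rangle$ for $j=1,2$; since $\langle c_i,c_i\rangle=0$ and $m:=\langle c_1,c_2\rangle$ is a nonzero integer (the intersection pairing is topological, hence constant on $U$, and nondegenerate on the symplectic plane $F_{X'}$), this yields $|\beta_1|,|\beta_2|\le c_1(\rho,U,F)/|m|$. Therefore
\[\|\pr_{F_{X'}}[\gamma_C]\|_{\omega'}\le|\beta_1|\,\|c_1\|_{\omega'}+|\beta_2|\,\|c_2\|_{\omega'}\le\frac{c_1(\rho,U,F)}{|m|}\bigl(\|c_1\|_{\omega'}+\|c_2\|_{\omega'}\bigr),\]
and $X'\mapsto\|c_i\|_{\omega'}$ is continuous on the compact set $\bar U$, hence bounded there; taking the maximum over the finitely many charts gives the constant $c(\rho,K,F)$.

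The delicate point — and the reason the statement is not immediate — is that one must \emph{not} try to bound $\|[\gamma_C]\|_{\omega'}$ first and then apply Cauchy--Schwarz against $c_i$: a thin cylinder of circumference $\le\rho$ can have arbitrarily small modulus while its surface stays in $K$, and then $\|[\gamma_C]\|_{\omega'}$ is of order $(\mathrm{modulus})^{-1/2}$ and unbounded. The argument above circumvents this precisely because $\tilde c_i$ is a \emph{fixed} form supported away from the singularities, so $\langle[\gamma_C],c_i\rangle$ only sees the portion of $\gamma_C$ lying in the thick region $S\setminus N$, where all the relevant flat metrics are comparable; whatever $\gamma_C$ does inside $N$ (where $|\omega'|$ is tiny and it may wind a lot) is invisible to $\tilde c_i$. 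The only genuine bookkeeping is the reduction to a finite cover of $K$, forced by the fact that the zeros of $\omega'$ are not located at fixed points of $S$.
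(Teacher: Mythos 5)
Your proof is correct, but it takes a genuinely different route from the paper's. The paper argues by soft compactness and continuity: for each fixed $X'\in K$, there are only finitely many cylinders of length at most $2\rho$, so $c_0(2\rho,X',F)=\max\|\pr_{F_{X'}}[\gamma]\|_{\omega'}$ over that finite set is finite; continuity of $F$, of $\pr_F$, and of the Hodge norm then gives a neighborhood $U(X')$ on which the cylinders of length $\leq\rho$ are (up to local identification) among those of length $\leq 2\rho$ in $X'$ and the norm is distorted by at most a factor of~$2$, so the bound persists, and a finite subcover of $K$ finishes. Your argument is quantitative and geometric: you first observe (correctly, and it is a nice point worth noting since the paper's hypotheses say only ``continuous'') that continuity plus being defined over $\Z$ forces $F$ to be flat, because the set of rational $2$-planes in the Grassmannian is countable; you then take a flat integral frame $c_1,c_2$, represent it by smooth closed $1$-forms $\tilde c_i$ vanishing on a neighborhood $N$ of $\Sigma$, and bound $\langle[\gamma_C],c_i\rangle=\int_{\gamma_C\cap(S\setminus N)}\tilde c_i$ using the uniform comparability of the flat metrics $|\omega'|$ on the thick part $S\setminus N$. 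The symplectic linear algebra in the frame $c_1,c_2$ then yields the bound on $\pr_{F_{X'}}[\gamma_C]$. Your closing remark --- that a naive Cauchy--Schwarz through $\|[\gamma_C]\|_{\omega'}$ must fail because the Hodge norm of the core class blows up like $(\mathrm{modulus})^{-1/2}$ for thin cylinders --- is exactly the right diagnosis and shows you understand why the lemma is not a triviality. The trade-off: your proof needs the flat structure of $F$ in an essential way, where the paper's softer argument only uses continuity of the subbundle; since flatness does follow from the stated hypotheses, both apply in the relevant setting, and your version has the advantage of making the constant $c(\rho,K,F)$ effectively traceable.
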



\begin{proof}

Let $\mathbf{C}(\rho,X')$ be the finite set of cylinders on $X'$ of length at most $\rho$.
Then, $c_0(\rho,X',F)=\max\{\|\pr_{F_{X'}}[\gamma]\|_{\omega'}:C\in\mathbf{C}(\rho,X')\}$ is finite.

Define $\Gamma(\rho,X')=\{\gamma_C:C\in\mathbf{C}(\rho,X')\}$. Then, since $F$ is continuous, $\pr_{F_{(\cdot)}}(\cdot)$ is continuous and since the Hodge norm $\|\cdot\|_{(\cdot)}$ is continuous, there is a neighborhood $U(X')$ of $X'$ in $\M$ such that, for all $\bar{X}=(\bar{S},\bar{\omega})\in U(X')$,
\begin{itemize}
\item $\Gamma(\rho,\bar{X})\subset \Gamma(2\rho,X')$ (after local identification), and
\item $\|\pr_{F_{\bar{X}}}\cdot\|_{\bar{\omega}}\leq 2\|\pr_{F_{X'}}\cdot\|_{\omega'}$.
\end{itemize}
Therefore, if $\bar{C}$ is a cylinder in $\bar{X}\in U(X')$ with $|\hol[\bar{\omega}]\gamma_{\bar{C}}|\leq \rho$, then
\[\|\pr_{F_{\bar{X}}}[\gamma_{\bar{C}}]\|_{\bar{\omega}} \leq 2\|\pr_{F_{X'}}[\gamma_{\bar{C}}]\|_{\omega'} \leq 2c_0(2\rho,X',F) \eqqcolon c(\rho,X',F). \]

Since $U(X')$ is open and $K$ is compact, there is a finite set $A\subset K$ such that $K\subset \cup_{X'\in A} U(X')$. We conclude, taking $\max_{X'\in A} c(\rho,X',F)$ to be $c(\rho,K,F)$.
\end{proof}

Since $F$ is $2$-dimensional and has non-zero Lyapunov exponents, it is symplectic and its Lyapunov spectrum is symmetric (see Remark~\ref{rema:Lyapunov-symplectic}), say $\Lambda(\M,F)=\{\pm\lambda\}$, $\lambda>0$. Moreover, since $f\in F_X(\Z)$ is an integer covector, its associated Lyapunov exponent has to be positive. Then, for almost every $\theta$, we have that \[\lim_{t\to\infty} \frac{\log \| A(g_t,r_\theta X) f\|_{g_tr_\theta\omega}}{t} = \lambda>0, \] in particular, for almost every $\theta$ and sufficiently large $t$, $t\geq t_0(r_\theta X,f)$, 
\begin{equation} \label{equa:ineq1}
\| A(g_t,r_\theta X) f\|_{g_tr_\theta\omega} \geq e^{\frac{\lambda}{2}t}.
\end{equation}

Recall that, since $F$ is defined over $\Z$, $F_{X}^\pr(\Z)=\pr_{F_X}H^1_X(\Z)$ is a lattice and $F_X(\Z)\subset F_X^\pr(\Z)$. Let $m=m(f)$ be a positive integer such that $\frac{1}{m}f$ is a primitive element in the lattice $F_X^\pr(\Z)$, and let $c(\rho,\epsilon,F)$ be the constant given by Lemma~\ref{lemm:bounded-holonomy-homology} for $K = K_\epsilon$. Then, for large $t$, $t\geq t_0(\epsilon,\rho,f)$,
\begin{equation} \label{equa:ineq2}
e^{\frac{\lambda}{2}t} > m(f) c(\rho,\epsilon,F).
\end{equation}
Therefore, putting (\ref{equa:ineq1}) and (\ref{equa:ineq2}) together, for almost every $\theta$ and all $t$ sufficiently large, $t\geq t_0(\epsilon,\rho,\theta,X,f)$, we have that \[\| A(g_t,r_\theta X) f\|_{g_tr_\theta\omega} \geq e^{\frac{\lambda}{2}t} > m(f) c(\rho,\epsilon,F).\]

Fix $\theta$ and $t$ as before, consider $X_t=g_tr_\theta X$, $\omega_t=g_tr_\theta\omega$ and $f_t=A(g_t,r_\theta X)f$, and suppose that $X_t\in K_\epsilon$.
Now, if $\gamma$ is the core curve of a cylinder in $X_t$ such that $|\mathrm{hol}_{\omega_t} \gamma| \leq \rho$, then
\[ \|\pr_{F_{X_t}}[\gamma]\|_{\omega_t} \leq c(\rho,\epsilon,F) < \frac{1}{m}\|f_t\|_{\omega_t},\]
where the first inequality is given by Lemma~\ref{lemm:bounded-holonomy-homology}, for $X'=X_t$ and $K=K_\epsilon$.

%

Recall that under our hypothesis, an $(F,f_t)$-bad cylinder $C$ in $X_t$ has to verify that $\pr_{F_{X_t}}[\gamma_C]\neq 0$ is colinear to $f_t$ (see Remark~\ref{rema:hyp-subbundle}).
But no element in $F_{X_t}^\pr(\Z)$ colinear to $f_t$ can be shorter than $\frac{1}{m}f_t$, since this last is primitive in the lattice $F_{X_t}^\pr(\Z)$, by definition of $m$ and, evidently, $\pr_{F_{X_t}}[\gamma]$ belongs to $F_{X_t}^\pr(\Z)$

Then $\gamma$, as before, cannot be the core curve of an $(F,f_t)$-bad cylinder in $X_t$. And thus, $N_F(\rho,A(g_t,r_\theta X)f))=N_F(\rho,f_t)= 0$, for $\theta$ and $t$ as before. That is, for all $f\in F_X(\Z)$, all $\rho,\epsilon > 0$ and almost every $\theta$ \[ N_F(\rho, A(g_t, r_\theta X)f)\cdot\ind_{K_\epsilon}(X_t)=0\] for sufficiently large $t$, $t \geq t_0(x,\rho,\epsilon,\theta)$. \qed

\section{Application to wind-tree models} \label{sect:application-wtm}

In this section we apply previous discussion to wind-tree models. As we have seen, there is an identification between cylinders (up to $\Z^2$-translations) in the infinite billiard $\Pi\in\WT(m)$ and the union of $(F^{+-}\oplus F^{-+})$-good cylinders, $(F^{+-},h)$-bad cylinders and $(F^{-+},v)$-bad cylinders in $\X=\X(\Pi)\in\B(m)$.
Moreover, the subbundles $F^{+-}$ and $F^{-+}$ are always $2$~dimensional flat subbundles defined over $\Z$ and, by Theorem~\ref{theo:DZ}, we know that for almost every $\X\in\B(m)$, $\Lambda(\M,F^{+-})=\Lambda(\M,F^{-+})=\{\pm\delta(m)\}$, where $\M$ is the $\slr$-orbit closure of $\X$ and $\delta(m)>0$. In particular, for almost every $\X\in\B(m)$, $F^{+-}$ and $F^{-+}$ satisfy the hypothesis of Theorem~\ref{theo:bad-cylinders}.

This suffices for the almost everywhere statement of Theorem~\ref{theo:main}, but it does not for the everywhere statement of Theorem~\ref{theo:weak}. However, an adaptation of Forni's criterion~\cite{Fo} allows us to prove that the top Lyapunov exponents of $F^{+-}$ and $F^{-+}$ are in fact positive.

\begin{theo}[Forni's criterion for integer equivariant subbundles] \label{theo:Forni}
Let $\M$ be an affine invariant manifold and $F$ be an equivariant subbundle of the Hodge bundle on $\M$ defined over~$\Z$. Suppose that there exists a flat surface $X\in\M$ and a family of parallel closed geodesics in $X$ such that the space generated by the (Poincar\'e dual of the) homology classes of these closed geodesics is a subspace of $F_X$ of dimension $d\geq 1$. Then, the top $d$ Lyapunov exponents on $F$ are strictly positive, that is, \[\lambda_1(\M,F) \geq \dots \geq \lambda_{d}(\M,F)>0.\]
\end{theo}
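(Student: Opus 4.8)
The plan is to adapt G.~Forni's geometric criterion for nonuniform hyperbolicity~\cite{Fo} so that it applies to the integer equivariant subbundle $F$ rather than to the whole Hodge bundle. First I would normalize: after replacing $X$ by $r_\theta X$ for a suitable $\theta$, we may assume the given parallel closed geodesics are horizontal; write $c_1,\dots,c_d\in F_X$ for the Poincar\'e duals of their homology classes, so that $W=\langle c_1,\dots,c_d\rangle$ is a $d$-dimensional subspace of $F_X$ which, being generated by classes of pairwise disjoint simple closed curves, is isotropic for the intersection form. Two facts from Forni's theory are needed. First, \emph{log-convexity and continuity of the first variation}: for every $Y=(S',\omega')\in\M$ and every $c\in H^1(S',\R)$ the function $t\mapsto\log\|A(g_t,Y)c\|_{g_t\omega'}$ is convex, and its derivative at $t=0$ is given by a bilinear form $B_Y$ on the holomorphic part of $H^1_Y$ which depends continuously on $Y$. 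Second, \emph{growth of duals of horizontal core curves}: if $\gamma$ is a horizontal closed geodesic of length $\ell>0$ and $c$ is the Poincar\'e dual of $[\gamma]$, then the (topological) cup-product pairing gives $\langle A(g_t,X)c,[\Re(g_t\omega)]\rangle=e^{t}\ell$ while $\|[\Re(g_t\omega)]\|_{g_t\omega}$ is constant because $g_t$ preserves the area, so by Cauchy--Schwarz for the Hodge inner product $\|A(g_t,X)c\|_{g_t\omega}=\|c\|_{g_tX}\geq c_0\,\ell\,e^{t}$.

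Then I would settle the case $d=1$ — the only one needed for the wind-tree surfaces, where $F^{+-}$ and $F^{-+}$ are two-dimensional and we merely want their top exponents positive. Assume for a contradiction that $\lambda_1(\M,F)=0$ (the possibility $\lambda_1(\M,F)<0$ is excluded because $F$, being defined over $\mathbb{Q}$, underlies a polarised $\mathbb{Q}$-sub-variation of Hodge structure and is therefore symplectic for the intersection form, so its Lyapunov spectrum is symmetric). Then $\Lambda(\M,F)=\{0\}$, so by Oseledets' theorem in the form of Theorem~\ref{theo:Chaika-Eskin}, for $\nu_\M$-almost every $Y$ and every $c\in F_Y$ one has $\tfrac1t\log\|A(g_t,Y)c\|_{g_t\omega}\to 0$ as $t\to\pm\infty$; a convex function on $\R$ that is sublinear at both ends is constant, hence $t\mapsto\|A(g_t,Y)c\|_{g_t\omega}$ is constant. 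Differentiating at $t=0$ shows $B_Y$ vanishes identically on $F_Y$ for $\nu_\M$-almost every $Y$. Since $Y\mapsto B_Y|_{F_Y}$ is continuous and $\nu_\M$ has full support in $\M$, this forces $B_Y|_{F_Y}\equiv 0$ for \emph{every} $Y\in\M$, in particular for $Y=X$; but then $t\mapsto\|A(g_t,X)c_1\|_{g_t\omega}$ would be constant, contradicting the exponential lower bound above. Hence $\lambda_1(\M,F)>0$.

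For general $d$ I would run the same argument on the exterior powers: the vector $c_1\wedge\cdots\wedge c_j\in\Lambda^{j}F_X$ grows like $e^{jt}$ along the horizontal Teichm\"uller geodesic (the Cauchy--Schwarz estimate above, now pairing against $[\Re(g_t\omega)]\wedge(\cdot)$), so the top exponent of $\Lambda^{j}F$, namely $\lambda_1(\M,F)+\cdots+\lambda_j(\M,F)$, is strictly positive for each $1\le j\le d$. Upgrading the positivity of these partial sums to the positivity of each individual $\lambda_j(\M,F)$ uses Forni's description of the neutral (zero-exponent) Oseledets subbundle of $F$ as an isometric subbundle that is controlled at every point of $\M$; this is precisely the step where the hypothesis that $F$ be defined over $\Z$ is genuinely needed, and where the argument departs from the classical one for the full Hodge bundle.

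I expect this last step to be the main obstacle: passing from ``a single vector grows along one Teichm\"uller geodesic through the special surface $X$'' to ``the top $d$ Lyapunov exponents are positive'' is exactly the content of Forni's criterion, and its adaptation to an arbitrary integer equivariant subbundle requires the Hodge-theoretic properties of $\mathbb{Q}$-sub-variations of Hodge structure (semisimplicity, and the flatness/isometry of the neutral part) applied to $F$, together with a verification that the relevant degeneracy locus of the Forni form $B$ is detected at $X$. The case $d=1$ carried out above is, however, already sufficient for all the applications in this paper.
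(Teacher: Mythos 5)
The main difficulty with your proposal is the last claim, that the case $d=1$ ``is already sufficient for all the applications in this paper.'' It is not. In Corollary~\ref{coro:hypothesis} the theorem is applied with $d=4$: the subbundle $F$ there is the rank-$8$ flat subbundle generated by the Poincar\'e duals of $h_{00},h_{10},h_{01},h_{11},v_{00},v_{10},v_{01},v_{11}$, and the four independent horizontal cylinder classes $h_{ij}$ give $d=4$. One then concludes that the top four exponents of $F$ are positive, uses the symplectic symmetry of $\Lambda(\M,F)$ to get that \emph{all eight} exponents are non-zero, and only afterwards restricts to $F^{+-},F^{-+}\subset F$. Your $d=1$ argument produces only $\lambda_1(\M,F)>0$, which says nothing about the remaining exponents of $F$, nor about the sub-spectrum $\Lambda(\M,F^{+-})$. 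Nor can you run the $d=1$ contradiction directly on $F^{+-}$: the natural integral class there is $h$, Poincar\'e dual to $h_{00}-h_{01}+h_{10}-h_{11}$, and by the $\langle\tau_h,\tau_v\rangle$-symmetry of $\omega$ all four $\int_{h_{ij}}\Re\omega$ are equal, so $\langle h,[\Re\omega]\rangle=0$ and your Cauchy--Schwarz lower bound for $\|h\|_{g_t\X}$ is vacuous. Hence the case $d\geq 2$, which you explicitly leave open, is exactly what the application requires.

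Your route for $d=1$ is also genuinely different from the paper's. The paper transplants Forni's original argument [Fo, \S\S3--5] --- transversality of the unstable bundle to integral isotropic subspaces, convergence of the $d\times d$ block of the second fundamental form to $-\mathrm{Id}$ near a degeneration, and local product structure of $\nu_\M$ (automatic after Eskin--Mirzakhani) --- restricted to the $\Z$-defined subbundle $F$. Your contradiction scheme (Oseledets $\Rightarrow$ sublinear growth; claimed convexity of $t\mapsto\log\|c\|_{g_tY}$ $\Rightarrow$ constant; continuity of the Forni form and full support of $\nu_\M$ $\Rightarrow$ vanishing at $X$; Cauchy--Schwarz growth $\Rightarrow$ contradiction) would, if correct, be a cleaner and more conceptual proof. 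But the convexity of $t\mapsto\log\|c\|_{g_tY}$ for a fixed cohomology class $c$ is not a statement I can locate in [Fo], and it is not obvious: convexity along geodesics of the period domain does not automatically descend to Teichm\"uller geodesics, because the period map is not totally geodesic away from Veech loci. Without this convexity, the inference ``sublinear at $\pm\infty$ $\Rightarrow$ constant'' fails, and the $d=1$ argument does not close. You would either need to supply a proof of the convexity, or replace that step by Forni's quantitative scheme (which is exactly what the paper does).
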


\begin{proof} The proof follows as the original proof of \cite[Theorem~1.6]{Fo}. In fact, as communicated to as by C.~Matheus, the main steps of the proof are:
\begin{enumerate}[leftmargin=*]
  \item \cite[\S~3]{Fo}: The unstable bundle of the Kontsevich--Zorich cocycle is $\nu_\M$-almost everywhere transverse to all integral isotropic subspaces (see \cite[Lemma~3.1]{Fo}). In our case, we can restrict the unstable bundle to the equivariant subbundle $F$ and this statement remains true since the subbundle $F$ is defined over~$\Z$.
  \item \cite[\S~4]{Fo}: $d\times d$-block of the second fundamental form converges to $-\mathrm{Id}$ along an isotropic subspace transverse to the (Poincar\'e dual of the) $d$-dimensional subspace generated by the closed geodesics (see \cite[Lemma~4.4]{Fo}). This remains true when restricting to the subbudle $F$; the proof relies only on classical formulas for the period matrix near the boundary of the Deligne--Mumford compactification of the moduli space of abelian differentials (see \cite[Lemma~4.1]{Fo}).
  \item \cite[\S~5]{Fo}: Finally, the proof of \cite[Theorem~1.6]{Fo} remains valid since the argument combines the two previous points with a hypothesis of local product structure, which is always true after Eskin--Mirzakhani~\cite{EMi}.
\end{enumerate}
\end{proof}

\begin{coro} \label{coro:hypothesis}
For every $\X\in\B(m)$, the subbundles $F^{+-}$ and $F^{-+}$ defined on the $\slr$-orbit closure of $\X$, satisfy the hypothesis of Theorem~\ref{theo:bad-cylinders}.
\end{coro}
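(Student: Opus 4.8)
\textbf{Proof proposal for Corollary~\ref{coro:hypothesis}.}

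The plan is to go through the four requirements of Theorem~\ref{theo:bad-cylinders} for $F=F^{+-}$ and for $F=F^{-+}$, and observe that all but one were already recorded in \S\ref{sect:background}. Indeed, from the construction of $F^{+-}$ and $F^{-+}$ there, each of these subbundles is two--dimensional, flat---hence continuous and equivariant---, defined over $\Z$, and symplectic, being canonically identified with the Hodge bundle over the genus--one quotient $\Wh$ (respectively $\Wv$). So the only hypothesis left to check, for \emph{every} $\X\in\B(m)$, is that the Lyapunov spectra $\Lambda(\M,F^{+-})$ and $\Lambda(\M,F^{-+})$ are non--zero, where $\M$ is the $\slr$--orbit closure of $\X$. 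For almost every $\X$ this is contained in Theorem~\ref{theo:DZ} (the exponents being $\pm\delta(m)$ with $\delta(m)>0$), so the real content is the remaining surfaces, and there I will invoke Theorem~\ref{theo:Forni}.

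Since $F^{+-}$ and $F^{-+}$ are rank two and symplectic, a single positive exponent suffices: it forces $\Lambda(\M,F^{+-})=\{\pm\lambda\}$ with $\lambda>0$, which is exactly the missing hypothesis. Moreover, any family of \emph{parallel} closed geodesics spans an isotropic subspace of the Hodge bundle, and a rank--two symplectic bundle has no isotropic plane; so in Theorem~\ref{theo:Forni} the value $d=1$ is both forced and enough. Hence it suffices to produce one flat surface $X\in\M$ together with a family of parallel closed regular geodesics on $X$ whose Poincar\'e--dual classes span a one--dimensional subspace of $F^{+-}_X$ (and, by the same reasoning applied with the roles of the horizontal and vertical directions exchanged, of $F^{-+}_X$). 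I will take $X=\X=\X(\Pi)$ itself, using the concrete description of $F^{+-}_\X$ as the subspace of $H^1(\X;\R)$ fixed by $G'=\langle\tau_h,\iota\circ\tau_v\rangle$, equivalently as the pullback of $H^1(\Wh;\R)$ under the natural $(\Z_2)^2$--cover $\X\to\Wh$; a parallel family on $\X$ then works precisely when it is $G'$--invariant and its (one--dimensional) span is fixed pointwise by $G'$.

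The construction is explicit and parameter--independent. One exhibits a completely periodic direction on the wind--tree surface and selects inside its cylinder decomposition a $G'$--orbit of cylinders that are pairwise homologous with matching orientations; their common Poincar\'e--dual class $w$ is then a non--zero $G'$--invariant class, i.e.\ $0\neq w\in F^{+-}_\X$, and those cylinders are the desired family. A single cylinder cannot do the job, since $\iota$, being a rotation by $\pi$, reverses the core curve of any cylinder it preserves, so a pair swapped by $\iota\circ\tau_v$ is the minimal possibility. Equivalently, at the level of the quotient: one picks a cylinder $C_0$ on $\Wh$ with $[\gamma_{C_0}]\neq 0$ in $H^1(\Wh;\R)$ whose preimage in $\X$ is a union of mutually homologous parallel cylinders; that preimage family spans the line pulled back from $\langle[\gamma_{C_0}]\rangle$, which sits inside $F^{+-}_\X$. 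Carrying this out with horizontal and vertical interchanged handles $F^{-+}$, and Theorem~\ref{theo:Forni} then gives the positivity of the top exponents, completing the verification.

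The main obstacle is exactly this last, combinatorial--topological step: locating a periodic direction on $\X$ and a $G'$--invariant family of pairwise homologous cylinders whose class is non--zero in $F^{+-}_\X$, and checking that the class really lands in $F^{+-}$ rather than merely in $E^{+-}$. For $m=1$ this is implicit in Delecroix--Hubert--Leli\`evre (where in addition $F^{+-}=E^{+-}$), and for general $m$ it follows from the analogous arrangement of cylinders for the $4m$--cornered obstacle. Everything else is bookkeeping of the already--established structure of $F^{+-}$ and $F^{-+}$.
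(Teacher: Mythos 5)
Your proposal takes a genuinely different route from the paper, and it has a real gap precisely where you flag one. You try to apply Forni's criterion (Theorem~\ref{theo:Forni}) \emph{directly} to the rank-$2$ bundle $F^{+-}$ with $d=1$, which requires producing a family of parallel closed geodesics on $\X$ whose (one-dimensional) span of Poincar\'e duals lies inside $F^{+-}_\X$. Because $F^{+-}$ is anti-invariant under $\tau_v$ and invariant under $\iota\circ\tau_v$ and $\tau_h$, a single cylinder's core curve essentially never has its dual class in $F^{+-}$; you therefore need a $G'$-orbit of mutually homologous cylinders with matched orientations, and as you write yourself, locating such a configuration on $\X(\Pi)$ for arbitrary $m$ and arbitrary parameters ``is the main obstacle.'' That step is left unproved, so the argument is incomplete as written.

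The paper sidesteps this entirely by enlarging the bundle before applying Forni's criterion. It introduces the rank-$8$ flat subbundle $F$ spanned by the Poincar\'e duals of the eight cycles $h_{00},h_{10},h_{01},h_{11},v_{00},v_{10},v_{01},v_{11}$; this is defined over $\Z$ and symplectic. The four horizontal cycles $h_{ij}$ are core curves of parallel horizontal cylinders, and their duals lie \emph{directly} in $F_\X$ and are linearly independent — no homology matching across a $G'$-orbit is needed. Applying Theorem~\ref{theo:Forni} with $d=4$ gives four positive exponents on $F$; symmetry of the symplectic spectrum then forces all eight exponents to be nonzero. Finally $F^{+-}, F^{-+}\subset F$ (since $h\in F^{+-}$, $v\in F^{-+}$ are combinations of the $h_{ij}$'s resp.\ $v_{ij}$'s), so $\Lambda(\M,F^{\pm\mp})\subset\Lambda(\M,F)\not\ni 0$. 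This buys you a parameter-free, $m$-independent argument with no cylinder-hunting. If you want to salvage your version, you would have to supply the explicit $G'$-invariant homologous cylinder family for every $m$ and every choice of obstacle parameters; the paper's auxiliary-bundle trick is exactly what makes that unnecessary.
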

\begin{proof}
We already know that the subbundles $F^{+-}$ and $F^{-+}$ are $2$~dimensional flat subbundles defined over $\Z$. Then, it remains to prove that they have non-zero Lyapunov exponents. 

Let $F_\X$ be the (Poincar\'e dual of the) symplectic subspace generated by cycles $h_{00},h_{10},h_{01},h_{11},v_{00},v_{10},v_{01},v_{11}$ (see Figure~\ref{figu:compact-surface}). This defines a flat (that is, a locally constant) subbundle of the Hodge bundle, which is clearly defined over $\Z$. Moreover, $F$ has rank~$8$ and is symplectic. In particular, its Lyapunov spectrum is symmetric. Taking the closed geodesics given by $h_{00},h_{10},h_{01},h_{11}$, which are horizontal and homologically independent, and applying Theorem~\ref{theo:Forni}, we conclude that $F$ has $4$ positive Lyapunov exponents and therefore all eight Lyapunov exponents are non-zero.
Finally, we note that $F^{+-}$ and $F^{-+}$ are subbundles of $F$ and, in particular, their Lyapunov spectra are contained in the one of $F$. Thus, they have non-zero Lyapunov exponents. 
\end{proof}

Thus, by Theorem~\ref{theo:bad-cylinders}, $(F^{+-},h)$-bad cylinders and $(F^{-+},v)$-bad cylinders in $\X$ have subquadratic asymptotic growth rate, proving Theorem~\ref{theo:bad-cylinders-WTM}. Thus, asymptotic formulas for the wind-tree model correspond to those of $(F^{+-}\oplus F^{-+})$-good cylinders. In particular, this justifies why we can conclude Theorem~\ref{theo:weak}, so we have weak asymptotic formulas for \emph{every} wind-tree model.

For simplicity, henceforth, we will call simply good cylinders the $(F^{+-}\oplus F^{-+})$-good cylinders, and by bad cylinders we will refer to $(F^{+-},h)$ and $(F^{-+},v)$-bad cylinders.

As a direct consequence of Theorem~\ref{theo:bad-cylinders-WTM} and an adapted version of Theorem~\ref{theo:EM-SV-constant} (see Remark~\ref{rema:EM-SV-constant}), we have the following.

\begin{coro}
For almost every wind-tree billiard $\Pi\in\WT(m)$, the number $N(L,\Pi)$ of closed billiard trajectories of length bounded by $L$ in $\Pi$ has quadratic asymptotic growth rate, 
\[N(L,\Pi)\sim \frac{1}{4} c_{good}(\M)\frac{\pi L^2}{\mathrm{Area}\left(\Pi/\Z^2\right)},\]
where $c_{good}(\M)$ is the Siegel-Veech constant associated to the counting problem of good cylinders in $\M$, the $\slr$-orbit closure of $\X(\Pi)$.
\end{coro}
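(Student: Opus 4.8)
The plan is to combine the reduction to the compact surface with Theorem~\ref{theo:bad-cylinders-WTM} and the Eskin--Masur asymptotics, so that the only real work is a transfer of ``almost every'' from one measure to another. First I would invoke the length-preserving identification recalled at the start of \S~\ref{sect:application-wtm} (see also \S~\ref{sect:counting-periodic-surfaces} and \cite[\S~3]{DHL}): the isotopy classes of closed billiard trajectories of length at most $L$ in $\Pi$, counted up to $\Z^2$-translation, correspond to the cylinders $C$ in $\X=\X(\Pi)$ with $|\hol\gamma_C|\le L$ and $\langle\gamma_C,h\rangle=\langle\gamma_C,v\rangle=0$, and these split into the good cylinders together with the $(F^{+-},h)$-bad and the $(F^{-+},v)$-bad cylinders. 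Hence, writing $N_{good}(L,\X)$ for the number of good cylinders of length at most $L$ on $\X$,
\[ N(L,\Pi)=N_{good}(L,\X)+N_{F^{+-}}(L,h)+N_{F^{-+}}(L,v). \]
Since the hypotheses of Theorem~\ref{theo:bad-cylinders-WTM} hold for every $\X\in\B(m)$ by Corollary~\ref{coro:hypothesis}, the last two terms are $o(L^2)$, so $N(L,\Pi)=N_{good}(L,\X)+o(L^2)$.

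Next I would feed the good cylinders into the Eskin--Masur machinery. Because $F^{+-}\oplus F^{-+}$ is equivariant, the family of good cylinders --- equivalently, the cylinders $C$ with $\pr_{(F^{+-}\oplus F^{-+})_\X}\gamma_C=0$ --- is $\slr$-invariant, and it is a finite union of multiplicity-one configurations; therefore the version of Theorem~\ref{theo:EM-SV-constant} valid for such families (Remark~\ref{rema:EM-SV-constant} and the discussion in \S~\ref{sect:counting-periodic-surfaces}) applies and gives, for $\nu_\M$-almost every area-$1$ surface in $\M$, the asymptotics $N_{good}(L,\cdot)\sim c_{good}(\M)\,\pi L^2$, hence $N_{good}(L,Y)\sim c_{good}(\M)\,\pi L^2/\operatorname{Area}(Y)$ for a surface $Y$ of arbitrary area after the usual area rescaling; here $c_{good}(\M)>0$ because $\X(\Pi)$ carries at least one good cylinder (good cylinders do occur, as will be used throughout the explicit computation of $c_{good}(\M)$ in the following sections). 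As $\X$ is obtained by gluing four copies of a fundamental domain of $\Pi$, we have $\operatorname{Area}(\X)=4\operatorname{Area}(\Pi/\Z^2)$, and this is exactly where the factor $\tfrac14$ comes from.

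Combining the two previous steps yields the claimed asymptotics for every $\Pi$ such that $\X(\Pi)$ avoids a fixed $\nu_\M$-null exceptional set $\mathcal E\subset\M$. The hard part will be to conclude that $\{\Pi\in\WT(m):\X(\Pi)\in\mathcal E\}$ is Lebesgue-null in $\WT(m)$: this is not automatic, since $\B(m)$ need not carry positive $\nu_\M$-measure. To handle it I would use Theorem~\ref{theo:DZ}, which tells us that for Lebesgue-almost every $\Pi$ the orbit closure $\M$ is one fixed affine invariant manifold, lying over the full stratum $\Q(1^m,-1^{m+4})$; then, following the template of \cite[proof of Theorem~1.7]{AEZ}, one argues that the obstacle parameters furnish (part of) a system of period coordinates on $\M$, so that the Lebesgue measure class on $\WT(m)$ does not charge $\mathcal E$. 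I expect this transfer to be the only genuinely non-routine ingredient; with it in place, the chain
\[ N(L,\Pi)=N_{good}(L,\X)+o(L^2)\sim c_{good}(\M)\,\frac{\pi L^2}{\operatorname{Area}(\X)}=\tfrac14\,c_{good}(\M)\,\frac{\pi L^2}{\operatorname{Area}(\Pi/\Z^2)} \]
finishes the proof.
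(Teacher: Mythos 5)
Your proposal matches the paper's argument: the paper declares the corollary ``a direct consequence of Theorem~\ref{theo:bad-cylinders-WTM} and an adapted version of Theorem~\ref{theo:EM-SV-constant} (see Remark~\ref{rema:EM-SV-constant})'' and notes that the factor $1/4$ comes from $\operatorname{Area}(\X(\Pi))=4\operatorname{Area}(\Pi/\Z^2)$, which is exactly the decomposition good $+$ bad, subquadratic bad count, Eskin--Masur for the good count, and area rescaling that you spell out. You are somewhat more careful than the paper in explicitly flagging the transfer from ``$\nu_\M$-almost every surface'' to ``Lebesgue-almost every $\Pi\in\WT(m)$'' (the paper leaves this implicit, leaning on the AEZ/Theorem~1.7 template you also invoke), but the route is the same.
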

The factor $1/4$ coming from the fact that $\mathrm{Area}\left(\X(\Pi)\right)=4\cdot\mathrm{Area}\left(\Pi/\Z^2\right)$.

In addition, a cylinder in $\X$ is a good cylinder if (and only if) the homology class of its core curve projects trivially to $F^{+-}$ and to $F^{-+}$ (see Remark~\ref{rema:hyp-subbundle}). We have also the following useful characterization of good cylinders (see Figure~\ref{diagram} for notation).

\begin{lemm} \label{homology-good} Let $C$ be a cylinder in $\X$. Then $C$ is a good cylinder in $\X$ if and only if the core curve of $C$ projects to ho\-mo\-lo\-gi\-cally trivial curves in $\Wh$ and $\Wv$.
\end{lemm}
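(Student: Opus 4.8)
The plan is to carry the cohomological condition defining good cylinders through the two quotient maps and read it off on the tori $\Wh$ and $\Wv$. Write $\pi_h=\Pph\circ\Ph\colon\X\to\Wh$ and $\pi_v=\Ppv\circ\Pv\colon\X\to\Wv$ (see Figure~\ref{diagram}); these are branched coverings of degree $4$, with deck groups $\langle\tau_h,\iota\circ\tau_v\rangle$ and $\langle\tau_v,\iota\circ\tau_h\rangle$. By Remark~\ref{rema:hyp-subbundle}, since $F^{+-}$ and $F^{-+}$ are $2$-dimensional and symplectic, $C$ is a good cylinder exactly when $\pr_{F^{+-}_\X}[\gamma_C]=0$ and $\pr_{F^{-+}_\X}[\gamma_C]=0$; by nondegeneracy of the intersection form on each of these subbundles, this is the same as $\langle\gamma_C,f\rangle=0$ for every $f\in F^{+-}_\X$ and every $f\in F^{-+}_\X$, where $\gamma_C$ denotes the Poincar\'e dual of the core curve, so that $\langle\gamma_C,\beta\rangle=\int_{\gamma_C}\beta$ and orientations and multiplicities are irrelevant.

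The only input specific to the wind-tree construction is the identification $F^{+-}_\X=\pi_h^*H^1(\Wh,\R)$ (and symmetrically $F^{-+}_\X=\pi_v^*H^1(\Wv,\R)$): this is precisely the ``natural isomorphism with the Hodge bundle over $\Wh$'' recalled in \S~\ref{sect:background}, since $\pi_h^*$ is injective on real cohomology (split by the transfer, $\pi_h$ having finite degree), its image lies in the deck-invariant subspace $H^1(\X,\R)^{\langle\tau_h,\iota\circ\tau_v\rangle}=F^{+-}_\X$, and both spaces are $2$-dimensional ($\Wh$ has genus one, $F^{+-}$ has rank $2$), hence equal. Granting this, the projection formula gives for every $\alpha\in H^1(\Wh,\R)$
\[
\langle\gamma_C,\pi_h^*\alpha\rangle=\int_{\gamma_C}\pi_h^*\alpha=\big\langle(\pi_h)_*[\gamma_C],\alpha\big\rangle,
\]
so that $\langle\gamma_C,f\rangle=0$ for all $f\in F^{+-}_\X$ if and only if $(\pi_h)_*[\gamma_C]$ pairs trivially with all of $H^1(\Wh,\R)$, i.e.\ $(\pi_h)_*[\gamma_C]=0$ in $H_1(\Wh,\R)$; and since $H_1(\Wh,\Z)\cong\Z^2$ is torsion free this holds if and only if $(\pi_h)_*[\gamma_C]=0$ in $H_1(\Wh,\Z)$, that is, if and only if the projected curve $\pi_h(\gamma_C)$ is null-homologous in $\Wh$. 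The identical computation with $\pi_v$ and $\Wv$, combined with the previous one, gives the lemma.

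I do not expect a real obstacle: past the identification $F^{+-}_\X=\pi_h^*H^1(\Wh,\R)$ — which only repackages what \S~\ref{sect:background} already provides — the argument is formal. The two points deserving some care are the bookkeeping of sign and normalisation conventions, so that ``good'' genuinely unpacks as the vanishing of $\langle\gamma_C,\cdot\rangle$ on $F^{+-}_\X\oplus F^{-+}_\X$, and the remark that the core curve of a cylinder, being a regular closed geodesic, avoids the finitely many branch points of $\pi_h$ and $\pi_v$, so that $\pi_h(\gamma_C)$ and $\pi_v(\gamma_C)$ are honest closed curves realising the classes $(\pi_h)_*[\gamma_C]$ and $(\pi_v)_*[\gamma_C]$; in any event, only these homology pushforwards enter the argument and they are defined on homology classes irrespective of representatives.
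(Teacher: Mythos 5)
Your proof is correct and follows essentially the same route as the paper's: both reduce ``good'' to the vanishing of the symplectic projections $\pr_{F^{+-}_\X}[\gamma_C]$ and $\pr_{F^{-+}_\X}[\gamma_C]$ (Remark~\ref{rema:hyp-subbundle}), then invoke the identification of $F^{+-}_\X$ with $H^1(\Wh,\R)$ (and of $F^{-+}_\X$ with $H^1(\Wv,\R)$) to turn this into homological triviality of the projected core curve. You merely make explicit what the paper states tersely, namely that the isomorphism is realized by pullback $\pi_h^*$ along the degree-$4$ cover $\pi_h=\Pph\circ\Ph$ and that the equivalence ``projection vanishes iff pushforward vanishes'' is the projection formula; this also cleans up the paper's abuse of notation where $\ph$ is used for the composite map $\X\to\Wh$.
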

\begin{proof} Let $\gamma$ be the core curve of $C$. Then $C$ is an $F^{+-}$-good cylinder in $\X$ if and only if $\pr_{F^{+-}} [\gamma] = 0$. But $F^{+-}$ is naturally isomorphic to $H^1(\Wh)$ by the pushforward of the covering map $\ph$. Then  $\pr_{F^{+-}} [\gamma] = 0$ if and only if ${\ph}_*[\gamma] = [\ph \gamma] = 0$. Analogously, the same holds for $F^{-+}$ and $\Wv$. And good cylinders are exactly those which are $F^{+-}$ and $F^{-+}$-good cylinders.
\end{proof}

\begin{figure}[h]
\centering
\begin{tikzpicture}
\coveringDiagram
\end{tikzpicture}
\caption{Surfaces and covering maps notation}
\label{diagram}
\end{figure}
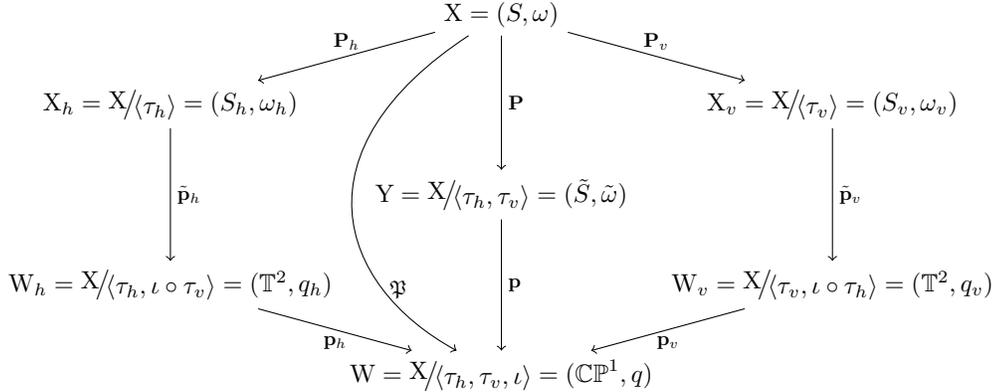

Then, good cylinders in $\X$ are exactly those which project to ho\-mo\-lo\-gi\-cally trivial cylinders in the flat surfaces $\Wh$ and $\Wv$.
Cylinders in $\X$ also project to the flat surface $\W$, of genus zero. The $\slr$-orbit closure $\M$ of $\X$ projects to the $\slr$-orbit closure $\L$ of $\W$, and for almost every $\X\in\B(m)$, $\R\L$ coincides with the whole stratum $\Q(1^m,-1^{m+4})$ (\cite[Proposition~2]{DZ}). Moreover, we have seen in \S~\ref{sect:configurations-sphere} that generic flat surfaces in $\Q(1^m,-1^{m+4})$ have only two types of configurations of cylinders, the so called pocket and dumbbell configurations. But generic flat surfaces are not pertinent to our study. In fact, the set of flat surfaces $\W\in\Q(1^m,-1^{m+4})$ coming from wind-tree billiards 
is negligible. However, we have the following.

\begin{prop} For almost any wind-tree billiard $\Pi\in\WT(m)$ the following property holds. Consider a cylinder in $\W(\Pi)=\rquo{\X(\Pi)}{\langle\iota,\tau_h,\tau_v\rangle}$ and suppose it is not horizontal nor vertical. Then, the cylinder make part of one of the configurations described in \S~\ref{sect:configurations-sphere}, that is, a pocket or a dumbbell configurations.
\end{prop}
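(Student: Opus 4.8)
The plan is to deduce the statement for the particular surface $\W(\Pi)$ from the classification of configurations on \emph{generic} genus-zero surfaces recalled in \S\ref{sect:configurations-sphere}, by a deformation argument inside the ambient stratum. For every $\Pi\in\WT(m)$ the surface $\W=\W(\Pi)$ lies in the stratum $\Q(1^m,-1^{m+4})$ of quadratic differentials on $\mathbb{CP}^1$ with at most simple poles, which is not the pillowcase stratum $\Q(-1^4)$ since $m\geq 1$. Restricting to almost every $\Pi$, we may assume in addition, by \cite[Proposition~2]{DZ}, that $\R\L=\Q(1^m,-1^{m+4})$; in particular $\W$ really sits in this stratum, with its $m$ zeros and $m+4$ poles all simple. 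By Masur--Zorich~\cite{MZ} and Boissy~\cite{Bo} (see \S\ref{sect:configurations-sphere}), the surfaces of $\Q(1^m,-1^{m+4})$ carrying a regular closed geodesic outside a pocket or a dumbbell configuration form a countable union of proper affine subvarieties; call the complementary (full-measure, hence dense) set the set of \emph{generic} surfaces.

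First I would fix a cylinder $C$ on $\W$ whose direction is neither horizontal nor vertical, denote by $\C$ its configuration, and choose $\varepsilon>0$ smaller than the width of $C$ and than the length of every saddle connection bounding $C$. Then I would work in period coordinates for $\Q(1^m,-1^{m+4})$ near $\W$ and check that for every $\W'$ sufficiently close to $\W$: the core curve of $C$ still bounds a cylinder $C'$ on $\W'$ (cylinders are stable under small deformations), the boundary saddle connections of $C$ persist without collapsing, and no singularity enters the interior of $C$. Consequently the decomposition $\W'\setminus\overline{C'}$ ---together with the orders of the singularities in each piece and the incidence pattern of the boundary saddle connections, which is exactly the data recorded by $\C$--- is isotopic to $\W\setminus\overline{C}$, so $C'$ realizes the same configuration $\C$; after shrinking the neighborhood, $C'$ also stays away from the horizontal and the vertical. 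Since generic surfaces are dense in $\Q(1^m,-1^{m+4})$, the neighborhood contains a generic $\W'$, on which $C'$ ---and hence $\C$--- must be a pocket or a dumbbell configuration, which is the claim.

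The hard part will be the persistence statement: making rigorous that the configuration attached to a single cylinder is locally constant in period coordinates as long as the deformation does not degenerate the cylinder. Concretely, one must check that no boundary saddle connection of $C$ shrinks to zero length and that $C$ is not invaded by a singularity (this is where the choice of $\varepsilon$ enters), and that, once this is granted, the purely topological/homological data defining $\C$ cannot change under the deformation. We keep the hypothesis of the statement excluding horizontal and vertical cylinders: these correspond to the special completely periodic directions of $\W(\Pi)$ inherited from the wind-tree structure, they form a set of directions of measure zero, and they are in any case irrelevant for the counting problem, so restricting to the remaining cylinders keeps us safely within the scope of the genus-zero classification of \cite{Bo,MZ}.
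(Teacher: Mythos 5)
Your deformation argument breaks down at exactly the step you flag as the hard part: the configuration of a single cylinder is \emph{not} locally constant in period coordinates, even when the cylinder itself and its boundary saddle connections all survive the perturbation. The boundary of a cylinder $C$ is a union of saddle connections with mutually parallel holonomy vectors, but those saddle connections may have linearly independent relative homology classes; in period coordinates their holonomies are then independent linear functionals, so the parallelism among them is a condition of codimension at least one. A generic small perturbation destroys it, at which point the saddle connections still exist but can no longer all lie on the boundary of the deformed cylinder $C'$: some of them detach, the boundary structure of $C'$ is rebuilt, and $C'$ realizes a \emph{different} (typically pocket or dumbbell) configuration. So your perturbation does not transport the configuration $\C$ of $C$ to the nearby generic surface $\W'$; it only tells you about $C'$. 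A quick sanity check that something must be wrong: your argument never uses that $\W$ actually comes from a wind-tree billiard, only that $\W\in\Q(1^m,-1^{m+4})$; if the persistence step were valid you would have proved that \emph{every} cylinder on \emph{every} surface of the stratum lies in a pocket or dumbbell configuration, which is strictly stronger than the Masur--Zorich/Boissy classification (valid only almost everywhere) and is false.

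The paper's own proof is a citation to \cite[Proposition~2.2]{AEZ} (whose proof mimics \cite[Theorem~7.4]{EMZ}). The mechanism there is precisely what your argument lacks: one shows that the loci of surfaces carrying a cylinder in a non pocket/dumbbell configuration are contained in a countable union of proper affine subvarieties of the stratum, and then that the family $\{\W(\Pi):\Pi\in\WT(m)\}$ is not contained in any of them, hence meets each in a set of Lebesgue measure zero on $\WT(m)$. That is a transversality statement about the entire wind-tree \emph{family}, not a local perturbation of a single surface, and it cannot be obtained from density of generic surfaces alone, because the locus of surfaces carrying a cylinder in a given non-generic configuration is a positive-codimension subvariety rather than an open set.
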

\begin{proof} See~\cite[Proposition~2.2]{AEZ} (the proof of which mimics the proof of \cite[Theorem~7.4]{EMZ}).
\end{proof}

\begin{coro}
For almost every wind-tree billiard $\Pi\in\WT(m)$, 
\[c_{good}(\M)=c_{good}^{\text{pocket}}(\M)+c_{good}^{\text{dumbbell}}(\M),\]
where $c_{good}^{\text{pocket}}(\M)$ (resp. $c_{good}^{\text{dumbbell}}(\M)$) corresponds to the Siegel--Veech constant associated to the counting problem of configurations of good cylinders in $\M$, the $\slr$-orbit closure of $\X(\Pi)$, such that those configurations project to pocket (resp. dumbbell) configurations in $\Q(1^m,-1^{m+4})$.

\end{coro}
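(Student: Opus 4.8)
The plan is to deduce the identity by splitting, for a $\nu_\M$-generic wind-tree surface, the asymptotic count of good cylinders according to the type of configuration to which each of them projects in $\W$. Concretely, I would fix $\X=\X(\Pi)$ for a $\Pi$ in the full-measure set for which, as in the previous corollary, $\X$ is a $\nu_\M$-generic point of its $\slr$-orbit closure $\M$; applying the Eskin--Masur theorem (Theorem~\ref{theo:EM-SV-constant}) to the $\slr$-equivariant family of good cylinders introduced in \S~\ref{sect:counting-periodic-surfaces}, the number $N_{good}(L,\X)$ of good cylinders of length at most $L$ in $\X$ (normalised to unit area) then satisfies $N_{good}(L,\X)\sim c_{good}(\M)\,\pi L^2$. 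The object to prove is the decomposition
\[
N_{good}(L,\X)=N^{\text{pocket}}_{good}(L,\X)+N^{\text{dumbbell}}_{good}(L,\X)+O(1),
\]
where $N^{\text{pocket}}_{good}(L,\X)$ (resp.\ $N^{\text{dumbbell}}_{good}(L,\X)$) counts the good cylinders of length at most $L$ in $\X$ whose image under the projection $\PP\colon\X\to\W$ lies in a pocket (resp.\ dumbbell) configuration of $\W$; once this is established, dividing by $\pi L^2$ and letting $L\to\infty$ yields $c_{good}(\M)=c_{good}^{\text{pocket}}(\M)+c_{good}^{\text{dumbbell}}(\M)$.

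The steps I would carry out, in order, are the following. First, recall that every good cylinder $C$ in $\X$ projects to a cylinder $\bar C$ in $\W$ of the same direction. Second, restrict further to the full-measure set of $\Pi$ for which the preceding proposition applies to $\W(\Pi)$; then, whenever $\bar C$ is neither horizontal nor vertical, it belongs to a pocket or to a dumbbell configuration, and these two possibilities are mutually exclusive (being bounded on one side by a saddle connection joining two poles, versus being bounded on each side by a saddle connection from a zero to itself), so that a well-defined type is attached to $C$. Third, observe that ``$C$ is a good cylinder whose projection to $\W$ lies in a pocket (resp.\ dumbbell) configuration'' is invariant under continuous deformation of $(\X,C)$ inside $\M$: it depends only on the combinatorial type of the decomposition of $\X\setminus C$ together with the homologically determined, hence topological, data that characterises good cylinders via Lemma~\ref{homology-good}; it therefore defines a configuration in the sense of \S~\ref{sect:configuration}, and Remark~\ref{rema:EM-SV-constant} gives $N^{\text{pocket}}_{good}(L,\X)\sim c_{good}^{\text{pocket}}(\M)\,\pi L^2$ and $N^{\text{dumbbell}}_{good}(L,\X)\sim c_{good}^{\text{dumbbell}}(\M)\,\pi L^2$ for almost every $\Pi$. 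Fourth, note that the good cylinders left over are exactly those projecting to a horizontal or a vertical cylinder of $\W$; these are themselves horizontal or vertical in $\X$, and since $\X$ decomposes into finitely many maximal cylinders in each of these two directions there are only finitely many of them, contributing $O(1)$. Finally, assemble these four points into the displayed identity and pass to the limit.

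I do not expect a genuinely new difficulty here: the substantive inputs --- the preceding proposition (which rests on \cite[Proposition~2.2]{AEZ} and \cite[Theorem~7.4]{EMZ}), the Eskin--Masur machinery, the genericity of $\X(\Pi)$ in $\M$ for almost every $\Pi$ coming from \cite{DZ}, and the configuration formalism --- are all already in place, so what remains is essentially book-keeping. The one point that requires genuine care is the third step: one must verify that ``projecting to a pocket (resp.\ dumbbell) configuration of $\W$'' is a deformation-invariant, topologically determined condition, so that the configuration-refined version of Theorem~\ref{theo:EM-SV-constant} recalled in Remark~\ref{rema:EM-SV-constant} may be applied to each piece separately. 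Closely related is the mild genericity remark that, for almost every $\Pi$, a good cylinder of $\X$ projects to a non-degenerate cylinder of $\W$ rather than sweeping pathologically through ramification points of $\PP$; this follows from the explicit structure of the $(\Z_2)^3$-covers recalled above. Once these are settled, the conclusion is immediate.
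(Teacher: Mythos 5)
Your proposal matches the paper's own (implicit) argument: the corollary is stated without a formal proof and is meant to follow directly from the preceding proposition and Remark~\ref{rema:EM-SV-constant} by exactly the decomposition you spell out. The extra care you take -- noting that horizontal and vertical good cylinders are finite in number and hence contribute $O(1)$, and that membership in the pocket- and dumbbell-refined subfamilies is a deformation-invariant, hence configuration-type, condition so that the refined Eskin--Masur count applies to each piece -- is just a more explicit version of what the paper leaves tacit, so your argument is correct and of the same nature.
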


It follows that the study of configurations of cylinders on generic flat surfaces in $\Q(1^m,-1^{m+4})$ suffices for our purposes.

\section{Configurations of good cylinders} \label{sect:configurations}

Here we show which conditions a cylinder in $\W=(\mathbb{CP}^1,q)\in\L=\Q(1^m,-1^{m+4})$ has to satisfy so that it lifts to a good cylinder in $\X=(S,\omega)\in\M$, and then we interpret this in terms of configurations of generic surfaces of genus zero, that is, pocket and dumbbell configurations (see \S~\ref{sect:configurations-sphere}).

Recall that, by Lemma~\ref{homology-good}, a cylinder in $\X$ is good if it projects to a ho\-mo\-lo\-gi\-cally trivial cylinder in the surfaces $\Wh$ and $\Wv$, of genus~$1$. Then, our classification will consist in finding the configurations on $\W$ which lift to ho\-mo\-lo\-gi\-cally trivial closed geodesics in $\Wh$ and $\Wv$.

Since there are clear analogies between objects with subindex $h$ and subindex $v$ (see Figure~\ref{diagram}), in this section we will use the label $\si$ for both labels $h$ and $v$. Thus, any result in terms of labels $\si$ will give the corresponding result for $h$ and $v$.

\subsection{Cylinders in $\W$ who lift to good cylinders in $\X$} \label{sect:which-good}

Let $C$ be a cylinder in the genus zero surface $\W$. Then, since all curves are ho\-mo\-lo\-gi\-cally trivial on $\W$, the core curve of $C$, say $\gamma$, cuts the surface in two components, say $\W_1$ and $\W_2$.

For our purposes here, the only relevant information about $C$ we need, is the number $q_l$ of cone singularities of angle $3\pi$ and the number $r_l$ of ramified poles in $\W_l$ for the double cover $\p_\si:\W_\si\to\W$, $l=1,2$. The number $p_l$, of unramified poles for $\p_\si$ in $\W_l$ is also relevant, but since $\W_l$ is a genus zero surface with only simple zeros and poles, and a single boundary component, then 
\[4g(\W_l)-4 = -4 = q_l - p_l - r_l - 2,\]
and $p_l$ can be written in terms of $q_l$ and $r_l$ as $p_l = q_l - r_l + 2$, $l=1,2$. Also, $q_2 = m-q_1$ and $r_2=4-r_1$, so we will only consider $r=r_1$ and $q=q_1$.

Remark that the number $r$ depends on the configuration as well as on the double cover $\p_\si$ (of which there are two, $\ph$ and $\pv$), while $q$ does not depend on the double cover. Call then, the former number $r_\si = r(C,\p_\si)$. Furthermore, since $\W_1$ and $\W_2$ were arbitrarily chosen, we can fix them such that $r_\si=r_1\leq r_2$. Note that $|r_h-r_v|\leq 1$, since three out of four ramified poles are shared by both covering maps. In particular, we can always choose $\W_1$ and $\W_2$ coherently such that $r_\si=r_{\si 1}\leq r_{\si 2}$, for both coverings. Furthermore, there is only one way to do this unless $r_h=r_v=2$. Note that with this setting, $r_h,r_v \in\{0,1,2\}$. Call $\W^\prime=\W_2$ and $\W_\si^\prime = \p_\si^{-1}\W^\prime$, and recall that ${\p_\si}_*:\pi_1(\W_\si)\to\pi_1(\W)$ is the pushforward of the projection $\p_\si:\W_\si\to\W$, which sends closed curves in $\W_\si$ to closed curves in $\W$. In particular, $b_\si = \# {\p_\si}_*^{-1}(\gamma)$ is the number of curves (connected components) in $\p_\si^{-1}(\gamma)$, and $b_\si\in\{1,2\}$, since $\p_\si$ is a double cover.

\begin{rema} \label{monodromy}
In particular, the number $b_\si$ corresponds to the number of boundary components of the surface $\W_\si^\prime$. This number also defines the monodromy of the core curve of $C$, $\gamma$, for $\p_\si$. In fact, $b_\si=2$ means that $\gamma$ has two ${\p_\si}_*$-preimages and, since $\p_\si$ is a double cover, this gives trivial monodromy. While non trivial monodromy, and equals to $\Z_2$, arises when $b_\si=1$.
\end{rema}

\begin{lemm} \label{parity} Let $C$ be a cylinder in $\W$, $\gamma$ its core curve and consider $b_\si = \# {\p_\si}_*^{-1}(\gamma)$. Then, $b_\si = 4 - r_\si - 2g(\W_\si^\prime)$. In particular, $b_\si \equiv r_\si \mod 2$.
\end{lemm}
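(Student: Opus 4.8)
The plan is to compute the Euler characteristic of $\W_\si^\prime$ in two ways and compare. Recall that $\W^\prime=\W_2$ is a genus zero surface with a single boundary component $\gamma$, carrying $q_2=m-q$ zeros of order $1$, $r_2=4-r_\si$ ramified simple poles and $p_2=q_2-r_2+2$ unramified simple poles (the relation $p_l=q_l-r_l+2$ being the one already derived). Since $\W^\prime$ has one boundary component and no other topology, $\chi(\W^\prime)=2-0-1=1$. Now $\p_\si\colon\W_\si^\prime\to\W^\prime$ is the restriction of the double cover $\p_\si\colon\W_\si\to\W$, branched exactly over the $r_2=4-r_\si$ ramified simple poles lying in $\W^\prime$ (the ramification points of $\p_\si$ are four of the simple poles of $\W$, by the discussion following Figure~\ref{figu:t2-cp1}). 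By Riemann--Hurwitz,
\[
\chi(\W_\si^\prime)=2\,\chi(\W^\prime)-(4-r_\si)=2-(4-r_\si)=r_\si-2.
\]

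On the other hand, $\W_\si^\prime$ is a (possibly disconnected, but in fact connected — see below) surface with $b_\si$ boundary components, namely the connected components of $\p_\si^{-1}(\gamma)$; write $g(\W_\si^\prime)$ for its genus. Then $\chi(\W_\si^\prime)=2-2g(\W_\si^\prime)-b_\si$. Comparing the two expressions gives
\[
2-2g(\W_\si^\prime)-b_\si=r_\si-2,
\]
i.e. $b_\si=4-r_\si-2g(\W_\si^\prime)$, which is the claimed formula; reducing mod $2$ yields $b_\si\equiv r_\si\pmod 2$.

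The one point that needs care — and which I expect to be the main (mild) obstacle — is the connectedness of $\W_\si^\prime$ and the legitimacy of applying Riemann--Hurwitz to a surface with boundary. For the latter one works with the doubled (closed) surfaces or, equivalently, notes that $\chi$ is additive over the cell structure and the cover is $2$-to-$1$ away from the $4-r_\si$ branch points and $1$-to-$1$ at them, so $\chi(\W_\si^\prime)=2\chi(\W^\prime)-(4-r_\si)$ regardless of connectedness; this makes the Euler-characteristic count valid even if $\W_\si^\prime$ were to split, and one then interprets $g(\W_\si^\prime)$ and $b_\si$ for the disjoint union. For connectedness: if $\W_\si^\prime$ were disconnected it would consist of two copies of $\W^\prime$ exchanged by the deck involution, forcing all $4-r_\si$ branch points in $\W^\prime$ to be absent, i.e. $r_\si=4$, contradicting $r_\si\in\{0,1,2\}$; hence $\W_\si^\prime$ is connected and the formula $\chi(\W_\si^\prime)=2-2g(\W_\si^\prime)-b_\si$ applies verbatim. (As a consistency check, $b_\si=2$ forces $r_\si$ even, matching Remark~\ref{monodromy}: trivial monodromy of $\gamma$ means $\gamma$ lifts to two curves, and then the branch locus upstairs splits evenly.)
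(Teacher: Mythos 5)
Your proof is correct and, like the paper's, it is an Euler-characteristic count; but the execution is different and yours is cleaner. The paper proceeds by listing the singularities of the lifted quadratic differential on $\W_\si^\prime$ — the $m-q$ simple zeros and the $m-q-2+r_\si$ unramified simple poles of $\W^\prime$ each lift to two, while the $4-r_\si$ ramified poles become regular points — and then applies the degree relation $\sum d_i = 4g-4+2b$ for a quadratic differential on a bordered surface. You replace that singularity bookkeeping with the purely topological Riemann--Hurwitz formula $\chi(\W_\si^\prime)=2\chi(\W^\prime)-(4-r_\si)$, which is the same count via Gauss--Bonnet but needs only the ramification locus, not the lifted singularity data. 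You also supply two points of care the paper leaves implicit: (i) the connectedness of $\W_\si^\prime$, needed to write $\chi = 2-2g-b_\si$, with the right argument — a disconnected double cover of the connected $\W^\prime$ would be trivial, hence unramified, forcing $4-r_\si=0$, which contradicts $r_\si\le 2$; and (ii) the validity of Riemann--Hurwitz on the bordered piece, since the boundary circles carry no ramification and contribute $0$ to $\chi$. (Incidentally, the paper's intermediate count of poles in $\W_\si^\prime$ contains a sign typo — $2(m-q+2-r_\si)$ should read $2(m-q-2+r_\si)$ — though its stated conclusion $b_\si = 4-r_\si-2g(\W_\si^\prime)$ is correct and agrees with yours.)
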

\begin{proof}
Clearly, $\W^\prime$ has one boundary component, which is equal $\gamma$. Note that $b_\si$ is the number of boundary components of $\W_\si^\prime$, $b_\si = \# {\p_\si}_*^{-1}(\gamma) \in\{1,2\}$.

In $\W^\prime$, there are $4-r_\si$ ramified and $m-(q - r_\si + 2)$ unramified poles for $\p_\si$, and $m-q$ simple zeros.
Thus, we have $2(m-q+2-r_h)$ poles and $2(m-q)$ simple zeros in $\W_\si^\prime$. But then,
\[4g(\W_\si^\prime)-4 = 2(m-q) - 2(m-q+2-r_\si) - 2b_\si,\]
That is, $b_\si = 4 - r_\si - 2g(\W_\si^\prime)$ and, in particular, $b_\si \equiv r_\si \mod 2$.
\end{proof}

\begin{prop} \label{good-liftings} Let $C$ be a cylinder in $\W$. Then $C$ lifts to good cylinders in $\X$ if and only if $r_h,r_v \in\{0,1\}$.
\end{prop}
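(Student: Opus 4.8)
The plan is to reduce the statement, by Lemma~\ref{homology-good}, to a topological question about the branched double covers $\ph\colon\Wh\to\W$ and $\pv\colon\Wv\to\W$, and then settle it with a monodromy count. Indeed, a cylinder in $\X$ lying over $C$ is good precisely when the core curve of $C$ lifts through $\ph$ and $\pv$ to homologically trivial closed curves in the genus-one surfaces $\Wh$ and $\Wv$ (by Lemma~\ref{homology-good}), and since goodness of a lift is invariant under the deck group of $\X\to\W$, this is the same as asking that the lifts of the core curve $\gamma$ themselves be null-homologous in $\Wh$ and in $\Wv$. On a closed genus-one surface a simple closed curve is null-homologous exactly when it is separating, and the lifts of $\gamma$ are simple closed geodesics; so, using the symmetry between the labels $h$ and $v$, it is enough to prove the claim: the component(s) of $\p_\si^{-1}(\gamma)$ are separating in $\W_\si$ if and only if $r_\si\in\{0,1\}$.

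The next step is a monodromy computation for $\p_\si$. This cover is branched over four simple poles of $\W$, of which $r_\si$ lie in $\W_1$ and $4-r_\si$ in $\W_2=\W'$; the monodromy around a small loop encircling a single branch point is the nontrivial element of $\Z_2$, and around a loop encircling an even number of them it is trivial. Since $\pi_1(\W_l)$ is generated by loops around the punctures of $\W_l$, the piece $\p_\si^{-1}(\W_l)$ is disconnected exactly when $\W_l$ contains no branch point. Because $r_\si\leq 2$, the piece $\W_2$ always carries at least $4-r_\si\geq 2$ branch points, so $\p_\si^{-1}(\W_2)$ is always connected, while $\p_\si^{-1}(\W_1)$ splits into two components exactly when $r_\si=0$. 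Moreover the monodromy of $\gamma$ is $r_\si\bmod 2$, which is consistent with the parity statement $b_\si\equiv r_\si\pmod 2$ of Lemma~\ref{parity} and, together with $b_\si\in\{1,2\}$, pins down $b_\si=1$ when $r_\si=1$ and $b_\si=2$ when $r_\si\in\{0,2\}$.

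With this in hand I would conclude by inspecting the three cases. For $r_\si=1$: $\p_\si^{-1}(\gamma)$ is a single curve $\gamma'$ along which the two connected pieces $\p_\si^{-1}(\W_1)$, $\p_\si^{-1}(\W_2)$ are glued, so cutting along $\gamma'$ disconnects $\W_\si$ and $\gamma'$ is separating. For $r_\si=0$: $\p_\si^{-1}(\W_1)$ is two disjoint lifts of $\W_1$, and $\p_\si^{-1}(\gamma)=\gamma_1'\sqcup\gamma_2'$ with $\gamma_i'$ attaching the $i$-th lift to the connected piece $\p_\si^{-1}(\W_2)$; cutting along $\gamma_i'$ splits off that lift, so each $\gamma_i'$ is separating. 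For $r_\si=2$: both $\p_\si^{-1}(\W_1)$ and $\p_\si^{-1}(\W_2)$ are connected and $\p_\si^{-1}(\gamma)=\gamma_1'\sqcup\gamma_2'$, each $\gamma_i'$ meeting both pieces; cutting along $\gamma_1'$ leaves these two pieces still joined along $\gamma_2'$, so $\W_\si\smallsetminus\gamma_1'$ is connected and $\gamma_1'$ (likewise $\gamma_2'$) is non-separating. This proves the claim; the ambiguity in the labelling of $\W_1,\W_2$ is irrelevant, being unique unless $r_h=r_v=2$, in which case the two choices both yield $r_h=r_v=2$.

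The crux is the case $r_\si=2$: one must see that the two disjoint lifts of $\gamma$ (present because $b_\si=2$) are not each bounding, but instead together carry a non-separating class, so the decisive input is exactly the parity of $b_\si$ from Lemma~\ref{parity}. A minor point needing care is that ``homologically trivial in $\Wh$'' means trivial in $H^1$ of the closed torus $\Wh$ (the fibre of $F^{+-}$), which for a simple closed curve is equivalent to separating on the closed surface; one should also check that the relevant lifts really are simple closed curves on $\Wh$ and $\Wv$, which holds since preimages of an embedded cylinder under a covering are embedded cylinders.
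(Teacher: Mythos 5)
Your proof is correct, and its broad outline matches the paper's: reduce via Lemma~\ref{homology-good} to null-homology of the lifts of the core curve $\gamma$ in $\Wh$ and $\Wv$, then treat the three cases $r_\si\in\{0,1,2\}$. Where you diverge is in how the topology of $\W_\si$ is determined. The paper applies Lemma~\ref{parity} in full strength, extracting both $b_\si$ and the genus $g(\W_\si^\prime)$ from the Euler-characteristic count, and then reads off separating vs.\ non-separating from the genus and number of boundary components of $\W_\si^\prime$ inside the genus-one surface. You instead compute directly with the monodromy of the branched double cover: loops around ramification points carry the nontrivial element of $\Z_2$ and everything else is trivial because $\W$ has genus zero, so $\p_\si^{-1}(\W_l)$ is connected precisely when $\W_l$ contains a branch point, and $b_\si\equiv r_\si\bmod 2$ is likewise immediate. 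From there the separating/non-separating dichotomy falls out of connectivity of the two pieces rather than genus bookkeeping. Both routes hinge on the same fact --- $b_\si=2$ with both halves connected in the case $r_\si=2$ --- which you correctly identify as the crux. Your version is slightly more self-contained, since $b_\si$ comes from the monodromy count rather than from Lemma~\ref{parity}; the paper's version records $g(\W_\si^\prime)$, which is reused later (e.g.\ in \S~\ref{sect:how-good}) and is therefore worth isolating in its own lemma. You also rightly flag and dispatch two small points that the paper leaves implicit: that the deck-group symmetry lets one test a single lift of $\gamma$ in each of $\Wh,\Wv$, and that the labelling ambiguity for $\W_1,\W_2$ only arises when $r_h=r_v=2$, which lies outside the good range anyway.
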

\begin{proof}
Let $\gamma$ be the core curve of $C$. Then, we want to show that if $\gamma_\si \in {\p_\si}_*^{-1}(\gamma)$, $[\gamma_\si]=0$ if and only if $r_\si\neq 2$. Note that, since $g(\W_\si) = 1$, a ho\-mo\-lo\-gi\-cally trivial curve always cut the surface into a genus zero surface and a genus one surface.

As before, let $\W^\prime=\W_2$ and $\W_\si^\prime={\p_\si}^{-1} \W^\prime$.
By the previous lemma, we know that $\# {\p_\si}_*^{-1}(\gamma) = b_\si = 4 - r_\si - 2g(\W_\si^\prime)$, $b_\si \equiv r_\si \mod 2$. Then,
\begin{itemize}[leftmargin=*]
\item If $r_\si=0$, then $b_\si=2$ and $g(\W_\si^\prime) = 1$. That is, $\gamma$ has two ${\p_\si}_*$-preimages ($b_\si=2$) bounding a genus one surface ($g(\W_\si^\prime) = 1$) in $\W_\si$. But $g(\W_\si) = 1$, and therefore both ${\p_\si}_*$-preimages of $\gamma$ are ho\-mo\-lo\-gi\-cally trivial (see, e.g., Figure~\ref{p-lift-pocket-r-0} and Figure~\ref{p-lift-dumbbell-r-0}).
\item When $r_\si=1$, we have $b_\si=1$ and $g(\W_\si^\prime) = 1$. It follows that $\gamma$ has one ${\p_\si}_*$-preimage which is ho\-mo\-lo\-gi\-cally trivial (see, e.g., Figure~\ref{p-lift-pocket-r-1} and Figure~\ref{p-lift-dumbbell-r-1}).
\item Finally, if $r_\si=2$, then $b_\si=2$ and $g(\W_\si^\prime) = 0$. Therefore, $\gamma$ has two ${\p_\si}_*$-preimages and both together bounds each of two genus zero surfaces which form the whole surface $\Wh$ of genus one (see, e.g., Figure~\ref{p-lift-pocket-r-2} and Figure~\ref{p-lift-dumbbell-r-2}).
Then, both preimages of $\gamma$ are not ho\-mo\-lo\-gi\-cally trivial.
\end{itemize}
\end{proof}

Thus, we know which cylinders in $\W$ lift to good cylinders in $\X$. It remains to see how these cylinders lift, that is, the number of cylinders in $\X$ we obtain and their length.

\begin{figure}[h]
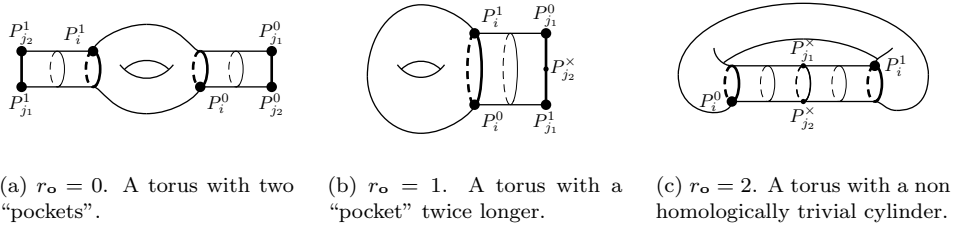

\begin{subfigure}[t]{.31\textwidth}
  \vcTikZ{\liftPocketRZero[.475]{0,0}}
  \caption{$r_\si=0$. A torus with two ``pockets''.}
  \label{p-lift-pocket-r-0}
\end{subfigure}
\hfill
\begin{subfigure}[t]{.31\textwidth}
  \vcTikZ{\liftPocketROne[.475]{0,0}}
  \caption{$r_\si=1$. A torus with a ``pocket'' twice longer.}
  \label{p-lift-pocket-r-1}
  \end{subfigure}
\hfill
\begin{subfigure}[t]{.31\textwidth}
  \vcTikZ{\liftPocketRTwo[.475]{0,0}}
  \caption{$r_\si=2$. A torus with a non ho\-mo\-lo\-gi\-cally trivial cylinder.}
  \label{p-lift-pocket-r-2}
\end{subfigure}
\caption{Possible liftings for $\p_\si$ of a pocket configuration.}
\label{p-lift-pocket}
\end{figure}

\begin{figure}[h]
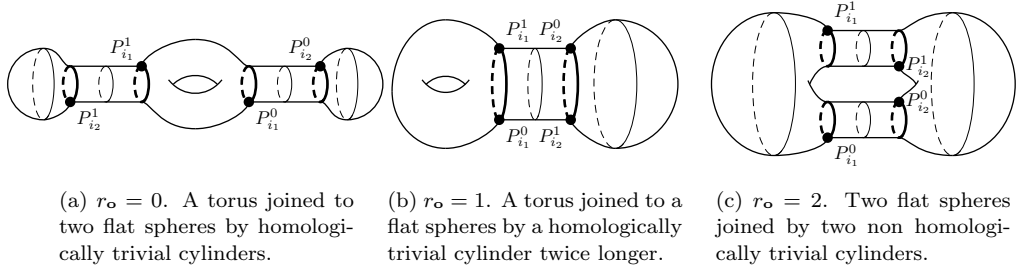

\begin{subfigure}[t]{.31\textwidth}
  \hspace*{-1ex}\vcTikZ{\liftDumbbellRZero[.475]{0,0}}
  \caption{$r_\si=0$. A torus joined to two flat spheres by ho\-mo\-lo\-gi\-cally trivial cylinders.}
  \label{p-lift-dumbbell-r-0}
\end{subfigure}
\hfill
\begin{subfigure}[t]{.31\textwidth}
  \vcTikZ{\liftDumbbellROne[.475]{0,0}}
  \caption{$r_\si=1$. A torus joined to a flat spheres by a ho\-mo\-lo\-gi\-cally trivial cylinder twice longer.}
  \label{p-lift-dumbbell-r-1}
\end{subfigure}
\hfill
\begin{subfigure}[t]{.31\textwidth}
  \vcTikZ{\liftDumbbellRTwo[.475]{0,0}}
  \caption{$r_\si=2$. Two flat spheres joined by two non ho\-mo\-lo\-gi\-cally tri\-vial cylinders.}
  \label{p-lift-dumbbell-r-2}
\end{subfigure}
\caption{Possible liftings for $\p_\si$ of a dumbbell configuration.}
\label{p-lift-dumbbell}
\end{figure}

\subsection{How cylinders in $\W$ lift to good cylinders in $\X$} \label{sect:how-good}
Here we show how lift to $\X$ those cylinders in $\W$ who lift to good cylinders in $\X$. More precisely, we determine the number of cylinders in $\X$ we obtain and their length. To do this, we will lift one by one the covering maps $\p_\si:\W_\si\to\W$, then $\Pp_\si:\X_\si\to\W_\si$ and finally $\P_\si:\X\to\X_\si$ (see Figure~\ref{diagram}). Recall we are using the label $\si$ instead of $h$ and $v$.

The following is a direct consequence of Remark~\ref{monodromy} and Lemma~\ref{parity}.

\begin{lemm} \label{monodromy-p}
Let $C$ be a cylinder in $\W$. Then, the core curve $\gamma$ of $C$ has trivial monodromy for $\p_\si$ if $r_\si\neq 1$, and equals to $\Z_2$, if $r_\si=1$.
\end{lemm}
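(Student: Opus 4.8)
The plan is to deduce the statement immediately from the two facts already recorded, namely Lemma~\ref{parity} and Remark~\ref{monodromy}; no new geometric argument is required. First I would note that $b_\si=\#{\p_\si}_*^{-1}(\gamma)\in\{1,2\}$: the core curve $\gamma$ of the cylinder $C$ is connected, and $\p_\si$ is a degree-two cover, so the preimage of $\gamma$ has either one or two connected components. By Remark~\ref{monodromy}, the case $b_\si=2$ is precisely the case in which $\gamma$ has two disjoint ${\p_\si}_*$-preimages, which forces trivial monodromy, while the case $b_\si=1$ is precisely the case of a single connected preimage wrapping around twice, which forces the monodromy to be the nontrivial element of $\Z_2$.

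It then remains only to carry out the parity bookkeeping. By Lemma~\ref{parity} we have $b_\si\equiv r_\si\pmod 2$, with $r_\si\in\{0,1,2\}$. If $r_\si$ is even, that is $r_\si=0$ or $r_\si=2$, then $b_\si$ is even and the only even value available is $b_\si=2$; hence the monodromy of $\gamma$ for $\p_\si$ is trivial. If instead $r_\si=1$, then $b_\si$ is odd, so $b_\si=1$, and the monodromy is $\Z_2$. This exhausts the cases $r_\si\neq1$ and $r_\si=1$, which is the claim. Alternatively, one can avoid the parity shortcut and feed $r_\si$ directly into $b_\si=4-r_\si-2g(\W_\si^\prime)$ together with $g(\W_\si^\prime)\ge0$ and $b_\si\le2$ to pin down $b_\si$ in each of the three cases, with the same outcome.

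I do not expect a genuine obstacle here, since every ingredient has already been established. The only points worth a line of justification are that $\gamma$ is connected, which is true by definition of the core curve of a cylinder, and that $\p_\si$ has degree exactly two, which is recorded in the construction of $\Wh$ and $\Wv$ as ramified double covers of $\W$; with these, the dichotomy $r_\si\neq1$ versus $r_\si=1$ translates verbatim into $b_\si=2$ versus $b_\si=1$, and Remark~\ref{monodromy} converts that into the stated conclusion about the monodromy.
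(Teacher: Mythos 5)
Your argument is correct and follows the same route as the paper: combine Remark~\ref{monodromy} (which translates $b_\si\in\{1,2\}$ into trivial versus $\Z_2$ monodromy) with the parity relation $b_\si\equiv r_\si\pmod 2$ from Lemma~\ref{parity}, and conclude by the case analysis over $r_\si\in\{0,1,2\}$.
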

\begin{proof}
    From Remark~\ref{monodromy}, we know that the number $b_\si$ defines the monodromy of $\gamma$, being trivial for $b_\si=2$ and equals to $\Z_2$ when $b_\si=1$.
    But, by Lemma~\ref{parity}, we also know that $b_\si\equiv r_\si\mod 2$, and $r_\si\in\{0,1,2\}$.
\end{proof}

The meaning of previous lemma can be noticed in Figure~\ref{p-lift-pocket} and Figure~\ref{p-lift-dumbbell}.

\begin{lemm} \label{monodromy-Pp} Let $C_\si$ be a cylinder in $\W_\si$ such that $r_\si(\p_\si(C_\si))\neq 2$. 
Then, the core curve of $C_\si$ has trivial monodromy for $\Pp_\si:\X_\si\to\W_\si$.
\end{lemm}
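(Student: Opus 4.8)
The plan is to realise the double cover $\Pp_\si\colon\X_\si\to\W_\si$ as a base change of the single orientation double cover $\phv\colon\Xhv\to\W$ of the genus zero surface (see Figure~\ref{diagram}), and then to read off the monodromy of the core curve of $C_\si$ ``downstairs'' on $\W$, where it reduces to a count of singularities modulo $2$. First I would record the relevant subgroup identities inside the group $(\Z_2)^3=\langle\tau_h,\tau_v,\iota\rangle$ of isometries of $\X$: in the $h$-case $\langle\tau_h\rangle=\langle\tau_h,\iota\circ\tau_v\rangle\cap\langle\tau_h,\tau_v\rangle$ and $\langle\tau_h,\iota\circ\tau_v\rangle\cdot\langle\tau_h,\tau_v\rangle=(\Z_2)^3$, and symmetrically for $v$. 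By standard covering space theory this means that, away from branch points, $\X_\si$ is the fibre product of $\W_\si$ and $\Xhv$ over $\W$; hence $\Pp_\si$ is the pullback of $\phv$ along $\p_\si\colon\W_\si\to\W$, and its $\Z_2$-valued monodromy homomorphism equals $\rho\circ(\p_\si)_*$, where $\rho\colon\pi_1(\W\setminus\Sigma(\W))\to\Z_2$ is the monodromy of $\phv$.

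Then I would use that $\phv$ is branched exactly over the $2m+4$ singularities of $\W\in\Q(1^m,-1^{m+4})$ --- the $m$ simple zeros and the $m+4$ simple poles, all of odd order --- so that $\rho$ factors through $H_1(\W\setminus\Sigma(\W);\Z_2)$ and sends the class of a small loop around each singularity to the non-trivial element of $\Z_2$. Let $\gamma$ be the core curve of the cylinder $C\coloneqq\p_\si(C_\si)$ in $\W$ and $\gamma_\si$ that of $C_\si$. By Lemma~\ref{parity} the number of connected components of $\p_\si^{-1}(\gamma)$ is $b_\si\in\{1,2\}$ with $b_\si\equiv r_\si\pmod{2}$, so the hypothesis $r_\si\neq 2$ leaves precisely the cases $b_\si=1$ (when $r_\si=1$) and $b_\si=2$ (when $r_\si=0$). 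If $b_\si=1$, then $\p_\si$ restricts on $\gamma_\si$ to a connected double cover of $\gamma$, so $(\p_\si)_*[\gamma_\si]=2[\gamma]=0$ in $H_1(\,\cdot\,;\Z_2)$ and the monodromy vanishes automatically. If $b_\si=2$, then $\p_\si$ maps $\gamma_\si$ homeomorphically onto $\gamma$, so $(\p_\si)_*[\gamma_\si]=[\gamma]$; taking for $\W_1$ the component of $\W\setminus\gamma$ containing $q$ zeros, $r_\si$ ramified poles and $p_1=q-r_\si+2$ unramified poles (the relation $p_1=q-r_\si+2$ being the one recorded in \S~\ref{sect:which-good} just before Lemma~\ref{parity}), the class $[\gamma]$ is the sum of the loops around those $q+r_\si+p_1=2q+2$ singularities of $\W$, whence $\rho([\gamma])\equiv 2q+2\equiv 0\pmod{2}$. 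In both cases the monodromy of $\gamma_\si$ for $\Pp_\si$ is trivial; this matches the pictures in Figures~\ref{p-lift-pocket-r-0}, \ref{p-lift-pocket-r-1}, \ref{p-lift-dumbbell-r-0} and \ref{p-lift-dumbbell-r-1}.

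The only genuinely delicate step is the fibre-product identification of $\Pp_\si$ with a pullback of $\phv$, that is, the computation of the monodromy of $\Pp_\si$. I would settle it over $\W$ with all branch points deleted, where $\Wh$, $\Wv$, $\Xhv$, $\Xh$ and $\Xv$ all restrict to honest (unramified) coverings and the subgroup identities above give the fibre-product description verbatim; one then checks that $\Xh$ (resp.\ $\Xv$) is connected by noting that the maps $\Xh\to\Wh$ and $\Xh\to\Xhv$ commute with the projections to $\W$, so that the universal property yields a map from the connected surface $\Xh$ onto a connected component of $\W_\si\times_\W\Xhv$ which, because $\Wh\to\W$ and $\Xhv\to\W$ are non-isomorphic double covers and everything has degree $4$ over $\W$, must be the whole fibre product. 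After this, the argument is the elementary parity computation above --- which, incidentally, never used $r_\si\neq 2$ beyond listing the possible values of $b_\si$, the total $2q+2$ being even in all cases.
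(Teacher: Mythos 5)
Your proof is correct, but it takes a genuinely different route from the paper's. The paper uses the hypothesis $r_\si\neq 2$ to conclude (via Proposition~\ref{good-liftings} and Lemma~\ref{homology-good}) that $\gamma_\si$ is homologically trivial in $\W_\si$, cuts $\W_\si$ along $\gamma_\si$ into two pieces, and then applies Gauss--Bonnet to the preimage $\X_\si^\backprime=\Pp_\si^{-1}(\W_\si^\backprime)$, which only has double zeros: the identity $4g(\X_\si^\backprime)-4=4q^\backprime-2b^\backprime$ forces the number of boundary components $b^\backprime$ to be even, hence $b^\backprime=2$ and the monodromy is trivial. You instead identify $\Pp_\si$ as the base change of the orientation double cover $\phv:\Xhv\to\W$ along $\p_\si$, using the fibre-product description $\X_\si=\W_\si\times_\W\Xhv$ coming from the subgroup lattice of $(\Z_2)^3$, so the $\Z_2$-monodromy of $\gamma_\si$ becomes $\rho\big((\p_\si)_*[\gamma_\si]\big)$, which you then compute downstairs on $\W$ by counting singularities mod $2$ inside $\W_1$. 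Your approach is heavier in set-up but buys something the paper's argument cannot reach: since the count $q+r_\si+p_1=2q+2$ is even for every $r_\si$, the conclusion holds even when $r_\si=2$, whereas the paper's proof breaks down there because $\gamma_\si$ no longer separates $\W_\si$. It is worth noting, finally, that both arguments are somewhat more work than necessary: since $\iota\circ\tau_v$ (resp.\ $\iota\circ\tau_h$) negates $\omega_h$ (resp.\ $\omega_v$), the map $\Pp_\si:\X_\si\to\W_\si$ is itself the orientation double cover of the quadratic differential $\W_\si$, and the lemma then follows from the standard fact that a cylinder core curve has trivial linear holonomy and so always lifts to the orientation double cover. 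Your fibre-product computation is, in effect, an unwinding of this one-line observation; the paper's Euler-characteristic count is another.
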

\begin{proof}
    Let $\gamma_\si$ be the core curve of $C_\si$.
    Since $r_\si(\p_\si(C_\si))\neq 2$, by Proposition~\ref{good-liftings} and Lemma~\ref{homology-good}, $\gamma_\si$ is ho\-mo\-lo\-gi\-cally trivial. Then, it cuts the surface $\W_\si$ in two components. Let $\W_\si^\backprime$ be one of these two components and consider $\X_\si^\backprime = \Pp_\si^{-1}\W_\si^\backprime$.
    
    Let $q^\backprime$ be the number of double zeros and $b^\backprime$, the number of boundary components, on $\X_\si^\backprime$. Then, $4g(\X_\si^\backprime)-4=4q^\backprime-2b^\backprime$, and $b^\backprime \equiv 0 \mod 2$. That is, $b^\backprime=2$ and $\gamma_\si$ has two $\Pp_{\si *}$-preimages. Since $\Pp_\si$ is a double cover, then $\gamma_\si$ has trivial monodromy.
\end{proof}

Thus, the possible $\Pp_\si$-liftings in the surface $\X_\si$ of a cylinder $C_\si$ in the surface $\W_\si$ (with $r_\si(\p_\si(C_\si))\neq 2$) looks like as in Figure~\ref{pp-lift-pocket} or Figure~\ref{pp-lift-dumbbell}. 

\begin{figure}[h]
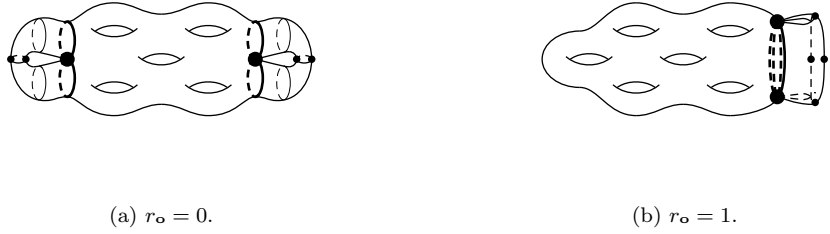

\begin{subfigure}[t]{.45\textwidth}
  \vcTikZ{\liftliftPocketRZero[.5]{0,0}}
  \caption{$r_\si=0$.}
\end{subfigure}
\hfill
\begin{subfigure}[t]{.45\textwidth}
  \vcTikZ{\liftliftPocketROne[.5]{0,0}}
  \caption{$r_\si=1$.}
\end{subfigure}
\caption{Possible $\Pp_\si$-liftings in $\X_\si$ of cylinders in $\W_\si$ coming from a pocket configuration in $\W$.}
\label{pp-lift-pocket}
\end{figure}

\begin{figure}[h]
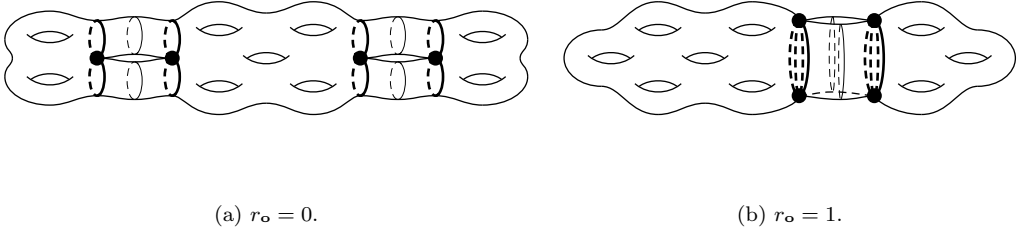

\begin{subfigure}[t]{.45\textwidth}
  \vcTikZ{\liftliftDumbbellRZero[.5]{0,0}}
  \caption{$r_\si=0$.}
\end{subfigure}
\hfill
\begin{subfigure}[t]{.45\textwidth}
  \vcTikZ{\liftliftDumbbellROne[.5]{0,0}}
  \caption{$r_\si=1$.}
\end{subfigure}
\caption{Possible $\Pp_\si$-liftings in $\X_\si$ of cylinders in $\W_\si$ coming from a dumbbell configuration in $\W$.}
\label{pp-lift-dumbbell}
\end{figure}

Finally, we can describe how cylinders in $\W$ lift to good cylinders in $\X$.
Recall that $\PP:\X\to\W$ is a covering of degree~$8$.

\begin{lemm} \label{monodromy-P}
Let $C$ be a cylinder in $\W$ and $\gamma$ be its core curve. Suppose that $r_h,r_v\in\{0,1\}$. Then,
\begin{enumerate}
  \item If $r_h=r_v=0$, then $\gamma$ has trivial monodromy for $\PP$. In particular, $\gamma$ has eight $\PP_*$-preimages of the same length than $\gamma$.
  \item In any other case, $\gamma$ has monodromy $\Z_2$ for $\PP$. In particular, $\gamma$ has four $\PP_*$-preimages twice longer than $\gamma$.
\end{enumerate}
\end{lemm}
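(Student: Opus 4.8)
The plan is to phrase everything in terms of the monodromy homomorphism $\rho\colon\pi_1(\W\setminus\Sigma)\to G$ of the degree-$8$ normal covering $\PP\colon\X\to\W$, where $G=\langle\tau_h,\tau_v,\iota\rangle\cong(\Z_2)^3$ is its deck group, and to decide the value of $\rho([\gamma])\in G$ (the core curve $\gamma$ is disjoint from the singularities, so this makes sense). Two elementary observations will carry the bookkeeping. First, $G$ has exponent $2$, so $\langle\rho(\gamma)\rangle$ is trivial when $\rho(\gamma)=e$ and is $\Z_2$ otherwise; accordingly $\gamma$ has $|G|/|\langle\rho(\gamma)\rangle|$ preimage components under $\PP$, i.e.\ eight of them, each isometric to $\gamma$, when $\rho(\gamma)=e$, and four of them, each a degree-$2$ and hence twice longer cover of $\gamma$, when $\rho(\gamma)\neq e$ (using that $\PP$ is a local isometry for the flat metrics). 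Second, for a subgroup $H\leq G$ the corresponding intermediate cover $\X/H\to\W$ has monodromy of $\gamma$ equal to the image of $\rho(\gamma)$ in $G/H$, so $\gamma$ lifts to exactly $[G:H]$ closed curves there precisely when $\rho(\gamma)\in H$. Thus the lemma reduces to deciding whether or not $\rho(\gamma)=e$.

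I would first dispatch part~(2). Under the standing hypothesis $r_h,r_v\in\{0,1\}$, ``any other case'' means $r_h=1$ or $r_v=1$; say $r_h=1$, the other case being symmetric. By Lemma~\ref{monodromy-p}, $\gamma$ then has monodromy $\Z_2$ for $\ph$, that is, nontrivial monodromy for the double cover $\Wh=\X/\langle\tau_h,\iota\circ\tau_v\rangle\to\W$; hence $\rho(\gamma)\notin\langle\tau_h,\iota\circ\tau_v\rangle$, and in particular $\rho(\gamma)\neq e$. By the first observation, $\gamma$ has monodromy $\Z_2$ for $\PP$ and exactly four $\PP_*$-preimages, each twice longer than $\gamma$.

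For part~(1), assume $r_h=r_v=0$. Running the tower $\X\xrightarrow{\Ph}\Xh\xrightarrow{\Pph}\Wh\xrightarrow{\ph}\W$: since $r_h=0\neq1$, Lemma~\ref{monodromy-p} gives that $\gamma$ has trivial monodromy for $\ph$, so it lifts to two length-preserving closed curves in $\Wh$; each of these is homologically trivial by Proposition~\ref{good-liftings} (as $r_h\neq2$), so by Lemma~\ref{monodromy-Pp} each has trivial monodromy for $\Pph$ and lifts to two length-preserving closed curves in $\Xh$. Hence $\gamma$ lifts to exactly four closed curves in $\Xh=\X/\langle\tau_h\rangle$, and since this is a degree-$4$ cover of $\W$, the second observation forces $\rho(\gamma)\in\langle\tau_h\rangle$. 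The identical argument with $v$ in place of $h$ gives $\rho(\gamma)\in\langle\tau_v\rangle$. Since $\tau_h$ and $\tau_v$ are distinct generators of $(\Z_2)^3$, we have $\langle\tau_h\rangle\cap\langle\tau_v\rangle=\{e\}$, so $\rho(\gamma)=e$; by the first observation $\gamma$ has trivial monodromy for $\PP$ and exactly eight $\PP_*$-preimages, each isometric to $\gamma$.

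The computation is routine bookkeeping once the monodromy lemmas of this section are available; the one delicate point — the part worth spelling out carefully — is the conclusion $\rho(\gamma)=e$ in the case $r_h=r_v=0$. The $h$-tower alone only determines $\rho(\gamma)$ modulo $\langle\tau_h\rangle$, so it does not by itself exclude $\rho(\gamma)=\tau_h$; genuinely using the symmetry between the $h$- and $v$-towers, together with the fact that $\Xh\to\W$ and $\Xv\to\W$ are the covers attached to the complementary order-$2$ subgroups $\langle\tau_h\rangle$ and $\langle\tau_v\rangle$ of $G$, is what pins down $\rho(\gamma)$ exactly. One should also check that the lift counts are exact (no accidental coincidences), which is automatic since distinct sheets over a point remain distinct, and that the asserted lengths follow because $\PP$ is a local isometry of the flat metrics.
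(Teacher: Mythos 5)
Your proof is correct, and it takes a genuinely cleaner route than the paper's. Both arguments rest on the same inputs (Lemma~\ref{monodromy-p}, Lemma~\ref{monodromy-Pp}, and the tower of quotient covers), but you organize the bookkeeping around the single observation that $\PP:\X\to\W$ is a \emph{normal} cover with deck group $G=\langle\tau_h,\tau_v,\iota\rangle\cong(\Z_2)^3$, so the monodromy of $\gamma$ is literally an element $\rho(\gamma)\in G$, and $G$ has exponent~$2$. This has two payoffs. First, the monodromy group $\langle\rho(\gamma)\rangle$ is automatically trivial or $\Z_2$, which dispatches the spurious $\Z_4$ possibility in one line; the paper instead treats the monodromy as an abstract cyclic group and spends a paragraph (the length estimate on $\check\gamma_i$ inside $\phv^{-1}(\gamma)$) ruling $\Z_4$ out in the case $r_h=r_v=1$. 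Second, for part~(1), your conclusion $\rho(\gamma)\in\langle\tau_h\rangle\cap\langle\tau_v\rangle=\{e\}$ replaces the paper's contradiction argument (which shows that $\tau_h$ and $\tau_v$ would both fix the four preimages $\bar\gamma_i$, hence $\#\phv_*^{-1}(\gamma)\geq 4$, contradicting that $\phv$ has degree~$2$); your route is shorter and makes it visible that the ``delicate point'' you flag is exactly the failure of a single tower to determine $\rho(\gamma)$ modulo more than a $\Z_2$. Your part~(2) is likewise a one-liner ($\rho(\gamma)\notin\langle\tau_h,\iota\circ\tau_v\rangle$, hence nontrivial), where the paper handles the profiles $(1,1)$ and $(1,0)/(0,1)$ separately with different composed-cover arguments. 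The one hypothesis worth stating explicitly in your write-up is that $G$ acts freely on $\X$ away from a finite set avoided by $\gamma$, so that the monodromy homomorphism is honestly defined on $\pi_1$ of the complement and takes values in the full deck group; you invoke this implicitly and it does hold here.
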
	
\begin{proof}
    Recall first that $\PP:\X\to\W$ is a covering of degree~$8$, $\PP=\p_\si\circ\Pp_\si\circ\P_\si$ and also $\PP=\phv\circ\Phv$, where $\Phv:\X\to\Xhv$ and $\phv:\Xhv\to\W$ (see the diagram in Figure~\ref{diagram} for a recall in notation).
\begin{enumerate}[leftmargin=*]
  \item Suppose $r_h=r_v=0$. By Lemma~\ref{monodromy-p}, we know that $\gamma$ has trivial monodromy for both $\ph$ and $\pv$. Then, by Lemma~\ref{monodromy-Pp}, we deduce that $\gamma$ has trivial monodromy for $\ph\circ \Pph$ and for $\pv\circ \Ppv$. Then, the monodromy of $\gamma$ for $\PP=\p_\si\circ\Pp_\si\circ\P_\si$ can be at most $\Z_2$, since $\P_\si:\X\to\X_\si$ is a double cover.
  
  Suppose it is $\Z_2$. Then, the monodromy for $\P_\si$ of the corresponding curves $\bar{\gamma_\si}_i$, $i=1,\dots,4$, in $\X_\si$ is $\Z_2$. This means, in particular, that $\tau_h$ and $\tau_v$ fix the corresponding curves $\bar\gamma_i$, $i=1,\dots,4$, in $\X$. Consider $D={\Phv}_*(\{\bar\gamma_i\}_{i=1}^4)$ and note that $D={\phv}_*^{-1}(\gamma)$. Then, since $\tau_h$ and $\tau_v$ fix each $\bar\gamma_i$, $i=1,\dots,4$, we have that $\# D=4$, but $\phv$ is a double cover, so this is impossible.
  Thus, assuming that the monodromy for $\PP$ of $\gamma$ is $\Z_2$, we get a contradiction. Therefore, the monodromy is trivial (see Figure~\ref{P-lift-pocket-r-0} and Figure~\ref{P-lift-dumbbell-r-0}).
  \item For the other cases, we will prove that $\gamma$ has monodromy $\Z_2$. Remember we are assuming that $r_h,r_v\neq 2$.
  \begin{enumerate}[leftmargin=*]
    \item Suppose $r_h=r_v=1$. From Lemma~\ref{monodromy-p} we know that $\gamma$ has monodromy $\Z_2$ for both $\ph$ and $\pv$. Then, by Lemma~\ref{monodromy-Pp}, we deduce that $\gamma$ has monodromy $\Z_2$ for $\ph\circ \Pph$ and for $\pv\circ \Ppv$. Then, the monodromy of $\gamma$ for $\PP=\p_\si\circ\Pp_\si\circ\P_\si$ can be $\Z_2$ or $\Z_4$, since $\P_\si$ is a double cover.
    
    Suppose it is $\Z_4$. Then, the monodromy for $\P_\si$ of the corresponding curves $\bar{\gamma_\si}_i$, $i=1,2$, in $\X_\si$ is $\Z_2$, and $\tau_h$ and $\tau_v$ fix each $\bar\gamma_i$, $i=1,2$ in $\X$. To continue with the argument, we need to remark first that $\tau_h$ and $\tau_v$ are orientation preserving isometric involutions. Then, when they fix a cylinder, the only way to do this is, either being the identity or a rotation by half the length of the cylinder, when restricted to the cylinder. In particular, ${\check\gamma}_i\coloneqq \PP(\bar\gamma_i)=\rquo{\bar\gamma_i}{\langle \tau_h,\tau_v\rangle}$ has at least half the length of $\bar\gamma_i$, $i=1,2$, that is, at least twice the length of $\gamma$. But $\check\gamma_i\in {\phv}_*^{-1}(\gamma)$, $i=1,2$, and $\phv$ is a double cover, so it is impossible to have two $\phv$-preimages of at least twice the length.
    Thus, assuming that the monodromy of $\gamma$ for $\PP$ is $\Z_4$, we get a contradiction. Therefore, the monodromy is $\Z_2$ (see Figure~\ref{P-lift-pocket-r-1} and Figure~\ref{P-lift-dumbbell-r-1}).
    \item Suppose that $r_h=0$ and $r_v=1$. Then, as before, we find that $\gamma$ has trivial monodromy for $\ph\circ \Pph$, and monodromy $\Z_2$ for $\pv\circ \Ppv$. Then, since $\Ph$ and $\Pv$ are double covers, $\gamma$ has trivial or $\Z_2$ monodromy for $\ph\circ \Pph\circ\Ph$ and monodromy $\Z_2$ or $\Z_4$ for $\pv\circ \Ppv\circ\Pv$. But $\ph\circ \Pph\circ \Ph=\pv\circ \Ppv\circ \Pv=\PP$, and therefore, the only alternative is to have monodromy equals to $\Z_2$ (see Figure~\ref{P-lift-pocket-r-0} and Figure~\ref{P-lift-dumbbell-r-0}). Analogously, we have monodromy $\Z_2$ for $r_h=1$ and $r_v=0$.
  \end{enumerate}
\end{enumerate}
\end{proof}

\begin{figure}[h]
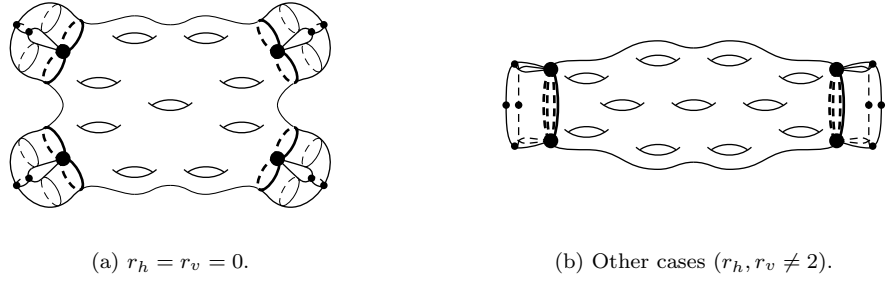

\begin{subfigure}[t]{.45\textwidth}
  \vcTikZ{\liftliftliftPocketRZero[.475]{0,0}}
  \caption{$r_h=r_v=0$.}
  \label{P-lift-pocket-r-0}
\end{subfigure}
\hfill
\begin{subfigure}[t]{.45\textwidth}
  \vcTikZ{\liftliftliftPocketROne[.475]{0,0}}
  \caption{Other cases ($r_h,r_v\neq 2$).}
  \label{P-lift-pocket-r-1}
\end{subfigure}
\caption{Lifting of a pocket configuration in $\W$ to $\X$.}
\label{P-lift-pocket}
\end{figure}

\begin{figure}[h]
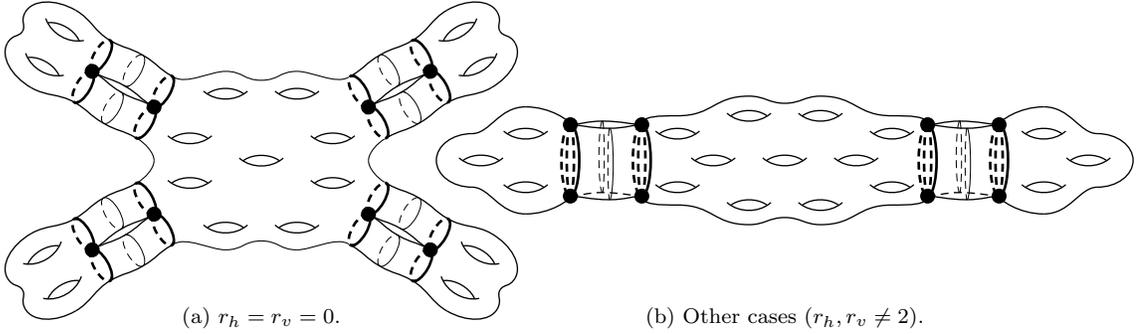

\begin{subfigure}[t]{.45\textwidth}
  \vcTikZ{\liftliftliftDumbbellRZero[.475]{0,0}}
  \vspace{3ex}
  \caption{$r_h=r_v=0$.}
  \label{P-lift-dumbbell-r-0}
\end{subfigure}
\hfill
\begin{subfigure}[t]{.45\textwidth}
  \vspace{3ex}
  \vcTikZ{\liftliftliftDumbbellROne[.475]{0,0}}
  \caption{Other cases ($r_h,r_v\neq 2$).}
  \label{P-lift-dumbbell-r-1}
\end{subfigure}
\caption{Lifting of a dumbbell configuration in $\W$ to $\X$.}
\label{P-lift-dumbbell}
\end{figure}

\subsection{Relation between Siegel-Veech constants in $\Q(1^m,-1^{m+4})$ and its lifting to $\M$} \label{sect:relation-SV}
We conclude the study of which and how cylinders in $\W$ lift to good cylinders in $\X$ by relating the Siegel-Veech constants of configurations in $\W$ and its liftings to $\X$.


Let $\L$ be an invariant affine submanifold in $\Q(1^m,-1^{m+4})$ and let $\mu$ be the associated affine invariant measure on $\L$. Consider the locus $\M$ of all possible $\PP$-covers surfaces from $\L$. Note that, by construction, this gives an $\slr$-equivariant one-to-one correspondence between $\L$ and $\M$. In particular, $\M$ is an affine invariant submanifold on $\H(2^{4m})$. Let $\nu$ be the affine invariant measure on $\M$. Note that that $\mu$ is the direct image of $\nu$ with respect to the projection $\M\to\L$.

Let $c=c_{\C}(\L)$ be the Siegel-Veech constant associated to the counting of a multiplicity one configuration $\C$ of cylinders in $\L$ (see \S~\ref{sect:configuration} for the definitions).
Then, the configuration $\C$ induces a cylinder configuration $\bar\C$ on the covering space $\M$, defined by the covering maps $\PP$. Let $\bar c=c_{\bar\C}(\M)$ be the associated Siegel-Veech constant. The lemma below relates $c$ and $\bar c$. It is the analogous of Lemma~1.1 in \cite{EKZ} and Lemma~4.1 in \cite{DZ}, adapted for our purposes.

We say that $\C$ is a \emph{pocket-like} configuration, if the singularities in one of the boundary components of the cylinder are only poles. Note that, in particular, there are exactly two poles in that boundary component. Denote by $r_h(\C)$ and $r_v(\C)$ the values of $r_h$ and $r_v$ in the cylinders defined by configuration $\C$. These values are well defined, since a configuration defines all that data. Call the pair $(r_h,r_v)$ the \emph{profile of the configuration $\C$}. We say that $\C$ is a \emph{good configuration} if it is a multiplicity one configuration of cylinders in $\L$ such that $r_h(\C),r_v(\C)\in\{0,1\}$.

\begin{lemm} \label{relation-SV-constants}
Let $\C$ be a good configuration.
\begin{enumerate}
\item If $\C$ is pocket-like, then
  \begin{enumerate}
  \item If $\C$ has profile $(0,0)$, then $\bar c = 32c$.
  \item In any other case, $\bar c= 4c$.
  \end{enumerate}
\item If $\C$ is not pocket-like, then
  \begin{enumerate}
  \item If $\C$ has profile $(0,0)$, then $\bar c = 64c$.
  \item In any other case, $\bar c= 8c$.
  \end{enumerate}
\end{enumerate}
\end{lemm}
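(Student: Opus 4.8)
The plan is to unwind the Siegel--Veech formula \eqref{equa:SV-formula} through the covering $\PP\colon\X\to\W$, using the dictionary established in \S\ref{sect:how-good} between a good configuration $\C$ in $\L$ and its lift $\bar\C$ in $\M$. The key observation is that the Siegel--Veech constant $c_\C(\L)=\tfrac1{\pi\rho^2}\int_\L N_\C(\rho,Y)\,\d\mu(Y)$ is computed by an integral over the base, whereas $c_{\bar\C}(\M)=\tfrac1{\pi\rho^2}\int_\M N_{\bar\C}(\rho,X)\,\d\nu(X)$ is an integral over the total space. Since $\mu$ is the pushforward of $\nu$ under the one-to-one $\slr$-equivariant correspondence $\M\to\L$, the two integrals are over ``the same'' measure space, and the ratio $\bar c/c$ is simply the (constant) ratio of the integrands $N_{\bar\C}(\rho,X)/N_\C(\rho,\PP(X))$, which I will show depends only on the profile of $\C$ and on whether $\C$ is pocket-like. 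Concretely, I first note that by the analysis of \S\ref{sect:which-good} a good cylinder $C$ in $\W$ (multiplicity one, $r_h,r_v\in\{0,1\}$) lifts under $\PP$ to a collection of parallel cylinders in $\X$ that \emph{all have the same circumference} (this is where I need that $\tau_h,\tau_v,\iota$ are isometries, so the lift is by isometries), and Lemma~\ref{monodromy-P} tells me exactly how many there are and how long they are relative to $\gamma$: eight copies of the same length when the profile is $(0,0)$, and four copies of twice the length in all other cases.

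The two remaining ingredients are combinatorial. First, the \emph{number of cylinders per lift}: a single cylinder of circumference $\ell$ forming configuration $\C$ in $\W$ gives rise to either $8$ cylinders of circumference $\ell$ (profile $(0,0)$) or $4$ cylinders of circumference $2\ell$ (otherwise). But I must count how many distinct copies of the \emph{configuration} $\bar\C$ sit over a single copy of $\C$. When $\C$ is not pocket-like (the dumbbell case), each of the $8$ (resp.\ $4$) lifted cylinders comes with its own separating structure, and by the pictures in Figure~\ref{P-lift-dumbbell} these are genuinely $8$ (resp.\ $4$) distinct configurations of type $\bar\C$ in $\X$ over one configuration of type $\C$ in $\W$. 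When $\C$ \emph{is} pocket-like (Figure~\ref{P-lift-pocket}), the two poles on the pole-only boundary component of the cylinder are interchanged by the deck group of $\PP$ restricted near that boundary; concretely, the fiber over the pair of poles is identified in a way that collapses pairs of cylinders, so one configuration in $\W$ produces only half as many configurations in $\X$, i.e.\ $4$ (profile $(0,0)$) or $2$ (otherwise). This is the point where the factor $32=8\cdot 4$ vs.\ $64=8\cdot 8$ and $4=4\cdot1$ vs.\ $8=4\cdot2$ gets assembled: the first factor is the ratio of cylinder counts (with the extra $\times 2$ in the ``other'' cases absorbed because the lifted cylinder is twice as long, so comparing $N(\rho,\cdot)$ at the same $\rho$ gives no length correction — one only counts cylinders of length $\le\rho$, and a cylinder of length $2\ell$ in $\X$ lies over one of length $\ell$ in $\W$, so the indicator matches up after rescaling $\rho$; I will phrase this carefully), and the second factor records how the configuration decomposition multiplies.

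I expect the main obstacle to be the careful bookkeeping in the pocket-like case: one must check that the deck transformations of $\PP$ act on the preimage of a pocket cylinder in a way that identifies configurations in pairs (and not, say, leaves them all distinct, which would give $64$ instead of $32$), and this hinges on the fact that among the ramified simple poles, the two poles on the ``pocket'' boundary of the cylinder in $\W$ are exchanged by the relevant quotient maps. I would nail this down by tracing a neighborhood of the pole-pair through the tower $\W\leftarrow\Wh,\Wv\leftarrow\X$ as in Figures~\ref{P-lift-pocket-r-0}--\ref{P-lift-pocket-r-1}, using that $\Wh$ and $\Wv$ are double covers branched over exactly those poles (see \S\ref{sect:configurations-sphere} and the discussion around Figure~\ref{figu:t2-cp1}), so that a small loop around the pair of poles has nontrivial monodromy in precisely one extra factor. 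Once the count of lifted configurations is pinned down — $4,2,8,4$ in the four cases — the Siegel--Veech identity $\bar c=(\text{that count})\cdot(\text{cylinders per configuration, which is }8\text{ or }4)\cdot c/(8\text{ or }4)\cdot\dots$ collapses, after canceling the covering degree $8$, to the stated $32c,\,4c,\,64c,\,8c$. I would organize the final computation as a short case analysis mirroring the statement, citing Lemma~\ref{monodromy-P} for the lift data and Remark~\ref{rema:EM-SV-constant} for the validity of the Siegel--Veech formula for configuration counts, exactly as in the proofs of \cite[Lemma~1.1]{EKZ} and \cite[Lemma~4.1]{DZ}.
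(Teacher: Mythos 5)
Your approach is essentially the paper's: reduce via Lemma~\ref{monodromy-P} to the number $n$ of lifted cylinders and their relative length factor $s$, account for the area normalization (covering degree $8$), and halve $n$ in the pocket-like case, arriving at $n=4,2,8,4$ in your four cases. Two points worth tightening when you write it out. First, the length rescaling is not ``absorbed'': the exact relation is $N_{\bar\C}(\X,L)=n\,N_\C(\W,s^{-1}L)$, which together with $\mathrm{Area}(\X)=8\,\mathrm{Area}(\W)$ gives $\bar c=8\,(n/s^2)\,c$; the factor $1/s^2=1/4$ in the non-$(0,0)$ cases is what drops $64$ to $8$ (and $32$ to $4$), so it must be carried explicitly rather than argued away. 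Second, the pocket-like halving of $n$ is cleanest to see geometrically rather than via a deck-group ``interchange'' of the two poles: every pole of $\W$ becomes a regular point of $\X$ (since $\X\in\H(2^{4m})$ has no poles), so the pole-pair boundary of the pocket cylinder lifts to a regular closed geodesic in the \emph{interior} of each lifted cylinder, meaning each lifted cylinder contains two $\PP_*$-preimages of $\gamma$; hence $n$ is half the preimage count given by Lemma~\ref{monodromy-P}.
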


\begin{proof}
First of all, suppose we know the exact number and the relative length of cylinders in $\X$ we obtain by lifting a cylinder from configuration $\C$ in $\W$.
Say, a cylinder from $\C$ in $\W$ is lifted to $n$ cylinders in $\X$ and their lengths are $s$ times the length of $\gamma$. Then, \[N_{\bar\C}\left(\X,L\right)=nN_{\C}\left(\W,s^{-1}L\right)\] and therefore,
\[\bar c = \frac{n}{s^2}\frac{\mathrm{Area}(\X)}{\mathrm{Area}(\W)}c = 8\frac{n}{s^2} c,\]
where we used the fact that $\mathrm{Area}(\X)=8\mathrm{Area}(\W)$, since $\X$ is a metric $8$-fold covering of $\W$. 
But we know, by Lemma~\ref{monodromy-P}, the exact number of $\PP_*$-preimages of the core curve of $C$, $\gamma$, and the relative length of these, depending on $r_h$ and $r_v$.

If $\C$ is not a pocket-like configuration, then, there is at least one singularity in each boundary of the cylinder in $\W$ which is not a pole. Then, for each $\PP_*$-preimage, $\bar\gamma$, of its core curve $\gamma$, there is a cylinder in $\X$ with core curve $\bar\gamma$ (see Figure~\ref{P-lift-dumbbell}). Thus, the values of $n$ and $s$ are given by Lemma~\ref{monodromy-P}. That is, $n=8$ and $s=1$ for profile $(0,0)$, and $n=4$, $s=2$, for all other profiles of good configurations.

In the case of pocket-like configurations, the poles defining the pocket-like configuration become regular points in the interior of the corresponding cylinders in $\X$ (see Figure~\ref{P-lift-pocket}) and, therefore, each cylinder in $\X$ has two $\PP_*$-preimages of $\gamma$ in its interior, instead of one, as in the case of non pocket-like configurations. Hence, the number $n$, of cylinders in $\X$ obtained by lifting a cylinder in $\W$ is half the number of $\PP_*$-preimages of $\gamma$, which is given by Lemma~\ref{monodromy-P}. 
That is, in the case of pocket-like configurations, we have that $n=4$ and $s=1$ for profile $(0,0)$, and $n=2$, $s=2$, for all other profiles of good configurations.

\end{proof}

\begin{rema} \label{rema:c-area}
If we were working with the area Siegel-Veech constant, instead of the classical Siegel-Veech constant, there would be no difference for pocket-like or not pocket-like configurations in the previous result, since area Siegel-Veech constant depends only on monodromy.
\end{rema}

\section{Siegel-Veech constants of good configurations for generic surfaces} \label{sect:SV-good}

In this section we use the results of the previous section to compute the exact value of the Siegel-Veech constant of good configurations for generic surfaces in $\Q(1^m,-1^{m+4})$ with respect to the Masur--Veech measure.

Recall that for almost every surface in $\L=\Q(1^m,-1^{m+4})$, the only possible configurations are pocket and dumbbell configurations. Note that both configurations are multiplicity one configurations, that is, they define a single cylinder.

By Proposition~\ref{good-liftings}, a multiplicity one configuration is a good configuration if and only if $r_h,r_v\in\{0,1\}$, where $r_h$ and $r_v$ are the number of ramified poles for $\p_h$ and $\p_v$, respectively, in a component of the surface $\W$ after cutting along the core curve of the cylinder defined by the configuration.
Lastly, recall that $\p_h$ and $\p_v$ have four ramified poles each, from which they share three. In particular, there are five ``special'' poles, the three shared ramified poles and one more for each one of $\p_h$ and $\p_v$. 

\subsection*{Good pocket configurations}  Recall that in a pocket configuration, we have a single cylinder bounded by a saddle connection joining a fixed pair of poles $P_{j_1},P_{j_2}$ on one side and by a separatrix loop emitted from a fixed zero $P_i$ of order $d_i \geq 1$, on the other side (see Figure~\ref{pocket}). Then, $r_h$ and $r_v$, as defined in the previous section, is the number of ramified poles among the poles $P_{j_1}$ and $P_{j_2}$ of the configuration for the double cover $\ph$ and $\pv$, respectively. By Proposition~\ref{good-liftings}, the configuration is good if and only if $r_h,r_v\in\{0,1\}$. Recall that the profile of the configuration is the pair $(r_h,r_v)$.

Profile $(0,0)$ means that none of the ramified poles, for $\ph$ and $\pv$, is one of the poles defining the pocket configuration, $P_{j_1}$ or $P_{j_2}$. Then, since there are $m-1=(m+4)-5$ poles which are unramified poles for both $\ph$ and $\pv$, there are exactly $\binom{m-1}{2}=(m-1)(m-2)/2$ pocket configurations of profile $(0,0)$.

In order to have profile $(1,1)$, we should have one ramified and one unramified pole for both $\ph$ and $\pv$, or one which is ramified for $\ph$ but unramified for $\pv$ and vice versa. This latter case occurs once, because $\ph$ and $\pv$ share three out of four of their ramified poles. The former case happens exactly $\binom{3}{1}\binom{m-1}{1}=3m-3$ times. Therefore, we have $3m-2$ pocket configurations of profile $(1,1)$.

Profile $(1,0)$ occurs when one of the poles is ramified for $\ph$ but unramified for $\pv$ and the other is unramified for both $\ph$ and $\pv$. Then, there are $\binom{1}{1}\binom{m-1}{1}=m-1$ pocket configurations of profile $(1,0)$. Similarly, we have $m-1$ pocket configurations of profile $(0,1)$.

Summarizing good profiles and applying Lemma~\ref{relation-SV-constants}, we get that good pocket configurations contribute to the Siegel-Veech constant of good cylinders in $\M$ by $c_{good}^\text{pocket}(\M)$, which is $16(m-1)(m-2)+4((3m-2)+2(m-1))$ times the Siegel-Veech constant for pocket configurations in $\L$. Thus, by formula~(\ref{SV-pocket}), \[c_{good}^\text{pocket}(\M)=\left(4m^2 - 7m + 4\right)\frac{2}{\pi^2}.\]

\subsection*{Good dumbbell configurations} Recall that in this configuration, we have a single cylinder, bounded by a saddle connection joining a zero to itself on each side (see Figure~\ref{dumbbell}). Such a cylinder separates the original surface $\W$ in two parts. This yields a partition of $\alpha=\{1^m,-1^{m+4}\}$ (where superindices stand for the multiplicities) into two subsets $\alpha=\alpha_1\sqcup \alpha_2$, which is also considered to be part of the configuration, and we consider $\alpha_1$ to contain the $r_h$ ramified poles for $\p_h$ and the $r_v$ ramified poles for $\p_v$. We stress in the fact that, even if there are several singularities with the same degree, we differentiate them, so they are named and, by a slight abuse of notation, we consider this information is also carried by the partition.

Let $k_l=\#\alpha_l$, counting multiplicities, $l=1,2$, and note that $k=k_1+k_2=2m+4$. Let $q$ be the number of simple zeros in $\alpha_1$. Then, there are $k_1-q$ poles in $\alpha_1$, but also, by topological considerations, we have that this number is equal to $q+2$, since we are restricted to a genus zero surface with one boundary component. Therefore, we will always have that $\alpha_1=\{1^q,-1^{q+2}\}$ and $\alpha_2=\{1^{m-q},-1^{m-q+2}\}$ (up to the names of the singularities). In particular, $k_1 = 2q+2$ and $k_2=2m-2q+2$. Thus, in this context, formula~(\ref{SV-dumbbell}) becomes
\begin{equation} \label{SV-dumbbell-q}
c^{\text{dumbbell}}_{i_1,i_2;\alpha_1,\alpha_2} = \frac{(2q-1)!(2m-2q-1)!}{(2m)!}\frac{2}{\pi^2}.
\end{equation}
Since this value depends only on $q$, it is natural to try to group configurations sharing this number $q$ and study the corresponding combinatorics. But, by Lemma~\ref{relation-SV-constants}, different profiles give different weights when lifted to $\M$. Hence, we have to consider different profiles separately.

For dumbbell configurations, profile $(0,0)$ means that there are only unramified poles in $\alpha_1$, that is, all the five ramified poles for $\ph$ and $\pv$, are in $\alpha_2$. Then, 
the combinatorics are given by the remaining $m-1$ poles and the $m$ simple zeros.

Hence, to compute the number of these configurations, that is, dumbbell configurations of profile $(0,0)$ with $q$ simple zeros in $\alpha_1$, we remark that we have to choose $q$ of the $m$ (named) simple zeros and $q+2$ of the remaining $m-1$ (named) poles, to have in total $q+2$ poles in $\alpha_1$, as required by the topology. Finally, note that we have to choose one of $q$ zeros to be located at the boundary of the cylinder on one side and one of $m-q$ zeros to be located at the boundary of the cylinder on the other side.
For any given $q$, where $1\leq q\leq m-1$, the count gives \[\binom{m}{q}\binom{m-1}{q+2}q(m-q)\] dumbbell configurations of profile $(0,0)$.

In order to have profile $(1,1)$, there are two possibilities. The first one is to have one simple pole in  $\alpha_1$ which is ramified for $\ph$ but unramified for $\pv$ and vice versa. In this case, there is only one choice for this two ramified poles, because $\ph$ and $\pv$ share three out of four of their ramified poles. The three ramified poles shared by $\ph$ and $\pv$ are then in $\alpha_2$. As before, we have to choose $q$ of the $m$ simple zeros to be in $\alpha_1$, one of them to be in a boundary component of the cylinder and one of the remaining $m-q$ simple zeros to be in the other boundary component. For poles, since we have already taken two poles to be in $\alpha_1$, we have to choose $q$ poles among the $m-1$ unramified poles, to have $q+2$ poles in total, as required by the topology. Then, this case of profile $(1,1)$ occurs $\binom{m}{q}\binom{m-1}{q}q(m-q)$ times.

The other case which gives profile $(1,1)$ is when there is only one ramified pole for both $\ph$ and $\pv$ in $\alpha_1$ and all the remaining ramified poles (for $\ph$ or $\pv$) are in $\alpha_2$. Thus, there are $3$ possibilities in choosing the common ramified pole and therefore, by an analogous computation, this case happens $\binom{m}{q}\binom{3}{1}\binom{m-1}{q+1}q(m-q)$ times. Then, for fixed $q$, $1\leq q\leq m-1$, we have \[\binom{m}{q}\left[3\binom{m-1}{q+1}+\binom{m-1}{q}\right]q(m-q)\] dumbbell configurations of profile $(1,1)$.

Profile $(1,0)$ occurs when only one of the poles in $\alpha_1$ is ramified for $\ph$ but unramified for $\pv$ and all others are unramified for both $\ph$ and $\pv$. Then, by an analogous computation, there are $\binom{m}{q}\binom{1}{1}\binom{m-1}{q+1}q(m-q)$ dumbbell configurations of profile $(1,0)$. Similarly, we have \[\binom{m}{q}\binom{m-1}{q+1}q(m-q)\] dumbbell configurations of profile $(0,1)$.

In summary, by Lemma~\ref{relation-SV-constants}, good dumbbell configurations contribute to the Siegel-Veech constant of good cylinders in $\M$ by
\[\textstyle\binom{m}{q}\left[64\binom{m-1}{q+2} + 8\left(3\binom{m-1}{q+1}+\binom{m-1}{q}+2\binom{m-1}{q+1}\right)\right]q(m-q)\]
times the Siegel-Veech constant for a dumbbell configurations in $\L$ with $q$ simple zeros in $\alpha_1$, that is,
\[c_{q,good}^\text{dumbbell}(\M)=8\binom{m}{q}\left[8\binom{m-1}{q+2} + 5\binom{m-1}{q+1}+\binom{m-1}{q}\right]q(m-q)c_q^\text{dumbbell},\]
where $c_q^\text{dumbbell}$ is given by formula~(\ref{SV-dumbbell-q}).
Finally, summing up all the contribution of good dumbbell configurations and plugging in formula~(\ref{SV-dumbbell-q}), we obtain that
\begin{align}
c_{good}^\text{dumbbell}(\M) & = 8\sum_{q=1}^{m-1}\textstyle\binom{m}{q}\left[8\binom{m-1}{q+2} + 5\binom{m-1}{q+1}+\binom{m-1}{q}\right]q(m-q)\frac{(2q-1)!(2m-2q-1)!}{(2m)!}\frac{2}{\pi^2} \nonumber \\
  & = 8\sum_{q=1}^{m-1}\textstyle\binom{m}{q}\left[8\binom{m-1}{q+2} + 5\binom{m-1}{q+1}+\binom{m-1}{q}\right]\frac{1}{4}\frac{(2q)!(2m-2q)!}{(2m)!}\frac{2}{\pi^2}\nonumber \\
\label{eq-dumbbell}  & = \frac{4}{\pi^2}\sum_{q=1}^{m-1}\frac{\binom{m}{q}}{\binom{2m}{2q}}\textstyle\left[8\binom{m-1}{q+2} + 5\binom{m-1}{q+1}+\binom{m-1}{q}\right].
\end{align}

But, by Proposition~\ref{combinatorial-identities}, formula~(\ref{eq-dumbbell}) can be written as
\begin{align*}
c_{good}^\text{dumbbell}(\M) & = \frac{4}{\pi^2}\sum_{q=1}^{m-1}\frac{\binom{m}{q}}{\binom{2m}{2q}}\textstyle\left[8\binom{m-1}{q+2} + 5\binom{m-1}{q+1}+\binom{m-1}{q}\right] \\
  & = \frac{4}{\pi^2}\textstyle\left[8\left(\frac{1}{6}m^2 -\frac{13}{6}m -3 +\frac{5}{2}4^m\frac{(m!)^2}{(2m)!}\right)\right. \\
  & \qquad\textstyle\left. {}+ 5\left(m +2 -\frac{3}{2}4^m\frac{(m!)^2}{(2m)!}\right)+ \left(-1 +\frac{1}{2}4^m\frac{(m!)^2}{(2m)!}\right)\right] \\
  & = \frac{2}{3\pi^2}\textstyle\left[8\left(m^2 -13m -18 +15\cdot 4^m\frac{(m!)^2}{(2m)!}\right)\right. \\
  & \qquad\textstyle\left. {}+ 5\left(6m + 12 -9\cdot 4^m\frac{(m!)^2}{(2m)!}\right)+ \left(-6 + 3\cdot 4^m\frac{(m!)^2}{(2m)!}\right)\right] \\
  & = \frac{2}{3\pi^2}\textstyle\left(8m^2 - 74m - 90 + 78\cdot 4^m\frac{(m!)^2}{(2m)!}\right).
\end{align*}

We conclude the computation of the Siegel-Veech constant for good cylinders in $\M$, for generic surfaces, summing up the contribution of pocket and dumbbell good configurations
\begin{align}
c_{good}(\M) & = c_{good}^\text{pocket}(\M) + c_{good}^\text{dumbbell}(\M) \nonumber \\
 & = \left(4m^2-7m+4\right)\frac{2}{\pi^2} + \left(8m^2 - 74m - 90 + 78\cdot 4^m\frac{(m!)^2}{(2m)!}\right)\frac{2}{3\pi^2} \nonumber \\
 & = \left(20m^2 - 95m - 78 + 78\cdot 4^m\frac{(m!)^2}{(2m)!}\right)\frac{2}{3\pi^2}.
\end{align}

\section{Side results} \label{sect:side-results}

\subsection{Area Siegel-Veech constant}

Following the same treatment, we can deduce that
for almost every wind-tree billiard $\Pi\in\WT(m)$, the number $N_{area}(L,\Pi)$ has quadratic asymptotic growth rate and 
\[N_{area}(L,\Pi)\sim \frac{1}{4} c_{a,good}(\M)\frac{\pi L^2}{\mathrm{Area}\left(\Pi/\Z^2\right)},\]
where $c_{a,good}(\M)$ is the area Siegel-Veech constant associated to the counting problem of the area of good cylinders in $\M$, the $\slr$-orbit closure of $\X(\Pi)$.

Moreover, 
for almost every wind-tree billiard $\Pi\in\WT(m)$, 
\[c_{a,good}(\M)=c_{a,good}^{\text{pocket}}(\M)+c_{a,good}^{\text{dumbbell}}(\M),\]
where $c_{a,good}^{\text{pocket}}(\M)$ (resp. $c_{a,good}^{\text{dumbbell}}(\M)$) corresponds to the area Siegel--Veech constant associated to configurations of good cylinders in $\M$ 
which project to pocket (resp. dumbbell) configurations in $\Q(1^m,-1^{m+4})$.

Furthermore, there exist a relation between classical Siegel--Veech constants and area Siegel-Veech constants for configurations $\C$ of cylinders in $\L=\Q(1^m,-1^{m+4})$:
\[c_{a,\C}(\L) = \frac{1}{2m+1}c_\C(\L).\]
This is a consequence of a generalization of Vorobets formula~\cite[Theorem~1.6(b)]{Vo2}, proved by Athreya--Eskin--Zorich~\cite[Proposition~4.9]{AEZ} for any configuration of cylinders on any strata $\Q(d_1,\dots,d_k)$ of quadratic differentials on $\mathbb{CP}^1$.

Then, we can relate the Siegel--Veech constant on $\M$ with that of $\L$, using the analogous of Lemma~\ref{relation-SV-constants} (keeping in mind Remark~\ref{rema:c-area}).

Finally, we have\begin{align*}
c_{a,good}(\M) & = c_{a,good}^\text{pocket}(\M) + c_{a,good}^\text{dumbbell}(\M) \\
 & = \frac{1}{2m+1}\left(4m^2-7m+4\right)\frac{4}{\pi^2} \\
 & \qquad + \frac{1}{2m+1}\left(8m^2 - 74m - 90 + 78\cdot 4^m\frac{(m!)^2}{(2m)!}\right)\frac{2}{3\pi^2} \\
 & = \frac{1}{2m+1}\left(16m^2 - 58m - 33 + 39\cdot 4^m\frac{(m!)^2}{(2m)!}\right)\frac{4}{3\pi^2} \\
 & = \left(8m - 33 + 39\cdot 4^m\frac{(m!)^2}{(2m+1)!}\right)\frac{4}{3\pi^2}
\end{align*}

\subsection{Polynomial diffusion rate}
The main result of Delecroix--Hubert--Leli\`evre in~\cite{DHL} relates the polynomial diffusion rate on the classical model to the Lyapunov exponents of the subbundles $F^{+-}$ and $F^{-+}$. In this case, the polynomial diffusion rate is $2/3$ for \emph{every} wind-tree billiard in $\WT(1)$. This result was generalized by Delecroix--Zorich~\cite{DZ} for $m\geq 2$. However, in the general case, the value of the diffusion rate is also explicitly known but only for \emph{almost every} wind-tree billiard in $\WT(m)$ and numerically for some explicit examples (see~\cite[Remark~2]{DZ}).

The explicit values of the polynomial diffusion rate for \emph{all} wind-tree billiards in $\WT(m)$, $m\geq 2$, is still an open problem. However, an application of Forni's criterion for integer equivariant subbundles (Theorem~\ref{theo:Forni}) allows us to show that the relevant Lyapunov exponents is always positive, for every wind-tree billiard in $\WT(m)$, for all $m\geq 1$ (Corollary~\ref{coro:hypothesis}).

Thus, we can conclude that we have always positive polynomial diffusion rate.

\subsection{Recurrence}

A geometric criterion for the recurrence of the directional linear flow on $\Z^d$-periodic flat surfaces in terms of good cylinders by Avila--Hubert~\cite{AH} says that if the positive $g_t$-orbit of the compact surface accumulates on a flat surfaces with a vertical good cylinder, then the vertical linear flow on the $\Z^d$-periodic flat surface is recurrent (\cite[Proposition~2]{AH}).

A result of Chaika--Eskin~\cite{CE} allows us to extend this criterion. In fact, we have the following.

\begin{theo} \label{theo:recurrence-criterion}
Let $X$ be a flat surface, $\M$ its $\slr$-orbit closure and $F$ a continuous equivariant subbundle. Let $\mathbf{f}$ be a $d$-tuple of elements in $F_X(\Z)$ and consider $X_\infty$, the $\Z^d$-periodic flat surface defined by $X$ and $\mathbf{f}$. Suppose that there exists $Y\in\M$ with an $F$-good cylinder. Then, for almost every $\theta\in[0,2\pi)$, the linear flow in direction $\theta$ is recurrent on $X_\infty$.
\end{theo}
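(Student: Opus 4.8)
The plan is to combine the Avila--Hubert criterion \cite[Proposition~2]{AH}, recalled just above, with the Birkhoff-genericity part of the Chaika--Eskin theorem \cite{CE}. The hypothesis only produces \emph{some} surface in $\M$ carrying an $F$-good cylinder, whereas \cite[Proposition~2]{AH} requires the forward Teichm\"uller geodesic of the surface under consideration to \emph{accumulate} on such a surface; Chaika--Eskin's equidistribution statement is exactly what bridges this gap in almost every direction, and the $\slr$-equivariance of the set of $F$-good cylinders lets us rotate an arbitrary good cylinder into a vertical one.

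First I would use the $\slr$-equivariance of the set of $F$-good cylinders (noted in \S\ref{sect:counting-periodic-surfaces}) together with the $\slr$-invariance of $\M$: if the given $Y\in\M$ carries an $F$-good cylinder in some direction $\psi$, then $Y_0:=r_{\pi/2-\psi}Y\in\M$ carries a \emph{vertical} $F$-good cylinder. Since the entries of $\mathbf{f}$ lie in $F_X(\Z)=F_{Y_0}(\Z)\subset H^1(S,\Z)$, i.e. are \emph{absolute} integer covectors, they are part of the purely topological covering data and are unaffected by the $\slr$-action; hence for every $\theta$ the $\Z^d$-cover of $r_\theta X$ defined by $\mathbf{f}$ is just $X_\infty$ rotated by $\theta$, and its vertical flow coincides with the linear flow of $X_\infty$ in direction $\pi/2-\theta$ (up to the standing choice of angle conventions).

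Next I would invoke \cite{CE}: for almost every $\theta\in[0,2\pi)$ the forward geodesic orbit $(g_t r_\theta X)_{t>0}$ equidistributes in $\M$ with respect to the affine measure $\nu_\M$ (this is Birkhoff genericity; only density of the forward orbit is needed here), and in particular it accumulates on every point of $\M=\operatorname{supp}\nu_\M$, so on $Y_0$. Fix such a $\theta$. Applying \cite[Proposition~2]{AH} to the flat surface $r_\theta X$ --- whose $\slr$-orbit closure is again $\M$, which carries the $\Z^d$-cover $(r_\theta X)_\infty$ determined by $\mathbf{f}\in F_{r_\theta X}(\Z)$, and whose forward geodesic orbit accumulates on the surface $Y_0\in\M$ bearing a vertical $F$-good cylinder --- yields that the vertical flow on $(r_\theta X)_\infty$ is recurrent. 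By the dictionary of the previous paragraph this says that the linear flow on $X_\infty$ in direction $\pi/2-\theta$ is recurrent; as $\theta$ ranges over a full-measure subset of $[0,2\pi)$ and $\theta\mapsto\pi/2-\theta$ preserves the Lebesgue measure class, the linear flow on $X_\infty$ is recurrent in almost every direction.

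The only genuinely delicate point is the bookkeeping in the second paragraph: one must set up the correspondence ``flow in direction $\theta$ on $X_\infty$'' versus ``vertical flow on a rotated cover'' so that the covering data $\mathbf{f}$ is honestly carried along, which is where it matters that we use \emph{absolute} integer covectors (insensitive to the $\slr$-action). Everything else is assembly. In particular, one does not need $F$-goodness of cylinders to be an open condition --- which it generally is not for a merely continuous equivariant subbundle $F$ --- since the single honest surface $Y_0$ with a vertical $F$-good cylinder is all that is fed into the Avila--Hubert criterion, which is used as a black box.
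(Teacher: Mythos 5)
Your argument is essentially identical to the paper's: rotate the given $Y$ so its $F$-good cylinder becomes vertical, invoke Chaika--Eskin Birkhoff genericity so that, for almost every $\theta$, the forward $g_t$-orbit of the rotated surface accumulates on this $Y_0$, and close with the Avila--Hubert criterion. The paper writes this in three sentences and elides the bookkeeping you spell out in your second and fourth paragraphs (that the absolute integer covector $\mathbf{f}$ is carried along unchanged by rotation, and that one need not worry about openness of $F$-goodness because Avila--Hubert only requires accumulation on the single surface $Y_0$); those clarifications are sound and welcome but do not change the route.
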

\begin{proof}
By \cite[Theorem~1.1]{CE}, for almost every $\theta\in[0,2\pi)$, $r_{-\theta} X$ is Birkhoff generic for the $g_t$-flow with respect to $\nu_\M$. Since $Y\in\M$ has a $F$-good cylinder, then $Y'=r_\phi Y$ has a vertical cylinder for some $\phi\in[0,2\pi)$. Obviously $Y'\in\M$ and, since $r_{-\theta} X$ is Birkhoff generic, its positive $g_t$-orbit accumulates on $Y'$.
Then, by \cite[Proposition~2]{AH}, the linear flow in direction $\theta$ is recurrent in $X_\infty$.
\end{proof}

Thus, to prove the recurrence of every wind-tree billiard $\Pi\in\WT(m)$, we shall show that we can find good cylinders in the compact surface $\X(\Pi)$.

For $m=1$ this was first proved by Avila--Hubert~\cite[Lemma~4]{AH}. Consider $m\geq 2$ and recall that the obstacles of a wind-tree billiard $\Pi\in\WT(m)$ are horizontal and vertically symmetric right-angled polygons with $4m$ corners with the angle $\pi/2$ and $4(m-1)$, with the angle $3\pi/2$.

\begin{figure}[h]
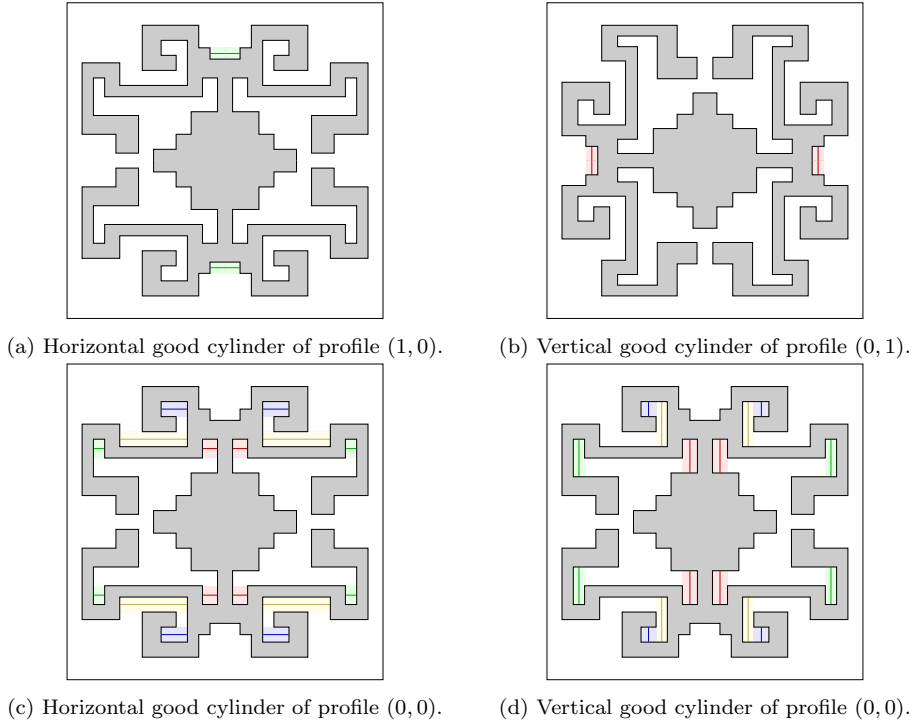

\begin{subfigure}[t]{.495\textwidth}
  \begin{centering}
  \tikz{\goodCylinderOneZero{0,0}}
  \caption{Horizontal good cylinder of profile $(1,0)$.}
  \label{figu:good-10}
  \end{centering}
\end{subfigure}
\begin{subfigure}[t]{.495\textwidth}
  \begin{centering}
  \tikz{\goodCylinderZeroOne{0,0}}
  \caption{Vertical good cylinder of profile $(0,1)$.}
  \label{figu:good-01}
  \end{centering}
\end{subfigure}
\begin{subfigure}[t]{.495\textwidth}
  \begin{centering}
  \tikz{\goodCylinderZeroZeroH{0,0}}
  \caption{Horizontal good cylinder of profile $(0,0)$.}
  \label{figu:good-00-h}
  \end{centering}
\end{subfigure}
\begin{subfigure}[t]{.495\textwidth}
  \begin{centering}
  \tikz{\goodCylinderZeroZeroV{0,0}}
  \caption{Vertical good cylinder of profile $(0,0)$.}
  \label{figu:good-00-v}
  \end{centering}
\end{subfigure}
\caption{Good cylinders for obstacles with two consecutive corners with angle $3\pi/2$.}
\end{figure}

If the obstacle has two consecutive angles $3\pi/2$, then we have (horizontal or vertical) good cylinders of profile $(1,0)$, $(0,1)$ or $(0,0)$. In fact, if the two consecutive angles are symmetric with respect to the vertical reflection, then we obtain horizontal good cylinders of profile $(1,0)$ as in Figure~\ref{figu:good-10}. Similarly, if the angles are symmetric with respect to the horizontal reflection, then we have vertical good cylinders of profile $(0,1)$ as in Figure~\ref{figu:good-01}. In other case, we obtain horizontal or vertical good cylinders of profile $(0,0)$ as in Figure~\ref{figu:good-00-h} and Figure~\ref{figu:good-00-v}.

\begin{figure}[h]
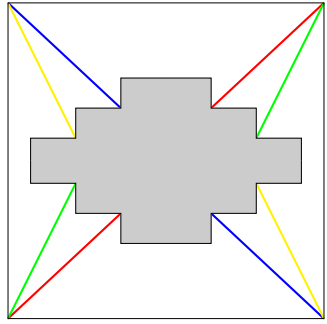

\centering
  \tikz{\goodCylinderOneOne{0,0}}
\caption{Core curves of good cylinders of profile $(1,1)$ for obstacles with no consecutive corners with angle $3\pi/2$.}
\label{figu:good-11}
\end{figure}

If there are no consecutive corners of angles $3\pi/2$, then there are good cylinders of profile $(1,1)$ as in Figure~\ref{figu:good-11}.

Thus, for every $\Pi\in\WT(m)$ we can exhibit good cylinders in $\X(\Pi)$ and then, by Theorem~\ref{theo:recurrence-criterion}, we conclude that the billiard flow in direction $\theta$ is recurrent for almost every $\theta\in[0,2\pi)$.

\appendix
\section{Combinatorial identities} \label{appe:combinatorial-identities}
In this appendix we prove the following identities.
\begin{prop} \label{combinatorial-identities} For any $m\in\N$ the following identities hold
\begin{alignat}{5}
\label{sum-q+2} \sum_{q=1}^{m-1}\frac{\binom{m}{q}\binom{m-1}{q+2}}{\binom{2m}{2q}} & = \frac{1}{6}m^2 & -\frac{13}{6}m & -3 & +\frac{5}{2}4^m\frac{(m!)^2}{(2m)!} \\
\label{sum-q+1} \sum_{q=1}^{m-1}\frac{\binom{m}{q}\binom{m-1}{q+1}}{\binom{2m}{2q}} & =  & m & +2 & -\frac{3}{2}4^m\frac{(m!)^2}{(2m)!} \\
\label{sum-q} \sum_{q=1}^{m-1}\frac{\binom{m}{q}\binom{m-1}{q}}{\binom{2m}{2q}} & = & & -1 & +\frac{1}{2}4^m\frac{(m!)^2}{(2m)!}
\end{alignat}
\end{prop}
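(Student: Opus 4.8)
The plan is to reduce all three sums to a handful of classical convolution sums of central binomial coefficients. The key algebraic observation is the factorial identity
\[
\binom{2m}{2q}^{-1}=\frac{\binom{2q}{q}\binom{2m-2q}{m-q}}{\binom{2m}{m}\binom{m}{q}^{2}},
\]
which, after multiplying by $\binom{m}{q}\binom{m-1}{q+j}$ and simplifying the ratio $\binom{m-1}{q+j}/\binom{m}{q}$ (another factorial computation), yields for $j=0,1,2$
\[
\frac{\binom{m}{q}\binom{m-1}{q+j}}{\binom{2m}{2q}}
=\frac{(m-q)(m-q-1)\cdots(m-q-j)}{m\,(q+1)(q+2)\cdots(q+j)}\cdot\frac{\binom{2q}{q}\binom{2m-2q}{m-q}}{\binom{2m}{m}} .
\]
It is convenient to work first with the full sums $\widehat S_{j}:=\sum_{q=0}^{m}\binom{m}{q}\binom{m-1}{q+j}/\binom{2m}{2q}$; since the $q=m$ term is zero and the $q=0$ term equals $\binom{m-1}{j}$, the three sums in the statement are $\widehat S_{0}-1$, $\widehat S_{1}-(m-1)$ and $\widehat S_{2}-\tfrac{(m-1)(m-2)}{2}$. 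Performing the partial-fraction decomposition of the rational prefactor, i.e.\ writing it as a polynomial in $q$ plus terms $c_{i}/(q+i)$ with $1\le i\le j$, expresses each $\widehat S_{j}$ as an explicit $\Z$-linear combination of the auxiliary sums
\[
\mu_{r}:=\sum_{q=0}^{m}q^{r}\binom{2q}{q}\binom{2m-2q}{m-q}\ (r=0,1),\qquad
\nu_{i}:=\sum_{q=0}^{m}\frac{1}{q+i}\binom{2q}{q}\binom{2m-2q}{m-q}\ (i=1,2).
\]

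Each auxiliary sum is standard. Squaring $(1-4x)^{-1/2}=\sum_{q}\binom{2q}{q}x^{q}$ gives $\mu_{0}=4^{m}$, and the symmetry $q\mapsto m-q$ of the summand gives $2\mu_{1}=m\mu_{0}$, hence $\mu_{1}=\tfrac{m}{2}4^{m}$. Multiplying $(1-4x)^{-1/2}$ by the Catalan generating function $\tfrac{1-\sqrt{1-4x}}{2x}=\sum_{q}C_{q}x^{q}$ identifies $\nu_{1}$ with $\sum_{q}C_{q}\binom{2m-2q}{m-q}$, and extracting the coefficient of $x^{m}$ gives $\nu_{1}=\tfrac12\binom{2m+2}{m+1}$. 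Likewise $\sum_{q}\frac{1}{q+2}\binom{2q}{q}x^{q+2}=\int_{0}^{x}t(1-4t)^{-1/2}\,dt=\tfrac1{12}-\tfrac18\sqrt{1-4x}+\tfrac1{24}(1-4x)^{3/2}$; multiplying by $(1-4x)^{-1/2}$ leaves $\tfrac1{12}(1-4x)^{-1/2}$ plus a constant and a linear term, so that $\nu_{2}=[x^{m+2}]\bigl(\tfrac1{12}(1-4x)^{-1/2}\bigr)=\tfrac1{12}\binom{2m+4}{m+2}$ for $m\ge1$.

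Finally I assemble the pieces. Using $\binom{2m+2}{m+1}/\binom{2m}{m}=\tfrac{2(2m+1)}{m+1}$ and $\binom{2m+4}{m+2}/\binom{2m}{m}=\tfrac{4(2m+1)(2m+3)}{(m+1)(m+2)}$, a short computation gives $\widehat S_{0}=\tfrac12 R$, $\widehat S_{1}=(2m+1)-\tfrac32 R$ and $\widehat S_{2}=\tfrac{2m^{2}-11m-6}{3}+\tfrac52 R$, where $R:=4^{m}(m!)^{2}/(2m)!$; subtracting the $q=0$ terms recorded above reproduces the three identities exactly. Everything is routine arithmetic; the only steps needing any care are the two partial-fraction decompositions (for $j=1$ and $j=2$) and checking, in the coefficient extractions for $\nu_{1}$ and $\nu_{2}$, that the low-degree polynomial leftovers do not contribute to $[x^{m}]$ (they do not, since $m\ge1$). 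There is no real obstacle: the whole argument is organised so that only the four elementary sums $\mu_{0},\mu_{1},\nu_{1},\nu_{2}$ are needed.
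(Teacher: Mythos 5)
Your proof is correct; I checked the reduction of the summand, the polynomial divisions and partial fractions, and the final assembly (including the $q=0$ corrections), and all three formulas come out right. In spirit you and the paper do the same thing: both convert $\binom{m}{q}\binom{m-1}{q+s}/\binom{2m}{2q}$ into the central-binomial product $\binom{2q}{q}\binom{2m-2q}{m-q}$ times a rational prefactor, and both then reduce to the same four auxiliary sums (your $\mu_0,\mu_1,\nu_1,\nu_2$ are exactly the paper's $D(m,0,0),\,D(m,0,1),\,D(m,1,0),\,D(m,2,0)$). The differences are organizational but worth noting. First, the paper expands only the numerator factor $(m-q)(m-q-1)\cdots(m-q-s)$ into a polynomial in $q$, keeps the denominator as $q!/(q+s)!$, and then grinds down the mixed terms via a two-index recurrence $D(m,s,j)=D(m,s-1,j-1)-s\,D(m,s,j-1)$; you instead do the whole partial-fraction decomposition of $\frac{(m-q)\cdots(m-q-s)}{(q+1)\cdots(q+s)}$ at once, landing directly on a polynomial part (handled by $\mu_0,\mu_1$) plus simple-pole parts (handled by $\nu_1,\nu_2$). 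This flattens out the paper's recursion into one division step per $s$, which is shorter for the cases $s\le 2$ actually needed, though the paper's recurrence would scale more systematically if higher $s$ were wanted. Second, the paper imports the closed forms for $\sum_q\binom{2q}{q}\binom{2m-2q}{m-q}$ and $\sum_q\binom{2q}{q}\binom{2m-2q}{m-q}\frac{i}{q+i}$ from Gould's tables, whereas you rederive your $\mu$'s and $\nu$'s self-containedly from $(1-4x)^{-1/2}$, the Catalan generating function, and one integration — a nice way to avoid an external citation. Both routes are equally rigorous; yours is a bit more compact and self-contained for the specific range $s\in\{0,1,2\}$ relevant here.
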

\begin{proof}
Define \[B(m,s)\coloneqq \sum_{q=1}^{m-1}\frac{\binom{m}{q}\binom{m-1}{q+s}}{\binom{2m}{2q}}\]
and note that \[\frac{\binom{m}{q}\binom{m-1}{q+s}}{\binom{2m}{2q}}=\frac{m!(m-1)!}{(2m)!}\binom{2q}{q}\binom{2m-2q}{m-q}\frac{q!}{(q+s)!}\frac{(m-q)!}{(m-1-q-s)!}.\]
Consider \[A(m,s) =  \sum_{q=0}^{m} \binom{2q}{q}\binom{2m-2q}{m-q}\frac{q!}{(q+s)!}\frac{(m-q)!}{(m-1-q-s)!}.\]
Then
\begin{equation}\label{equa:B-A}
B(m,s) = \frac{m!(m-1)!}{(2m)!} A(m,s) - \binom{m-1}{s}.
\end{equation}

Note now than we can write \[\frac{(m-q)!}{(m-1-q-s)!} = \prod_{i=0}^{s} (m-q-i) \eqqcolon P^{(m,s)}(q),\] where $P^{(m,s)}$ is a computable polynomial of degree~$s+1$, and suppose \[P^{(m,s)}(q)=\sum_{j=0}^{s+1} p^{(m,s)}_j q^{j}.\]
Then, we can write
\begin{equation*}
A(m,s) = \sum_{j=0}^{s+1} p^{(m,s)}_j \sum_{q=0}^{m} \binom{2q}{q}\binom{2m-2q}{m-q}\frac{q!}{(q+s)!} q^{j}
\end{equation*}
and define \[D(m,s,j)=\sum_{q=0}^{m} \binom{2q}{q}\binom{2m-2q}{m-q}\frac{q!}{(q+s)!} q^{j},\]
so that
\begin{equation}\label{equa:A-D}
A(m,s) = \sum_{j=0}^{s+1} p^{(m,s)}_j D(m,s,j).
\end{equation}

Note that
{\small\begin{align*}
D(m,s,j) & =\sum_{q=0}^{m} \binom{2q}{q}\binom{2m-2q}{m-q}\frac{q!}{(q+s)!} q^{j} \\
 & =\sum_{q=0}^{m} \binom{2q}{q}\binom{2m-2q}{m-q}\frac{q!}{(q+s)!} q^{j} \frac{q+s+1}{q+s+1} \\
 & =\sum_{q=0}^{m} \binom{2q}{q}\binom{2m-2q}{m-q}\frac{q!}{(q+s+1)!} q^{j} (q+s+1) \\
 & = D(m,s+1,j+1) + (s+1)\; D(m,s+1,j).
\end{align*}}
Then, $D$ satisfies the following recurrence relation,
\begin{equation}\label{equa:recurrence-D}
  D(m,s,j) = D(m,s-1,j-1) - s\; D(m,s,j-1)
\end{equation}
and, in particular, we can deduce that $D(m,s,j)$ can be written as a linear combination of $D(m,i,0)$, $i=1,\dots,s$, and $D(m,0,l)$, $0\leq l\leq j-s$.
But, since $j$ takes values in $\{0,\dots,s+1\}$, for the $D(m,0,l)$ terms, we are interested only in $D(m,0,1)$ and $D(m,0,0)$. The value of $D(m,0,0)$ is given in \cite[(3.90)]{Go},
\begin{equation}\label{equa:D(m,0,0)}
D(m,0,0)=\sum_{q=0}^{m} \binom{2q}{q}\binom{2m-2q}{m-q}=4^m.
\end{equation}

On the other hand,
{\small\begin{align*}
D(m,0,1) & =\sum_{q=0}^{m} \binom{2q}{q}\binom{2m-2q}{m-q}q \\
 & =\sum_{r=0}^{m} \binom{2m-2r}{m-r}\binom{2r}{r}(m-r) \\
 & = m\; D(m,0,0) - D(m,0,1).
\end{align*}}
Then, $2\; D(m,0,1) = m\; D(m,0,0)$ and, by the identity~(\ref{equa:D(m,0,0)}),
\begin{equation}\label{equa:D(m,0,1)}
D(m,0,1)=\frac{m}{2}4^m.
\end{equation}
\begin{rema} In fact, it is not difficult to show that $D(m,0,l)=(m/2)^l\; 4^m$, $l\geq 0$.
\end{rema}

For the other terms, of the form $D(m,i,0)$, we use the following identity (\cite[(3.95)]{Go})
\begin{equation}\label{equa:3.95}
\mathcal{X}(m,i)\coloneqq \sum_{q=0}^{m}\binom{2q}{q}\binom{2m-2q}{m-q} \frac{i}{q+i} = \frac{\binom{2m+2i-1}{m+i}}{\binom{2i-1}{i}}.
\end{equation}

But, a simple partial fraction decomposition gives
{\small\begin{equation*}
\frac{q!}{(q+i)!} =\prod_{j=1}^{i} \frac{1}{q+j}
  = \sum_{j=1}^{i} \frac{(-1)^{j-1}}{(j-1)!(i-j)!}\frac{1}{q+j}
  = \sum_{j=1}^{i} \frac{(-1)^{j-1}}{j!(i-j)!}\frac{j}{q+j}
\end{equation*}}
and thus,
\begin{equation}\label{equa:D(m,i,0)} 
D(m,i,0) =\sum_{j=1}^{i} \frac{(-1)^{j-1}}{(j)!(i-j)!}\mathcal{X}(m,j).
\end{equation}

\subsection*{Proof of identity~(\ref{sum-q})}

Following previous discution, $P^{(m,0)}(q)=m-q$ and then, by (\ref{equa:A-D}), we have that,
\[A(m,0) = m\; D(m,0,0) - D(m,0,1) = \frac{m}{2} 4^m,\]
where last equality comes from (\ref{equa:D(m,0,0)}) and (\ref{equa:D(m,0,1)}).
Finally, from~(\ref{equa:B-A}), we have that
\[B(m,0) = \frac{m!(m-1)!}{(2m)!} A(m,0) - \binom{m-1}{0} = \frac{1}{2}4^m\frac{(m!)^2}{(2m)!} - 1,\]
which is~(\ref{sum-q}).

\subsection*{Proof of identity~(\ref{sum-q+1})}

Note that $P^{(m,1)}(q)=m^2-m - (2m-1)q + q^2$. Then, by (\ref{equa:A-D}), we have that,
\[ A(m,1) = (m^2-m)\; D(m,1,0) - (2m-1)\; D(m,1,1) + D(m,1,2).\]
Using the recurrence rule~(\ref{equa:recurrence-D}), we have that
\begin{align*}
D(m,1,1) & = D(m,0,0) - D(m,1,0), \text{ and} \\
D(m,1,2) & = D(m,0,1) - D(m,1,1) = D(m,0,1) - D(m,0,0) + D(m,1,0).
\end{align*}

It follows that 
{\small\begin{align*}
A(m,1)
 & = (m^2-m + (2m-1) + 1)\; D(m,1,0) - (2m-1 +1)\; D(m,0,0) + D(m,0,1) \\
 & = (m^2 + m)\; D(m,1,0) - 2m\; D(m,0,0) + D(m,0,1).
\end{align*}}
By identity~(\ref{equa:D(m,i,0)}) for $i=1$, $D(m,1,0) = \mathcal{X}(1)$, and from~(\ref{equa:3.95}),
\[D(m,1,0) = \mathcal{X}(1) = \binom{2m+1}{m+1} = \frac{(2m+1)!}{m!(m+1)!}.\]
Therefore,
\begin{align*}
A(m,1) 
 & = (m^2 + m)\frac{(2m+1)!}{m!(m+1)!} - 2m\;4^m + \frac{m}{2} 4^m \\
 & = \frac{(2m+1)!}{m!(m-1)!} - \frac{3m}{2} 4^m,
\end{align*}
where we have also used (\ref{equa:D(m,0,0)}) and (\ref{equa:D(m,0,1)}).
Thus, from~(\ref{equa:B-A}),
\begin{align*}
B(m,1) &= \frac{m!(m-1)!}{(2m)!} A(m,1) - \binom{m-1}{1} \\
 & = \frac{m!(m-1)!}{(2m)!}\left(\frac{(2m+1)!}{m!(m-1)!} - \frac{3m}{2} 4^m\right) - (m-1)\\
 & = 2m+1 - \frac{3}{2} 4^m \frac{(m!)^2}{(2m)!} - (m-1)\\
 & = m + 2 - \frac{3}{2} 4^m \frac{(m!)^2}{(2m)!},
\end{align*}
which is~(\ref{sum-q+1}).

\subsection*{Proof of identity~(\ref{sum-q+2})} (For the sake of readability, we will omit $m$ from notation in this part.) From (\ref{equa:A-D}), we have that
\[A(2) =  p^{(2)}_0 D(2,0) + p^{(2)}_1 D(2,1) + p^{(2)}_2 D(2,2) + p^{(2)}_3 D(2,3),\]
where \[P^{(2)}(q)= \sum_{j=0}^{3} p^{(2)}_j q^j = (m^3-3m^2+2m) - (3m^2-6m+2)q + (3m-3)q^2 - q^3.\]

Using the recurrence rule~(\ref{equa:recurrence-D}), we have that
\begin{align*}
D(2,1) & = D(1,0) - 2\;D(2,0), \\
D(2,2) & = D(1,1) - 2\;D(2,1) \\
 & = D(0,0) - D(1,0) - 2\;(D(1,0) - 2\;D(2,0)) \\
 & = D(0,0) - 3\;D(1,0) + 4\;D(2,0), \text{ and} \\
D(2,3) & = D(1,2) - 2\;D(2,2) \\
 & = D(0,1) - D(1,1) - 2\;(D(0,0) - 3\;D(1,0) + 4\;D(2,0)) \\
 & = D(0,1) - D(0,0) + D(1,0) - 2\;D(0,0) + 6\;D(1,0) - 8\;D(2,0) \\
 & = D(0,1) - 3\;D(0,0) + 7\;D(1,0) - 8\;D(2,0).
\end{align*}
It follows that,
\begin{align*}
A(2)
 & = p^{(2)}_0 D(2,0) + p^{(2)}_1 D(2,1) + p^{(2)}_2 D(2,2) + p^{(2)}_3 D(2,3) \\
 & = p^{(2)}_3 D(0,1) + (p^{(2)}_2 - 3\;p^{(2)}_3) D(0,0) + (p^{(2)}_1 - 3\;p^{(2)}_2 + 7\;p^{(2)}_3) D(1,0) \\
 & \qquad + (p^{(2)}_0 - 2\;p^{(2)}_1 + 4\;p^{(2)}_2 - 8\;p^{(2)}_3) D(2,0) \\
 & = - D(0,1) + 3m\; D(0,0) + q^{(2)}_1 D(1,0) + q^{(2)}_2 D(2,0) \\
 & = \frac{5m}{2} 4^m + q^{(2)}_1 D(1,0) + q^{(2)}_2 D(2,0),
\end{align*}
where we have used (\ref{equa:D(m,0,0)}), (\ref{equa:D(m,0,1)}) and the values of $p^{(2)}_3=-1$ and $p^{(2)}_2 = 3m-3$. We have also defined $q^{(2)}_1 = p^{(2)}_1 - 3p^{(2)}_2 + 7p^{(2)}_3$ and $q^{(2)}_2=p^{(2)}_0 - 2p^{(2)}_1 + 4p^{(2)}_2 - 8p^{(2)}_3$.

Thus, by identity~(\ref{equa:D(m,i,0)}),
{\small\begin{align*}
A(2) 
 & = \frac{5m}{2} 4^m + q^{(2)}_1\mathcal{X}(1) + q^{(2)}_2\left(\mathcal{X}(1)-\frac{1}{2}\mathcal{X}(2)\right)\\
 & = \frac{5m}{2} 4^m + (q^{(2)}_1 + q^{(2)}_2)\mathcal{X}(1) - \frac{1}{2} q^{(2)}_2 \mathcal{X}(2) \\
 & = \frac{5m}{2} 4^m + (p^{(2)}_0 - p^{(2)}_1 + p^{(2)}_2 - p^{(2)}_3)\mathcal{X}(1) - \frac{1}{2}(p^{(2)}_0 - 2p^{(2)}_1 + 4p^{(2)}_2 - 8p^{(2)}_3)\mathcal{X}(2) \\
 & = \frac{5m}{2} 4^m + (m^3-m)\mathcal{X}(1) - \frac{1}{2}(m^3+3m^2+2m)\mathcal{X}(2).
\end{align*}}
Plugging in identity~(\ref{equa:3.95}), we obtain
{\small\begin{align*}
A(2)
 & = \frac{5m}{2} 4^m + (m^3-m)\binom{2m+1}{m+1} - \frac{1}{2}(m^3+3m^2+2m)\frac{\binom{2m+3}{m+2}}{\binom{3}{2}} \\
 & = \frac{5m}{2} 4^m + (m-1)m(m+1)\frac{(2m+1)!}{m!(m+1)!} - \frac{1}{6}m(m+1)(m+2)\frac{(2m+3)!}{(m+1)!(m+2)!} \\
 & = \frac{5m}{2} 4^m + \left((m-1) - \frac{1}{3}(2m+3)\right)\frac{(2m+1)!}{m!(m-1)!} 
 = \frac{5m}{2} 4^m + \frac{1}{3}(m-6)\frac{(2m+1)!}{m!(m-1)!}.
\end{align*}}
Finally, by~(\ref{equa:B-A}),
{\small\begin{align*}
B(2) &= \frac{m!(m-1)!}{(2m)!} A(2) - \binom{m-1}{2} \\
 & = \frac{m!(m-1)!}{(2m)!}\left(\frac{5m}{2} 4^m + \frac{1}{3}(m-6)\frac{(2m+1)!}{m!(m-1)!}\right) - \frac{1}{2}(m-1)(m-2)\\
 & = \frac{5}{2} 4^m \frac{(m!)^2}{(2m)!} + \frac{1}{3}(2m^2-11m-6) - \frac{1}{2}(m^2-3m+2)\\
 & = \frac{5}{2} 4^m \frac{(m!)^2}{(2m)!} + \frac{1}{6}(m^2 -13m - 18),
\end{align*}}
which is~(\ref{sum-q+2}).

\end{proof}

\begin{rema} Note that the proof of Theorem~\ref{combinatorial-identities} states a procedure or algorithm in order to compute $A(m,s)$ and $B(m,s)$ for all $s\geq 0$. Anyway, an algorithm is not a formula, and evidently, the complexity increase enormously when $s$ becomes larger. However, with this method, it is possible to show that $B(m,s)$ has the form \[(2m+1)\; \mathcal{P}_s(m) + (-1)^{s}\;\textstyle\frac{2s+1}{2} \displaystyle \; 4^m \frac{(m!)^2}{(2m)!} - \textstyle\binom{m-1}{s},\] where $\mathcal{P}_s$ is a polynomial of degree~$s-1$ (in particular, $\mathcal{P}_0=0$), which can also be explicitly computed. Moreover, $\mathcal{P}_s$ can be deduced from the fact that $B(m,s)=0$ for $m=1,\dots,s+1$. In particular,
\[\mathcal{P}_s(m) = (-1)^{s+1}\;\textstyle\frac{2s+1}{2} \displaystyle \; 4^m \frac{(m!)^2}{(2m+1)!}\]
for  $m=1,\dots,s$. Anyway, we do not perform the computations here.
\end{rema}

\bibliographystyle{acm}

\end{document}